\numberwithin{equation}{section}
\newtheorem*{rep@theorem}{\rep@title}
\newcommand{\newreptheorem}[2]{%
\newenvironment{rep#1}[1]{%
 \def\rep@title{#2 \ref{##1}}%
 \begin{rep@theorem}}%
 {\end{rep@theorem}}}
\theoremstyle{theorem}
\newtheorem{thm}{Theorem}[section]
\newtheorem*{thm*}{Theorem}
\theoremstyle{definition}
\newtheorem{prop}[thm]{Proposition}
\newtheorem*{prop*}{Proposition}
\newtheorem{defn}[thm]{Definition}
\newtheorem{lem}[thm]{Lemma}
\newtheorem{cor}[thm]{Corollary}
\newtheorem*{cor*}{Corollary}
\theoremstyle{remark}
\newtheorem{rem}[thm]{Remark}
\title{\vspace*{-0.5cm}Semiclassical Ohsawa-Takegoshi extension theorem \\
and asymptotics of the orthogonal Bergman kernel}
\author
{Siarhei Finski
}
\date{}
\newcommand{\imun} {\sqrt{-1}}
\newcommand{\res}{{\rm{Res}}}
\newcommand{\ext}{{\rm{E}}}
\newcommand{\comp}{\mathbb{C}}
\newcommand{\real}{\mathbb{R}}
\newcommand{\nat}{\mathbb{N}}
\newcommand{\spec}{{\rm{Spec}}}
\newcommand{\dist}{{\rm{dist}}}
\newcommand{\enmr}[1]{\text{End}{(#1)}}
\newcommand{\ccal}{\mathscr{C}}
\newcommand{\dbar}{ \overline{\partial} }
\newcommand{\laplcomp}{\Box}
\newcommand{\rk}[1]{{\rm{rk}} ( #1 )}
\renewcommand{\Im}{\operatorname{Im}}
\newcommand{\scal}[2]{\big< #1, #2 \big>}
\newenvironment{sciabstract}{}
\begin{document} 
\maketitle

\begin{sciabstract}
  \textbf{Abstract.} We study the asymptotics of Ohsawa-Takegoshi extension operator and orthogonal Bergman projector associated with high tensor powers of a positive line bundle.
  \par 
  More precisely, for a fixed complex submanifold in a complex manifold, we consider the operator which associates to a given holomorphic section of a positive line bundle over the submanifold the holomorphic extension of it to the ambient manifold with the minimal $L^2$-norm.
  When the tensor power of the line bundle tends to infinity, we obtain an explicit asymptotic expansion of this operator.
  This is done by proving an exponential estimate for the associated Schwartz kernel and showing that this Schwartz kernel admits a full asymptotic expansion.
We prove similar results for the projection onto holomorphic sections orthogonal to those which vanish along the submanifold.

\end{sciabstract}

\pagestyle{fancy}
\lhead{}
\chead{Semiclassical Ohsawa-Takegoshi extension theorem}
\rhead{\thepage}
\cfoot{}


\newcommand{\Addresses}{{
  \bigskip
  \footnotesize
  \noindent \textsc{Siarhei Finski, Institut Fourier - Université Grenoble Alpes, France.}\par\nopagebreak
  \noindent  \textit{E-mail }: \texttt{finski.siarhei@gmail.com}.
}} 

\vspace*{-0.4cm}

\tableofcontents

\section{Introduction}\label{sect_intro}
	One of the main goals of this article is to give an asymptotic version of Ohsawa-Takegoshi extension theorem when the powers of the twisting positive line bundle tend to infinity. Another goal is to establish asymptotic expansion of the Schwartz kernel of the orthogonal Bergman projector onto the holomorphic sections orthogonal to those which vanish along a submanifold.
	\par 
	More precisely, we fix two (not necessarily compact) complex manifolds $X, Y$, of dimensions $n$ and $m$ respectively.
	We fix also a complex embedding $\iota : Y \to X$, a positive line bundle $(L, h^L)$ over $X$ and an arbitrary Hermitian vector bundle $(F, h^F)$ over $X$.
	In particular, we assume that for the curvature $R^L$ of the Chern connection on $(L, h^L)$, the closed real $(1, 1)$-form
	\begin{equation}\label{eq_omega}
		\omega := \frac{\imun}{2 \pi} R^L
	\end{equation}
	is positive.
	We denote by $g^{TX}$ the Riemannian metric on $X$ induced by $\omega$ as follows
	\begin{equation}\label{eq_gtx_def}
		g^{TX}(\cdot, \cdot) := \omega(\cdot, J \cdot),
	\end{equation}
	where $J : TX \to TX$ is the complex structure on $X$.
	We denote by $g^{TY}$ the induced metric on $Y$. 	
	\par 
	\textit{We assume throughout the whole article that the triple $(X, Y, g^{TX})$, and the Hermitian vector bundles $(L, h^L)$, $(F, h^F)$, are of bounded geometry in the sense of Definitions \ref{defn_bnd_subm}, \ref{defn_vb_bg}.}
	\par 
	Now, we fix some positive (with respect to the orientation given by the complex structure) volume forms  $dv_X$, $dv_Y$ on $X$ and $Y$.
	For smooth sections $f, f'$ of $L^p \otimes F$, $p \in \nat$, over $X$, we define the $L^2$-scalar product using the pointwise scalar product $\langle \cdot, \cdot \rangle_h$, induced by $h^L$ and $h^F$, by
	\begin{equation}\label{eq_l2_prod}
		\scal{f}{f'}_{L^2(X)} := \int_X \scal{f(x)}{f'(x)}_h dv_X(x).
	\end{equation}
	Similarly, using $dv_Y$, we introduce the $L^2$-scalar product for sections of $\iota^*( L^p \otimes F)$ over $Y$.
	We denote by $H^0_{(2)}(X, L^p \otimes F)$ and $H^0_{(2)}(Y, \iota^*( L^p \otimes F))$ the vector spaces of holomorphic sections of $L^p \otimes F$ over $X$ and $Y$ respectively with bounded $L^2$-norm.
	\par 
	We assume that for the Riemannian volume forms $dv_{g^{TX}}$, $dv_{g^{TY}}$ of $(X, g^{TX})$, $(Y, g^{TY})$, for any $k \in \nat$, there is $C_k > 0$, such that over $X$ and $Y$, the following bounds hold
	\begin{equation}\label{eq_vol_comp_unif}
		\Big\| \frac{dv_{g^{TX}}}{dv_X} \Big\|_{\ccal^k(X)}, 
		\Big\| \frac{dv_X}{dv_{g^{TX}}} \Big\|_{\ccal^k(X)}, 
		\Big\| \frac{dv_{g^{TY}}}{dv_Y} \Big\|_{\ccal^k(Y)}, 
		\Big\| \frac{dv_Y}{dv_{g^{TY}}} \Big\|_{\ccal^k(Y)} \leq C_k.
	\end{equation}
	By extending Ohsawa-Takegoshi theorem, \cite{OhsTak1}, \cite{Ohsawa}, \cite[\S 13]{DemBookAnMet}, in Theorem \ref{thm_ot_weak}, we prove that there is $p_0 \in \nat$, such that for any $p \geq p_0$, $g \in H^{0}_{(2)}(Y, \iota^*(L^p \otimes F))$, there is $f \in H^{0}_{(2)}(X, L^p \otimes F)$, satisfying $f|_Y = g$.
	Then, for the \textit{Bergman projector} $B_p^{Y}$, given by the orthogonal projection from the space of $L^2$-sections $L^2(Y, \iota^*(L^p \otimes F))$ to $H^{0}_{(2)}(Y, \iota^*(L^p \otimes F))$, we define the \textit{extension} operator
	\begin{equation}\label{eq_ext_op}
		\ext_p :  L^2(Y, \iota^*(L^p \otimes F)) \to H^{0}_{(2)}(X, L^p \otimes F),
	\end{equation}
	by putting $\ext_p g = f$, where $f|_Y = B_p^{Y} g$, and $f$ has the minimal $L^2$-norm among those $f' \in H^{0}_{(2)}(X, L^p \otimes F)$ satisfying $f'|_Y = B_p^{Y} g$ (the set of such $f'$ is closed and convex, hence $f$ exists and it is unique by Hilbert projection theorem; moreover, the operator $\ext_p$ is easily seen to be linear).
	In particular, for $g \in H^{0}_{(2)}(Y, \iota^*(L^p \otimes F))$, we have $(\ext_p g)|_Y = g$.
	Ohsawa-Takegoshi extension theorem in this context means precisely that the operator $\ext_p$ is bounded.
	In this article, we find an explicit asymptotic expansion of $\ext_p$, as $p \to \infty$.
	Remark that $\| \ext_p \|$ is the \textit{optimal constant} for the estimate of the $L^2$-norm of the extension.
	\par 
	More precisely, we identify the normal bundle $N$ of $Y$ in $X$ as an orthogonal complement of $TY$ in $TX$ (with respect to $g^{TX}$), so that we have the following orthogonal decomposition
	\begin{equation}\label{eq_tx_rest}
		TX|_Y \to TY \oplus N.
	\end{equation}
	We denote by $g^{N}$ the metric on $N$ induced by $g^{TX}$.
	Let $P^N : TX|_Y \to N$,  $P^Y : TX|_Y \to TY$, be the projections induced by (\ref{eq_tx_rest}).
	Clearly, $\nabla^{N} := P^N \nabla^{TX}|_{Y}$ defines a connection on $N$.
	\par 
	For $y \in Y$, $Z_N \in N_y$, let $\real \ni t \mapsto \exp_y^{X}(tZ_N) \in X$ be the geodesic in $X$ in direction $Z_N$.
	Bounded geometry condition means, in particular, that this map induces a diffeomorphism of $r_{\perp}$-neighborhood of the zero section in $N$ with a tubular neighborhood $U$ of $Y$ in $X$.
	From now on, we use this identification implicitly.
	Of course, points $(y, 0)$, for $y \in Y$ then correspond to $Y$.
	\par 
	We denote by $\pi_0 : U \to Y$ the natural projection $(y, Z_N) \mapsto y$. 
	Over $U$, we identify $L, F$ to $\pi_0^* (L|_Y), \pi_0^* (F|_Y)$ by the parallel transport with respect to the respective Chern connections along the geodesic $[0, 1] \ni t \mapsto (y,t  Z_N) \in X$, $|Z_N| < r_{\perp}$. We also define a function $\kappa_N$ as follows
	\begin{equation}\label{eq_kappan}
		dv_X = \kappa_N dv_Y \wedge dv_N,
	\end{equation}
	where $dv_N$ is the relative Riemannian volume form on $(N, g^N)$.
	Of course, we have $\kappa_N|_Y = 1$ if
	\begin{equation}\label{eq_comp_vol_omeg}
		dv_X = dv_{g^{TX}}, \qquad dv_Y = dv_{g^{TY}}.
	\end{equation}
	\par 
	\begin{sloppypar}
	Using the above identification, we define the operator $\ext_p^0 : L^2(Y, \iota^*(L^p \otimes F)) \to L^2(X, L^p \otimes F)$ as follows.
	For $g \in L^2(Y, \iota^*(L^p \otimes F))$, we let $(\ext_p^0 g)(x) = 0$, $x \notin U$, and in $U$, we put
	\begin{equation}\label{eq_ext0_op}
		(\ext_p^{0} g)(y, Z_N) = (B_p^Y g)(y) \exp \Big(- p \frac{\pi}{2} |Z_N|^2 \Big) \rho \Big(  \frac{|Z_N|}{r_{\perp}} \Big).
	\end{equation}
	where the norm $|Z_N|$, $Z_N \in N$, is taken with respect to $g^{N}$, and $\rho : \real_{+} \to [0, 1]$ satisfies
	\begin{equation}\label{defn_rho_fun}
		\rho(x) =
		\begin{cases}
			1, \quad \text{for $x < \frac{1}{4}$},\\
			0, \quad \text{for $x > \frac{1}{2}$}.
		\end{cases}
	\end{equation}
	Now, for $g \in H^{0}_{(2)}(Y, \iota^*(L^p \otimes F))$, the section $\ext_p^{0} g$ satisfies $(\ext_p^{0} g)|_Y = g$, but $\ext_p^{0} g$ is not holomorphic over $X$ unless $g$ is zero.
	Nevertheless, as we shall see, $\ext_p^{0} g$ can be used to approximate very well the holomorphic section $\ext_p g$.
	More precisely, we have the following result.
	\end{sloppypar}
	\begin{thm}\label{thm_high_term_ext}
		There are $C > 0$, $p_1 \in \nat^*$, such that for any $p \geq p_1$, we have 
		\begin{equation}\label{eq_ext_as}
			\big\| \ext_p - \ext_p^{0} \big\| \leq \frac{C}{p^{\frac{n - m+ 1}{2}}}.
		\end{equation}
		where $\| \cdot \|$ is the operator norm.
		Also, as $p \to \infty$, we have
		\begin{equation}\label{eq_norm_asymp}
			\big\|  \ext_p^{0} \big\| \sim   \frac{1 }{p^{\frac{n - m}{2}}} \cdot  \sup_{y \in Y} \kappa_N^{\frac{1}{2}}(y).
		\end{equation}
		Moreover, under assumption (\ref{eq_comp_vol_omeg}), in (\ref{eq_ext_as}), one can replace $p^{\frac{n - m+ 1}{2}}$ by an asymptotically better estimate if and only if $Y$ is a totally geodesic submanifold of $(X, g^{TX})$, i.e. the second fundamental form, see (\ref{eq_sec_fund_f}), vanishes.
	\end{thm}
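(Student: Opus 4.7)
To establish (\ref{eq_norm_asymp}) I would compute $\|\ext_p^0 g\|_{L^2(X)}^2$ directly from the definition (\ref{eq_ext0_op}) using the decomposition $dv_X = \kappa_N dv_Y \wedge dv_N$ from (\ref{eq_kappan}). This gives
\begin{equation*}
\big\| \ext_p^0 g \big\|_{L^2(X)}^2 = \int_Y \big| (B_p^Y g)(y) \big|_h^2 I_p(y) \, dv_Y(y), \qquad I_p(y) = \int_N e^{- p \pi |Z_N|^2} \rho^2 \kappa_N(y, Z_N) \, dv_N(Z_N).
\end{equation*}
The cutoff $\rho$ contributes only an error of order $e^{-c p}$, and Laplace's method with the substitution $Z_N = W/\sqrt{p}$ yields $I_p(y) = p^{-(n - m)} \kappa_N(y, 0) (1 + O(p^{-1}))$, uniformly in $y \in Y$ by bounded geometry. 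The upper bound in (\ref{eq_norm_asymp}) then follows from $\|B_p^Y g\|_{L^2(Y)} \leq \|g\|_{L^2(Y)}$, while a matching lower bound comes from testing on $g_{y_0} := B_p^Y(\cdot, y_0)/\|B_p^Y(\cdot, y_0)\|_{L^2(Y)}$ with $y_0 \in Y$ approximating a maximizer of $\kappa_N(\cdot, 0)$: the on-diagonal asymptotic $B_p^Y(y_0, y_0) \sim p^m$ together with the standard off-diagonal Gaussian decay of $B_p^Y$ show that $|g_{y_0}|^2$ concentrates at $y_0$, producing the factor $\kappa_N(y_0, 0)$ in the limit.

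For estimate (\ref{eq_ext_as}) I would invoke the asymptotic expansion of the Schwartz kernel $E_p$ of $\ext_p$ together with its exponential off-diagonal decay, both announced in the abstract and presumably established in the bulk of the paper. In rescaled normal Fermi coordinates centred at $y \in Y$ the expected shape of the expansion is
\begin{equation*}
p^{-m} E_p\big((y, Z_N / \sqrt{p}), y'\big) \sim J_0(y, Z_N, y') + p^{-1/2} J_1(y, Z_N, y') + O(p^{-1}),
\end{equation*}
with leading term $J_0$ matching the analogous rescaling of the Schwartz kernel of $\ext_p^0$. The pointwise difference of the kernels is then of size $p^{-1/2}$ relative to the leading term, with the same Gaussian decay in $|y - y'|$ and in $|Z_N|$; Schur's test applied to $\ext_p - \ext_p^0$ reduces to uniformly bounding the two integrals $\sup_x \int_Y |E_p - E_p^0|(x, y') \, dv_Y(y')$ and $\sup_{y'} \int_X |E_p - E_p^0|(x, y') \, dv_X(x)$, each a factor of $p^{-1/2}$ smaller than the corresponding integrals for $E_p^0$. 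Multiplying the two gives the squared operator norm bound $C p^{-(n - m + 1)}$, hence (\ref{eq_ext_as}).

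For the last assertion, note that under (\ref{eq_comp_vol_omeg}) one has $\kappa_N(y, 0) = 1$ and a direct computation shows $\kappa_N = 1 + O(|Z_N|^2)$, since the Jacobian of the normal exponential map carries no linear correction in $Z_N$; hence no $p^{-1/2}$ contribution comes from the volume factor. The same asymptotic expansion identifies $J_1$ as a polynomial of degree one in $Z_N$ whose coefficients are a non-degenerate linear expression in the second fundamental form $A$ of $Y \subset (X, g^{TX})$ (see (\ref{eq_sec_fund_f})). If $A \equiv 0$, the term $J_1$ vanishes identically, the next contribution is $O(p^{-1})$ and the argument above gives an improved bound of order $p^{-(n - m + 2)/2}$. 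Conversely, if $A$ is non-zero at some $y_0 \in Y$, testing on $g_{y_0}$ and computing the leading non-trivial inner product $\langle (\ext_p - \ext_p^0) g_{y_0}, (\ext_p - \ext_p^0) g_{y_0} \rangle_{L^2(X)}$ by Gaussian integration against $J_1(y_0, \cdot, y_0)$ produces a lower bound of exact order $p^{-(n - m + 1)/2}$, ruling out any improvement.

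The hard part is the construction of the Schwartz kernel expansion of $E_p$ and its off-diagonal exponential decay in the non-compact bounded-geometry setting; these analytic inputs reduce the theorem to the essentially algebraic model computations outlined above, but obtaining them---through a localisation via the off-diagonal decay, a rescaling argument in normal Fermi coordinates around $Y$, and the resolution of a Bergman-type model problem on $\comp^m \times \comp^{n-m}$---is the central technical content on which the argument rests.
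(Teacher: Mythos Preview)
Your proposal is correct and follows essentially the same route as the paper. The paper defines $K_p:=\ext_p-\ext_p^0$, uses Theorems~\ref{thm_ext_exp_dc} and~\ref{thm_ext_as_exp} (precisely the kernel expansion and exponential decay you anticipate) together with the Bergman expansion on $Y$ to show $J_{0,K}^E=0$, then applies Young's inequality (your Schur test) to obtain (\ref{eq_ext_as}); for the sharpness under (\ref{eq_comp_vol_omeg}) it uses the explicit formula (\ref{eq_je1_exp}) for $J_1^E$, which is exactly the ``linear in $z_N$, coefficients given by $A$'' polynomial you describe.

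Two small comparative remarks. First, for the lower bound in (\ref{eq_norm_asymp}) the paper takes a slightly less direct path: it rewrites $\|\ext_p^0 g\|^2$ as $\langle T_{\kappa_N,p}g,g\rangle$ for the Toeplitz operator with symbol $\kappa_N$ and then invokes the spectral asymptotic $\|T_{\kappa_N,p}\|\to\sup_Y\kappa_N$ of Bordemann--Meinrenken--Schlichenmaier/Ma--Marinescu; your peak-section test is precisely what underlies that result, so the two arguments are equivalent but yours is more self-contained. Second, your claim that $\kappa_N=1+O(|Z_N|^2)$ under (\ref{eq_comp_vol_omeg}) is correct, but the reason is specific to the K\"ahler setting: the linear term is governed by the mean curvature $\nu$, which vanishes for complex submanifolds of K\"ahler manifolds (see (\ref{eq_nu_zero}) and (\ref{eq_kappa_t}) in the paper); for a general Riemannian embedding the Jacobian does carry a linear term.
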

	\begin{rem}
		a) From the bounded geometry condition $\kappa_N$ is bounded, see Section \ref{sect_bnd_geom_cf}.
		\par 
		b) By the remark after (\ref{eq_ext_op}), we see that this theorem gives an asymptotic of the optimal constant in Ohsawa-Takegoshi extension theorem.
		For other versions of optimal Ohsawa-Takegoshi extension theorem, see Błocki \cite{BlockiOToptimal}, Guan-Zhou \cite{GuanZhouOToptimal}.
		\par 
		c)
		Our result refines a theorem of Randriambololona \cite[Théorème 3.1.10]{RandriamTh}, stating in the compact case that for any $\epsilon > 0$, there is $p_1 \in \nat^*$, such that $\big\|  \ext_p \big\| \leq \exp(\epsilon p)$ for $p \geq p_1$.
		Remark, however, that theorem of Randriambololona also works for smooth nonreduced subvarieties $Y$.
		For a generalization of Theorem \ref{thm_high_term_ext} in this context, see the subsequent work \cite{FinOTRed}.
	\end{rem}
	\begin{cor}\label{cor_ot_higher}
		There is $p_1 \in \nat^*$, such that for any $k \in \nat^*$, there is $C > 0$, such that for any $p \geq p_1$, $g \in H^0_{(2)}(Y, \iota^*(L^p \otimes F))$, we have
	\begin{equation}\label{eq_ot_higher}
		\big\| \nabla^k ( \ext_p g) \big\|_{L^2(X)} \leq \frac{C}{p^{\frac{n - m - k}{2}}} \cdot \norm{g}_{L^2(Y)},
	\end{equation}
	where the connection $\nabla$ is induced by the Chern and Levi-Civita connections.
	\end{cor}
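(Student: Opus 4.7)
The plan is to deduce the corollary from Theorem \ref{thm_high_term_ext} together with a semiclassical $L^2$-Bernstein inequality for holomorphic sections of $L^p \otimes F$. The case $k=0$ is immediate: the asymptotic (\ref{eq_norm_asymp}) combined with the boundedness of $\kappa_N$ (Corollary \ref{cor_kap_n_bnd}) gives $\|\ext_p^0\| = O(p^{-(n-m)/2})$, while (\ref{eq_ext_as}) shows $\|\ext_p - \ext_p^0\|$ is of strictly lower order, whence $\|\ext_p\| \leq C p^{-(n-m)/2}$ for all $p \geq p_1$.

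For $k \geq 1$, I would first establish the following uniform Bernstein-type inequality: for every $k$ there is $C_k > 0$ such that for all $p \in \nat^*$ and all $f \in H^0_{(2)}(X, L^p \otimes F)$,
\begin{equation}
\big\| \nabla^k f \big\|_{L^2(X)} \leq C_k \, p^{k/2} \, \|f\|_{L^2(X)}. \label{eq_bern_plan_cor}
\end{equation}
The idea is a pointwise Cauchy-type estimate at the natural semiclassical scale $1/\sqrt{p}$. In a local holomorphic trivialization of $L$ and $F$ on a ball $B(x_0, \epsilon)$ (with $\epsilon$ independent of $x_0$ by bounded geometry), the weight takes the form $h^{L^p} = e^{-p\phi}$ for a local Kähler potential $\phi$ with $\phi(z) = \pi|z|^2 + O(|z|^3)$ in adapted normal coordinates. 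Applying the Cauchy integral formula to the holomorphic function representing $f$ in this frame on a ball of radius $c/\sqrt{p}$, and using $e^{-p\phi(z)} \asymp e^{-p\phi(x_0)}$ there, yields
\[
|\nabla^k f(x_0)|^2 \leq C \, p^{n+k} \int_{B(x_0,\, c/\sqrt{p})} |f|^2 \, dv_X.
\]
Integrating this pointwise bound in $x_0$, swapping the order of integration by Fubini, and using the uniform volume estimate $\mathrm{vol}(B(y, c/\sqrt{p})) \leq C' p^{-n}$ (a consequence of bounded geometry and (\ref{eq_vol_comp_unif})) produces (\ref{eq_bern_plan_cor}).

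Finally, applying (\ref{eq_bern_plan_cor}) to the holomorphic section $\ext_p g$ and combining with the $k=0$ bound gives
\[
\big\| \nabla^k ( \ext_p g ) \big\|_{L^2(X)} \leq C_k \, p^{k/2} \cdot C \, p^{-(n-m)/2} \, \|g\|_{L^2(Y)} = C'' \, p^{-(n-m-k)/2} \, \|g\|_{L^2(Y)},
\]
which is (\ref{eq_ot_higher}). The main point requiring care is the \emph{uniformity} of (\ref{eq_bern_plan_cor}) over $x_0 \in X$: the constant must not depend on the base point. This is precisely what the bounded geometry assumptions on $(X, g^{TX})$, $(L, h^L)$, $(F, h^F)$ provide, since they allow one to choose the local holomorphic frames, the Kähler potentials, and the volume comparisons at the scale $1/\sqrt{p}$ with bounds independent of $x_0$.
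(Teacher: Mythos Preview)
Your proposal is correct and follows essentially the same route as the paper: bound $\|\ext_p\|$ by $O(p^{-(n-m)/2})$ via Theorem \ref{thm_high_term_ext}, then apply a uniform $L^2$-Bernstein inequality $\|\nabla^k f\|_{L^2(X)} \leq C_k\, p^{k/2}\|f\|_{L^2(X)}$ for $f \in H^0_{(2)}(X, L^p\otimes F)$ to the holomorphic section $\ext_p g$.

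The only substantive difference lies in how the Bernstein inequality is obtained. The paper already has this statement as Proposition~\ref{prop_der_bound}, proved by writing $f = B_p^X f$, using the integral representation $\nabla^k f(x_1) = \int_X (\nabla^k B_p^X)(x_1,x_2)\, f(x_2)\, dv_X(x_2)$, and then invoking the off-diagonal exponential decay of $B_p^X$ (Theorem~\ref{thm_bk_off_diag}) together with Young's inequality. Your proposed proof via local Cauchy estimates at scale $c/\sqrt{p}$ in adapted holomorphic coordinates is a valid and more elementary alternative; the bounded-geometry hypotheses indeed supply the uniformity in $x_0$ you need (holomorphic coordinates and frames of uniform size exist by Theorem~\ref{thm_hol_coord_exst1} and Corollary~\ref{cor_hl_fr_bnd_g}). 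The paper's approach has the advantage of reusing machinery already in place, while yours avoids any appeal to global Bergman-kernel estimates, at the cost of a slightly more hands-on computation relating $\nabla^k$ to the local holomorphic derivatives $\partial^\alpha \tilde f$ and the derivatives of the weight $p\phi$.
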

	\begin{rem}\label{rem_dem_rem}
		The estimate of type (\ref{eq_ot_higher}) was lacking in Demailly's approach to the invariance of plurigenera for Kähler families, see \cite[(4.19)]{DemBerg}.
		Remark that in \cite{DemBerg}, the manifold $X$ is an open strictly pseudoconvex domain $U$, given by the neighborhood of the diagonal of certain product manifold (hence, $X$ is never compact).
		Our theorem applies to $X$ of this form because the Bergman metric on any strictly pseudoconvex domain has bounded geometry, see Klembeck \cite[Theorem 1 and p. 279]{KlembBergm} and Greene-Krantz \cite[p. 8]{GreenKrantz}.
		We, however, couldn't find a complex-geometric description of submanifolds $Y$ for which there is a metric $g^{TX}$ on $X$ coming from a strictly plurisubharmonic exhaustion function, such that the triple $(X, Y, g^{TX})$ is of bounded geometry.
	\end{rem}
	\par 
	Both Theorem \ref{thm_high_term_ext} and Corollary \ref{cor_ot_higher} appear as almost direct consequences of more precise results about the asymptotics of the \textit{Schwartz kernel} $\ext_p(x, y) \in (L^p \otimes F)_x \otimes (L^p \otimes F)_y^{*}$, $x \in X$, $y \in Y$, of $\ext_p$ with respect to $dv_Y$.
	To state them, recall that $\ext_p(x, y)$ is defined so that for any $g \in L^2(Y, \iota^*(L^p \otimes F))$, $x \in X$, we have
	\begin{equation}\label{eq_sch_ker_ext}
		(\ext_p g)(x) = \int_Y \ext_p(x, y) \cdot g(y) \cdot dv_Y(y).
	\end{equation}
	As we show, this Schwartz kernel shares similar asymptotic behavior with the Bergman kernel, previously studied by Dai-Liu-Ma \cite{DaiLiuMa} and Ma-Marinescu \cite{MaMarOffDiag}.
	More precisely, similarly to \cite{MaMarOffDiag}, we first show that this Schwartz kernel has exponential decay.
	\begin{thm}\label{thm_ext_exp_dc}
		There are $c > 0$, $p_1 \in \nat^*$,  such that for any $k \in \nat$, there is $C > 0$, such that for any $p \geq p_1$, $x \in X$, $y \in Y$, the following estimate holds
		\begin{equation}\label{eq_ext_exp_dc}
			\Big|  \ext_p(x, y) \Big|_{\ccal^k(X \times Y)} \leq C p^{m + \frac{k}{2}} \exp \big(- c \sqrt{p} \dist(x, y) \big),
		\end{equation}
		where the pointwise $\ccal^{k}$-norm of an element from $\ccal^{\infty}(X \times Y, (L^p \otimes F) \boxtimes (L^p \otimes F)^*)$ at a point $(x, y) \in X \times Y$ is the sum of the norms induced by $h^L, h^F$ and $g^{TX}$, evaluated at $(x, y)$, of the derivatives up to order $k$ with respect to the connection induced by the Chern connections on $L, F$ and the Levi-Civita connection on $TX$.
	\end{thm}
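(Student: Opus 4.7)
The strategy is to decompose $\ext_p$ into building blocks whose Schwartz kernels have exponential off-diagonal decay and to use that this class is closed under composition, with controlled polynomial loss in $p$. The starting point is the identity
\begin{equation*}
\ext_p = R_p^{\ast} \, (R_p R_p^{\ast})^{-1} \, B_p^Y,
\end{equation*}
where $R_p : H^0_{(2)}(X, L^p \otimes F) \to H^0_{(2)}(Y, \iota^{\ast}(L^p \otimes F))$ is restriction to $Y$ and $R_p^{\ast}$ is its $L^2$-adjoint; this identity follows from the extremal characterization of $\ext_p$ as the Moore--Penrose pseudoinverse of $R_p$ composed with $B_p^Y$ (the range of $R_p^\ast$ being the orthogonal complement in $H^0_{(2)}(X, L^p \otimes F)$ of the sections vanishing on $Y$). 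One checks that the Schwartz kernel of $R_p^{\ast}$ at $(x, y) \in X \times Y$ is $B_p^X(x, \iota(y))$ and that of $R_p R_p^{\ast}$ at $(y, y') \in Y \times Y$ is $B_p^X(\iota(y), \iota(y'))$, reducing everything to decay of Bergman kernels and to the inversion of $R_p R_p^{\ast}$.

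The first ingredient is the bound $|B_p^X(x, x')| \leq C p^n \exp(-c \sqrt{p}\, \dist(x, x'))$ and its analogue on $Y$. Under the bounded-geometry hypothesis, these follow by adapting the finite-propagation-speed method for the wave operator of the Kodaira Laplacian, combined with the Bismut--Vasserot spectral gap, as in the Ma--Marinescu framework; the uniform lower bound on the injectivity radius and the uniform bounds on curvatures provide constants independent of the base point. The estimates transfer immediately to the kernels of $R_p^{\ast}$ and of $R_p R_p^{\ast}$.

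The main obstacle is to invert $R_p R_p^{\ast}$ while preserving exponential kernel decay. Here the normal Gaussian profile of $B_p^X$ near the submanifold integrates to total mass of order $p^{-(n-m)}$, so $p^{n-m} R_p R_p^{\ast}$ is asymptotically a positive scalar multiple of the identity on $H^0_{(2)}(Y, \iota^{\ast}(L^p \otimes F))$, with a remainder of order $p^{-1/2}$ whose kernel still has the same exponential decay rate. One then inverts via a Neumann series on this Hilbert space: the series converges in operator norm for $p$ large, and closure of the class of exponentially-decaying kernels under composition (via the triangle inequality under the exponential weight, with volume loss at most polynomial in $p$ by the bounded-geometry volume control) yields the Schwartz kernel estimate for $(R_p R_p^{\ast})^{-1} B_p^Y$.

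Composing the three factors produces (\ref{eq_ext_exp_dc}) with $k = 0$. Higher-order derivatives are recovered from the $\ccal^0$-bound by exploiting that $\ext_p(x, y)$ is holomorphic in $x$ and antiholomorphic in $y$: the mean-value inequality for holomorphic sections of $L^p \otimes F$, applied on a geodesic ball of radius of order $1/\sqrt{p}$ centered at $x$ (respectively $y$), converts each covariant derivative into a factor of order $p^{1/2}$ without affecting the exponential rate, since the ball diameter is of lower order than $\dist(x, y)$ in the regime where the estimate is non-trivial. This yields the claimed prefactor $p^{m + k/2}$.
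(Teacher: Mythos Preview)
Your route via the Moore--Penrose formula $\ext_p = R_p^{\ast}(R_p R_p^{\ast})^{-1} B_p^Y$ is genuinely different from the paper's, and in one respect more economical. The paper expresses $\ext_p = I_p + I_p\circ T_p$ with $I_p = B_p^{\perp}\circ\ext_p^{0}\circ B_p^Y$ and $T_p = \sum_{i\ge 1}(-1)^i G_p^i$ for $G_p = \res_Y\circ I_p - B_p^Y$ (Lemma~\ref{lem_ext_op_inf_sum}); it then feeds in the exponential decay of $B_p^{\perp}$ (Theorem~\ref{thm_logbk_exp_dc}) and, crucially, the cancellation $J_{0,G}=0$ in the leading asymptotics of $G_p$ (Lemma~\ref{lem_gp_dec}), which already requires the full asymptotic expansion of $B_p^{\perp}$ (Theorem~\ref{thm_berg_perp_off_diag}). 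Your approach bypasses $B_p^{\perp}$ and $\ext_p^0$ altogether, using only the ambient Bergman kernel restricted to $Y$ and the spectral bound of Theorem~\ref{thm_ot_as_sp}.

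There is, however, a real gap in your inversion step. The assertion that $p^{-(n-m)} R_p R_p^{\ast}$ equals a scalar multiple of the identity on $H^0_{(2)}(Y,\iota^{\ast}(L^p\otimes F))$ plus a remainder whose \emph{Schwartz kernel} is of order $p^{m-1/2}$ is false in general. Its kernel is $p^{-(n-m)} B_p^X|_{Y\times Y}$, whose leading term, by Proposition~\ref{prop_berg_off_diag} and (\ref{eq_kappas_rel}), differs from that of $B_p^Y$ by the factor $\kappa_N^{-1/2}(y)\kappa_N^{-1/2}(y')$; unless (\ref{eq_comp_vol_omeg}) holds, the operator is to leading order the Toeplitz operator with symbol $\kappa_N^{-1}$, not a scalar. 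Even after recentring, the remainder $S$ in your Neumann series has kernel only of size $Cp^m\exp(-c\sqrt{p}\,\dist)$ with no extra power of $p^{-1/2}$, so by Lemma~\ref{lem_bnd_prod_a} the kernel of $S^j$ is bounded by $C_0^j\,p^m\exp(-c'\sqrt{p}\,\dist)$ and the series $\sum_j C_0^j$ diverges. Operator-norm convergence (which you do have from Theorem~\ref{thm_ot_as_sp}) is not by itself enough to get the pointwise exponential bound.

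The fix is precisely the balancing trick the paper uses to prove Theorem~\ref{thm_logbk_exp_dc}. Truncate the Neumann series at level $r$; control the tail pointwise by $Cp^m\epsilon^r$ using operator-norm convergence at rate $\epsilon^r<1$ together with the reproducing-kernel bound $B_p^Y(y,y)\le Cp^m$; bound the partial sum's kernel by $C_0^r p^m\exp(-c\sqrt{p}\,\dist)$ via Lemma~\ref{lem_bnd_prod_a}; then choose $r$ proportional to $\sqrt{p}\,\dist(y,y')$ so that both contributions are at most $C p^m\exp(-c'\sqrt{p}\,\dist)$. Composing with $R_p^{\ast}$ (kernel $B_p^X(x,\iota(\cdot))$) via Corollary~\ref{cor_exp_bound_int} then gives the claimed estimate with the correct prefactor $p^m$. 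Your mean-value argument on balls of radius $p^{-1/2}$ for the higher derivatives is fine.
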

	\par 
	Theorem \ref{thm_ext_exp_dc} implies that to understand fully the asymptotics of the Schwartz kernel of the extension operator, it suffices to do so in a neighborhood of a fixed point $(y_0, y_0) \in Y \times Y$ in $X \times Y$.
	Our next result shows that after a reparametrization, given by a homothety with factor $\sqrt{p}$ in certain coordinates around $(y_0, y_0)$, the kernel of this extension operator admits a complete asymptotic expansion in integer powers of $\sqrt{p}$, as $p \to \infty$, similarly to the off-diagonal asymptotic expansion of Bergman kernel \cite{DaiLiuMa}.
	To state it, let us fix some notation.
	\par 
	We define the \textit{second fundamental form} $A \in \ccal^{\infty}(Y, T^*Y \otimes \enmr{TX|_Y})$ by
	\begin{equation}\label{eq_sec_fund_f}
		A := \nabla^{TX}|_{Y} - \nabla^{TY} \oplus \nabla^N.
	\end{equation}
	Trivially, $A$ takes values in skew-symmetric endomorphisms of $TX|_Y$, interchanging $TY$ and $N$.
	\par 
	\begin{sloppypar}
	We fix a point $y_0 \in Y$ and an orthonormal frame $(e_1, \ldots, e_{2m})$ (resp. $(e_{2m+1}, \ldots, e_{2n})$) in $(T_{y_0}Y, g_{y_0}^{TY})$ (resp. in $(N_{y_0}, g^{N}_{y_0})$), such that for $i = 1, \ldots, n$, the following identity is satisfied
	\begin{equation}\label{eq_cond_jinv}
		J e_{2i - 1} = e_{2i}.
	\end{equation}
		Define the coordinate system $\psi_{y_0} : \mathbb{B}_0^{\real^{2m}}(r_Y) \times \mathbb{B}_0^{\real^{2(n - m)}}(r_{\perp}) \to X$, for $Z = (Z_Y, Z_N)$, $Z_Y \in \real^{2m}$, $Z_N \in \real^{2(n - m)}$, $Z_Y = (Z_1, \ldots, Z_{2m})$, $Z_N = (Z_{2m + 1}, \ldots, Z_{2n})$, $|Z_Y| < r_Y$, $|Z_N| < r_{\perp}$, by 
		\begin{equation}\label{eq_defn_fermi}
			\psi_{y_0}(Z_Y, Z_N) := \exp_{\exp_{y_0}^{Y}(Z_Y)}^{X}(\tau_{Z_Y}(Z_N)),
		\end{equation}
		where we identified $Z_Y$, $Z_N$ to elements in $T_{y_0}Y$, $N_{y_0}$, using the fixed frames $(e_1, \ldots, e_{2m})$ and $(e_{2m+1}, \ldots, e_{2n})$, $\tau_{Z_Y}(Z_N) \in N_{\exp_{y_0}^{Y}(Z_Y)}$ is the parallel transport of $Z_N$ along the geodesic $\exp_{y_0}^{Y}(tZ_Y)$, $t = [0, 1]$, with respect to the connection $\nabla^{N}$ on $N$, and $\mathbb{B}_0^{\real^k}(\epsilon)$, $\epsilon > 0$ means the euclidean ball of radius $\epsilon$ around $0 \in \real^k$.
		The coordinates $\psi_{y_0}$ are called the \textit{Fermi coordinates} at $y_0$.
		Define the functions $\kappa_{X, y_0} : \mathbb{B}_0^{\real^{2m}}(r_Y) \times \mathbb{B}_0^{\real^{2(n - m)}}(r_{\perp}) \to \real$, $\kappa_{Y, y_0} : \mathbb{B}_0^{\real^{2m}}(r_Y) \to \real$, by
	\begin{equation}\label{eq_defn_kappaxy}
	\begin{aligned}
		&
		(\psi_{y_0}^* dv_X) (Z)
		=
		\kappa_{X, y_0}(Z)
		d Z_1 \wedge \cdots \wedge d Z_{2n},
		\\
		&
		((\exp_{y_0}^{Y})^* dv_Y) (Z_Y)
		=
		\kappa_{Y, y_0}(Z_Y)
		d Z_1 \wedge \cdots \wedge d Z_{2m}.
	\end{aligned}
	\end{equation}
	Clearly, under assumptions (\ref{eq_comp_vol_omeg}), we have $\kappa_{X, y_0}(0) = \kappa_{Y, y_0}(0) = 1$.
	Once the point $y_0$ is fixed, we usually omit it from the subscript of functions.
	\end{sloppypar}
	\par 
	We fix an orthonormal frame $f_1, \ldots, f_r \in F_{y_0}$, and define $\tilde{f}_1, \ldots, \tilde{f}_r$ by the parallel transport of $f_1, \ldots, f_r$ with respect to the Chern connection $\nabla^F$ of $(F, h^F)$, done first along the path $\psi(t Z_Y, 0)$, $t \in [0, 1]$, and then along the path $\psi(Z_Y, tZ_N)$, $t \in [0, 1]$, $Z_Y \in \real^{2m}$, $Z_N \in \real^{2(n-m)}$, $|Z_Y| < r_Y$, $|Z_N| < r_{\perp}$.
	Similarly, we trivialize $L$ in the neighborhood of $y_0$.
	These frames, as well as the induced frames of the dual vector bundles, allow us to interpret $\ext_p(x, y)$ as an element of $\enmr{F_{y_0}}$ for $x \in X$, $y \in Y$ in a $\min(r_{\perp}, r_Y)$-neighborhood of $y_0$.
	\par 
	We also define the function $\mathscr{E}_{n, m}$ over $\real^{2n} \times \real^{2m}$ as follows
	\begin{equation}\label{eq_ext_defn_fun}
		\mathscr{E}_{n, m}(Z, Z'_Y)
		=
		 \exp \Big(
				-\frac{\pi}{2} \sum_{i = 1}^{m} \big( 
					|z_i|^2 + |z'_i|^2 - 2 z_i \overline{z}'_i
				\big)
				-
				\frac{\pi}{2}
				 \sum_{i = m+1}^{n} |z_i|^2
				\Big),
	\end{equation}
	where $Z = (Z_Y, Z_N)$, $Z_Y, Z'_Y \in \real^{2m}$, $Z_N \in \real^{2(n-m)}$ and $z_i$, $z'_i$ are given by $z_i = Z_{2i - 1} + \imun Z_{2i}$, $z'_j = Z'_{2j - 1} + \imun Z'_{2j}$, for $i = 1, \ldots, n$ and $j = 1, \ldots, m$.
	As we show in Section \ref{sect_model_calc}, $\mathscr{E}_{n, m}(Z, Z'_Y)$ is the Schwartz kernel of the extension operator for a model space. 
	For general manifolds, the extension operator is comparable to this model one, as the following theorem shows.
	\begin{thm}\label{thm_ext_as_exp}
		For any $r \in \nat$, $y_0 \in Y$, there are $J_r^E(Z, Z'_Y) \in \enmr{F_{y_0}}$ polynomials in $Z \in \real^{2n}$, $Z'_Y \in \real^{2m}$, with the same parity as $r$ and $\deg J_r^E \leq 3r$, 
		whose coefficients are polynomials in $R^{TX}$, $A$, $R^F$, $(dv_X / dv_{g^{TX}})^{\pm \frac{1}{2n}}$,  $(dv_Y / dv_{g^{TY}})^{\pm \frac{1}{2n}}$,  and their derivatives of order $\leq 2r$, all evaluated at $y_0$, such that for the functions $F_r^E := J_r^E \cdot \mathscr{E}_{n, m}$ over $\real^{2n} \times \real^{2m}$, the following holds.
		\par 
		There are $\epsilon, c > 0$, $p_1 \in \nat^*$, such that for any $k, l, l' \in \nat$, there is $C  > 0$, such that for any $y_0 \in Y$, $p \geq p_1$, $Z = (Z_Y, Z_N)$, $Z_Y, Z'_Y \in \real^{2m}$, $Z_N \in \real^{2(n - m)}$, $|Z|, |Z'_Y| \leq \epsilon$, $\alpha \in \nat^{2n}$, $\alpha' \in \nat^{2m}$, $|\alpha| + |\alpha'| \leq l$, for  $Q^1_{k, l, l'} := 6(16(n+2)(k+1) + l') + 2l$, the following bound holds
		\begin{multline}\label{eq_ext_as_exp}
			\bigg| 
				\frac{\partial^{|\alpha|+|\alpha'|}}{\partial Z^{\alpha} \partial Z'_Y{}^{\alpha'}}
				\bigg(
					\frac{1}{p^m} \ext_p \big(\psi_{y_0}(Z), \psi_{y_0}(Z'_Y) \big)
					-
					\sum_{r = 0}^{k}
					p^{-\frac{r}{2}}						
					F_r^E(\sqrt{p} Z, \sqrt{p} Z'_Y) 
					\kappa_{X}^{-\frac{1}{2}}(Z)
					\kappa_{Y}^{-\frac{1}{2}}(Z'_Y)
				\bigg)
			\bigg|_{\ccal^{l'}(Y)}
			\\
			\leq
			C p^{- \frac{k + 1 - l}{2}}
			\Big(1 + \sqrt{p}|Z| + \sqrt{p} |Z'_Y| \Big)^{Q^1_{k, l, l'}}
			\exp\Big(- c \sqrt{p} \big( |Z_Y - Z'_Y| + |Z_N| \big) \Big),
		\end{multline}
		where the $\ccal^{l'}$-norm is taken with respect to $y_0$.
		Also, the following identity holds
		\begin{equation}\label{eq_je0_exp}
			J_0^E(Z, Z'_Y) = {\rm{Id}}_{F_{y_0}} \cdot \kappa_N^{\frac{1}{2}}(y_0).
		\end{equation}
		Moreover, under assumption (\ref{eq_comp_vol_omeg}), we have
		\begin{equation}\label{eq_je1_exp}
			J_1^E(Z, Z'_Y) =  {\rm{Id}}_{F_{y_0}} \cdot \pi  \cdot
			 g^{TX}_{y_0} \big(z_N, A(\overline{z}_Y - \overline{z}'_Y) (\overline{z}_Y - \overline{z}'_Y) \big),
		\end{equation}
		where we implicitly identified $Z \in \real^{2n}$ to an element in $T_{y_0}X$ as $Z := \sum Z_i \cdot e_i$, similar notations have been used for $Z_Y, Z_N, Z'_Y$, and $z_Y, z_N$, $z'_Y$ are the induced complex coordinates.
 	\end{thm}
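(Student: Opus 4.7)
The plan is to combine three ingredients: the exponential decay estimate (Theorem \ref{thm_ext_exp_dc}), the asymptotic expansion of the orthogonal Bergman projector $\tilde B_p$ onto $V_p := H^0_{(2)}(X, L^p \otimes F) \ominus \{f : f|_Y = 0\}$ (established along the way in the paper, as announced in the abstract), and the Ma--Marinescu expansion of $B_p^Y$. The first step is to exhibit $\ext_p$ as a composition of operators with accessible asymptotics. Letting $R : V_p \to H^0_{(2)}(Y, \iota^*(L^p\otimes F))$ be the restriction map (invertible by Ohsawa--Takegoshi) and $G_p := R R^*$, the definition of $\ext_p$ gives
\begin{equation*}
\ext_p \,=\, R^{-1} \circ B_p^Y \,=\, R^* \circ G_p^{-1} \circ B_p^Y.
\end{equation*}
The Schwartz kernel of $R^*$ is $\tilde B_p(x, \iota(y'))$, and the Schwartz kernel of $G_p$ is $\tilde B_p(\iota(y), \iota(y'))$; once the expansion of the latter is known, $G_p^{-1}$ is computed asymptotically by Neumann inversion.

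Using Theorem \ref{thm_ext_exp_dc} and the analogous exponential decay for $\tilde B_p$, the integrations in $R^* G_p^{-1} B_p^Y$ can be localized to a $p^{-1/2+\epsilon'}$-neighborhood of $y_0$ up to $O(\exp(-c\sqrt{p}))$-errors. In Fermi coordinates $\psi_{y_0}$ and after the rescaling $Z \mapsto Z/\sqrt{p}$, substituting the $p^{-1/2}$-expansions of $\tilde B_p$ and $B_p^Y$ into this composition produces a formal series in $p^{-1/2}$ whose coefficients are of the form (polynomial in the rescaled variables)$\times \mathscr{E}_{n,m}$. The degree bound $\deg J_r^E \le 3r$ reflects the usual accounting of Bergman kernel expansions: each extra power of $p^{-1/2}$ can absorb at most three factors of the rescaled position, since each comes from one order of Taylor expansion of the Gaussian phase.

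To identify $J_0^E$ one invokes the model computation of Section \ref{sect_model_calc}, which shows that on the Fock--Bargmann model the extension kernel is exactly $\mathscr{E}_{n,m}$; the factor $\kappa_N^{1/2}(y_0)$ in (\ref{eq_je0_exp}) arises from reinstating the ratio $dv_X/(dv_Y \wedge dv_N)|_{y_0}$ in the $L^2$-normalization along the normal directions. For $J_1^E$ under the normalization (\ref{eq_comp_vol_omeg}), the contributions to order $p^{-1/2}$ from $\kappa_{X,y_0}$, $\kappa_{Y,y_0}$, and the first subleading coefficients in the expansions of $\tilde B_p$ and $B_p^Y$ are each polynomial of even parity in $Z$, hence cancel in the composition $R^* G_p^{-1} B_p^Y$. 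The only odd contribution of order $p^{-1/2}$ comes from the cubic Taylor term of the Chern connection of $(L,h^L)$ in Fermi coordinates mixing tangential and normal directions, which by (\ref{eq_sec_fund_f}) is precisely the second fundamental form $A$; evaluating the resulting cubic phase perturbation on the Gaussian produces (\ref{eq_je1_exp}).

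The principal obstacle is the bookkeeping: showing that the termwise composition of the three asymptotic expansions genuinely produces an asymptotic expansion of $R^* G_p^{-1} B_p^Y$, uniformly in $y_0$, with the precise polynomial growth factor $(1+\sqrt{p}|Z|+\sqrt{p}|Z'_Y|)^{Q^1_{k,l,l'}}$ and the exponential decay in $|Z_Y - Z'_Y| + |Z_N|$ claimed in (\ref{eq_ext_as_exp}). This is handled by combining Schwartz-type remainder bounds on each expansion (uniform in $y_0$ thanks to the bounded-geometry hypothesis) with the exponential decay from Theorem \ref{thm_ext_exp_dc}, in the spirit of the Ma--Marinescu localization technique.
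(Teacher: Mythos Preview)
Your decomposition $\ext_p = R^{*}\,(RR^{*})^{-1}\,B_p^{Y}$ is correct and constitutes a genuinely different route from the paper's. The paper instead builds an explicit approximate extension $\ext_p^{0}$ (the Gaussian cutoff (\ref{eq_ext0_op})), sets $I_p := B_p^{\perp}\circ \ext_p^{0}\circ B_p^{Y}$ and $G_p := \res_Y\circ I_p - B_p^{Y}$, proves $\|G_p\|=O(p^{-1/2})$ (Lemmas \ref{lem_gp_dec}--\ref{lem_norm_bnd}), and then obtains the closed formula $\ext_p = I_p + I_p\circ T_p$ with $T_p=\sum_{i\ge 1}(-1)^{i}G_p^{i}$ (Lemma \ref{lem_ext_op_inf_sum}). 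Both approaches ultimately feed the expansion of $B_p^{\perp}$ (Theorem \ref{thm_berg_perp_off_diag}) and the Dai--Liu--Ma expansion of $B_p^{Y}$ through a Neumann-type series and the composition calculus of Section \ref{sect_2}. Your Moore--Penrose formula is conceptually cleaner; the paper's route has the advantage that $\ext_p^{0}$ already exhibits the leading behaviour, so (\ref{eq_je0_exp}) and (\ref{eq_je1_exp}) fall out of one application of Lemma \ref{lem_ac_ext_op_tay} without having to first expand and then invert $RR^{*}$.

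Your identification of $J_1^{E}$, however, contains a concrete error. You assert that ``the first subleading coefficients in the expansions of $\tilde B_p$ and $B_p^{Y}$ are each polynomial of even parity in $Z$, hence cancel''. This is false for $\tilde B_p=B_p^{\perp}$: by (\ref{eq_jopep_1}), $J_1^{\perp}$ is cubic, hence odd. The correct mechanism in your decomposition is rather that on $Y\times Y$ one has $J_1^{\perp}|_{Z_N=Z'_N=0}=0$, so $(RR^{*})^{-1}$ contributes nothing at order $p^{-1/2}$; likewise $J'_{1,Y}=0$ by (\ref{eq_j1pr_calc}); the entire $J_1^{E}$ therefore comes from the $R^{*}$ factor, whose kernel $B_p^{\perp}(x,y')$ with $y'\in Y$ has first subleading coefficient $J_1^{\perp}(Z,(Z'_Y,0))=\pi\,g\big(z_N,A(Z_Y-Z'_Y)(Z_Y-Z'_Y)\big)$, which the kernel calculus of Lemma \ref{lem_comp_poly} reduces to (\ref{eq_je1_exp}). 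The second fundamental form thus enters via $J_1^{\perp}$, not via a separate ``cubic Taylor term of the Chern connection'' acting directly on the Gaussian.
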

	\par 
	The operator $E_p$ is very much related to the \textit{orthogonal Bergman projector}. The last operator is defined as the orthogonal projector onto the holomorphic sections of $L^p \otimes F$ over $X$, which are orthogonal to those vanishing along $Y$.
	To prove Theorems \ref{thm_ext_exp_dc}, \ref{thm_ext_as_exp}, and out of independent interest, we establish analogous results for this projector.
	More precisely, consider the vector space
	\begin{equation}\label{eq_h00_defn}
		 H^{0, 0}_{(2)}(X, L^p \otimes F) := \Big\{ f \in  H^{0}_{(2)}(X, L^p \otimes F) : f|_Y = 0 \Big\}.
	\end{equation}
	An easy verification shows that (\ref{eq_h00_defn}) is a closed subspace.
	Let $H^{0, \perp}_{(2)}(X, L^p \otimes F)$ be the orthogonal complement of $H^{0, 0}_{(2)}(X, L^p \otimes F)$ in $H^{0}_{(2)}(X, L^p \otimes F)$ with respect to the $L^2$-scalar product.
	\par 
	 Denote by $B_p^{\perp}$, $B_p^{0}$, $B_p^X$ the orthogonal projection from $L^2(X, L^p \otimes F)$ to $H^{0, \perp}_{(2)}(X, L^p \otimes F)$, $H^{0, 0}_{(2)}(X, L^p \otimes F)$ and $H^{0}_{(2)}(X, L^p \otimes F)$ respectively.
	 As we shall explain in Lemma \ref{lem_ext_op_inf_sum}, there is an algebraic relation between $E_p$ and $B_p^{\perp}$.
	 Similarly to (\ref{eq_sch_ker_ext}), we denote by $B_p^{\perp}(x_1, x_2)$, $B_p^{0}(x_1, x_2)$, $B_p^X(x_1, x_2)$ the Schwartz kernels of $B_p^{\perp}$, $B_p^{0}$, $B_p^X$ with respect to $dv_X$.
	\begin{thm}\label{thm_logbk_exp_dc}
		There are $c > 0$, $p_1 \in \nat^*$, such that for any $k \in \nat$, there is $C > 0$, such that for any $p \geq p_1$, $x_1, x_2 \in X$, the following estimate holds
		\begin{equation}\label{eq_logbk_exp_dc}
			\Big|  B_p^{\perp}(x_1, x_2) \Big|_{\ccal^k(X \times X)} \leq C p^{n + \frac{k}{2}} \exp \Big(- c \sqrt{p} \big(  \dist(x_1, x_2) + \dist(x_1, Y) + \dist(x_2, Y) \big) \Big),
		\end{equation}
		where the norm $\ccal^k$ is interpreted in the same way as in Theorem \ref{thm_ext_exp_dc}.
	\end{thm}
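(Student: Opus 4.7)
The plan is to reduce the statement to the exponential decay of the extension kernel $\ext_p(x, y)$ (already established in Theorem \ref{thm_ext_exp_dc}) together with the standard exponential decay of the ordinary Bergman kernel $B_p^X$ on manifolds of bounded geometry, by means of an explicit integral identity that expresses $B_p^{\perp}$ in terms of these two kernels.

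The first step is the operator identity $B_p^{\perp} = \ext_p \circ \iota^* \circ B_p^X$. Given $s \in L^2(X, L^p \otimes F)$, decompose $B_p^X s = s_{\perp} + s_0$ with $s_{\perp} \in H^{0, \perp}_{(2)}(X, L^p \otimes F)$ and $s_0 \in H^{0, 0}_{(2)}(X, L^p \otimes F)$. Since $s_0|_Y = 0$, we have $(B_p^X s)|_Y = s_{\perp}|_Y$; on the other hand $H^{0, \perp}_{(2)} \cap H^{0, 0}_{(2)} = 0$, so $s_{\perp}$ is the unique element of $H^{0, \perp}_{(2)}$ with this prescribed restriction, and therefore coincides with the minimal $L^2$-norm holomorphic extension $\ext_p(s_{\perp}|_Y)$. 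At the level of Schwartz kernels this reads
\begin{equation*}
    B_p^{\perp}(x_1, x_2) = \int_Y \ext_p(x_1, y) \cdot B_p^X(y, x_2) \, dv_Y(y).
\end{equation*}

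Next, I would plug in Theorem \ref{thm_ext_exp_dc}, giving $|\ext_p(x_1, y)|_{\ccal^k} \leq C p^{m + k/2} \exp(-c \sqrt{p} \dist(x_1, y))$, together with the analogous $\ccal^k$-decay $|B_p^X(y, x_2)|_{\ccal^k} \leq C p^{n + k/2} \exp(-c \sqrt{p} \dist(y, x_2))$ for the ambient Bergman kernel. Differentiating under the integral sign reduces the desired bound to controlling
\begin{equation*}
    \int_Y \exp\bigl(-c \sqrt{p} (\dist(x_1, y) + \dist(y, x_2))\bigr) \, dv_Y(y).
\end{equation*}
The two triangle inequalities $\dist(x_1, y) + \dist(y, x_2) \geq \dist(x_1, x_2)$ and $\dist(x_1, y) + \dist(y, x_2) \geq \dist(x_1, Y) + \dist(x_2, Y)$ together imply that this exponent dominates half of the desired total distance $\dist(x_1, x_2) + \dist(x_1, Y) + \dist(x_2, Y)$. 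After factoring out this gain, the residual integral is estimated using $\dist(x_1, y) \geq \tfrac{1}{2} \dist(y, y_0)$ for $y_0 \in Y$ a nearest point to $x_1$ (which follows by averaging $\dist(x_1, y) \geq \dist(x_1, Y)$ with $\dist(x_1, y) \geq \dist(y, y_0) - \dist(x_1, Y)$), whereupon bounded geometry of $Y$ produces a $p^{-m}$ factor. Combined with the prefactor $p^{n+m+k/2}$ this yields the claimed $p^{n+k/2}$ bound.

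The main obstacle in executing this plan is securing the exponential $\ccal^k$-decay of $B_p^X$ itself on the noncompact manifold $X$ of bounded geometry: in the compact setting this is classical, and in the bounded-geometry setting it is expected from finite-propagation-speed and spectral-gap arguments, but it will have to be recorded as an auxiliary statement preceding Theorem \ref{thm_logbk_exp_dc}. Once this is in hand, the reduction above, together with differentiation under the integral sign and the uniform bounds on $dv_Y$ from bounded geometry (cf.~(\ref{eq_vol_comp_unif})), completes the proof.
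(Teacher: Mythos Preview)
Your identity $B_p^{\perp} = \ext_p \circ \res_Y \circ B_p^X$ is correct, and the subsequent kernel estimate works out cleanly (the integral over $Y$ is handled by Corollary~\ref{cor_exp_bound_int}, and the exponential decay of $B_p^X$ that you flag as ``the main obstacle'' is in fact already available as Theorem~\ref{thm_bk_off_diag}, due to Ma--Marinescu).  The genuine problem is circularity: in this paper Theorem~\ref{thm_ext_exp_dc} is proved \emph{after} Theorem~\ref{thm_logbk_exp_dc} and uses it as input.  Concretely, the proof of Theorem~\ref{thm_ext_exp_dc} in Section~\ref{sect_exp_dec_ot} goes through Lemma~\ref{lem_ext_op_inf_sum}, which expresses $\ext_p$ as an infinite series built from $I_p = B_p^{\perp} \circ \ext_p^{0} \circ B_p^{Y}$, and the exponential decay of $I_p$ is obtained precisely from Theorem~\ref{thm_logbk_exp_dc}.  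So you cannot invoke the decay of $\ext_p$ here without first proving the decay of $B_p^{\perp}$ by some independent means.

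The paper's own argument supplies that independent entry point.  Rather than the exact projector $B_p^{\perp}$ or the exact extension $\ext_p$, it builds an \emph{approximate} projector $B_p^{\perp,a} = A_p^{*} A_p$ with $A_p = \ext_p^{0} \circ \res_Y \circ B_p^{X}$, using the explicit Gaussian extension $\ext_p^{0}$ of~(\ref{eq_ext0_op}) in place of $\ext_p$.  Because $\ext_p^{0}$ has an explicit kernel supported in the tube around $Y$, the required exponential decay~(\ref{eq_bpa_bnd_2}) for $B_p^{\perp,a}$ follows directly from Theorem~\ref{thm_bk_off_diag} and Lemma~\ref{lem_bnd_prod_a}.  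The passage from $B_p^{\perp,a}$ to $B_p^{\perp}$ then uses the spectral gap~(\ref{eq_bpa_sp_gap}) (which is where the refined Ohsawa--Takegoshi estimate, Theorem~\ref{thm_ot_as_sp}, enters): one iterates $(B_p^X - \tfrac{1}{2b} B_p^{\perp,a})^r$, controls each power via Lemma~\ref{lem_bnd_prod_a}, and chooses $r \sim \sqrt{p}\,(\dist(x_1,x_2) + \dist(x_1,Y) + \dist(x_2,Y))$ to balance the operator-norm error $\epsilon^r$ against the kernel bound $C^r$.  Your formula $B_p^{\perp} = \ext_p \circ \res_Y \circ B_p^{X}$ is, morally, what one would converge to if in this construction one replaced $\ext_p^{0}$ by $\ext_p$; the point is that $\ext_p^{0}$ is elementary enough to be estimated directly, breaking the circle.
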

	 \par 
	 Hence, for the asymptotics of the Schwartz kernel of $B_p^{\perp}$, it is only left to study it in the neighborhood of a fixed point $(y_0, y_0) \in Y \times Y$ in $X \times X$.
	 To state our result in this direction, we define the function $\mathscr{P}_{n, m}^{\perp}$ over $\real^{2n} \times \real^{2n}$ as follows
	\begin{equation}\label{eq_pperp_defn_fun}
		\mathscr{P}_{n, m}^{\perp}(Z, Z')
		=
		 \exp \Big(
				-\frac{\pi}{2} \sum_{i = 1}^{n} \big( 
					|z_i|^2 + |z'_i|^2
				\big)
				+
				\pi
				 \sum_{i = 1}^{m} z_i \overline{z}'_i
				\Big),
	\end{equation}
	where $Z, Z' \in \real^{2n}$, and $z_i$, $z'_i$ are given by $z_i = Z_{2i - 1} + \imun Z_{2i}$, $z'_i = Z'_{2i - 1} + \imun Z'_{2i}$, for $i = 1, \ldots, n$.
	As we show in Section \ref{sect_model_calc}, $\mathscr{P}_{n, m}^{\perp}(Z, Z')$ is the Schwartz kernel of the orthogonal Bergman projector for the model space. 
	The following theorem shows that the general situation is comparable to this model one.
	We use the same trivialization of $F, F^*$ and $L, L^*$ in the neighborhood of $y_0$ as in Theorem \ref{thm_ext_as_exp}, and interpret $B_p^{\perp}(x_1, x_2)$ as an element of $\enmr{F_{y_0}}$, for $x_1, x_2 \in X$ in a $\min(r_{\perp}, r_Y)$-neighborhood of $y_0$.
	\begin{thm}\label{thm_berg_perp_off_diag}
		For any $r \in \nat$, $y_0 \in Y$, there are polynomials $J_r^{\perp}(Z, Z') \in \enmr{F_{y_0}}$, $Z, Z' \in \real^{2n}$, with the same properties as in Theorem \ref{thm_ext_as_exp}, such that for $F_r^{\perp} := J_r^{\perp} \cdot \mathscr{P}_{n, m}^{\perp}$, the following holds.
		\par 
		There are $\epsilon, c > 0$, $p_1 \in \nat^*$, such that for any $k, l, l' \in \nat$, there is $C  > 0$, such that for any $y_0 \in Y$, $p \geq p_1$, $Z = (Z_Y, Z_N)$, $Z' = (Z'_Y, Z'_N)$, $Z_Y, Z'_Y \in \real^{2m}$, $Z_N, Z'_N \in \real^{2(n-m)}$, $|Z|, |Z'| \leq \epsilon$, $\alpha, \alpha' \in \nat^{2n}$, $|\alpha|+|\alpha'| \leq l$,  for  $Q^2_{k, l, l'} := 3(8(n+2)(k+1) + l') + l$, we have
		\begin{multline}\label{eq_berg_perp_off_diag}
			\bigg| 
				\frac{\partial^{|\alpha|+|\alpha'|}}{\partial Z^{\alpha} \partial Z'{}^{\alpha'}}
				\bigg(
					\frac{1}{p^n} B_p^{\perp}\big(\psi_{y_0}(Z), \psi_{y_0}(Z') \big)
					-
					\sum_{r = 0}^{k}
					p^{-\frac{r}{2}}						
					F_r^{\perp}(\sqrt{p} Z, \sqrt{p} Z') 
					\kappa_{X}^{-\frac{1}{2}}(Z)
					\kappa_{X}^{-\frac{1}{2}}(Z')
				\bigg)
			\bigg|_{\ccal^{l'}(Y)}
			\\
			\leq
			C p^{- \frac{k + 1 - l}{2}}
			\Big(1 + \sqrt{p}|Z| + \sqrt{p} |Z'| \Big)^{Q^2_{k, l, l'}}
			\exp\Big(- c \sqrt{p} \big( |Z_Y - Z'_Y| + |Z_N| + |Z'_N| \big) \Big),
		\end{multline}
		where the $\ccal^{l'}$-norm is taken with respect to $y_0$.
		Also, we have
		\begin{equation}\label{eq_jopep_0}
			J_0^{\perp}(Z, Z') = {\rm{Id}}_{F_{y_0}}.
		\end{equation}
		Moreover, under the assumptions (\ref{eq_comp_vol_omeg}) and notations as in (\ref{eq_je1_exp}), we have
		\begin{equation}\label{eq_jopep_1}
			J_1^{\perp}(Z, Z') = {\rm{Id}}_{F_{y_0}} \cdot 
			 \pi
			 \cdot
				 g^{TX}_{y_0} \big(z_N + \overline{z}'_N, A(Z_Y - Z'_Y) (Z_Y - Z'_Y) \big).
		\end{equation}
	\end{thm}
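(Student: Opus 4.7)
The strategy is to adapt the local-rescaling framework of Ma--Marinescu for the Bergman projector to the orthogonal Bergman setting, using Theorem \ref{thm_logbk_exp_dc} as the essential localization input.

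First I would use the exponential decay from Theorem \ref{thm_logbk_exp_dc} to reduce the statement to the analysis of $B_p^{\perp}(\psi_{y_0}(Z), \psi_{y_0}(Z'))$ for $|Z|, |Z'| \leq 2\epsilon$; the contribution of larger scales is exponentially suppressed in $\sqrt{p}$ and can be absorbed into the right-hand side of (\ref{eq_berg_perp_off_diag}). In Fermi coordinates with the trivializations of $L$ and $F$ introduced above Theorem \ref{thm_ext_as_exp}, I would conjugate the Kodaira Laplacian $\Box_p$ on $L^p \otimes F$ by $\kappa_X^{1/2}$ and rescale $Z \mapsto Z/\sqrt{p}$. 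As $p \to \infty$, the resulting operator $\mathscr{L}_p$ converges to the model Kodaira Laplacian $\mathscr{L}_0$ on $\real^{2n}$, whose $L^2$-kernel is the Fock--Bargmann space. The orthogonal projector in this model onto the subspace orthogonal to Fock--Bargmann functions vanishing on $\real^{2m}$ has Schwartz kernel exactly $\mathscr{P}^{\perp}_{n, m}$, as computed in Section \ref{sect_model_calc}; this identifies the leading term $J_0^{\perp} = \mathrm{Id}_{F_{y_0}}$.

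To obtain the full asymptotic expansion, I would decompose $B_p^{\perp} = B_p^X - B_p^0$, where $B_p^0$ is the orthogonal projector onto $H^{0, 0}_{(2)}(X, L^p \otimes F)$. The kernel $B_p^X$ admits the standard Ma--Marinescu expansion obtained from the contour-integral representation $B_p^X = \frac{1}{2\pi\imun} \oint (\lambda - \Box_p)^{-1}\, d\lambda$ over a small contour around $0$, using the uniform spectral gap of order $p$ that follows from bounded geometry. The kernel $B_p^0$ is treated by the same machinery applied to the restriction of $\Box_p$ to sections vanishing along $Y$, which by the spectral gap is again a contour integral of a resolvent. Taylor-expanding $\mathscr{L}_p = \mathscr{L}_0 + p^{-1/2} \mathscr{L}_1 + p^{-1} \mathscr{L}_2 + \cdots$, with the coefficient operators $\mathscr{L}_r$ being polynomial-coefficient differential operators built from the Taylor expansions at $y_0$ of $\omega$, $R^{TX}$, $R^F$, $A$ and the volume-form ratios, and then iterating the Neumann series for the resolvents, produces the asymptotic expansion of the rescaled kernel. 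The parity statement and the bound $\deg J_r^{\perp} \leq 3r$ follow from standard homogeneity bookkeeping for such rescalings, while the Gaussian factor $\exp(-c\sqrt{p}(|Z_Y - Z'_Y| + |Z_N| + |Z'_N|))$ in the remainder comes from Theorem \ref{thm_logbk_exp_dc} combined with uniform resolvent estimates in the rescaled Sobolev norms.

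The main obstacle is the explicit identification of $J_1^{\perp}$ in (\ref{eq_jopep_1}). Under (\ref{eq_comp_vol_omeg}) the volume-form ratios contribute nothing at first order, so the entire first-order correction to $\mathscr{L}_p$ is controlled by the second fundamental form $A$, which measures the deviation between the geodesic structure of $(X, g^{TX})$ and the product structure along $Y$. Computing the first-order contribution of $\mathscr{L}_1$ to the contour integral and applying it to $\mathscr{P}^{\perp}_{n, m}$ should produce the formula $\pi g(z_N + \overline{z}'_N, A(Z_Y - Z'_Y)(Z_Y - Z'_Y)) \cdot \mathrm{Id}_{F_{y_0}}$; the symmetric combination $z_N + \overline{z}'_N$ reflects the Hermitian symmetry $B_p^{\perp}(x_2, x_1) = B_p^{\perp}(x_1, x_2)^*$, which is not present in the extension-kernel formula (\ref{eq_je1_exp}) because there $Z'_N$ is constrained to $0$. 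The most delicate part is keeping track of exactly which geometric terms contribute at first order in $p^{-1/2}$, and orchestrating the simultaneous expansions of $B_p^X$ and $B_p^0$ so that their leading singularities cancel cleanly in the difference and only the second-fundamental-form contribution survives.
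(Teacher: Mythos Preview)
Your strategy has a genuine gap at the step where you propose to handle $B_p^0$ by ``the same machinery applied to the restriction of $\Box_p$ to sections vanishing along $Y$.'' The Kodaira Laplacian does not preserve the subspace of sections vanishing along $Y$, so there is no well-defined restricted operator whose spectral projector onto the kernel would be $B_p^0$. More to the point, $B_p^0$ (and hence $B_p^{\perp}$) is \emph{not} a spectral projector of $\Box_p$: both $H^{0,0}_{(2)}$ and $H^{0,\perp}_{(2)}$ lie inside $\ker\Box_p$, and the decomposition between them is determined by the restriction map to $Y$, which the Laplacian knows nothing about. So the contour-integral representation $\frac{1}{2\pi\imun}\oint(\lambda-\Box_p)^{-1}\,d\lambda$ can only recover $B_p^X$, not its pieces, and the Neumann expansion of the rescaled Laplacian resolvent does not give you access to the $J_r^{\perp}$.

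The paper addresses exactly this obstruction: it constructs an auxiliary self-adjoint operator $\mathcal{C}_t = \mathcal{A}_t^*\mathcal{A}_t$ with $\mathcal{A}_t = \mathcal{E}^0\circ\res_{Y'}\circ\mathcal{B}_t$ (the rescaled analogue of $B_p^{\perp,a}$ from (\ref{eq_bpperp_a_ap_defn})), which has a uniform spectral gap by Theorem~\ref{thm_ot_as_sp} and whose nonzero spectral projector is precisely $\mathcal{B}_t^{\perp}$. The contour integral is then taken over the resolvent $(\lambda-\mathcal{C}_t)^{-1}$, and the Neumann expansion is in powers of $\mathcal{C}_t-\mathcal{C}_0$ rather than $\mathscr{L}_p-\mathscr{L}_0$. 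The asymptotic expansion of $\mathcal{C}_t$ is obtained from the known Bergman kernel expansion (Proposition~\ref{prop_berg_off_diag}) via the kernel-calculus lemmas of Section~\ref{sect_algtay_type}, and the explicit formula for $J_1^{\perp}$ comes from (\ref{eq_j1_perp_kkpr_expr}) and the identities in Remark~\ref{rem_k_calculc}, not from a first-order term of the Laplacian. Your localization step and the identification of the model kernel $\mathscr{P}_{n,m}^{\perp}$ are correct and match the paper, but the core analytic mechanism is different from what you describe.
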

	\begin{rem}
		We present an algorithmic way to compute the polynomials $J_r^E, J_r^{\perp}$.
	\end{rem}
	\par 
	In the final part of the introduction, we describe an application of Theorem \ref{thm_ext_as_exp} to the $L^{\infty}$-estimates of holomorphic extensions.
	Remark that Theorem \ref{thm_ext_as_exp} only studies the holomorphic extensions which are optimal for the $L^2$-norm.
	Nevertheless, as an explicit asymptotic formula is given, our theorem can be applied to study extension problems associated with other norms.
	\begin{thm}\label{thm_opt_linf_bnd}
		There are $C > 0$, $p_1 \in \nat^*$, such that for any $p \geq p_1$, $f \in H^0_{(2)}(Y, \iota^*( L^p \otimes F))$,
		\begin{equation}\label{eq_ext_asinfty}
			\big\| \ext_p f \big\|_{L^{\infty}(X)} \leq \Big( 1 + \frac{C}{\sqrt{p}} \Big) \cdot \big\|  f \big\|_{L^{\infty}(Y)}.
		\end{equation}
	\end{thm}
	\begin{rem}
		a) Clearly, for any extension $\tilde{f}$ of $f$ to $X$, we have $\| \tilde{f} \|_{L^{\infty}(X)} \geq \|  f \|_{L^{\infty}(Y)}$.
		Hence, the theorem above tells that $\ext_p f$ saturates the optimal $L^{\infty}$-bound asymptotically. 
		\par 
		b) Theorem \ref{thm_opt_linf_bnd} refines the results of Zhang \cite[Theorem 2.2]{ZhangPosLinBun} and Bost \cite[Proposition 3.6, Theorem A.1]{BostDwork}, cf. also Randriambololona \cite[Théorème B]{RandriamCrelle}, which prove in various settings and generalities that for any $\epsilon > 0$, there is $p_1 \in \nat^*$, such that for any $f \in H^0_{(2)}(Y, \iota^*( L^p \otimes F))$, $p \geq p_1$, there is a holomorphic extension $\tilde{f}$ of $f$ to $X$, verifying $\|  \tilde{f} \|_{L^{\infty}(X)} \leq \exp(\epsilon p) \|  f \|_{L^{\infty}(Y)}$.
		As explained in \cite{ZhangPosLinBun}, \cite{BostDwork}, \cite{RandriamCrelle}, these $L^{\infty}$-bounds are important in Arakelov geometry.
		\par 
		c)
		In realms of non-Archimedean geometry, a related extension problem has been considered by Chen-Moriwaki \cite{ChenMoriw}.
		\par 
		d) It is interesting to study to which extent the $L^2$-optimal extension becomes optimal for other $L^p$-norms, $p \in [1,+\infty[$. See Beatrous \cite{Beatrous} for related results.
	\end{rem}
	\par 
	Let us finally say few words about the tools we use in this article.
	The proofs of Theorems \ref{thm_ext_exp_dc}, \ref{thm_logbk_exp_dc} rely on the exponential estimate for the Bergman kernel, cf. Ma-Marinescu \cite[Theorem 0.1]{MaMarOffDiag}, and the refinement -- in our asymptotic setting -- of the Ohsawa-Takegoshi extension theorem, see Theorem \ref{thm_ot_as_sp}, the proof of which is inspired by Bismut-Lebeau \cite{BisLeb91} and Demailly \cite{Dem82}.
	\par 
	The proofs of Theorems \ref{thm_ext_as_exp}, \ref{thm_berg_perp_off_diag} rely on some techniques from spectral analysis, inspired by \cite{BisLeb91}; on the existence of the so-called uniform Stein atlases (see Definition \ref{eq_stein_uniform_man}) over Kähler manifolds of bounded geometry, which we establish using Hörmander's $L^2$-estimates; on the full off-diagonal asymptotic expansion of the Bergman kernel due to Dai-Liu-Ma \cite{DaiLiuMa}, and on some technical results about the algebras of operators with Taylor-type expansion of the Schwartz kernel, which are inspired by the work of Ma-Marinescu \cite{MaMarToepl}, cf. \cite[\S 7]{MaHol}.
	The general strategy for dealing with semi-classical limits here is inspired by Bismut \cite{BisDem} and Bismut-Vasserot \cite{BVas}.
	\par 
	The technical novelty of this paper, compared to \cite{DaiLiuMa}, is that, unlike $B_p^X$, the operator $B_p^{\perp}$ \textit{is not} the spectral projector associated to the Kodaira Laplacian.
	This breaks apart most of the techniques used in \cite{DaiLiuMa}, \cite{MaHol}, as, for example, the relation with the heat kernel is no longer available. 
	To remedy this, instead of Laplacian, we construct an “ad hoc" operator, see (\ref{eq_bpperp_a_ap_defn}), based on the restriction map and $B_p^X$, so that $B_p^{\perp}$  \textit{is} the spectral projector associated to this new operator.
	\par 
	\begin{sloppypar}
	A similar idea of using spectral theory of operators other than Laplacian in the study of Bergman kernel has been used in symplectic reduction setting by Ma-Zhang \cite{MaZhBKSR}. 
	The role of normal direction in the decay of Schwartz kernels as in Theorems \ref{thm_ext_as_exp} and \ref{thm_berg_perp_off_diag} was already present in \cite{MaZhBKSR}, even though the contexts of the two problems are completely different.
	In their manuscript, authors use the deformation of Laplacian by the Casimir operator, coming from Hamiltonian action of a compact Lie group. 
	This deformation, differently from our setting, is a differential operator itself.
	This makes technical details of our article different, as here the spectral theory is applied to an operator, which is no longer local, and actually has a smooth Schwartz kernel.
	\end{sloppypar}
	\par 
	We note that Theorems \ref{thm_logbk_exp_dc}, \ref{thm_berg_perp_off_diag} can be reformulated in terms of the so-called \textit{logarithmic Bergman kernel}, which corresponds to the Schwartz kernel of $B_p^{0}$.
	This is due to the obvious relation $B_p^X = B_p^{0} + B_p^{\perp}$, and the fact that the Schwartz kernel of $B_p^X$ is already well-understood by the results of Tian \cite{TianBerg}, Catlin \cite{Caltin}, Zelditch \cite{ZeldBerg}, Dai-Liu-Ma \cite{DaiLiuMa} and Ma-Marinescu \cite{MaMarOffDiag}.
	\par 	
	This paper is organized as follows.
	In Section \ref{sect_20}, we recall the definitions of manifolds (resp. pairs of manifolds, vector bundles) of bounded geometry.
	We compare Fermi coordinates to geodesic coordinates.
	We then introduce and study the notion of a uniform Stein atlases.
	In Section \ref{sect_2}, we prove that the set of operators over manifolds of bonded geometry, admitting certain bounds on the Schwartz kernels, forms an algebra under the composition.
	In Section \ref{sect_spec_bnd_res}, we establish a spectral bound for the restriction operator.
	Finally, in Section \ref{sect_lgbk_prfs}, by using all the above results, we establish the results announced in this section.
	\par {\bf{Notations.}}
	We use notations $X, Y$ for complex manifolds and $M, H$ for real manifolds.
	The complex (resp. real) dimensions of $X, Y$ (resp. $M, H$) are denoted here by $n, m$.
	An operator $\iota$ always means an embedding $\iota : Y \to X$ (resp. $\iota : H \to M$).
	We denote by $\res_Y$ (resp. $\res_H$) the operator of restriction of sections of a certain vector bundle over $X$ to $Y$ (resp. $M$ to $H$). 
	\par 
	For a Riemannian manifold $(M, g^{TM})$, we denote the Levi-Civita connection by $\nabla^{TM}$, and by $dv_{g^{TM}}$ the Riemannian volume form.
	For a closed subset $W \subset M$, $r \geq 0$, let $\mathbb{B}_{W}^{M}(r)$ be the geodesic tubular neighborhood of radius $r$ around $W$. 
	For a Hermitian vector bundle $(E, h^E)$, note
	$
		\mathbb{B}_{r}(E) := \{ Z \in E : |Z|_{h^E} < r \}
	$.
	\par 
	For a fixed volume form $dv_M$ on $M$, we denote by $L^2(dv_M, h^E)$ the space of $L^2$-sections of $E$ with respect to $dv_M$ and $h^E$. When $dv_M = dv_{g^{TM}}$, we also use the notation $L^2(g^{TM}, h^E)$. When there is no confusion about the data, we also use the simplified notation $L^2(M)$ or even just $L^2$.
	\par 
	We denote by $dv_{\comp^n}$ the standard volume form on $\comp^n$.
	We view $\comp^m$ (resp. $\real^{2m}$) embedded in $\comp^n$ (resp. $\real^{2n}$) by the first $m$ coordinates.
	For $Z \in \real^k$, we denote by $Z_l$, $l = 1, \ldots, k$, the coordinates of $Z$.
	If $Z \in \real^{2n}$, we denote by $z_i$, $i = 1, \ldots, n$ the induced complex coordinates $z_i = Z_{2i - 1} + \imun Z_{2i}$.
	We frequently use the decomposition $Z = (Z_Y, Z_N)$, where $Z_Y = (Z_1, \ldots, Z_{2m})$ and $Z_N = (Z_{2m + 1}, \ldots, Z_{2n})$. 
	For a fixed frame $(e_1, \ldots, e_{2n})$ in $T_yX$, $y \in Y$, we implicitly identify $Z$ (resp. $Z_Y$, $Z_N$) to an element in $T_yX$ (resp. $T_yY$, $N_y$) by
	\begin{equation}\label{eq_Z_ident}
		Z = \sum_{i = 1}^{2n} Z_i e_i, \quad Z_Y = \sum_{i = 1}^{2m} Z_i e_i, \quad Z_N = \sum_{i = 2m + 1}^{2n} Z_i e_i.
	\end{equation}
	If the frame $(e_1, \ldots, e_{2n})$ satisfies (\ref{eq_cond_jinv}), we denote $\frac{\partial}{\partial z_i} := \frac{1}{2} (e_{2i-1} - \imun e_{2i})$, $\frac{\partial}{\partial \overline{z}_i} := \frac{1}{2} (e_{2i-1} + \imun e_{2i})$, and identify $z, \overline{z}$ to vectors in $T_yX \otimes_{\real} \comp$ as follows
	\begin{equation}\label{eq_z_ovz_id}
		z = \sum_{i = 1}^{n} z_i \cdot \frac{\partial}{\partial z_i}, \qquad
		\qquad
		\overline{z} = \sum_{i = 1}^{n} \overline{z}_i \cdot \frac{\partial}{\partial \overline{z}_i}.
	\end{equation}
	Clearly, in this identification, we have $Z = z + \overline{z}$, $({\rm{Id}} - \imun J) Z = 2z$, and $({\rm{Id}} + \imun J) Z = 2 \overline{z}$.
	We define $z_Y, \overline{z}_Y \in T_yY \otimes_{\real} \comp$, $z_N, \overline{z}_N \in N_y \otimes_{\real} \comp$ in a similar way. 
	\par 
	For $\alpha = (\alpha_1, \ldots, \alpha_k) \in \nat^k$, $B = (B_1, \ldots, B_k) \in \comp^k$, we note 
	\begin{equation}
		|\alpha| = \sum_{i = 1}^{k} \alpha_i, \quad \alpha! = \prod_{i = 1}^{k} \alpha_i!, \quad B^{\alpha} = \prod_{i = 1}^{k} B_i^{\alpha_i}.
	\end{equation}
	\par {\bf{Acknowledgements.}} Author would like to warmly thank Jean-Pierre Demailly, the numerous enlightening discussions with whom inspired this article.
	We also thank Jingzhou Sun for his interest in this article and for pointing out several misprints.
	Finally, we would like to thank the anonymous referees for many helpful comments.
	This work is supported by the European Research Council grant ALKAGE number 670846 managed by Jean-Pierre Demailly.

\section{Manifolds of bounded geometry and uniform Stein atlases}\label{sect_20}
	We recall the definitions of manifolds (resp. pairs of manifolds, vector bundles) of bounded geometry and study some of their properties. 
	Then, we introduce and study uniform Stein atlases. 
	\par Manifolds of bounded geometry are certain complete manifolds for which some uniform boundness conditions on the curvature and injectivity radii are assumed.
	The reason for considering those types of manifolds in this article is twofold.
	\par 
	First, they appear naturally already in the study of compact manifolds.
	This is due to the fact that our main philosophy here is to reduce all the statements from general manifolds to $\comp^n$. 
	As we would like our theory to work with $\comp^n$ as well, which is no longer compact, the setting of compact manifolds is not appropriate for our needs.
	It turns out that the class of manifolds of bounded geometry is wide enough to contain both compact manifolds and $\comp^n$, and restrictive enough to satisfy some essential estimates in our approach.
	\par
	Second, the full force of Ohsawa-Takegoshi extension theorem comes from the fact that it can be applied to local problems over general weakly pseudoconvex domains.
	Also, many geometric applications of the extension theorem, as \cite{DemBerg} (which partially motivated the current article), are formulated for non-compact manifolds, see Remark \ref{rem_dem_rem}.
	As we would like our theorems to be useful in these contexts as well, we need to abandon the compactness assumption.
	\par 
	This section is organized as follows.
	In Section \ref{sect_bnd_geom_cf}, we recall the definitions of manifolds (resp. pairs of manifolds, vector bundles) of bounded geometry.
	In Section \ref{sect_coord_syst}, we compare the geodesic and Fermi coordinates.
	In Section \ref{sect_par_transport}, we calculate the holonomy of the vector bundle along the paths, adapted to the two coordinate systems.
	In Section \ref{sect_cmplx_fermi}, we give a formula for the complex structure in Fermi coordinates and study quasi-plurisubharmonicity of some functions on manifolds of bounded geometry.
	Finally, in Section \ref{sect_stein_atl}, we introduce and study uniform Stein atlases, and prove that any Kähler manifold of bounded geometry admits uniform Stein atlas.
	\par 
	A reader, willing to understand only the compact case, might safely skip Sections \ref{sect_bnd_geom_cf} and \ref{sect_stein_atl}, and skim through most of Section \ref{sect_cmplx_fermi}, as for compact manifolds, the statements are well-known.

	\subsection{Coordinate-free and coordinate-wise descriptions of bounded geometry}\label{sect_bnd_geom_cf}
	In this section, we recall the definitions of manifolds (resp. pairs of manifolds, vector bundles) of bounded geometry. 
	There are mainly two ways to define objects of bounded geometry. 
	Either one uses some bounds on the curvature (and related objects) -- this is the coordinate-free description -- or one uses charts constructed by geodesics and bounds the relevant structures (as the metric tensor) and its derivatives in these coordinates -- the coordinate-wise description. 
	The equivalence of these two perspectives is established in the works of Eichhorn \cite{EichBoundG}, Schick \cite{SchBound} and Große-Schneider \cite{GrosSchnBound}. 
	In this section, we recall these statements precisely.
	\begin{defn}\label{defn_bnd_g_man}
		We say that a Riemannian manifold $(M, g^{TM})$ is of bounded geometry if the following two conditions hold.
		\\ \hspace*{0.3cm} \textit{(i)} The injectivity radius of $(M, g^{TM})$ is bounded below by a positive constant $r_M$. 
		\\ \hspace*{0.3cm} \textit{(ii)} Every covariant derivative of the Riemann curvature tensor $R^{TM}$ of $M$ is bounded, i.e. for any $k \in \nat$, there is a constant $C_k > 0$ such that for any $l = 0, \ldots, k$, we have
		\begin{equation}\label{eq_bnd_curv_tm}
			| (\nabla^{TM})^l R^{TM} | \leq C_k,
		\end{equation}
		where $\nabla^{TM}$ is the connection induced by the Levi-Civita connection, and the pointwise norm is taken with respect to $g^{TM}$.
	\end{defn}
	\begin{rem}
		The condition \textit{(i)} from Definition \ref{defn_bnd_g_man} implies that $(M, g^{TM})$ is complete.	
	\end{rem}
	\par
	Let us fix $x_0 \in M$ and an orthonormal frame $(e_1, \ldots, e_n)$ of $(T_{x_0}M, g^{TM}_{x_0})$.
	We identify $\real^n$ to $T_{x_0}M$ implicitly as in (\ref{eq_Z_ident}).
	We introduce the map $\phi_{x_0} : \real^n \to M$, $x_0 \in M$, as follows
	\begin{equation}\label{eq_phi_defn}
		\phi_{x_0}(Z) := \exp^{M}_{x_0}(Z).
	\end{equation}
	As a general rule, whenever the point $x_0$ is implicit, we drop it out from the subscript.
	As it was proved, for example, in \cite[Theorem A]{EichBoundG} and \cite[Proposition 3.3]{SchBound}, the metric tensor, written in geodesic coordinates, has bounded derivatives, and the transition maps between two different geodesic coordinates can be bounded uniformly in a similar way. This gives us \textit{the coordinate-wise approach} through geodesic coordinates to manifolds of bounded geometry.
	\par 
	\par 
	Now, let $(H, g^{TH})$ be an embedded submanifold of $(M, g^{TM})$, $g^{TH} := g^{TM}|_H$.
	We identify the normal bundle $N$ of $H$ in $M$ as an orthogonal complement of $TH$ in $TM$ as in (\ref{eq_tx_rest}).
	We denote by $g^{N}$ the metric on $N$ induced by $g^{TM}$, and define $\nabla^{N}$ as after (\ref{eq_tx_rest}).
	We denote by $A$ the second fundamental form of the embedding of $H$ in $M$, defined as in  (\ref{eq_sec_fund_f}).
	\begin{defn}\label{defn_bnd_subm}
		We say that the triple $(M, H, g^{TM})$ is of bounded geometry if the following conditions are fulfilled.
		\\ \hspace*{0.3cm} \textit{(i)} The manifold $(M, g^{TM})$ is of bounded geometry.
		\\ \hspace*{0.3cm} \textit{(ii)} The injectivity radius of $(H, g^{TH})$ is bounded below by a positive constant $r_H$. 
		\\ \hspace*{0.3cm} \textit{(iii)} There is a collar around $H$ (a tubular neighborhood of fixed radius), i.e. there is $r_{\perp} > 0$ such that for any $x, y \in H$, the normal geodesic balls $B^{\perp}_{r_{\perp}}(x), B^{\perp}_{r_{\perp}}(y)$, obtained by the application of the exponential mapping to vectors, orthogonal to $H$, of norm, bounded by $r_{\perp}$, are disjoint.
		\\ \hspace*{0.3cm} \textit{(iv)} Every covariant derivative of $A$ of $M$ is bounded, i.e. for all $k \in \nat$, there is a constant $C_k > 0$ such that for any $l = 0, \ldots, k$, we have
		\begin{equation}\label{eq_bnd_a_ck}
			| \nabla^l A | \leq C_k,
		\end{equation}
		where $\nabla$ is the connection induced by the Levi-Civita connection $\nabla^{TH}$ and $\nabla^N$, and the pointwise norm is taken with respect to $g^{TM}$.
	\end{defn}	
	Clearly, condition \textit{(iii)} from Definition \ref{defn_bnd_subm} means that the map $\mathbb{B}_{r_{\perp}}(N) \to M$, $(y, Z_N) \mapsto \exp^{M}_{y}(Z_N)$, $y \in H$, $Z_N \in N_y$, $|Z_N| < r_{\perp}$, is a diffeomorphism onto a tubular neighborhood $U := \mathbb{B}_H^M(r_{\perp})$ of $H$ in $M$.
	We define the projection
	\begin{equation}\label{eq_pi_proj}
		\pi_0 : U \to H, \qquad \qquad \exp^{M}_{y}(Z_N) \mapsto y.
	\end{equation}
	\par 
	\begin{sloppypar}
		We now fix a point $y_0 \in H$ and an orthonormal frame $(e_1, \ldots, e_m)$ (resp. $(e_{m+1}, \ldots, e_n)$) in $(T_{y_0}H, g^{TH})$ (resp. in $(N, g^{N})$).
		We define the \textit{Fermi coordinates}, $\psi_{y_0}$, at $y_0$ as in (\ref{eq_defn_fermi}).
		Again, as both manifolds $(M, g^{TM})$, $(H, g^{TH})$, are complete, one can extend the domain of $\psi_{x_0}$ to $\real^n$.
		As it was established, for example in \cite[Lemma 3.9]{SchBound} and \cite[Theorem 4.9]{GrosSchnBound}, the derivatives of the metric tensor in $\psi_{x_0}$ coordinates are uniformly bounded. Similarly, the derivatives of the transition maps are uniformly bounded.
		From this, we see directly that the function $\kappa_N$, defined as in (\ref{eq_kappan}), is uniformly bounded on the submanifold.
	\end{sloppypar}
	Define $R > 0$ as follows
	\begin{equation}\label{eq_r_defn_const}
		R := \min \Big\{ 
			\frac{r_M}{2}, \frac{r_H}{4}, \frac{r_{\perp}}{4}		
		\Big\}.
	\end{equation}
	Now, there is a diffeomorphism $h_{y_0} : \mathbb{B}_0^{\real^n}(R) \to \real^n$, such that the following holds
	\begin{equation}\label{eq_h_defn_tr_m}
		\psi_{y_0} = \phi_{y_0} \circ h_{y_0}.
	\end{equation}
	As it was established, for example, in {\cite[Lemma 4.7]{GrosSchnBound}, the derivatives of $h_{y_0}$ are then uniformly bounded. All in all, this gives the \textit{coordinate-wise approach} through Fermi coordinates to triples of bounded geometry.
	\par 
	\begin{defn}\label{defn_vb_bg}
		Let $(E, \nabla^E, h^{E})$ be a Hermitian vector bundle with a fixed Hermitian connection over a manifold $(M, g^{TM})$ of bounded geometry.
		We say that $(E, \nabla^E, h^{E})$ is of bounded geometry if for any $k \in \nat$, there is a constant $C_k > 0$ such that for any $l = 0, \ldots, k$, we have
		\begin{equation}\label{eq_bnd_re_ck}
			| \nabla^l R^E | < C_k,
		\end{equation}
		where $\nabla$ is the connection induced by the Levi-Civita connection $\nabla^{TM}$ and $\nabla^E$, and the pointwise norm is taken with respect to $g^{TM}$.
		\par 
		If $(E, h^{E})$ is a Hermitian vector bundle over a \textit{complex manifold}, we say that it is of bounded geometry if $(E, \nabla^E, h^{E})$ is of bounded geometry for the Chern connection $\nabla^E$ on $(E, h^{E})$.
	\end{defn}
	\par 
	Let us now give the coordinate-wise description for vector bundles of bounded geometry.
	Let us first construct a trivialization of vector bundle $(E, \nabla^E)$, $\rk{E} = r$, as follows.
	We fix a point $x_0 \in M$ and an orthonormal frame $f_1, \ldots, f_r \in E_{x_0}$. 
	Let $\tilde{f}'_1, \ldots, \tilde{f}'_r$ be a frame of $E$ over $\mathbb{B}_{x_0}^{M}(r_M)$, obtained by the parallel transport of $f_1, \ldots, f_r$ along the curve $\phi_{x_0}(tZ)$, $t \in [0, 1]$, $Z \in T_{x_0}M$, $|Z| < r_M$.
	Clearly, as $\nabla^E$ is Hermitian,  $\tilde{f}'_1, \ldots, \tilde{f}'_r$ is an orthonormal frame over $\mathbb{B}_{x_0}^{M}(r_M)$.
	We denote by $\Gamma^E{}'$ the connection form of $(E, \nabla^E)$ with respect to this frame.
	As it was established, for example in \cite[Theorem B]{EichBoundG}, the derivatives of $\Gamma^E{}'$ are uniformly bounded for vector bundles of bounded geometry.
	\par
	Now, let us consider another trivialization of $(E, \nabla^E)$.
	We place ourselves in a setting where $(M, H, g^{TM})$ is a triple of bounded geometry.
	We fix a point $y_0 \in H$  and an orthonormal frame $f_1, \ldots, f_r \in E_{y_0}$. 
	We define $\tilde{f}_1, \ldots, \tilde{f}_r$ by the parallel transport as it was done before Theorem \ref{thm_ext_as_exp}.
	We denote by $\Gamma^E$ the connection form of $(E, \nabla^E)$ with respect to this frame.
	As it was established, for example in \cite[Lemma 5.13]{GrosSchnBound}, the derivatives of $\Gamma^E{}$ are uniformly bounded for triples of bounded geometry.
	Let $\xi_E$ be the function, defined in $\mathbb{B}_{y_0}^{M}(R)$, with values in $\enmr{\comp^r}$, such that
	\begin{equation}\label{eq_frame_tilde}
		(\tilde{f}_1, \ldots, \tilde{f}_r) = \exp(\xi_E) \cdot (\tilde{f}'_1, \ldots, \tilde{f}'_r),
	\end{equation}
	where we view $(\tilde{f}_1, \ldots, \tilde{f}_r)$ and $(\tilde{f}'_1, \ldots, \tilde{f}'_r)$ as $r \times 1$ matrices.
	Clearly, for triples of bounded geometry, the derivatives of $\xi_E$ are uniformly bounded.

\subsection{Diffeomorphism between Fermi and geodesic coordinates}\label{sect_coord_syst}
	
	In this section, we study the Taylor expansion of the diffeomorphism comparing geodesic and Fermi coordinates.
	\par 
	We conserve the notations from Section \ref{sect_bnd_geom_cf} and we place ourselves in the setting of a triple $(M, H, g^{TM})$ of bounded geometry.
	Let $A \in \ccal^{\infty}(H, T^*H \otimes \enmr{TM|_H})$ be as in (\ref{eq_sec_fund_f}).
	We define an auxiliary form $B \in \ccal^{\infty}(H, {\rm{Sym}}^2(T^*M|_H) \otimes TM|_H)$ in the notations (\ref{eq_sec_fund_f}) by
	\begin{equation}\label{eq_b_defn}
		B(u) := B(u, u) := \frac{1}{2} A(P^H u)P^H u + A(P^H u)P^N u, \qquad  u \in TM|_H,
	\end{equation}
	where $P^H : TM|_H \to TH$ is the orthogonal projection.
	\par 
	We fix a point $y_0 \in H$ and an orthonormal frame $(e_1, \ldots, e_m)$ (resp. $(e_{m+1}, \ldots, e_n)$) in $(T_{y_0}H, g^{TH})$ (resp. in $(N, g^{N})$). 
	Recall that in (\ref{eq_defn_fermi}) and (\ref{eq_phi_defn}), we defined two coordinate systems $\psi_{y_0}$, $\phi_{y_0}$ in a neighborhood of $y_0$, and in (\ref{eq_h_defn_tr_m}), we defined a diffeomorphism $h_{y_0}: \mathbb{B}_0^{\real^{n}}(R) \to U \subset \real^{n}$, for $R$ from (\ref{eq_r_defn_const}) and a certain open subset $U$.
	We drop out $y_0$ from the subscripts from now on.
	The main goal of this section is to study the Taylor expansion of $h$ at $0 \in \real^{2n}$.
	\begin{prop}\label{prop_diff_exp}
		The diffeomorphism $h$ has the following Taylor expansion
		\begin{equation}\label{eq_prop_diff_exp}
			h(Z) = Z + B(Z) + O(|Z|^3).
		\end{equation}
		Moreover, the coefficients of order $r \in \nat$ in the above Taylor expansion can be expressed in terms of $R^{TM}$, $A$, and their derivatives up to order $r-2$ with respect to $\nabla^{TM}$, evaluated at $y_0$.
	\end{prop}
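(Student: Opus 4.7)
The plan is to Taylor expand $h$ order by order, using the two variational problems (geodesics on $M$ vs.\ geodesics on $H$ followed by normal geodesics) that define $\phi$ and $\psi$. The comparison between them is governed precisely by $A$, and at higher orders by derivatives of $A$ and $R^{TM}$, since this is where the two exponential maps disagree.

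First, I observe that $dh(0) = \mathrm{Id}$ (both $\phi_{y_0}$ and $\psi_{y_0}$ send $0$ to $y_0$ and have differential at $0$ equal to the identification given by the fixed frame), so $h(Z) = Z + h_2(Z) + h_3(Z) + \cdots$ with $h_k$ homogeneous of degree $k$. Next, I note the crucial exact identity
\[
\psi_{y_0}(0, Z_N) = \exp^M_{y_0}(Z_N) = \phi_{y_0}(Z_N),
\]
so $h(Z_N) = Z_N$ for all $|Z_N| < R$; therefore every Taylor coefficient involving only normal variables vanishes beyond the linear one. In particular $h_k(Z_N) = 0$ for $k \ge 2$, which already determines part of the expansion.

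To identify $h_2$, I would analyze the two-parameter curve $\Phi(t, s) := \psi_{y_0}(tZ_Y, sZ_N)$ in Fermi coordinates and its counterpart $\tilde\Phi(t, s) = h(tZ_Y + sZ_N)$ in geodesic coordinates. Setting $c(t) = \exp^H_{y_0}(tZ_Y)$ and $\tilde c(t) = \phi_{y_0}^{-1}(c(t))$, the geodesic equation on $(H, g^{TH})$ combined with the decomposition $\nabla^{TM}|_H = \nabla^{TH}\oplus\nabla^N + A$ gives $\nabla^{TM}_{\dot c}\dot c|_{t=0} = A(Z_Y)Z_Y$, and since Christoffel symbols vanish at the origin of geodesic coordinates, $\ddot{\tilde c}(0) = A(Z_Y)Z_Y$. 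Similarly, the parallel transport $\tilde Z_N(t)$ of $Z_N$ along $c(t)$ in $\nabla^N$ satisfies $\nabla^{TM}_{\dot c}\tilde Z_N|_{t=0} = A(Z_Y)Z_N$ (since $\nabla^N_{\dot c}\tilde Z_N = 0$ and $P^H\tilde Z_N = 0$), giving $\partial_t \tilde W(0) = A(Z_Y)Z_N$ in the $\phi$-chart. Putting these into the expansion
\[
\tilde\Phi(t, s) = \tilde c(t) + s\,\tilde W(t) + \tfrac{s^2}{2}\partial_s^2\tilde\Phi(t,0) + O(s^3),
\]
and noting $\partial_s^2\tilde\Phi(0,0) = 0$ (the case $Z_Y = 0$ above), so the $\tfrac{s^2}{2}\partial_s^2\tilde\Phi(t,0)$ term is $O(ts^2)$ and thus third order, I obtain modulo third-order terms
\[
h(tZ_Y + sZ_N) = tZ_Y + sZ_N + \tfrac{t^2}{2}A(Z_Y)Z_Y + ts\,A(Z_Y)Z_N + O(3).
\]
Polarizing and comparing with $B(Z) = \tfrac{1}{2}A(P^HZ)P^HZ + A(P^HZ)P^NZ$ yields $h_2(Z) = B(Z)$, proving (\ref{eq_prop_diff_exp}).

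For the claim on higher-order coefficients, I would proceed by induction on $r$. Writing the defining ODE for $c(t)$ as $\ddot{c}^k + \Gamma^k_{ij}(\psi\text{-coords})\dot c^i\dot c^j = 0$ on $H$ relates higher $t$-derivatives of $\tilde c$ to iterated covariant derivatives of the Christoffel symbols in Fermi coordinates; these in turn, by Proposition \ref{prop_bndg_tripl} and the standard relations between normal-coordinate Christoffels and curvature, are polynomial in $R^{TM}, A$ and their covariant derivatives. The parallel-transport ODE $\partial_t\tilde Z_N + \Gamma(\dot c)\tilde Z_N = 0$ contributes similar terms, and the geodesic in $M$ from $c(t)$ in direction $\tilde Z_N(t)$ contributes only terms of order $\ge 3$ in the mixed variables (again because at $t=0$ it is a geodesic through $y_0$, which is a straight line in $\phi$-coordinates). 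Tracking which derivatives enter at each order of the Taylor expansion shows that the $r$-th order coefficient involves derivatives of $R^{TM}$ and $A$ of order at most $r-2$ (the loss of two orders coming from the ``two integrations'' in passing from $\ddot{\tilde c}$ to $\tilde c$, respectively from the curvature to the metric in normal coordinates). The main obstacle is purely bookkeeping at this step; the conceptual input is entirely contained in the second-order computation.
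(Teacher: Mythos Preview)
Your second-order computation is correct and takes a genuinely different route from the paper. The paper's argument is algebraic: it fixes an arbitrary test function $u$, expands $u(\psi(Z))$ once via $\psi=\phi\circ h$ and Corollary~\ref{cor_cal_1}, and once via the iterated-connection formula of Lemma~\ref{lem_cal_2} (first expand along the normal $M$-geodesic, then along the $H$-geodesic), and reads off $h^{[1]}$, $h^{[2]}$ by comparing. Your argument is geometric: you analyze the constituent curves directly in the $\phi$-chart, using that $\Gamma^k_{ij}(0)=0$ in normal coordinates to convert $\nabla^{TM}_{\dot c}\dot c=A(\dot c)\dot c$ and $\nabla^{TM}_{\dot c}\tilde Z_N=A(\dot c)\tilde Z_N$ into the second partials of $\tilde\Phi$. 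Both arrive at $h_2=B$. For the higher-order claim the paper's method is cleaner, since the commutator identity (\ref{eq_h_t_exp7}) shows at once how $R^{TM}$ enters when reordering $(\nabla^{TM})^{\otimes k}$ against $(\nabla^a)^{\otimes l}$; your ODE sketch reaches the same conclusion but needs the separate (standard) input that Christoffel symbols in normal coordinates are universal polynomials in $R^{TM}$ and its covariant derivatives. Two small slips: your citation of Proposition~\ref{prop_bndg_tripl} is misplaced (that result gives uniform bounds, not the structural expressibility you need); and in your higher-order paragraph the ODE for $\tilde c$ should be written in $\phi$-coordinates with the $M$-Christoffels and the source term $A(\dot c)\dot c$, not in $\psi$-coordinates where $c(t)=tZ_Y$ is trivially linear.
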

	Before presenting the proof of Proposition \ref{prop_diff_exp}, let us prove some auxiliary results.
	\begin{lem}\label{lem_cal_1}
		Let $\gamma(t)$ be a geodesic in $(M, g^{TM})$, and let $v := \gamma'(t)$. Then for any $k \in \nat$, the following identity between operators on smooth functions on $M$ holds
		\begin{equation}\label{eq_lem_cal_1}
			v^{\otimes k} \cdot (\nabla^{TM})^{\otimes k}  = \Big( \frac{\partial}{\partial v} \Big)^{k},
		\end{equation}
		where we view $(\nabla^{TM})^{\otimes k}$ as an operator $(\nabla^{TM})^{\otimes k} : \ccal^{\infty}(M) \to \ccal^{\infty}(M, (T^*M)^{\otimes k})$, and $\cdot$ on the left-hand side of (\ref{eq_lem_cal_1}) means the contraction.
 	\end{lem}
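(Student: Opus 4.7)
The plan is a straightforward induction on $k$ whose single ingredient is the geodesic equation $\nabla_{v} v = 0$. Fix $f \in \ccal^{\infty}(M)$ and set $F(s) := f(\gamma(s))$. By the very definition of the iterated directional derivative one has
\[
\Big( \frac{\partial}{\partial v} \Big)^{k} f \Big|_{\gamma(t)} = F^{(k)}(t),
\]
so it suffices to prove the identity
\[
F^{(k)}(t) = \big( v^{\otimes k} \cdot (\nabla^{TM})^{\otimes k} f \big)\big|_{\gamma(t)}, \qquad k \in \nat.
\]

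The base case $k=1$ is immediate, since on a scalar function $\nabla^{TM} f$ is just $df$ and therefore $F'(t) = \gamma'(t)(f) = (\nabla^{TM} f)(v)$. For the inductive step, denote by $V$ the velocity field along $\gamma$, i.e.\ $V(s) := \gamma'(s)$, and recall the Leibniz rule for covariant differentiation along $\gamma$: for any covariant tensor $T$ on $M$,
\[
\frac{d}{ds}\bigl( V^{\otimes k} \cdot T \bigr)\Big|_{\gamma(s)}
= V^{\otimes k} \cdot (\nabla_{V} T)\big|_{\gamma(s)} + \sum_{i=1}^{k} \Bigl( V^{\otimes (i-1)} \otimes (\nabla_{V} V) \otimes V^{\otimes (k-i)} \Bigr) \cdot T \Big|_{\gamma(s)}.
\]
Applying this with $T := (\nabla^{TM})^{\otimes k} f$ and invoking $\nabla_{V} V = 0$ along $\gamma$, the entire sum drops out. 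The surviving term equals $V^{\otimes (k+1)} \cdot (\nabla^{TM})^{\otimes (k+1)} f|_{\gamma(s)}$ by the very definition of the iterated covariant derivative, and combined with the inductive hypothesis this yields $F^{(k+1)}(t) = v^{\otimes (k+1)} \cdot (\nabla^{TM})^{\otimes (k+1)} f|_{\gamma(t)}$, completing the induction.

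There is no genuine obstacle here; the only subtlety worth noting is that $V$ is a priori defined only along $\gamma$, so the expression $\nabla_{V} T$ must be interpreted via any smooth local extension of $V$. Both sides of the Leibniz identity at a point $\gamma(s)$ depend only on $V$ at that point because $T$ is a tensor, so the extension issue is cosmetic. The conclusion is that, along a geodesic, iterating $\nabla_{v}$ on a scalar is the \emph{same} as full contraction with $v^{\otimes k}$ of the symmetric $k$-th covariant derivative, with no lower-order correction.
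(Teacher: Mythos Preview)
Your proof is correct and follows essentially the same approach as the paper: both arguments reduce to the geodesic equation $\nabla_v v = 0$ together with the Leibniz formula for the covariant derivative of a tensor (the paper's formula (\ref{eq_der_diff_form}) is exactly your Leibniz rule specialized to $X_1 = \cdots = X_{k+1} = v$). The paper merely states these two ingredients without spelling out the induction, while you make it explicit.
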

 	\begin{proof}
 		The proof is a consequence of the simple fact that $\nabla_v^{TM} v = 0$ and the formula
 		\begin{equation}\label{eq_der_diff_form}
 			(\nabla^{TM} \alpha)(X_1, \ldots, X_{k + 1})
 			=
 			X_1 \alpha(X_2, \ldots, X_{k + 1})
 			-
 			\sum_{i = 2}^{k + 1} \alpha(X_2, \ldots, \nabla^{TM}_{X_1} X_i, \ldots, X_{k + 1}),
 		\end{equation}
 		where $\alpha$ is a $k$-form and $X_1, \ldots, X_{k + 1}$ are some vector fields.
 	\end{proof}
 	\begin{cor}\label{cor_cal_1}
 		Let $u$ be a smooth function on $M$. Then for any smooth function $u$ on $M$ on the level of formal Taylor expansions, the following identity holds
 		\begin{equation}\label{eq_cor_cal_1}
 			u \big( \exp^M_{y_0}(Z) \big) = \sum_{k = 0}^{\infty} \frac{1}{k!} Z^{\otimes k} \cdot (\nabla^{TM})^k u(y_0).
 		\end{equation}
 	\end{cor}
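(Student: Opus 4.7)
The plan is to reduce the multi-dimensional Taylor expansion on $M$ to the one-variable Taylor expansion of a function on $\real$, and then apply Lemma \ref{lem_cal_1} to rewrite the successive time derivatives as covariant derivatives.

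More precisely, I would fix $Z \in T_{y_0}M$ and consider the geodesic $\gamma : [0, 1] \to M$, $\gamma(t) := \exp^M_{y_0}(tZ)$, so that $\gamma(0) = y_0$ and $\gamma(1) = \exp^M_{y_0}(Z)$. Setting $f(t) := u(\gamma(t))$, the one-variable formal Taylor expansion at $t = 0$ gives
\begin{equation}
	u\big( \exp^M_{y_0}(Z) \big) = f(1) = \sum_{k = 0}^{\infty} \frac{1}{k!} f^{(k)}(0).
\end{equation}
On the other hand, for $v(t) := \gamma'(t)$, which is a parallel vector field along the geodesic (with $v(0) = Z$ under the chosen identification $\real^n \simeq T_{y_0}M$), we have $f^{(k)}(t) = (\partial / \partial v)^k u$ evaluated at $\gamma(t)$.

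Now Lemma \ref{lem_cal_1} tells us exactly that, along such a geodesic,
\begin{equation}
	\Big( \frac{\partial}{\partial v} \Big)^{k} u = v^{\otimes k} \cdot (\nabla^{TM})^{\otimes k} u.
\end{equation}
Evaluating at $t = 0$, where $v(0) = Z$, gives $f^{(k)}(0) = Z^{\otimes k} \cdot (\nabla^{TM})^k u(y_0)$. Substituting into the Taylor expansion above yields exactly (\ref{eq_cor_cal_1}).

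Since the statement is understood as a formal Taylor expansion, no analytic convergence issue arises, and there is no real obstacle beyond correctly invoking Lemma \ref{lem_cal_1} and noting that $v(t) = \gamma'(t)$ is parallel along $\gamma$ so that the hypothesis of the lemma is met. The only minor point worth being careful about is the identification $Z \in \real^n \leftrightarrow Z \in T_{y_0}M$ via the fixed orthonormal frame, which allows us to interpret the contraction $Z^{\otimes k} \cdot (\nabla^{TM})^k u(y_0)$ unambiguously.
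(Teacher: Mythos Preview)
Your proposal is correct and follows essentially the same approach as the paper: reduce to the one-variable Taylor expansion of $t \mapsto u(\exp^M_{y_0}(tZ))$ and then invoke Lemma \ref{lem_cal_1} to identify the successive $t$-derivatives with $Z^{\otimes k} \cdot (\nabla^{TM})^k u(y_0)$. The paper's proof is the same argument, stated more tersely.
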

 	\begin{proof}
 		We have the following formal identity
 		\begin{equation}
			u \big( \exp^M_{y_0}(Z) \big) 
			=
			\sum_{k = 0}^{\infty} \frac{1}{k!} \frac{d^n}{dt^n} u \big( \exp^M_{y_0}(tZ) \big).
 		\end{equation}
 		The proof now is a direct consequence of Lemma \ref{lem_cal_1}.
 	\end{proof}
 	Both Lemma \ref{lem_cal_1} and Corollary \ref{cor_cal_1} appeared in Gavrilov \cite[\S 2]{Gavr}.
 	Let us now define the connection $\nabla^a$ on $TM|_H$ as follows
 	\begin{equation}
 	\nabla^a = \nabla^{TH} \oplus \nabla^N,
 	\end{equation}
 	where we used the notation as in (\ref{eq_sec_fund_f}).
 	\begin{lem}\label{lem_cal_2}
 		Let $\gamma(t)$ be a geodesic in $(H, g^{TH})$, and let $v := \gamma'(t)$. Let $Z_1(t), \ldots, Z_l(t) \in N$, $l \in \nat$ be vector fields along $\gamma(t)$, which are parallel with respect to $\nabla^{N}$. Then for any $k \in \nat$, the following identity between operators on smooth functions on $M$ holds
		\begin{multline}\label{eq_lem_cal_2}
			\Big( v^{\otimes k} \cdot (\nabla^{TH})^{\otimes k} \Big) 
			\circ
			{\rm{Res}}_H
			\circ 
			\Big( (Z_1 \otimes \cdots \otimes Z_l) \cdot (\nabla^{TM})^{\otimes l} \Big)
			\\
			= 
			{\rm{Res}}_H \circ \Big( (v^{\otimes k} \otimes Z_1 \otimes \cdots \otimes Z_l) \cdot (\nabla^a)^{\otimes k} (\nabla^{TM})^{\otimes l} \Big),
		\end{multline}
		where the compositions of the connections are interpreted in the same way as in Lemma \ref{lem_cal_1}, and ${\rm{Res}}_H$ is a map restricting sections over $M$ to sections over $H$.
 	\end{lem}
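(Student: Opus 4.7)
The plan is to prove Lemma~\ref{lem_cal_2} by reducing it to Lemma~\ref{lem_cal_1}, combined with the standard Leibniz rule for the time derivative of a tensor along a curve. Set $\tilde S := (\nabla^{TM})^{\otimes l} u$, an $l$-covariant tensor on $M$ depending linearly on $u \in \ccal^{\infty}(M)$. Two geometric facts are essential: (i) since $\gamma$ is a $g^{TH}$-geodesic and $\nabla^a|_{TH} = \nabla^{TH}$, the velocity $v$ satisfies $\nabla^a_v v = \nabla^{TH}_v v = 0$, so $v$ is $\nabla^a$-parallel along $\gamma$; (ii) since each $Z_j \in N$ is $\nabla^N$-parallel along $\gamma$ and $\nabla^a|_N = \nabla^N$, each $Z_j$ is also $\nabla^a$-parallel along $\gamma$.

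The first step is to regard $f := \res_H \circ \big((Z_1 \otimes \cdots \otimes Z_l) \cdot (\nabla^{TM})^{\otimes l}\big) u \in \ccal^{\infty}(H)$ as a scalar function on $H$, having extended the $Z_j$ to an arbitrary smooth $N$-valued frame in a neighborhood of $\gamma$ in $H$ (the value of $v^{\otimes k} \cdot (\nabla^{TH})^{\otimes k} f$ along $\gamma$ depends only on the data in the statement, not on the extension). Applying Lemma~\ref{lem_cal_1} to the geodesic $\gamma$ in $(H, g^{TH})$ then identifies $v^{\otimes k} \cdot (\nabla^{TH})^{\otimes k} f$, evaluated at $\gamma(t)$, with $(d/dt)^k [\tilde S(\gamma(t))(Z_1(t), \ldots, Z_l(t))]$.

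The second step is to extend $\nabla^a$ to a connection on $(T^*M|_H)^{\otimes l}$ over $H$, and to view $\tilde S|_H$ as a section of this bundle. Iterating the standard Leibniz rule $\tfrac{d}{dt} T(X_1, \ldots, X_l) = (\nabla^a_v T)(X_1, \ldots, X_l) + \sum_i T(X_1, \ldots, \nabla^a_v X_i, \ldots, X_l)$, and using facts (i)--(ii) to kill every term involving $\nabla^a_v v$ or $\nabla^a_v Z_j$, one obtains $(d/dt)^k [\tilde S|_H(Z_1, \ldots, Z_l)] = \big((\nabla^a)^{\otimes k} \tilde S|_H\big)(v^{\otimes k} \otimes Z_1 \otimes \cdots \otimes Z_l)$ along $\gamma$. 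Unwinding the definition of the composite on the right-hand side of (\ref{eq_lem_cal_2}) --- first $(\nabla^{TM})^{\otimes l}$ on $M$, then restriction to $H$, then $(\nabla^a)^{\otimes k}$ on $H$, then contraction with $v^{\otimes k} \otimes Z_1 \otimes \cdots \otimes Z_l$ --- identifies this last quantity with the RHS of (\ref{eq_lem_cal_2}).

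The main obstacle I anticipate is notational rather than conceptual: the expression $(\nabla^a)^{\otimes k} (\nabla^{TM})^{\otimes l}$ must be interpreted with an implicit restriction to $H$ between the two factors, since $\nabla^a$ acts only on tensors over $H$. Equivalently, one may organize the proof by induction on $k$, the base case $k = 0$ being the identity of operators and the inductive step being precisely the $k = 1$ version of the computation above, which is the Leibniz rule for $\nabla^a$ together with the vanishing of $\nabla^a_v v$ and $\nabla^a_v Z_j$ along $\gamma$. Once this bookkeeping is in place, the argument collapses to a single appeal to Lemma~\ref{lem_cal_1}.
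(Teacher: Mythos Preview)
Your proposal is correct and follows essentially the same approach as the paper. The paper's proof is a single sentence pointing to the identities $\nabla^a_v Z_i = \nabla^N_v Z_i = 0$, $\nabla^{TH}_v v = 0$, and the Leibniz-type formula (\ref{eq_der_diff_form}); your version simply unpacks this by routing both sides through the common expression $(d/dt)^k\big[\tilde S(\gamma(t))(Z_1(t),\ldots,Z_l(t))\big]$ and invoking Lemma~\ref{lem_cal_1} for the left-hand side, which is a minor organizational difference rather than a different argument.
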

 	\begin{proof}
 		The proof is a direct consequence of $\nabla^a_{v} Z_{i} = \nabla^N_{v} Z_{i} = 0$, $\nabla_v^{TH} v = 0$ and (\ref{eq_der_diff_form}).
 	\end{proof}
 	\begin{proof}[Proof of Proposition \ref{prop_diff_exp}]
 		The main idea is to fix a smooth function $u$ on $M$ and to use Lemmas \ref{lem_cal_1}, \ref{lem_cal_2} to get two different expressions for the Taylor expansion of $u(\psi(Z))$.
 		By comparing the two expansions, we will get the Taylor expansion (\ref{eq_prop_diff_exp}).
 		\par 
 		In one way, since $\psi(Z_H, Z_N) = \phi(h(Z_H, Z_N))$, we may apply Lemma \ref{lem_cal_1} to get
 		\begin{equation}\label{eq_h_t_exp0}
 			u(\psi(Z_H, Z_N)) 
 			=
 			\sum_{k = 0}^{\infty} \frac{1}{k!} \Big( h(Z_H, Z_N)^{\otimes k} \cdot (\nabla^{TM})^k u \Big) (y_0). 
 		\end{equation}
 		\par 
 		In another way, let us first apply Lemma \ref{lem_cal_1} to get
 		\begin{equation}\label{eq_h_t_exp1}
 			u \big(\psi(Z_H, Z_N)\big) = \sum_{k = 0}^{\infty} \frac{1}{k!} \Big( \tau_{Z_H}(Z_N)^{\otimes k} \cdot (\nabla^{TM})^k u \Big) \big(\psi(Z_H, 0)\big),
 		\end{equation}
 		where $\tau_{Z_H}(Z_N)$ is the parallel transport of $Z_N$ along the curve $\exp_{y_0}^{H}(tZ_H)$, $t \in [0, 1]$, with respect to $\nabla^N$.
 		We now apply Lemma \ref{lem_cal_1} again, but this time on $H$, to get
 		\begin{multline}\label{eq_h_t_exp2}
 			\Big( \tau_{Z_H}(Z_N)^{\otimes k} \cdot (\nabla^{TM})^{\otimes k} u \Big) \big(\psi(Z_H, 0)\big)
 			\\
 			=
 			 \sum_{l = 0}^{\infty} \frac{1}{l!}
 			  Z_H^{\otimes l} \cdot (\nabla^{TH})^{\otimes l}
 			 \Big( \tau_{Z_H}(Z_N)^{\otimes k} \cdot (\nabla^{TM})^{\otimes k} u \Big)(y_0). 
 		\end{multline}
 		Finally, let us apply Lemma \ref{lem_cal_2}, (\ref{eq_h_t_exp1}) and (\ref{eq_h_t_exp2}), to get
 		\begin{equation}\label{eq_h_t_exp3}
 			u(\psi(Z_H, Z_N)) = 
 			\sum_{l = 0}^{\infty} 
 			\sum_{k = 0}^{\infty} 
			\frac{1}{l!} 			
 			\frac{1}{k!} 
 			  \Big( (Z_H^{\otimes l} \otimes Z_N^{\otimes k}) \cdot (\nabla^a)^{\otimes l} (\nabla^{TM})^{\otimes k} u \Big)(y_0). 
 		\end{equation}
 		\par 
 		Now, we denote by $h^{[r]}(Z)$, $r \in \nat$, the homogeneous polynomial in $(Z)$ of degree $r$ such that the Taylor expansion of $h(Z)$ is given by $\sum_{r = 0}^{\infty} h^{[r]}(Z)$. Let us now take the homogeneous parts of (\ref{eq_h_t_exp0}) and compare them with (\ref{eq_h_t_exp3}). 
 		The comparison of the first degree gives us
 		\begin{equation}\label{eq_h_t_exp4}
			\big( h^{[1]}(Z) u \big)(y_0)
			=
			(Z u)(y_0).
 		\end{equation}
 		By comparing now the second degree, we get
 		\begin{multline}\label{eq_h_t_exp5}
 			\Big( h^{[2]}(Z_H, Z_N) \cdot \nabla^{TM} \Big) u(y_0)
 			+
 			\frac{1}{2}
 			\Big(
 			h^{[1]}(Z_H, Z_N)^{\otimes 2} \cdot (\nabla^{TM})^{\otimes 2} \Big) u(y_0)
 			\\
 			=
 		 	\Big( \frac{1}{2} Z_H^{\otimes 2} \cdot (\nabla^a)^{\otimes 2} 
 			+
 			Z_H \otimes Z_N \cdot \nabla^a \nabla^{TM}
 			+
 			\frac{1}{2} Z_N^{\otimes 2} \cdot (\nabla^{TM})^{\otimes 2} \Big) u(y_0).
 		\end{multline}
 		From (\ref{eq_sec_fund_f}), (\ref{eq_der_diff_form}), (\ref{eq_h_t_exp5}) and the fact that $\nabla^{TM}$ has no torsion, we deduce
 		\begin{equation}\label{eq_h_t_fin2}
 			h^{[2]}(Z_H, Z_N) = \frac{1}{2} A_{y_0}(Z_H)Z_H + A_{y_0}(Z_H)Z_N.
 		\end{equation}
 		From (\ref{eq_h_t_exp4}) and (\ref{eq_h_t_fin2}), we deduce (\ref{eq_prop_diff_exp}). 
 		\par 
 		Now, by the definition of curvature, we also have
 	 	\begin{multline}\label{eq_h_t_exp7}
 	 		U \otimes V \otimes W \cdot (\nabla^{TM})^{\otimes 3} 
 	 		=
 	 		U \otimes W \otimes V \cdot (\nabla^{TM})^{\otimes 3} 
 	 		\\
 	 		=
 	 		V \otimes U \otimes W \cdot (\nabla^{TM})^{\otimes 3} 
 	 		-
			R^{TM}(U, V)W \cdot \nabla^{TM}.
 	 	\end{multline}
 	 	From (\ref{eq_h_t_exp0}), (\ref{eq_h_t_exp3}) and (\ref{eq_h_t_exp7}), we see that the coefficients of $h^{[r]}(Z_H, Z_N)$ can be expressed in terms of $A, R^{TM}$, and their derivatives up to order $r-2$.
 	\end{proof}

\subsection{Holonomy along the paths adapted to the two coordinate systems}\label{sect_par_transport}
	This section is devoted to the comparison of two different trivializations of vector bundles, done in a neighborhood of a submanifold. 
	One is for the parallel transport adapted to the Fermi coordinate system, another one is for the geodesic coordinates.
	\par 
	We conserve the notations and assumptions from Section \ref{sect_coord_syst}.
	Let $(E, \nabla^E, h^{E})$ be a Hermitian vector bundle of bounded geometry and rank $r$ over $(M, g^{TM})$.
	We fix $y_0 \in H$ and an orthonormal frame $f_1, \ldots, f_r \in (E_{y_0}, h^E_{y_0})$. 
	Recall that in Section \ref{sect_bnd_geom_cf}, over $\mathbb{B}_{y_0}^{M}(R)$, for $R$ defined as in (\ref{eq_r_defn_const}), using the parallel transports, we defined two orthonormal frames $\tilde{f}'_1, \ldots, \tilde{f}'_r$ and $\tilde{f}_1, \ldots, \tilde{f}_r$.
	In (\ref{eq_frame_tilde}), we defined the matrix function $\xi_E$ which relates them. 
	\begin{prop}\label{prop_phi_fun_exp}
		The following bound holds
		\begin{equation}\label{eq_phi_fun_exp1}
			\xi_E(\psi(Z)) = O(|Z|^2).
		\end{equation}
		If, moreover, we assume that $(E, \nabla^E, h^E) := (L, \nabla^L, h^L)$ is a line bundle, and that there is a skew-adjoint endomorphism $Q$ of $TM$, which is parallel with respect to $\nabla^{TM}$ (i.e. $\nabla^{TM} Q = 0$), which commutes with $A$, the restriction of which to $H$ respects the decomposition (\ref{eq_tx_rest}), and such that for the curvature $R^L$ of $\nabla^L$, and for any $u, v \in TM$, we have
		\begin{equation}\label{eq_rl_q_op}
			\frac{\imun}{2 \pi}
			R^L (u, v) 
			=
			g^{TM}(Q u, v),
		\end{equation}
		then the following more precise bound holds
		\begin{equation}\label{eq_phi_fun_exp12}
			\xi_L(\psi(Z)) = -\frac{1}{6} R^L_{y_0} \big(Z, B(Z) \big) + O(|Z|^4).
		\end{equation}
		Moreover, the coefficients of order $r$, $r \in \nat$, in above Taylor expansions can be expressed in terms of $R^{TM}$, $R^E$, $R^L$, $A$, and their derivatives up to order $r-2$ with respect to $\nabla^{TM}$, $\nabla^{E}$, $\nabla^{L}$, at $y_0$.
	\end{prop}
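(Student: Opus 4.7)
The plan is to interpret $\xi_E(\psi(Z))$ as the holonomy of $\nabla^E$ around the loop $L_Z$ formed by the Fermi path from $y_0$ to $\psi(Z)$ followed by the ambient geodesic from $\phi(h(Z)) = \psi(Z)$ back to $y_0$, and then to Taylor-expand in $Z$.

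For \eqref{eq_phi_fun_exp1}, I would first observe that $\xi_E(\psi(0, Z_N)) = 0$ identically, because in this case the Fermi path is literally the ambient geodesic $\exp^M_{y_0}(tZ_N)$, so the two parallel transports coincide exactly. Combined with the fact that $\tilde f$ is parallel with respect to $\nabla^E$ along $H$ by construction, this gives both $\xi_E(y_0) = 0$ and $d\xi_E|_{y_0} = 0$: the pure-normal derivative vanishes identically, while the pure-tangential one vanishes to first order because the Fermi and geodesic paths agree to first order at $y_0$. This yields the $O(|Z|^2)$ bound. An equivalent algebraic derivation combines the radial-gauge expansion $\Gamma^E{}'_j(W) = \tfrac12 R^E_{y_0}(e_j, W) + O(|W|^2)$ of \cite[Theorem B]{EichBoundG} with the analogous expansion of $\Gamma^E$ in Fermi coordinates from Propositions \ref{prop_bndg_vect_1}, \ref{prop_bndg_vect_2}.

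For \eqref{eq_phi_fun_exp12}, the crucial simplification is that for a Hermitian line bundle the holonomy around a contractible loop bounding a surface $S$ equals $\exp(-\int_S R^L)$; equivalently, since $R^L = d\Gamma^L$ in any local trivialization, $\xi_L(\psi(Z)) = -\int_S R^L$ for any surface $S$ with $\partial S = L_Z$. Under the hypotheses on $Q$, the curvature $R^L_{y_0}$ vanishes on the mixed block $T_{y_0}H \otimes N_{y_0}$ (since $Q$ preserves the decomposition \eqref{eq_tx_rest} and $TH \perp N$), which kills the a priori leading term $-\tfrac12 R^L_{y_0}(Z_Y, Z_N)$ and forces $\xi_L = O(|Z|^3)$. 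To extract the next coefficient I would pull back to $\real^n$ via the geodesic coordinates $\phi$: the constant $2$-form $R^L_{y_0}$ integrates as $\int_S R^L_{y_0} = R^L_{y_0}(\sigma)$ where $\sigma = \tfrac12 \oint_{L_Z} x \wedge dx$ is the signed area $2$-vector of the loop. Using the expansion $h(Z) = Z + B(Z) + O(|Z|^3)$ from Proposition \ref{prop_diff_exp} together with the broken-linear description of the Fermi path in Fermi coordinates, a direct computation of $\sigma$ modulo $O(|Z|^4)$ produces a small list of wedge terms. Discarding those lying in $TH \otimes N$ (killed by $R^L_{y_0}$) and invoking the skew-symmetries of both $Q$ and $A(Z_Y)$ together with $[Q, A(Z_Y)] = 0$ to relate the surviving $(TH,TH)$ and $(N,N)$ contributions should collapse the expression to $\tfrac{1}{6} R^L_{y_0}(Z, B(Z))$, and the overall sign yields \eqref{eq_phi_fun_exp12}.

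Finally, the claim that the order-$r$ coefficient in the Taylor expansion of $\xi_E$ is polynomial in $R^{TM}, R^E, R^L, A$ and their covariant derivatives at $y_0$ of order $\leq r-2$ follows inductively by combining the analogous statement for $h$ in Proposition \ref{prop_diff_exp} with the classical fact that each radial-gauge Taylor coefficient of $\Gamma^E$ is itself a polynomial in $R^E$ and its covariant derivatives of the appropriate order. I expect the main technical obstacle to be the cubic-order bookkeeping in the line bundle argument: one must carefully track every $O(|Z|^3)$ contribution to $\sigma$ and verify that, after the $Q$-induced cancellations on mixed blocks, only the single combination $R^L_{y_0}(Z, B(Z))$ survives, with the precise coefficient $-\tfrac16$.
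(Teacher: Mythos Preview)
Your approach is correct in outline but takes a genuinely different route from the paper, and there is one gap worth flagging.

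\textbf{How the paper argues.} The paper never invokes Stokes' theorem or a bounding surface. Instead it works with the connection form $\Gamma^E_0 := \nabla^E - \nabla$ in the Fermi-adapted frame $\tilde f$, decomposes it as $\Gamma^E_0 = \pi^*(\Gamma^E_0|_H) + \Gamma^E$ with $\Gamma^E$ as in \eqref{eq_gamma_l_def}, and uses the auxiliary Lemma~\ref{lem_gamma_exp} (itself proved by the radial Lie-derivative identity $L_{\mathcal R}\Gamma^E = \iota_{\mathcal R}R^E$) to obtain $\Gamma^E_0(e) = O(|Z|)$ in general and, under the $Q$-hypotheses, the sharper $\Gamma^L_0(e) = \tfrac12 R^L_{y_0}(Z,e) + O(|Z|^3)$ with no quadratic term. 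Since $\tilde f'$ is $\nabla^E$-parallel along the ambient geodesic $\gamma(t)=\phi(th(Z))$, one gets the ODE $\partial_t\xi_L(\gamma(t)) = \Gamma^L_{0,\gamma(t)}(\gamma'(t))$. Plugging in $\psi^{-1}(\gamma(t)) = tZ + (t-t^2)B(Z) + O(|Z|^3)$ from Proposition~\ref{prop_diff_exp} and using $\int_0^1 t(1-2t)\,dt = -\tfrac16$, $\int_0^1(t-t^2)\,dt=\tfrac16$ gives the coefficient $-\tfrac16$ directly.

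\textbf{Comparison.} Your holonomy/area-vector picture is equivalent and arguably more geometric; it makes transparent why the quadratic term is $R^L_{y_0}(Z_H,Z_N)$ and why it dies on the mixed block. The paper's path-integral argument avoids choosing any surface and packages all the $Q$-induced cancellations into the single statement $\Gamma^L_0 = \tfrac12 R^L_{y_0}(Z,\cdot) + O(|Z|^3)$, which is reusable later (e.g.\ in \eqref{eq_gamma_calc_tau}). Your bookkeeping at cubic order does come out right: after discarding $TH\wedge N$ terms, the non-mixed part of $\sigma$ is $\tfrac12 Z_H\wedge A(Z_H)Z_N + \tfrac14 A(Z_H)Z_H\wedge Z_N$, and the identity $R^L_{y_0}(Z_H,A(Z_H)Z_N)=R^L_{y_0}(Z_N,A(Z_H)Z_H)$ (from $[Q,A]=0$ and skewness of $A$) indeed collapses this to $\tfrac16 R^L_{y_0}(Z,B(Z))$.

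\textbf{The gap.} When you replace $\int_S R^L$ by $R^L_{y_0}(\sigma)$ you are tacitly assuming $\phi^*R^L - R^L_{y_0} = O(|Z|^2)$, not merely $O(|Z|)$. Since the triangle bounded by $L_Z$ has area $O(|Z|^2)$ (not $O(|Z|^3)$), an $O(|Z|)$ error in the curvature would produce an $O(|Z|^3)$ contribution you have not accounted for. The required $O(|Z|^2)$ estimate does hold here---because $\nabla^{TM}R^L = 0$ (from $Q$ and $g^{TM}$ both parallel) and Christoffel symbols vanish at the origin of geodesic coordinates---but you should state and justify this; it is the Stokes-theoretic counterpart of the paper's Lemma~\ref{lem_gamma_exp}.
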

	\begin{rem}
		Assume $(M, g^{TM})$ is endowed with a complex structure $J$, and $g^{TM}$ is invariant under the action of it. 
		Assume, moreover, that (\ref{eq_rl_q_op}) holds for $Q := J$ as in (\ref{eq_gtx_def}). 
		Then all the requirements are satisfied for $Q := J$.
		This is because the invariance of $g^{TM}$ under the action of $J$, and the fact that $H$ is a complex submanifold, imply that the restriction of $J$ to $H$ respects the decomposition (\ref{eq_tx_rest}).
		Also (\ref{eq_rl_q_op}) implies that $(M, J, g^{TM})$ is Kähler, which implies that $J$ is parallel with respect to $\nabla^{TM}$, cf. \cite[Theorem 1.2.8]{MaHol}.
		Finally, by the definition of $A$ and the fact that $J|_H$ preserves $TH$ and $N$, we see that $J$ commutes with $A$.
		Proposition \ref{prop_phi_fun_exp} will only be applied in this particular situation.
	\end{rem}
	\par 
	The proof is given in the end of this section. 
	Before, let us state some auxiliary results.
	\begin{lem}[{\cite[Lemma 1.2.3]{MaHol} or \cite[p. 38]{BGV}}]\label{lem_comp_phi_par}
		Let $\tilde{t}'_i$, $i = 1, \ldots, n$ be the vector fields, constructed by the parallel transport of $e_i$ with respect to $\nabla^{TM}$ along the curve $\phi(tZ)$, $t \in [0, 1]$, $Z \in T_{y_0}M$, $|Z| < r_M$.
 		Then
 		\begin{equation}\label{eq_par_tr_coord_comp}
 			\frac{\partial \phi}{\partial Z_i} = \tilde{t}'_{i} + \sum_{j = 1}^{n} O(|Z|^2) \tilde{t}'_{j},
 		\end{equation}
 		Moreover, the coefficients of order $r$, $r \in \nat$, of the Taylor expansion (\ref{eq_par_tr_coord_comp}) can be expressed in terms of $R^{TM}$, and their derivatives up to order $r-2$ with respect to $\nabla^{TM}$, evaluated at $y_0$.
	\end{lem}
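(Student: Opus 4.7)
The plan is to recognize $\frac{\partial \phi}{\partial Z_i}(Z)$ as the value at parameter $t=1$ of the Jacobi field along the radial geodesic $t \mapsto \phi(tZ)$, associated with the variation $\gamma_s(t) := \exp_{y_0}^M(t(Z + s e_i))$. Explicitly, $J_i(t) := \partial_s|_{s=0} \gamma_s(t)$ satisfies $J_i(0) = 0$, $\frac{D J_i}{dt}(0) = e_i$, and $J_i(1) = \frac{\partial \phi}{\partial Z_i}(Z)$. Writing this Jacobi field in the parallel frame $\tilde{t}'_j(t)$ as $J_i(t) = \sum_j a_{ij}(t) \tilde{t}'_j(t)$ and noting that $\dot\gamma(t) = \sum_k Z_k \tilde{t}'_k(t)$ (since $\dot\gamma$ is $\nabla^{TM}$-parallel along $\gamma$ and $\dot\gamma(0) = Z$), the Jacobi equation $\frac{D^2 J_i}{dt^2} + R^{TM}(J_i, \dot\gamma)\dot\gamma = 0$ collapses to the linear ODE
\begin{equation*}
a''_{ij}(t) + \sum_k M_{jk}(t) a_{ik}(t) = 0, \qquad a_{ij}(0) = 0, \quad a'_{ij}(0) = \delta_{ij},
\end{equation*}
where $M_{jk}(t) := \langle R^{TM}(\tilde{t}'_k, \dot\gamma) \dot\gamma, \tilde{t}'_j \rangle_{\gamma(t)}$.

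Next I would extract the Taylor expansion of $a_{ij}(t)$ at $t=0$ by iterated differentiation of the ODE. From $a(0)=0$ and $a'(0) = \mathrm{Id}$ one gets $a''(0) = 0$ and $a'''_{ij}(0) = -M_{ji}(0) = -\langle R^{TM}_{y_0}(e_i, Z)Z, e_j\rangle$. Hence
\begin{equation*}
a_{ij}(t) = \delta_{ij}\, t - \tfrac{1}{6}\langle R^{TM}_{y_0}(e_i, Z)Z, e_j\rangle\, t^3 + O(t^4),
\end{equation*}
and evaluating at $t=1$ yields exactly (\ref{eq_par_tr_coord_comp}), with the first correction of homogeneous degree $2$ in $Z$.

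For the claim on the general order of the coefficients, I would iterate the Leibniz expansion of $a^{(r)}_{ij}(0)$ in terms of $M(0), M'(0), \ldots, M^{(r-3)}(0)$. Since $\dot\gamma$ is parallel, $M^{(s)}(t) = \langle (\nabla^{TM}_{\dot\gamma})^s R^{TM}(\tilde{t}'_k, \dot\gamma)\dot\gamma, \tilde{t}'_j\rangle$, so $M^{(s)}(0)$ is polynomial of degree $s+2$ in $Z$ with coefficients involving derivatives of $R^{TM}$ at $y_0$ up to order $s$. Collecting the homogeneous degree-$r$ part in $Z$ of $J_i(1) = \sum_r a^{(r)}_{ij}(0)/r!\, \tilde{t}'_j(1)$, only the terms with $s \le r-2$ contribute, giving the required dependence on $\nabla^{\le r-2} R^{TM}$. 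The only mildly delicate point is this bookkeeping of the degree in $Z$ versus the number of covariant derivatives of $R^{TM}$; everything else is a direct transcription of the classical Jacobi-field computation as in \cite[Lemma 1.2.3]{MaHol}.
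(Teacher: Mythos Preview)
Your argument is correct and is precisely the classical Jacobi-field computation that the cited references \cite[Lemma 1.2.3]{MaHol} and \cite[p.~38]{BGV} carry out; the paper itself does not give an independent proof but simply invokes these sources. The one point worth making explicit is the passage from the Taylor expansion of $a_{ij}(t)$ in $t$ to the Taylor expansion of $a_{ij}(1;Z)$ in $Z$: this is justified by the homogeneity relation $a(1;\lambda Z)=\lambda^{-1}a(\lambda;Z)$, which follows from the scaling $M(t;\lambda Z)=\lambda^{2}M(\lambda t;Z)$ and uniqueness for the linear ODE, so that the degree-$(r-1)$ part of $a(1;Z)-\mathrm{Id}$ in $Z$ is exactly your coefficient $a^{(r)}(0)/r!$.
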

	We denote by $e_H, e'_H \in TM$ (resp. $e_N, e'_N$) the vector fields $\frac{\partial \psi}{\partial Z_i}$ for $i = 1, \ldots, m$ (resp.  $i = m+1, \ldots, n$).
 	\begin{lem}\label{lem_scal_prod}
 		For any endomorphism $T$ of $TM$, which is parallel with respect to $\nabla^{TM}$, which commutes with $A$, the restriction of which to $H$ respects the decomposition (\ref{eq_tx_rest}), we have
 		\begin{equation}\label{eq_scal_prod_1}
 		\begin{aligned}
 			& g^{TM}_{\psi (Z_H, Z_N)}(T e_N, e'_N) = g^{TM}_{y_0}(T e_N, e'_N) + O(|Z|^2), 
 			\\
 			& g^{TM}_{\psi (Z_H, Z_N)}(T e_N, e_H)= O(|Z|^2).	
 			\\
 			& g^{TM}_{\psi (Z_H, Z_N)}(T e_H, e'_H)= g^{TM}_{y_0}(T e_H, e'_H) + g^{TM}_{y_0} \big(T A(e_H)Z_N, e'_H \big) 
 			\\
 			&
 			\qquad \qquad \qquad \qquad \qquad \qquad \qquad \qquad 
 			+ g^{TM}_{y_0} \big(Te_H , A(e'_H)Z_N  \big)  + O(|Z|^2).			
 		\end{aligned}
 		\end{equation}
 		Also, the following identity holds
 		\begin{equation}\label{eq_scal_prod_12}
 			 g^{TM}_{\psi (Z_H, 0)}(T e_N, e'_N) =  g^{TM}_{y_0}(T e_N, e'_N).
 		\end{equation}
		Moreover, the coefficients of order $r$, $r \in \nat$, in above Taylor expansions can be expressed in terms of $R^{TM}$, $A$, their derivatives up to order $r-2$ with respect to $\nabla^{TM}$, $\nabla^{L}$, evaluated at $y_0$, and endomorphism $T_{y_0}$.
 	\end{lem}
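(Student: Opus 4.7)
The plan is to reduce everything to the geodesic frame at $y_0$, where the calculation becomes algebraic. First, I will use the identity $\psi = \phi \circ h$ together with Proposition \ref{prop_diff_exp} to expand the Jacobian of $h$ to first order as $\partial h_k/\partial Z_i = \delta_{ik} + \partial B_k/\partial Z_i + O(|Z|^2)$. Combining this with the chain rule and invoking Lemma \ref{lem_comp_phi_par} to replace each $\partial \phi/\partial Z_j$ by the $\nabla^{TM}$-parallel transport $\tilde t'_j$ up to an error of order $|Z|^2$, I will express the Fermi frame vectors $e_i = \partial\psi/\partial Z_i$ as linear combinations of parallel-transported frames with explicit first-order corrections coming from $B$.

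The key simplification will be that, since $T$ is $\nabla^{TM}$-parallel, $T \tilde t'_k$ is also parallel along the radial geodesic, so $g^{TM}_{\phi(w)}(T \tilde t'_k, \tilde t'_l) = g^{TM}_{y_0}(T e_k, e_l)$ identically. This reduces the expansion of $g^{TM}_{\psi(Z)}(T e_i, e_j)$ to
\begin{equation*}
g^{TM}_{y_0}(T e_i, e_j) + g^{TM}_{y_0}\bigl(T \partial B/\partial Z_i(Z), e_j\bigr) + g^{TM}_{y_0}\bigl(T e_i, \partial B/\partial Z_j(Z)\bigr) + O(|Z|^2).
\end{equation*}
Since $B(Z_H, Z_N) = \frac{1}{2} A(Z_H) Z_H + A(Z_H) Z_N$, a direct computation gives $\partial B/\partial Z_i = A(e_i) Z$ for $i \leq m$ (using the symmetry $A(u)v = A(v)u$ for $u,v \in TH$ that comes from torsion-freeness of $\nabla^{TM}$ and $\nabla^{TH}$), and $\partial B/\partial Z_i = A(Z_H) e_i$ for $i > m$.

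With this in hand, the three identities in (\ref{eq_scal_prod_1}) will follow by orthogonality arguments based on the facts that $A(u)$ interchanges $TH$ and $N$, that $A(u)$ is skew-symmetric on $TM|_H$, and that $T|_H$ preserves the decomposition. For the purely normal case each cross-term lands in the wrong subspace and vanishes. For the purely tangential case only the contractions against $A(e_H) Z_N \in TH$ survive, producing the advertised second fundamental form contribution. The mixed case is the most delicate: the two surviving cross-terms $g^{TM}_{y_0}(T A(Z_H) e_N, e_H)$ and $g^{TM}_{y_0}(T e_N, A(e_H) Z_H)$ will cancel after using that $T$ commutes with $A(u)$, the skew-symmetry of $A(u)$, and the symmetry $A(u) v = A(v) u$ on $TH$. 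This cancellation is where the hypothesis that $T$ commutes with $A$ is used, and I expect it to be the main point to verify carefully.

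Finally, the exact identity (\ref{eq_scal_prod_12}) will be handled separately, without any Taylor expansion. Along $H$, the vectors $e_N|_{\psi(Z_H, 0)}$ are by the very definition of Fermi coordinates the $\nabla^N$-parallel transports of $e_N$ along $\exp^H_{y_0}(t Z_H)$. Since $\nabla^{TM} T = 0$ and $T|_H$ preserves $TH \oplus N$, a short computation using $\nabla^N = P^N \nabla^{TM}$ shows that $T|_N$ is $\nabla^N$-parallel; combined with the fact that $\nabla^N$ preserves $g^N = g^{TM}|_N$, this yields the claim. The polynomial dependence of the Taylor coefficients on $R^{TM}$, $A$, $T_{y_0}$ and their derivatives will then be inherited directly from the corresponding property in Proposition \ref{prop_diff_exp} and Lemma \ref{lem_comp_phi_par}.
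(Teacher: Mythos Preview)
Your approach is correct and essentially identical to the paper's own proof: both factor $\psi=\phi\circ h$, use Proposition~\ref{prop_diff_exp} to expand the Jacobian of $h$ to first order (this is the paper's (\ref{eq_psi_phi_vect_f_com})), invoke Lemma~\ref{lem_comp_phi_par} together with the $\nabla^{TM}$-parallelism of $T$ to reduce inner products in the $\tilde t'_k$-frame to their values at $y_0$ (the paper's (\ref{eq_par_tr_coord_comp22})), and then analyze the three cases via the skew-symmetry of $A$, its interchange of $TH$ and $N$, and the symmetry $A(u)v=A(v)u$ on $TH$, with the commutation $[T,A]=0$ providing the cancellation in the mixed case. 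Your separate treatment of (\ref{eq_scal_prod_12}) via $\nabla^N$-parallelism of $T|_N$ is the same content as the paper's direct differentiation along $e_H$, just phrased more structurally.
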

 	\begin{rem}
 		For $H = \{y_0\}$ and $T = {\rm{Id}}$, the result follows from \cite[Proposition 1.28]{BGV}.
 		Similar calculations were done by Ma-Zhang in \cite{MaZhBKSR} and Bismut-Lebeau \cite{BisLeb91}.
 	\end{rem}
 	\begin{proof}
 		First of all, since $T$ is parallel with respect to $\nabla^{TM}$, we see that
 		\begin{equation}
 			\frac{\partial}{\partial e_H} g^{TM}_{\psi (Z_H, 0)}(T e_N, e'_N)
 			=
 			g^{TM}_{\psi (Z_H, 0)}(T \nabla^{TM}_{e_H}e_N, e'_N)
 			+
 			g^{TM}_{\psi (Z_H, 0)}(T e_N, \nabla^{TM}_{e_H} e'_N).
 		\end{equation}
 		However, by the definition of $\psi$, see (\ref{eq_defn_fermi}), the restrictions of $e_N$, $e'_N$ to $H$ are parallel with respect to $\nabla^N$.
 		By this, the fact that the form $A$ exchanges $N$ and $TH$, and the endomorphism $T$ preserves the decomposition (\ref{eq_tx_rest}), we deduce that
 		$
 			\frac{\partial}{\partial e_H} g^{TM}_{\psi (Z_H, 0)}(T e_N, e'_N) = 0,
 		$
 		which readily implies (\ref{eq_scal_prod_12}).
 		\par 
 		Recall that the vector fields $\tilde{t}'_i$, $i = 1, \ldots, n$, were introduced in Lemma \ref{lem_comp_phi_par}.
 		As $T$ is parallel, $T \tilde{t}'_i$ is equal to the parallel transport of $T e_i$, along the same curve as the one used in the definition of $\tilde{t}'_i$.
 		From this and (\ref{eq_par_tr_coord_comp}), we deduce that
 		\begin{equation}\label{eq_par_tr_coord_comp22}
 			g^{TM}_{\psi (Z_H, Z_N)}\Big( T \frac{\partial \phi}{\partial Z_i}, \frac{\partial \phi}{\partial Z_j} \Big)
 			=
 			g^{TM}_{y_0}\big( T e_i, e_j \big)
 			+
 			O(|Z|^2).
 		\end{equation}
 		\par 
 		Now, we see that Proposition \ref{prop_diff_exp}, along with the fact that $A$ exchanges $N$ with $TH$, implies that for $i = 1, \ldots, m$ and $j = m + 1, \ldots, n$, we have
 		\begin{equation}\label{eq_psi_phi_vect_f_com}
 		\begin{aligned}
 			& 
 			\frac{\partial \psi}{\partial Z_j}
 			=
 			\frac{\partial \phi}{\partial Z_j}
 			+
 			\sum_{l = 1}^{m}
 			\frac{\partial \phi}{\partial Z_l}
 			g^{TM}_{y_0}
 			\big(
 				A(
 				Z_H
 				) e_j,
 				e_l
 			\big)
 			+
 			O(|Z|^2),
 			\\
 			&
 			\frac{\partial \psi}{\partial Z_i}
 			=
 			\frac{\partial \phi}{\partial Z_i}
 			+
 			\sum_{l = 1}^{m}
 			\frac{\partial \phi}{\partial Z_l}
 			g^{TM}_{y_0}
 			\big(
 				A(
 					e_i
 				)Z_N
 				,
 				e_l
 			\big)
 			\\
 			& \qquad \qquad \qquad \qquad \qquad
 			+
 			\sum_{l = m + 1}^{n}
 			\frac{\partial \phi}{\partial Z_l}
 			g^{TM}_{y_0}
 			\big(
 				A\big(
 					e_i
 				\big)Z_H,
 					e_l
 			\big)
 			+
 			O(|Z|^2).
 		\end{aligned}
 		\end{equation}
 		From this, Proposition \ref{prop_diff_exp} and (\ref{eq_par_tr_coord_comp22}), we establish the first equation in (\ref{eq_scal_prod_1}) and the remark after it.
 		We also deduce that 
 		\begin{multline}
 			g^{TM}_{\psi (Z_H, Z_N)}(T e_N, e_H)
 			=
 			g^{TM}_{y_0}(T e_N, e_H)
 			\\
 			+
 			g^{TM}_{y_0}(T A(Z_H)e_N, e_H)
 			+
 			g^{TM}_{y_0}(T e_N, A(e_H) Z_H)
 			+
 			O(|Z|^2).
 		\end{multline}
 		Now, since both $\nabla^{TM}$ and $\nabla^{TH}$ have no torsion, we have
 		\begin{equation}\label{eq_a_no_tors}
 			A(u) v = A(v) u, \quad \text{for any} \quad u, v \in TH.
 		\end{equation}
 		From (\ref{eq_a_no_tors}) and the fact that $A$ is skew-adjoint and commutes with $T$, we deduce the second part of (\ref{eq_scal_prod_1}).
 		The third part of (\ref{eq_scal_prod_1}) follows directly from (\ref{eq_par_tr_coord_comp22}) and (\ref{eq_psi_phi_vect_f_com}).
 	\end{proof}
 	\par 
 	Recall that the projection $\pi_0$ and the identification of $E$ to $\pi_0^* (E|_H)$ in a tubular neighborhood $U$ of $H$ were defined before (\ref{eq_ext0_op}).
	We define the $1$-form $\Gamma^{E}$ with values in $\enmr{\pi_0^* (E|_H)}$ as follows
		\begin{equation}\label{eq_gamma_l_def}
			\Gamma^{E} = \nabla^E - \pi_0^*(\nabla^E|_{H}),
		\end{equation}
	where we implicitly used the above isomorphism.
	We introduce similar notations for $L$.
	\begin{lem}\label{lem_gamma_exp}
		Under the same assumptions as in Proposition \ref{prop_phi_fun_exp}, the following holds
		\begin{equation}\label{eq_gamma_exp_ref}
		\begin{aligned}
			& \Gamma^{E}_{\psi (Z)}(e_N) = \frac{1}{2} R^E_{\pi_0(\psi (Z))}(Z_N, e_N) + O(|Z|^2), 
			\, \Gamma^{E}_{\psi(Z)}(e_H) = R^E_{\pi_0(\psi (Z))}(Z_N, e_H) + O(|Z|^2),
			\\
			& \Gamma^{L}_{\psi(Z)}(e_N) = \frac{1}{2} R^L_{\pi_0(\psi (Z))}(Z_N, e_N) + O(|Z|^3), 
			\, \Gamma^{L}_{\psi(Z)}(e_H) = O(|Z|^3).
		\end{aligned}
		\end{equation}
		Moreover, the coefficients of order $r$, $r \in \nat$, in above Taylor expansions can be expressed in terms of $R^{TM}$, $R^L$, $R^E$, $A$, and their derivatives up to order $r-2$ with respect to $\nabla^{TM}$, $\nabla^L$, $\nabla^E$, $\nabla^a$, evaluated at $y_0$.
	\end{lem}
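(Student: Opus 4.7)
The plan is to evaluate separately the two terms in $\Gamma^E = A^E - \pi^* B^E$, where $A^E$ is the connection $1$-form of $\nabla^E$ in the frame $\tilde{f}_i$ on $U$, and $B^E$ is the connection $1$-form of $\nabla^E|_H$ in the frame $\hat{f}_i := \tilde{f}_i|_H$. Since by construction $\tilde{f}_i$ is the parallel extension of $\hat{f}_i$ in the normal direction, the connection form of $\pi^*(\nabla^E|_H)$ in the frame $\tilde{f}_i$ is $\pi^* B^E$. Two structural facts enter. First, writing $\mathcal{R}_N = \sum_{j > 2m} Z_j \partial_{Z_j}\psi$ for the normal radial vector field, the defining parallelism of $\tilde{f}_i$ along every ray $t \mapsto \psi(Z_H, tZ_N)$ gives $A^E(\mathcal{R}_N) \equiv 0$ on $U$. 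Second, since $\pi \circ \psi(Z_H, Z_N) = \exp^H_{y_0}(Z_H)$, one has $d\pi(e_N) = 0$ and $d\pi(e_H) = \partial \exp^H_{y_0}/\partial Z_H$; in particular $\pi^* B^E(e_N) = 0$, while $\pi^* B^E(e_H)|_{\psi(Z)}$ is $B^E$ evaluated at $\exp^H_{y_0}(Z_H) = \pi(\psi(Z))$.

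Feeding these into the structure identity $R^E(X, Y) = X \cdot A^E(Y) - Y \cdot A^E(X) - A^E([X, Y]) + [A^E(X), A^E(Y)]$ with $X = \mathcal{R}_N$ reduces it to $R^E(\mathcal{R}_N, Y) = \mathcal{R}_N \cdot A^E(Y) - A^E([\mathcal{R}_N, Y])$. Taking $Y = e_N = \partial_{Z_j}\psi$, $j > 2m$, with $[\mathcal{R}_N, e_N] = -e_N$, and restricting to $\gamma(s) := \psi(Z_H, sZ_N)$, the scalar $u(s) := A^E(e_N)|_{\gamma(s)}$ obeys $(s u(s))' = s R^E|_{\gamma(s)}(\gamma'(s), e_N)$, so integration from $0$ to $1$ yields $A^E(e_N)|_{\psi(Z)} = \int_0^1 s R^E|_{\gamma(s)}(\gamma'(s), e_N)\, ds$, whose leading term is $\tfrac12 R^E_{\pi(\psi(Z))}(Z_N, e_N) + O(|Z|^2)$. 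Taking instead $Y = e_H$ with $[\mathcal{R}_N, e_H] = 0$ gives the simpler ODE $u'(s) = R^E|_{\gamma(s)}(Z_N, e_H)$, whose initial value $u(0) = A^E(e_H)|_{\psi(Z_H, 0)}$ coincides with $B^E(d\pi(e_H))|_{\exp^H_{y_0}(Z_H)} = \pi^* B^E(e_H)|_{\psi(Z)}$, since at $\psi(Z_H, 0)$ the vector $e_H$ is tangent to $H$ and the frames match. The initial value therefore cancels in $\Gamma^E$, leaving $\Gamma^E(e_H)|_{\psi(Z)} = \int_0^1 R^E|_{\gamma(s)}(Z_N, e_H)\, ds = R^E_{\pi(\psi(Z))}(Z_N, e_H) + O(|Z|^2)$.

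The refinement in the line bundle case exploits $R^L(u, v) = -2\pi\imun\, g^{TM}(Qu, v)$ with $Q$ preserving the decomposition (\ref{eq_tx_rest}). For $\Gamma^L(e_H)$, the integrand is $-2\pi\imun\, g^{TM}_{\gamma(s)}(Q Z_N, e_H)$; because $Q$ maps $N$ into $N$, one has $g^{TM}_{y_0}(Q e_k, e_H|_{y_0}) = 0$ for any normal basis vector $e_k$, so the second equation of (\ref{eq_scal_prod_1}) gives $g^{TM}_{\gamma(s)}(Q e_k, e_H) = O(|\gamma(s)|^2)$, and contraction against $Z_N^k$ produces an $O(|Z|^3)$ integrand. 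For $\Gamma^L(e_N)$ the crucial input is the exact equality (\ref{eq_scal_prod_12}), which says that the $N \otimes N$-block of $g^{TM}$ along $H$ does not depend on $Z_H$; combined with the first equation of (\ref{eq_scal_prod_1}) this yields $g^{TM}_{\gamma(s)}(Q e_k, e_N) - g^{TM}_{y_0}(Q e_k, e_N) = O(s^2 |Z_N|^2)$, so after weighting by $s Z_N^k$ only an $O(|Z|^3)$ error to $\tfrac12 R^L_{\pi(\psi(Z))}(Z_N, e_N)$ is introduced.

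The main obstacle is precisely this cubic gain: it rests on the three-fold interplay between the Kähler-type form of $R^L$, the preservation of $TH \oplus N$ by $Q$ (which kills the would-be linear-in-$Z_N$ contribution at $y_0$), and the sharpness of Lemma \ref{lem_scal_prod}—in particular the exact identity (\ref{eq_scal_prod_12})—each of which would fail to produce the improvement individually. Finally, the claim on the structural dependence of the order-$r$ Taylor coefficients on $R^{TM}, R^L, R^E, A$ and their derivatives up to order $r-2$ follows by iterating the ODE via Picard expansion: each iteration introduces at most one covariant derivative of the curvature tensors or of $A$, while Proposition \ref{prop_diff_exp} controls the auxiliary metric-geometric comparisons between the Fermi coordinates $\psi$ and the geodesic coordinates $\phi$.
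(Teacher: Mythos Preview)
Your argument is correct and is essentially the paper's proof rephrased: both exploit the radial gauge $\iota_{\mathcal{R}_N}\Gamma^E=0$ to obtain the transport identity $L_{\mathcal{R}_N}\Gamma^E=\iota_{\mathcal{R}_N}R^E$ (you write it as an ODE along normal rays and integrate, the paper writes the Lie derivative and matches Taylor coefficients as in (\ref{eq_lem_5})), and for the cubic gain on $L$ both invoke Lemma~\ref{lem_scal_prod} together with the hypothesis that $Q$ preserves $TH\oplus N$. One small imprecision: from (\ref{eq_scal_prod_1}) and (\ref{eq_scal_prod_12}) you only get $g^{TM}_{\gamma(s)}(Qe_k,e_N)-g^{TM}_{y_0}(Qe_k,e_N)=O\big(s|Z_N|\,|Z|\big)$, not $O(s^2|Z_N|^2)$, since a mixed $Z_HZ_N$ quadratic term is not excluded; however, after contracting with $Z_N$ and integrating against $s$ this still gives $O(|Z|^3)$, so your conclusion is unaffected.
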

	\begin{rem}
		Similar identities were obtained in \cite[(13.65)-(13.66)]{BisLeb91}.
	\end{rem}
	\begin{proof}
		We follow closely the proof from \cite[Proposition 1.18]{BGV}, cf. \cite[Lemma 1.2.4]{MaHol}, which establishes Lemma \ref{lem_gamma_exp} for $H = \{ y_0 \}$.
		First of all, by (\ref{eq_gamma_l_def}), we have
		\begin{equation}\label{eq_lem_1}
			(\pi_0^*(\nabla^E|_{H}) \Gamma^E) + \Gamma^E \wedge \Gamma^E + \pi_0^* ( R^E|_{H})
			=
			R^E.
		\end{equation}
		\par 
		We denote by $\mathcal{R} \in \ccal^{\infty}(N, TN)$ the tautological section of $TN$.
		As we identified a neighborhood $U$ of $H$ in $M$ with a neighborhood of the zero section in $N$, we may look at $\mathcal{R}$ as at the vector in the tangent space of $U$. If we write it in Fermi coordinates, we get
		\begin{equation}\label{eq_lem_3}
			\mathcal{R} = 
			\sum_{i = m+1}^{n} Z_i \frac{\partial}{\partial Z_i}.
		\end{equation}
		By our choice of the trivialization, $\iota_{\mathcal{R}} \Gamma^E = 0$. 
		Also, $\iota_{\mathcal{R}} (\pi_0^* R^E|_{H}) = 0$. Hence, by (\ref{eq_lem_1}), we get
		\begin{equation}\label{eq_lem_2}
			L_{\mathcal{R}} \Gamma^E = \Big[ \iota_{\mathcal{R}}, \pi_0^*(\nabla^E|_{H}) \Big] \Gamma^E = \iota_{\mathcal{R}} R^E.
		\end{equation}
		From (\ref{eq_lem_3}), for $i = 1, \ldots, m$, and $j = m+1, \ldots, n$, we also obtain the following identities
		\begin{equation}\label{eq_lem_4}
			L_{\mathcal{R}} dZ_i = 0, \qquad L_{\mathcal{R}} dZ_j = dZ_j.
		\end{equation}
		\par 
		Using (\ref{eq_lem_4}) and expanding both sides of (\ref{eq_lem_2}) in Taylor series at $Z_N = 0$, we get
		\begin{equation}\label{eq_lem_5}
		\begin{aligned}
			&\sum_{\alpha} (|\alpha| + 1) (\partial^{\alpha} \Gamma^E)_{\psi(Z_H, 0)} ( e_N ) \frac{Z_N^{\alpha}}{\alpha!}
			=
			\sum_{\alpha} (\partial^{\alpha} R^E)_{\psi(Z_H, 0)} ( \mathcal{R},  e_N ) \frac{Z_N^{\alpha}}{\alpha!},
			\quad 
			\text{ }
			\\
			&\sum_{\alpha} |\alpha| (\partial^{\alpha} \Gamma^E)_{\psi(Z_H, 0)} ( e_H ) \frac{Z_N^{\alpha}}{\alpha!}
			=
			\sum_{\alpha} (\partial^{\alpha} R^E)_{\psi(Z_H, 0)} ( \mathcal{R},  e_H ) \frac{Z_N^{\alpha}}{\alpha!}.
		\end{aligned}
		\end{equation}
		From (\ref{eq_lem_5}), we deduce the first two equations of Lemma \ref{lem_gamma_exp}.
		From the fact that $N$ is orthogonal to $TH$, (\ref{eq_rl_q_op}) and the fact that the restriction of $Q$ to $H$ respects the decomposition (\ref{eq_tx_rest}), we see that $R^L_{\psi (Z_H, Z_N)} ( e_N,  e_H) = O(|Z|)$.
		The last two equations of Lemma \ref{lem_gamma_exp} follow directly from Lemma \ref{lem_scal_prod}, (\ref{eq_rl_q_op}) and (\ref{eq_lem_5}), applied for $E := L$.
	\end{proof}
	\begin{proof}[Proof of Proposition \ref{prop_phi_fun_exp}]
		We denote by $\nabla$ the standard covariant derivative in $E$, defined locally in the frame $\tilde{f}_1, \ldots, \tilde{f}_r$. 
		We define the $1$-form $\Gamma^E_{0}$ with values in $\enmr{E_{y_0}}$ as follows
		\begin{equation}
			\Gamma^E_{0} := \nabla^E - \nabla.
		\end{equation}
		By (\ref{eq_gamma_l_def}), we have the following identity
		\begin{equation}\label{eq_gamma_0_ident}
			\Gamma^E_{0, \psi (Z_H, Z_N)}
			=
			\pi_0^* ( \Gamma^E_{0, \psi (Z_H, 0)})
			+
			\Gamma^E_{\psi (Z_H, Z_N)},
		\end{equation}
		where we implicitly identified $\Gamma^E_{\psi (Z_H, Z_N)}$ with a $1$-form with values in $\enmr{E_{y_0}}$ using $\tilde{f}_1, \ldots, \tilde{f}_r$.
		By (\ref{eq_rl_q_op}) and Lemma \ref{lem_gamma_exp}, applied once for the second summand in the right-hand side of (\ref{eq_gamma_0_ident}), and once for $M := H, H := y_0$, to treat the first summand, we see that for $e := \frac{\partial \psi}{\partial Z_i}$, $i = 1, \ldots, n$,
		\begin{equation}\label{eq_gamma_exp_ref2}
		\begin{aligned}
			& \Gamma^L_{0, \psi (Z)}(e) = \frac{1}{2} \Big( R^L_{y_0}(Z_H, e) + R^L_{\psi (Z_H, 0)}(Z_N, e) \Big) + O(|Z|^3),  
			\\
			& \Gamma^E_{0, \psi (Z)}(e) = O(|Z|).
		\end{aligned}
		\end{equation}
		Now, by (\ref{eq_scal_prod_12}) and (\ref{eq_gamma_exp_ref2}), we conclude that 
		\begin{equation}\label{eq_gamma_exp_ref2222}
			\Gamma^L_{0, \psi (Z)}(e) = \frac{1}{2} R^L_{y_0}(Z, e) + O(|Z|^3),  
		\end{equation}
		\par 
		Now, we denote by $\gamma(t)$ the geodesic $\phi (t h(Z))$, $t \in [0, 1]$. By the definition of $h$, we have $\gamma(1) = \psi(Z)$. 
		From Proposition \ref{prop_diff_exp}, we easily see
		\begin{equation}\label{eq_gamma_in_psi_ccc}
			\psi^{-1}(\gamma(t)) = h^{-1}(t h(Z)) 
			= tZ + (t - t^2) B(Z) + O(|Z|^3).
		\end{equation}
		\par 
		Directly from the definition of $\xi_E$, the fact that $\nabla^E_{\gamma'(t)} \tilde{f}'_i = 0$, (\ref{eq_frame_tilde}), and the usual law of derivation of the exponential, we have
		\begin{equation}\label{eq_phi_der_11}
			\int_0^{1}
			\exp \big((1 - s) \xi_E(\gamma(t)) \big)
			\cdot
			\Big(
			\frac{\partial}{\partial t} \xi_E(\gamma(t))
			\Big)
			\cdot
			\exp \big((s - 1) \xi_E(\gamma(t)) \big)
			ds
			=
			\Gamma^E_{0, \gamma(t)}(\gamma'(t)).
		\end{equation}
		From (\ref{eq_phi_der_11}) and the last equation from (\ref{eq_gamma_exp_ref2}), we deduce (\ref{eq_phi_fun_exp1}).
		The statement about the coefficients of the Taylor expansions readily follows from (\ref{eq_phi_der_11}) and the respective statement from Proposition \ref{prop_phi_fun_exp}.
		\par 
		Now, it is only left to establish (\ref{eq_phi_fun_exp12}).
		Since now $E := L$ is a line bundle, (\ref{eq_phi_der_11}) simplifies to
		\begin{equation}\label{eq_phi_der_111}
			\frac{\partial}{\partial t} \xi_L(\gamma(t))
			=
			\Gamma^L_{0, \gamma(t)}(\gamma'(t)).
		\end{equation}
		From (\ref{eq_gamma_in_psi_ccc}), we deduce the following asymptotics
		\begin{equation}\label{eq_phi_der_12}
			\int_0^{1} R^L_{y_0} \Big( \psi^{-1}(\gamma(t)), \frac{\partial \psi^{-1}(\gamma(t))}{\partial t} \Big) dt 
			=
			- \frac{1}{3} R^L_{y_0} \Big(Z, B(Z) \Big) + O(|Z|^4).
		\end{equation}
		From  (\ref{eq_gamma_exp_ref2222}), (\ref{eq_phi_der_111}) and (\ref{eq_phi_der_12}), we deduce the result. The statement about the form of the coefficients follows from the respective statements in Proposition \ref{prop_diff_exp} and Lemma \ref{lem_gamma_exp}.
	\end{proof}
	
\subsection{Complex structure in Fermi coordinates and quasi-plurisubharmonicity}\label{sect_cmplx_fermi}
	The main goal of this section is to give for Kähler manifolds of bounded geometry an approximate formula for the complex structure in Fermi coordinates.
	As a consequence, we establish quasi-plurisubharmonicity of several functions, crucial for certain $L^2$-estimates.
	\par 
	More precisely, assume $(X, Y, g^{TX})$ is a Kähler triple of bounded geometry.
	We fix a point $y_0 \in Y$ and an orthonormal frame $(e_1, \ldots, e_{2m})$ (resp. $(e_{2m+1}, \ldots, e_{2n})$) in $(T_{y_0}Y, g_{y_0}^{TY})$ (resp. in $(N_{y_0}, g^{N}_{y_0})$), satisfying (\ref{eq_cond_jinv}).
	Recall that in (\ref{eq_defn_fermi}) and (\ref{eq_phi_defn}), we defined two coordinate systems $\psi_{y_0}$, $\phi_{y_0}$ in a neighborhood of $y_0$.
	We denote by $J$ the complex structure of $X$, and by $\textbf{J} = (\textbf{J}_{ij})_{i,j = 1}^{2n}$ its coordinates with respect to the basis $\frac{\partial \psi}{\partial Z_i}$, $i = 1, \ldots, 2n$.
	It is a function, defined in $\mathbb{B}_0^{\real^{2n}}(R)$, where $R$ is as in (\ref{eq_r_defn_const}), with matrix values of size $2n \times 2n$.
	We write $\textbf{J}$ in a block form $\big(\begin{smallmatrix}
		  \textbf{J}_0 &  \textbf{J}_1 \\
		  \textbf{J}_2 &  \textbf{J}_3
		\end{smallmatrix}\big)$,
	where $\textbf{J}_0$ has size $2m \times 2m$.
	The first result of this section goes as follows.
	\begin{lem}\label{lem_cmplx_str_exp}
		The matrix  $\textbf{J}$ has the following Taylor expansion
		\begin{equation}\label{eq_cmplx_str_exp}
			\textbf{J} = J^0 
			+
			 \bigg(
			 \begin{matrix}
			  J^1 & 0\\ 
			  0 & 0
			\end{matrix}
			\bigg)
			+
			 O \big( |Z|^2 \big),
		\end{equation}
		where $J^0$ is the diagonal block matrix with blocks
		$\big(\begin{smallmatrix}
		  0 & -1\\
		  1 & 0
		\end{smallmatrix}\big)$,
		and $J^1 = (J^1_{ij})_{i,j = 1}^{2m}$ is given by 
		\begin{equation}
			J^1_{ij} 
			=
			-2
			 g^{TX}_{y_0}(A(Je_i) Z_N, e_j),
		\end{equation}
		for $A$ defined in (\ref{eq_sec_fund_f}).
		Moreover, the constant in the $O$-terms is uniform on $y_0 \in Y$, and depends only on $C_1$ from the bounds (\ref{eq_bnd_curv_tm}), (\ref{eq_bnd_a_ck}) for $k = 1$.
	\end{lem}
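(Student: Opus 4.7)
The strategy is to compare the Fermi frame $\big(\frac{\partial \psi}{\partial Z_i}\big)_{i=1}^{2n}$ with the geodesic frame $(\tilde{t}'_i)_{i=1}^{2n}$ from Section \ref{sect_par_transport}, i.e.\ the parallel transports of $(e_1, \ldots, e_{2n})$ along the radial geodesic $\phi(tZ)$. Since $(X, g^{TX})$ is Kähler, $\nabla^{TX} J = 0$, so parallel transport commutes with $J$. Combined with (\ref{eq_cond_jinv}) this yields $J \tilde{t}'_{2i-1} = \tilde{t}'_{2i}$ throughout the chart, so in the frame $(\tilde{t}'_i)$ the matrix of $J$ is the constant $J^0$ of the lemma.

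Next, write $\frac{\partial \psi}{\partial Z_i} = \sum_j P_{ji} \tilde{t}'_j$ and expand $P = I + P^{(1)} + O(|Z|^2)$. Combining Lemma \ref{lem_comp_phi_par} (which gives $\frac{\partial \phi}{\partial Z_i} = \tilde{t}'_i$ modulo $O(|Z|^2)$) with equation (\ref{eq_psi_phi_vect_f_com}), one reads off the block form
\[
  P^{(1)} = \begin{pmatrix} P^{(1)}_0 & P^{(1)}_1 \\ P^{(1)}_2 & 0 \end{pmatrix},
\]
whose entries are explicit pairings of the second fundamental form $A$ against $Z_N$ and $Z_Y$; crucially, the normal-normal block of $P^{(1)}$ vanishes.

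Then $\textbf{J} = P^{-1} J^0 P = J^0 + [J^0, P^{(1)}] + O(|Z|^2)$, and the first-order term is computed block by block using two Kähler identities for the second fundamental form of a complex submanifold: (i) $A(u) \circ J = J \circ A(u)$ as endomorphisms of $TX|_Y$ for every $u \in TY$, which follows from $\nabla^{TX} J = 0$, $\nabla^{TY} J|_{TY} = 0$ and $\nabla^N J|_N = 0$; and (ii) $A(Ju) = J A(u)$ as maps $TY \to N$, but $A(Ju) = -J A(u)$ as maps $N \to TY$, the sign flip being a consequence of Weingarten's formula together with the symmetry of the classical second fundamental form $\alpha$. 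Identity (i) forces the tangential-normal and normal-tangential blocks of $[J^0, P^{(1)}]$ to cancel, while the sign flip in (ii) is what produces the factor $-2$ in the tangential-tangential block, giving $J^1_{ij} = -2 \, g^{TM}_{y_0}(A(Je_i) Z_N, e_j)$ after rearrangement.

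Finally, for the sharper $O(|Z| |Z_N|)$ estimate on the normal-normal block, observe that a priori this block of $\textbf{J} - J^0$ is only $O(|Z|^2)$, since both $P^{(1)}_3 = 0$ and the commutator in that block vanishes. The refinement uses that at $Z_N = 0$ the Fermi vectors $\frac{\partial \psi}{\partial Z_j}$, $j \geq 2m+1$, are the $\nabla^N$-parallel transports of $e_j$ along $\exp^Y_{y_0}(tZ_Y)$, and since $Y$ is a complex submanifold of a Kähler manifold, $\nabla^N J|_N = 0$; hence the normal-normal block of $\textbf{J}$ equals $J^0_N$ identically on $\{Z_N = 0\}$. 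Combined with the vanishing of the $Z_N$-linear Taylor coefficient at the origin (the only first-order contribution, $J^1$, sits in the tangential-tangential block), Taylor's theorem delivers the desired $O(|Z| |Z_N|)$ bound. Uniformity in $y_0 \in Y$ of all implicit constants is inherited from the bounded geometry hypotheses via Propositions \ref{prop_bndg_tripl} and \ref{prop_diff_exp}. The main technical hurdle is the bookkeeping of the two Kähler identities for $A$ and the resulting precise cancellations in the off-diagonal blocks and the doubling to $-2$ in the diagonal; the rest is a matrix computation.
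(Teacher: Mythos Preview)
Your proof is correct and follows essentially the same approach as the paper: both compare the Fermi frame to the $\nabla^{TX}$-parallel frame via Lemma \ref{lem_comp_phi_par} and (\ref{eq_psi_phi_vect_f_com}), then use the Kähler identities for $A$ (your (i), (ii) are precisely the content of (\ref{eq_gtx_commut})) to compute the first-order terms, and finally refine the normal-normal block to $O(|Z||Z_N|)$ via $\nabla^N$-parallelism of the normal Fermi vectors along $Y$ (your argument is exactly (\ref{eq_jpsi_cond})). Your packaging of the first-order computation as $\textbf{J} = P^{-1}J^0 P = J^0 + [J^0, P^{(1)}] + O(|Z|^2)$ is a slightly cleaner organization of the same calculation the paper performs directly.
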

	\begin{proof}
		The proof is essentially based on Lemma \ref{lem_comp_phi_par} and (\ref{eq_psi_phi_vect_f_com}).
		Recall that the vector fields $\tilde{t}'_i$, $i = 1, \ldots, 2n$, were constructed in Lemma \ref{lem_comp_phi_par} by the parallel transport of $e_i$ with respect to $\nabla^{TX}$ along the curve $\phi(tZ)$, $t \in [0, 1]$, $Z \in T_{y_0}X$.
		As $(X, g^{TX})$ is Kähler, by \cite[Theorem 1.2.8]{MaHol}, $J$ is parallel with respect to $\nabla^{TX}$.
		This means that by (\ref{eq_cond_jinv}), for $i = 1, \ldots, n$, we have 
		\begin{equation}\label{eq_cond_jinvtvect}
			J \tilde{t}'_{2i - 1} = \tilde{t}'_{2i}.
		\end{equation}
		From (\ref{eq_par_tr_coord_comp}) and (\ref{eq_cond_jinvtvect}), we conclude that
		\begin{equation}\label{eq_jphi}
			J \frac{\partial \phi}{\partial Z_{2i - 1 }}
			=
			\frac{\partial \phi}{\partial Z_{2i}}
			+
			O(|Z|^2).
		\end{equation}
		Remark that (\ref{eq_jphi}) already implies Lemma \ref{lem_cmplx_str_exp} for $Y := \{x_0\}$.
		Now, let us establish the general case.
		By the fact that $A$ takes values in skew-adjoint matrices and commutes with $J$, the $J$-invariance of $g^{TX}$ and (\ref{eq_a_no_tors}), we deduce that for $i = 1, \ldots, 2m$, $j = 2m + 1, \ldots, 2n$, $l = 1, \ldots, 2n$, for $Z_Y \in \real^{2m}$, $Z_N \in \real^{2(n-m)}$, we have
		\begin{equation}\label{eq_gtx_commut}
		\begin{aligned}
			&
			g^{TX}_{y_0}
 			\big(
 				A(
 				Z_Y
 				) e_j,
 				e_l
 			\big)
 			=
 			g^{TX}_{y_0}
 			\big(
 				A(
 				Z_Y
 				) J e_j,
 				J e_l
 			\big),
 			\\
 			&
			g^{TX}_{y_0}
 			\big(
 				A(
 				e_i
 				) Z_Y,
 				e_l
 			\big)
 			=
 			g^{TX}_{y_0}
 			\big(
 				A(
 				J e_i
 				) Z_Y,
 				J e_l
 			\big),
 			\\
 			&
			g^{TX}_{y_0}
 			\big(
 				A(
 				e_i
 				) Z_N,
 				e_l
 			\big)
 			=
 			-
 			g^{TX}_{y_0}
 			\big(
 				A(
 				J e_i
 				) Z_N,
 				J e_l
 			\big),
 		\end{aligned}
		\end{equation}
		By (\ref{eq_psi_phi_vect_f_com}), (\ref{eq_jphi}) and (\ref{eq_gtx_commut}), we deduce (\ref{eq_cmplx_str_exp}).
	\end{proof}
	\par 
	Now, recall that a function $f : X \to [-\infty, +\infty[$ on a complex Hermitian manifold $(X, \omega)$ is called quasi-plurisubharmonic if it is upper-semicontinuous, and there is a constant $C \in \real$, such that the following inequality holds in the distributional sense
	\begin{equation}\label{eq_distr_ineq}
		\imun \partial \dbar f \geq -C \omega.
	\end{equation}
	We denote by ${\rm{PSH}}(X, C \omega)$ the set of quasi-plurisubharmonic functions $f$, verifying (\ref{eq_distr_ineq}).
	\par 
	Recall that by standard properties of the plurisubharmonic functions, cf. \cite[Theorem I.5.6]{DemCompl}, for any convex function $\chi : \real \to \real$, verifying $0 \leq \chi' \leq 1$, $f \in {\rm{PSH}}(X, C \omega)$, $C \geq 0$, we have
	\begin{equation}\label{eq_chi_f_psh}
		\chi \circ f \in {\rm{PSH}}(X, C \omega) \quad \text{for any} \quad f \in {\rm{PSH}}(X, C \omega).
	\end{equation}
	\par 
	Later on, on several occasions, we will need the following lemma.
	\begin{lem}[{\cite[Proposition 4.2]{HarvLawsPotTh}}]\label{lem_harv_law_form}
		Suppose that $J$ is an almost complex structure on an open subset $\Omega \subset \real^{2n}$. 
		Let $v$ be a constant coefficient vector field on $\Omega$ (i.e. $v = \sum v_i \frac{\partial}{\partial Z_i}$ where $v_i$ are constants).
		Then for any smooth function $f : \Omega \to \real$, the following identity holds
		\begin{multline}\label{eq_harv_law_form}
			\partial \dbar f
			\Big(
				v - \imun Jv,
				v + \imun Jv
			\Big)
			=
			(D^2 f)(v, v)
			+
			(D^2 f)(Jv, Jv)
			\\
			+
			(D f)
			\Big(
				D_{Jv}J (v) - D_vJ (Jv)
			\Big),
		\end{multline}
		where $D^2 f$ is the standard double derivative of $f$, and $D_uJ$ is the derivative in the direction $u \in \real^{2n}$, of the almost complex structure, written in the standard coordinates of $\real^{2n}$.
	\end{lem}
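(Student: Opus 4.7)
The plan is to compute $\partial \dbar f(u,w)$ directly with $u := v - \imun Jv$ and $w := v + \imun Jv$, exploiting the fact that $v$ has constant coefficients (so $Dv = 0$) and $J^2 = -I$. The key identities I will use are: \emph{(a)} the Cartan formula for the exterior derivative of a $1$-form, and \emph{(b)} the fact that $\partial\dbar f$ equals the $(1,1)$-part of $d(\dbar f)$, which on a pair $(u,w)$ of type $(1,0)\times(0,1)$ simplifies because $\dbar f(u)=0$ and $\dbar f(w) = df(w) = w(f)$. Together these give
\begin{equation*}
  \partial\dbar f(u,w) = u\bigl(w(f)\bigr) - df\bigl([u,w]^{0,1}\bigr).
\end{equation*}

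First I would expand $u(w(f)) = (v - \imun Jv)\bigl((v + \imun Jv)f\bigr)$ using the following elementary fact valid in $\real^{2n}$: for any smooth vector fields $X,Y$,
\begin{equation*}
  X(Y(f)) = (D^2 f)(X,Y) + (D_X Y)(f),
\end{equation*}
where $D_X Y$ is the componentwise directional derivative. Since $Dv=0$, the only nontrivial contributions to $D_X Y$ come from differentiating $Jv$. The four terms produce $(D^2 f)(v,v) + (D^2 f)(Jv,Jv)$, the imaginary cross terms $(D^2f)(v,Jv) - (D^2f)(Jv,v)$ cancelling by symmetry of $D^2 f$, plus the first-order pieces $\imun (D_v J)(v)(f) + (D_{Jv}J)(v)(f)$.

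Next I would compute $[u,w]$. Since $[v,v]=0=[Jv,Jv]$ and $[v,Jv] = (D_v J)(v)$ (again because $v$ has constant coefficients), one gets $[u,w] = 2\imun (D_v J)(v)$. Its $(0,1)$-projection is $\frac{1}{2}([u,w] + \imun J[u,w]) = \imun (D_v J)(v) - J(D_v J)(v)$, so
\begin{equation*}
  df\bigl([u,w]^{0,1}\bigr) = \imun (D_v J)(v)(f) - \bigl(J(D_v J)(v)\bigr)(f).
\end{equation*}
Subtracting, the $\imun(D_v J)(v)(f)$ contributions cancel and we are left with
\begin{equation*}
  \partial\dbar f(u,w) = (D^2 f)(v,v) + (D^2 f)(Jv,Jv) + (D_{Jv}J)(v)(f) + \bigl(J(D_v J)(v)\bigr)(f).
\end{equation*}

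The final step is to rewrite $J(D_v J)(v)$ using the algebraic identity $J(D_v J) = -(D_v J) J$, obtained by differentiating $J^2 = -\mathrm{Id}$ in direction $v$. This gives $J(D_v J)(v) = -(D_v J)(Jv)$, and the formula (\ref{eq_harv_law_form}) follows immediately upon identifying $(Df)(X) = X(f)$. There is no real obstacle; the only point requiring care is bookkeeping of signs and factors of $\imun$, and the use of $J^2 = -\mathrm{Id}$ to convert $J(D_v J)(v)$ into $-(D_v J)(Jv)$, which is what produces the precise sign in the last term of the stated formula.
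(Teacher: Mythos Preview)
Your proof is correct. The paper does not give its own proof of this lemma but simply cites it from Harvey--Lawson \cite{HarvLawsPotTh}; your direct computation via the Cartan formula, together with the observations $\partial\bar\partial f(u,w)=d(\bar\partial f)(u,w)$ on a $(1,0)\times(0,1)$ pair and $J(D_vJ)=-(D_vJ)J$ from differentiating $J^2=-\mathrm{Id}$, is a clean self-contained verification of the identity.
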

	\par 
	For any $x_0 \in X$, we consider the function $\alpha_{x_0} : \mathbb{B}_{x_0}^{X}(r_X) \to \real$, given by 
	\begin{equation}\label{eq_defn_alpha}
		\alpha_{x_0}(x) := \dist_X(x_0, x)^2. 
	\end{equation}
	The proof of the following lemma is a direct consequence of Lemmas \ref{lem_cmplx_str_exp} and \ref{lem_harv_law_form}.
	\begin{lem}\label{prop_plurisub_pt}
		Assume that a Kähler manifold $(X, g^{TX})$ is of bounded geometry. 
		Then there is $r_0 > 0$, such that for any $x_0 \in X$, $\alpha_{x_0} \in {\rm{PSH}}(\mathbb{B}_{x_0}^{X}(r_0), - \frac{1}{2} \omega)$.
		Moreover, $r_0$ depends only on $C_1$ from the bound (\ref{eq_bnd_curv_tm}) and $r_X$.
	\end{lem}
	\par 
	Assume that a Kähler triple $(X, Y, g^{TX})$ is of bounded geometry. 
	Let us consider $\delta_Y : X \setminus Y \to \real$, $\alpha_Y : X \to \real$,  defined as
	\begin{equation}\label{eq_delta_defn_y}
	\begin{aligned}
		&
		\delta_Y(x) := \log \big(\dist_X(x, Y) \big) \cdot \rho 
		\Big(
			 \frac{\dist_X(x, Y)}{r_{\perp}} 
		\Big),
		\\
		&
		\alpha_Y(x) := \dist_X(x, Y)^2 \cdot \rho 
		\Big(
			 \frac{\dist_X(x, Y)}{r_{\perp}} 
		\Big)
		+
		\Big(
		1
		-
		\rho 
		\Big(
			 \frac{\dist_X(x, Y)}{r_{\perp}} 
		\Big)
		\Big)
		,
	\end{aligned}
	\end{equation}
	where $\rho$ was defined in (\ref{defn_rho_fun}).
	Remark that since $\dist_X(x, Y)^2$ is smooth over $\mathbb{B}_Y^X(r_{\perp})$, both functions $\delta_Y$ and $\alpha_Y$ are smooth.
	\begin{lem}\label{thm_plurisub}
		There is $C > 0$, such that $\delta_Y \in {\rm{PSH}}(X, C \omega)$, $\alpha_Y \in {\rm{PSH}}(X, C \omega)$, $-\alpha_Y \in {\rm{PSH}}(X, C \omega)$.
		Moreover, $C$ depends only on $C_1$ from the bounds (\ref{eq_bnd_curv_tm}), (\ref{eq_bnd_a_ck}) and $r_{\perp}$.
	\end{lem}
	\begin{proof}
		The proofs for $\delta_Y$, $\alpha_Y$, $-\alpha_Y$ are similar, so we concentrate on $\delta_Y$, which is slightly more difficult than the other two.
		From Lemma \ref{lem_cmplx_str_exp} and bounded geometry condition, we see that $\imun \partial \dbar \delta_Y$ is uniformly bounded away from in $\mathbb{B}_Y^X(\frac{r_{\perp}}{4})$.
		So it is enough to study this quantity only on $\mathbb{B}_Y^X(\frac{r_{\perp}}{4})$.
		We fix a point $y_0 \in Y$, consider Fermi coordinates $\psi_{y_0}$, and introduce the vector field
		\begin{equation}\label{eq_vvect_dec}
			v 
			=
			\sum_{i = 1}^{2m}
			a_i
			\frac{\partial \psi}{\partial Z_i}
			+
			\sum_{j = 2m + 1}^{2n}
			b_j
			\frac{\partial \psi}{\partial Z_j},
		\end{equation}
		where $a_i, b_j \in \comp$, $i = 1, \ldots, 2m$, $j = 2m + 1, \ldots, 2n$, are certain constants, $\sum |a_i|^2 + \sum |b_j|^2  = 1$.
		It is enough to verify that there is a constant $C > 0$, as described in the statement of the proposition we're proving, such that for any choice of $a_i$, $b_j$ above, over $\{ \psi_{y_0}(Z_N) : |Z_N| < \frac{r_{\perp}}{4} \}$, we have
		\begin{equation}\label{eq_j_asymp0}
			\partial \dbar \delta_Y
			\Big(
				v - \imun Jv,
				v + \imun Jv
			\Big)
			\geq
			- C.
		\end{equation}
		\par 
		Let us now calculate each term on the right-hand side of (\ref{eq_harv_law_form}) up to negligible terms.
		First of all, remark that in Fermi coordinates, over $\mathbb{B}_Y^X(\frac{r_{\perp}}{4})$, $\delta_Y$ has particularly simple form
		\begin{equation}
			\delta_Y(Z) = \log |Z_N| = \frac{1}{2} \log \Big( \sum_{j = 2m + 1}^{2n} |Z_j|^2 \Big).
		\end{equation}
		Hence, by (\ref{eq_vvect_dec}), we deduce
		\begin{equation}\label{eq_j_asymp1}
			(D^2 \delta_Y)(v, v)
			=
			\frac{1}{|Z_N|^4} \Big( 	
				\Big( \sum_{j = 2m + 1}^{2n} |b_j|^2 \Big)
				|Z_N|^2
				-
				2
				\Big( \sum_{j = 2m + 1}^{2n} b_j Z_j \Big)^2
			\Big).
		\end{equation}
		A similar calculation, using Lemma \ref{lem_cmplx_str_exp}, gives that over $\{ \psi_{y_0}(Z_N) : |Z_N| <  \frac{r_{\perp}}{4}  \}$, we have
		\begin{multline}\label{eq_j_asymp2}
			(D^2 \delta_Y)(Jv, Jv)
			=
			\\
			\frac{1}{|Z_N|^4} \Big( 	
				\Big( \sum_{j = 2m + 1}^{2n} |b_j|^2 \Big)
				|Z_N|^2
				-
				2
				\Big( \sum_{j = m + 1}^{n} \big( b_{2 j - 1} Z_{2j} - b_{2 j} Z_{2j - 1} \big) \Big)^2
			\Big)
			+
			O(1),
		\end{multline}
		where the $O$-term can be bounded uniformly in terms of $C_1$ from the bounds (\ref{eq_bnd_curv_tm}), (\ref{eq_bnd_a_ck}) and $r_{\perp}$.
		From Lemma \ref{lem_cmplx_str_exp}, we see that the last two terms in (\ref{eq_harv_law_form}) are negligible.
		From this (\ref{eq_harv_law_form}), (\ref{eq_j_asymp1}) and (\ref{eq_j_asymp2}), we see that to deduce (\ref{eq_j_asymp0}), it is now only left to prove the bound
		\begin{equation}\label{eq_j_asymp4}
			\Big( \sum_{j = 2m + 1}^{2n} |b_j|^2 \Big) |Z_N|^2
			\geq 
			\Big( \sum_{j = m + 1}^{n} \big( b_{2 j - 1} Z_{2j} - b_{2 j} Z_{2j - 1} \big) \Big)^2
			+
			\Big( \sum_{j = 2m + 1}^{2n} b_j Z_j \Big)^2.
		\end{equation}
		But (\ref{eq_j_asymp4}) follows easily from the fact that the vectors $(Z_{2m + 2}, - Z_{2m + 1}, \ldots,  Z_{2n}, -Z_{2n-1})$ and $(Z_{2m + 1}, \ldots, Z_{2n})$  are orthogonal.
	\end{proof}

	\subsection{Existence of uniform Stein atlases and related extension theorems}\label{sect_stein_atl}
	The main goal of this section is to introduce and study manifolds and submanifolds with uniform Stein atlases.
	In particular, we prove that any Kähler manifold of bounded geometry admits a uniform Stein atlas and we make a relation between the existence of a uniform Stein atlas and the existence of holomoprhic coordinates, defined on sufficiently big geodesic balls.
	\par 
	Before this, recall that by Cauchy formula, the $L^2$-bound on holomoprhic functions on the complex plane implies $\ccal^k$-bounds for any $k \in \nat$.
	As the following lemma shows, an analogous statement holds for general Kähler manifolds of bounded geometry.
	\begin{lem}\label{lem_ell_reg_sob_bnd_g}
		Assume that a Kähler manifold $(X, g^{TX})$ and  a Hermitian vector bundle $(E, h^E)$ over it have bounded geometry.
		Then for any $k \in \nat$, $r_1 > r_0 > 0$, $r_1 < \frac{r_X}{2}$, there is $C > 0$, such that for any $x_0 \in X$, for any $f \in H^0_{(2)}(\mathbb{B}_{x_0}^{X}(r_1), E)$, we have
		\begin{equation}
			\|
				f
			\|_{\ccal^k(\mathbb{B}_{x_0}^{X}(r_0))}
			\leq
			C
			\|
				f
			\|_{L^2(\mathbb{B}_{x_0}^{X}(r_1))}.
		\end{equation}
		Moreover, $C$ depends only on $r_0, r_1$ and $C_{k + n + 4}$ from (\ref{eq_bnd_curv_tm}) and (\ref{eq_bnd_re_ck}).
	\end{lem}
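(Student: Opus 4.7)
The plan is to transfer the estimate to a fixed Euclidean ball via the geodesic chart, run a standard elliptic bootstrap for the Kodaira Laplacian equation, and finish by Sobolev embedding. Thanks to the bounded geometry assumption, every constant produced along the way will be independent of $x_0$.

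Concretely, I would fix $x_0 \in X$ and pull $g^{TX}$, $h^E$ and $\nabla^E$ back through the geodesic chart $\phi_{x_0}$ of Section \ref{sect_bnd_geom_cf} and the associated parallel-transport frame $\tilde{f}'_1, \ldots, \tilde{f}'_r$ of $E$. By Propositions \ref{prop_bndg_man} and \ref{prop_bndg_vect_1}, the components $g_{ij}, g^{ij}$ of the pulled-back metric and the connection one-form of $\nabla^E$ on $B_0^{\real^{2n}}(r_1)$ are uniformly bounded in $\ccal^{l}$ for $l \leq k+n+2$, with bounds depending only on $C_{k+n+4}$ from (\ref{eq_bnd_curv_tm}) and (\ref{eq_bnd_re_ck}). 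In this trivialization the holomorphicity condition $\dbar f = 0$ becomes a first order linear elliptic system in $\real^{2n}$ whose matrix coefficients are built from $g_{ij}, g^{ij}$ and the connection form, and hence enjoy the same uniform $\ccal^{k+n+2}$-bounds. Equivalently, $f$ lies in the kernel of the Kodaira Laplacian $\Box^E = \dbar^* \dbar$, a genuinely second order elliptic operator whose coefficients inherit the same bounds (one loses one extra derivative, which is the reason for the safety margin in the index $k+n+4$).

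Next, I would pick nested radii $r_0 = \rho_0 < \rho_1 < \cdots < \rho_{k+n+1} = r_1$ and apply standard interior elliptic regularity for $\Box^E f = 0$ iteratively:
\begin{equation*}
	\big\| f \big\|_{W^{j+1,2}(B_0^{\real^{2n}}(\rho_j))}
	\leq
	C_j \big\| f \big\|_{W^{j,2}(B_0^{\real^{2n}}(\rho_{j+1}))},
	\qquad j = 0, \ldots, k+n,
\end{equation*}
where each $C_j$ depends only on $\rho_j, \rho_{j+1}$ and the $\ccal^{j+1}$-norms of the coefficients of $\Box^E$, hence only on $r_0, r_1$ and $C_{k+n+4}$. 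Chaining these $k+n+1$ inequalities yields
\begin{equation*}
	\big\| f \big\|_{W^{k+n+1,2}(B_0^{\real^{2n}}(r_0))}
	\leq
	C \big\| f \big\|_{L^2(B_0^{\real^{2n}}(r_1))}.
\end{equation*}
The Sobolev embedding $W^{k+n+1,2}(B_0^{\real^{2n}}(r_0)) \hookrightarrow \ccal^k(B_0^{\real^{2n}}(r_0))$, valid in real dimension $2n$ since the Sobolev index exceeds $k + n$, then upgrades this to a $\ccal^k$-bound in coordinates.

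Finally, I would translate back to the intrinsic norms appearing in the statement: the covariant derivatives differ from coordinate derivatives by contractions with $g_{ij}, g^{ij}$ and with the connection forms of $\nabla^E$ and $\nabla^{TX}$, all uniformly $\ccal^k$-bounded by Propositions \ref{prop_bndg_man} and \ref{prop_bndg_vect_1}; likewise $dv_{g^{TX}} = \det(g_{ij})^{1/2} dZ$ has density with uniform two-sided bounds, so the coordinate $L^2$-norm is uniformly equivalent to the intrinsic one. No step here is really an obstacle: the whole argument is a routine bootstrap, and the only delicate point is tracking how many derivatives of the metric and of $R^E$ are consumed by the bootstrap together with the Sobolev step, which is precisely what pins down the dependence on $C_{k+n+4}$ rather than on a lower-index constant.
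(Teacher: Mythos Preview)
Your proposal is correct and is precisely the standard elliptic-bootstrap-plus-Sobolev argument that underlies the result; the paper simply cites Ma--Marinescu \cite[Lemma 2]{MaMarOffDiag} (applied with trivial $(L,h^L)$) rather than spelling it out. Your write-up is in fact close to the alternative proof the author sketches in a commented-out passage, so there is no substantive difference in approach.
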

	\begin{proof}
		It is a direct consequence of Sobolev embedding and elliptic estimates, cf. \cite[(A.1.15), (A.1.17)]{MaHol}.
	\end{proof}
	\par 
	Recall that a complex manifold is said to be \textit{Stein} if it is holomorphically convex, its global holomorphic functions separate points and give local coordinates at every point, cf. \cite[Definition 1.6.16]{DemCompl}.
	One can see, cf. \cite[Theorem 1.6.18]{DemCompl}, that any Stein manifold is strongly pseudoconvex, meaning that it carries a smooth strictly plurisubharmonic exhaustion function.
	The famous result of Grauert from \cite[Theorem 2]{GrauertLevi}, states that the converse for relatively compact subdomains of complex manifolds holds as well.
	In this section, we will use this perspective through strictly plurisubharmonic exhaustions on Stein manifolds.
	\par 
	\begin{defn}\label{eq_stein_uniform_man}
	 We say that a Hermitian manifold $(X, g^{TX})$ admits a uniform Stein atlas if there are $r_1 > r_0 > 0$, $c, C > 0$, such that for any $x_0 \in X$, there is a Stein neighborhood $\Omega$ of $x_0$, such that  $\mathbb{B}_{x_0}^{X}(r_0) \subset \Omega \subset \mathbb{B}_{x_0}^{X}(r_1)$, and a smooth strictly plurisubharmonic exhaustion function $\delta$ on $\Omega$, verifying the following bounds
	 	\begin{equation}\label{eq_psi_norm_man}
	 	\begin{aligned}
	 		& 
	 		\delta > 0 \quad \text{ over } \, \Omega, \qquad \delta < C  \quad \text{ over } \, \mathbb{B}_{x_0}^{X}(r_0),
	 		\\
			&
			\imun \partial \dbar \delta > c \omega,
		\end{aligned}
		\end{equation}
		where $\omega$ is the Hermitian form associated to $g^{TX}$.
	\end{defn}
	\begin{prop}\label{thm_st_atl_ext}
		Any Kähler manifold of bounded geometry $(X, g^{TX})$ admits uniform Stein atlas.
		Moreover, $r_1 > r_0 > 0$, $c, C > 0$ from Definition \ref{eq_stein_uniform_man} depend only on $C_1$ from the bound (\ref{eq_bnd_curv_tm}) and $r_X$.
	\end{prop}
	\begin{proof}
		Recall that $\alpha_{x_0}$ was defined in (\ref{eq_defn_alpha}).
		By Lemma \ref{prop_plurisub_pt}, there is a uniform constant $r_1 > 0$, which depends only on $C_1$ from the bound (\ref{eq_bnd_curv_tm}) and $r_X$, such that $\imun \partial \dbar \alpha_{x_0} > \frac{1}{2} \omega$ over $\mathbb{B}_{x_0}^X(r_1)$.
		We now take $r_0 := \frac{r_1}{2}$, and let $\delta := \frac{1}{r_1^{2} -  \alpha_{x_0}}$.
		By the above and (\ref{eq_chi_f_psh}), we deduce that $\delta$ is strictly plurisubharmonic.
		Since it is clearly exhaustive, we deduce that $\Omega := \mathbb{B}_{x_0}^{X}(r_1)$ is strongly pseudoconvex.
		By (\ref{eq_chi_f_psh}), we conclude that the requirements of Definition \ref{eq_stein_uniform_man} are satisfied for the above choice and $c := \frac{1}{2 r_1^2}$, $C := \frac{2}{r_1^2}$.
	\end{proof}
	\begin{prop}\label{thm_hol_coord_exst1}
		For any Kähler manifold of bounded geometry $(X, g^{TX})$, there are $r_c, C > 0$, such that for any $x_0 \in X$, there are holomorphic coordinates $\chi := (h_1, \ldots, h_n) : \mathbb{B}_{x_0}^{X}(r_c) \to \comp^n$, verifying $h_i(x_0) = 0$, $|h_i| < C$, $i = 1, \ldots, n$, and such that $dh_i(x_0)$ form an orthonormal frame of $(T^{(1, 0)*}X, g^{T^*X})$.
		Moreover, $r_c, C > 0$, depend only on $C_{n + 7}$ from the bound (\ref{eq_bnd_curv_tm}) and $r_X$.
	\end{prop}
	\begin{proof}
		The main idea of the proof is to first construct smooth functions $h'_i$, $i = 1, \ldots, n$, as in Proposition \ref{thm_hol_coord_exst1} and then by using Hörmander's $L^2$-estimates to perturb $h'_i$ to holomorphic coordinates $h_i$.
		\par 
		Let $r_0, r_1 > 0$, $\Omega$, $\delta$ be as in Definition \ref{eq_stein_uniform_man}.
		First, we define $h'_i$ in geodesic coordinates $\phi_{x_0}$ by
		\begin{equation}\label{eq_hpr_zi_defn}
			h'_i := z_i \rho \Big(  \frac{|Z|}{r_0} \Big),
		\end{equation}
		where $\rho$ is a bump function as in (\ref{defn_rho_fun}).
		By bounded geometry condition, there is $C_1 > 0$, which depends only on $r_0$ and $C_0$ from (\ref{eq_bnd_curv_tm}), such that 
		\begin{equation}\label{eq_hi_l2_nrm}
			\int_{\Omega} |h'_i|^2 dv_{g^{TX}} \leq C_1.
		\end{equation}
		Directly from Lemma \ref{lem_cmplx_str_exp}, applied for $Y := \{ x_0 \}$, we see that there is $C_2 > 0$, which depends only on $r_0$ and $C_1$ from (\ref{eq_bnd_curv_tm}), such that over $\mathbb{B}_{x_0}^{X}(r_0)$, we have
		\begin{equation}\label{eq_dbarhi_pr_bnd}
			|\dbar h'_i| \leq C_2 |Z|^2.
		\end{equation}
		\par 
		From this, the fact that ${\rm{supp}} \, \dbar h'_i \subset \mathbb{B}_{x_0}^{X}(r_0)$ and the first estimate from (\ref{eq_psi_norm_man}), we conclude that for any $c > 0$, there is a uniform constant $C_3$, which depends only on $r_0$, $C$ from the first line in (\ref{eq_psi_norm_man}), and $C_1$ from (\ref{eq_bnd_curv_tm}), such that for $\delta_{ \{x_0\} }$, from Lemma \ref{thm_plurisub}, we have
		\begin{equation}\label{eq_est_dbarhpro}
		\int_{\Omega} |\dbar h'_i|^2 \exp(-c \delta) \cdot \exp(- (2n + 1) \delta_{ \{x_0\} }) dv_{g^{TX}}
		\leq
		C_3.
		\end{equation}
		\par 
		Now, as $(X, g^{TX})$ is of bounded geometry, by Lemma \ref{thm_plurisub}, there is $c > 0$ depending only on $C_1$ from (\ref{eq_bnd_curv_tm}) and $c$ from the second line of (\ref{eq_psi_norm_man}), such that for $E := K_X^{-1}$, $h^E := (h^{K_X})^{-1} \cdot \exp(-c \delta) \cdot \exp(- (2n + 1) \delta_{ \{x_0\} })$, the curvature $R^E$ of $(E, h^E)$ over $\Omega$ satisfies $\imun R^E \geq \omega \otimes {\rm{Id}}_E$,
		where the positivity here is in the sense of Nakano, cf. \cite[Definition VII.6.3]{DemCompl}.
		By (\ref{eq_est_dbarhpro}) and the result \cite[Théorème 0.2]{Dem82}, which states that $\Omega$ admits a complete Kähler metric, we may apply \cite[Théorème 5.1]{Dem82} for $X := \Omega$, $g := \dbar h'_i$ and $(E, h^E)$ as above to deduce that there are sections $f_i \in L^2(\Omega, K_{\Omega} \otimes E)$, such that we have $\dbar f_i = g$, and 
		\begin{multline}\label{eq_est_fi}
			\int_{\Omega} |f_i|^2 \exp(-c \delta) \cdot \exp(- (2n + 1) \delta_{ \{x_0\} }) dv_{g^{TX}}
			\\
			\leq
			\int_{\Omega} |\dbar h'_i|^2 \exp(-c \delta) \cdot \exp(- (2n + 1) \delta_{ \{x_0\} }) dv_{g^{TX}}.
		\end{multline}
		\par 
		We now put $h_i := h'_i - f_i$.
		We trivially have $\dbar h_i = 0$. 
		Also, by (\ref{eq_psi_norm_man}), (\ref{eq_hi_l2_nrm}), (\ref{eq_est_dbarhpro}) and (\ref{eq_est_fi}), we obtain
		\begin{equation}\label{eq_hi_bnd_l2}
			\int_{\mathbb{B}_{x_0}^X(r_0)} |h_i|^2 dv_{g^{TX}}
			\leq
			C_4,
		\end{equation}
		where $C_4$ depends only on $r_0$, $C$ from the first line in (\ref{eq_psi_norm_man}), and $C_1$ from (\ref{eq_bnd_curv_tm}).
		By the standard regularity result, \cite[Lemme 6.9]{Dem82}, $h_i$ is smooth and holomorphic on $\Omega$, and the equation $\dbar h_i = 0$ holds in the usual sense.
		From this, (\ref{eq_est_dbarhpro}), (\ref{eq_est_fi}) and the non-integrability of $|Z| \exp(- (2n + 1) \delta_{ \{x_0\} })$ and $ \exp(- (2n + 1) \delta_{ \{x_0\} })$ near $x_0$, we deduce that $f_i(x_0) = 0$ and $d f_i(x_0) = 0$.
		Hence $h_i(x_0) = h'_i(x_0)$ and $d h_i(x_0) = d h'_i(x_0)$.
		Thus, $\partial h_i(x_0)$ form an orthonormal basis of $(T^{(1, 0)*}X, g^{T^*X})$.
		\par 
		From Lemma \ref{lem_ell_reg_sob_bnd_g} and (\ref{eq_hi_bnd_l2}), we conclude that there is a constant $C_5 > 0$, which depends only on $r_0, r_1$ and $C_{n + 7}$ from (\ref{eq_bnd_curv_tm}), such that
		$
			\|
				h_i
			\|_{\ccal^3(\mathbb{B}_{x_0}^{X}(r_0))}
			\leq
			C_5
		$.
		We conclude that there is $r_c > 0$, which depends only on $C_{n + 7}$ from the bound (\ref{eq_bnd_curv_tm}) and $r_X$, such that $(h_1, \ldots, h_n)$ form holomorphic coordinates over $\mathbb{B}_{x_0}^{X}(r_c)$.
		This finishes the proof.
	\end{proof}
	\par 
	\begin{lem}\label{cor_hl_fr_bnd_g}
		Let $(E, h^E)$ be a Hermitian vector bundle of bounded geometry over a Kähler manifold of bounded geometry $(X, g^{TX})$. 
		Then there are $r_0, C > 0$, such that for any $x_0 \in X$, there is a local holomorphic frame $(f_1, \ldots, f_r) \in H^0(\mathbb{B}_{x_0}^{X}(r_0), E)$ of $E$, such that
		\begin{equation}\label{eq_f_un_bnd_geom}
			\big\| 
				f_i
			\big\|_{L^2(\mathbb{B}_{x_0}^{X}(r_0))}
			\leq 
			C,
		\end{equation}
		and $(f_1(x_0), \ldots, f_r(x_0))$ is an orthonormal frame of $(E_{x_0}, h^E_{x_0})$.
		Moreover, $C$ depends only on $r_0$, $c, C$ from (\ref{eq_psi_norm_man}) and $C_{n + 7}$ from (\ref{eq_bnd_curv_tm}) and (\ref{eq_bnd_re_ck}).
	\end{lem}
	\begin{proof}
		The proof can be deduced from Hörmander's $L^2$-estimates and Lemmas \ref{lem_gamma_exp}, \ref{lem_cmplx_str_exp}, \ref{thm_plurisub} in exactly the same way as in the proof of Proposition \ref{thm_hol_coord_exst1}.
	\end{proof}
	\par 
	\begin{thm}\label{lem_hol_coord_exst}
		Assume that the Kähler triple $(X, Y, g^{TX})$ is of bounded geometry. Then there are $r_c, C > 0$, such that for any $y_0 \in Y$, there are holomorphic coordinates $\chi := (h_1, \ldots, h_n) : \mathbb{B}_{y_0}^{X}(r_c) \to \comp$, $h_i(x_0) = 0$, $|h_i| < C$, $i = 1, \ldots, n$, $h_{m+1}|_Y, \ldots, h_n|_Y = 0$, and such that $\partial h_i(x_0)$ form an orthonormal frame of $(T^{(1, 0)*}X, g^{T^*X})$.
		Moreover, $r_c, C$ depend only on $r_X$ and $C_{n + 7}$ from (\ref{eq_bnd_curv_tm}) and (\ref{eq_bnd_a_ck}).
	\end{thm}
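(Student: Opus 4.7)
The strategy parallels the proof of Theorem \ref{thm_hol_coord_exst1}, but works in Fermi coordinates $\psi_{y_0}$ adapted to the submanifold $Y$, and uses an additional weight built from the quasi-plurisubharmonic function $\delta_Y$ from Theorem \ref{thm_plurisub} to force vanishing on $Y$.

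First, fix $y_0 \in Y$ with Fermi coordinates $\psi_{y_0}$ and an orthonormal frame satisfying (\ref{eq_cond_jinv}), and set complex coordinates $z_i := Z_{2i-1} + \imun Z_{2i}$. Define the initial approximate coordinates
\begin{equation*}
h'_i := z_i \, \rho(|Z|/r_0), \qquad i = 1, \ldots, n,
\end{equation*}
where $\rho$ is as in (\ref{defn_rho_fun}) and $r_0 > 0$ is to be chosen from the uniform Stein atlas of Theorem \ref{thm_st_atl_ext}. By construction $h'_i(y_0) = 0$ and $dh'_i(y_0)$ agrees with $dz_i|_{y_0}$, and crucially, for $i > m$ one has $h'_i|_Y = 0$ since $Y = \{Z_N = 0\}$ in Fermi coordinates.

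Next, I would analyze $\dbar h'_i$ using Lemma \ref{lem_cmplx_str_exp} to obtain two estimates. In general $\dbar h'_i = O(|Z|)$, exactly as in the proof of Theorem \ref{thm_hol_coord_exst1}. More importantly, for $i > m$, I want to show that $\dbar h'_i|_Y = 0$. This uses the fact that $Y$ is a complex submanifold, so $\textbf{J}$ preserves the decomposition $T_y X = T_y Y \oplus N_y$ at $y \in Y$, forcing the off-diagonal block $\textbf{J}_2$ of $\textbf{J}$ in Lemma \ref{lem_cmplx_str_exp} to vanish identically on $Y$ (not merely to be $O(|Z_Y|^2)$). Combined with the analogous remark for $\textbf{J}_3$, this yields the pointwise bound $|\dbar h'_i| = O(|Z_N|)$ in a tubular neighborhood for $i > m$.

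Then I would invoke Hörmander's $L^2$-estimate in the form of \cite[Theorem VIII.6.1]{DemCompl} on the Stein neighborhood $\Omega$ provided by Theorem \ref{thm_st_atl_ext}, solving $\dbar f_i = \dbar h'_i$ with weight
\begin{equation*}
\exp(-c\delta) \cdot \exp(-(2n+1)\delta_{\{y_0\}}) \cdot \exp(-\epsilon_i (2(n-m)+1) \delta_Y),
\end{equation*}
where $\epsilon_i = 0$ if $i \leq m$ and $\epsilon_i = 1$ if $i > m$. As in the proof of Theorem \ref{thm_hol_coord_exst1}, positivity of the total curvature (using $c_1(K_X,h^{K_X})$ bounds from bounded geometry, Theorem \ref{thm_plurisub} for $\delta_Y$ and $\alpha_{\{y_0\}}$, and the choice of $\delta$) supplies the Nakano-positivity hypothesis of the Hörmander estimate. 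The $L^2$-finiteness of $\dbar h'_i$ with this weight is exactly where the sharpened bound $|\dbar h'_i| = O(|Z_N|)$ for $i > m$ enters: after passing to polar coordinates around $Y$, the factor $|Z_N|^{-(2(n-m)+1)}$ is absorbed by $|Z_N|^2$ from $|\dbar h'_i|^2$ and by the $(2(n-m)-1)$-dimensional sphere volume element. The resulting $f_i$ is smooth on $\Omega$ by standard regularity (\cite[Lemme 6.9]{Dem82}); by the non-integrability of the weights at $y_0$ and (for $i > m$) along $Y$, one gets $f_i(y_0) = 0$, $df_i(y_0) = 0$, and $f_i|_Y = 0$ for $i > m$.

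Setting $h_i := h'_i - f_i$, the functions are holomorphic on $\Omega$, satisfy $h_i(y_0) = 0$, $h_{m+1}|_Y = \cdots = h_n|_Y = 0$, and $\partial h_i(y_0) = \partial h'_i(y_0)$ form an orthonormal frame of $T^{(1,0)*}_{y_0}X$. A uniform $L^2$-bound on $h_i$ follows from (\ref{eq_bnd_metr_tens}) applied to $h'_i$ and the Hörmander estimate for $f_i$, hence a uniform $\ccal^3$-bound from Lemma \ref{lem_ell_reg_sob_bnd_g}. Finally, by the quantitative inverse-function-theorem argument used in Corollary \ref{cor_hpr_diffeo} (which only requires a $\ccal^2$-bound on the derivative together with $d\chi(y_0) = {\rm{Id}}$), there is a uniform $r_c > 0$ on which $\chi = (h_1, \ldots, h_n)$ is a holomorphic diffeomorphism onto its image.

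The main obstacle is step 2: securing the improvement $|\dbar h'_i| = O(|Z_N|)$ for $i > m$. The bounds given by Lemma \ref{lem_cmplx_str_exp} alone are $O(|Z|^2)$ in the off-diagonal block, which is not enough for the $L^2$-weight $\exp(-(2(n-m)+1)\delta_Y)$ to be finite on $\dbar h'_i$. The missing ingredient is the exact vanishing $\textbf{J}_2|_Y \equiv 0$, which is forced by $Y$ being a complex submanifold in a Hermitian ambient manifold; this is the one place where the complex structure of $Y$ enters essentially.
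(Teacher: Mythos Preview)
Your approach matches the paper's: construct approximately holomorphic $h'_i$ in Fermi coordinates, correct by Hörmander's $L^2$-estimate with a weight singular both at $y_0$ and along $Y$, then invoke elliptic regularity and the quantitative inverse function theorem. Your key observation that $\textbf{J}_2|_Y = 0$ (because $Y$ is a complex submanifold) is exactly what is needed; combined with the $O(|Z|^2)$ bound from Lemma~\ref{lem_cmplx_str_exp} it in fact yields the sharper estimate $|\dbar h'_i| \le C|Z|\,|Z_N|$ used in the paper (write $g = \textbf{J}_2$, use $g|_{Z_N=0}=0$ and $g=O(|Z|^2)$, then integrate $\partial_{Z_N}g$). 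The paper also differs cosmetically: it keeps the construction of Theorem~\ref{thm_hol_coord_exst1} for $i\le m$ and uses the cutoff $\rho(|Z_N|/r_0)$ rather than $\rho(|Z|/r_0)$ for $i>m$.

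There is, however, a genuine gap in your choice of weight exponents. With the weight $\exp\bigl(-(2n+1)\delta_{\{y_0\}}\bigr)\cdot\exp\bigl(-(2(n-m)+1)\delta_Y\bigr)$ and your estimate $|\dbar h'_i|=O(|Z_N|)$, the weighted integral of $|\dbar h'_i|^2$ diverges at $y_0$: in polar coordinates $Z=r\omega$ one finds the radial integrand behaves like $r^{-1-2(n-m)}$, which is not integrable for $n>m$. (Even with the sharper $O(|Z||Z_N|)$ this weight is still too singular.) You have checked integrability transversally to $Y$, but not at the point $y_0$ where both singularities accumulate. The paper's remedy is to \emph{lower} the point exponent to $2m+1$ and use $2(n-m)$ for the $Y$-weight, so that the combined singularity at $y_0$ is $|Z|^{-(2m+1)}|Z_N|^{-2(n-m)}$; together these still force $f_i|_Y=0$ and the required first-order vanishing at $y_0$, while keeping $\int|\dbar h'_i|^2 e^{-\phi}$ finite. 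Adjusting the exponents in this way repairs your argument.
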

	\begin{rem}\label{rem_hol_coord_exst}
		We see that there is $r_c^1 > 0$, which depends only on $C_{n + 7}$ from the bounds (\ref{eq_bnd_curv_tm}), (\ref{eq_bnd_a_ck}), $r_Y$, $r_X$, $r_{\perp}$, such that $\mathbb{B}_0^{\comp^n}(r_c^1) \subset \Im \chi$.
	\end{rem}
	\begin{proof}
		The proof is very similar to the proof of Proposition \ref{thm_hol_coord_exst1}. The only difference is that instead of the weight $(2n + 1) \delta_{ \{x_0\} } $ in (\ref{eq_est_dbarhpro}), one has to consider $(2m + 1) \delta_{ \{x_0\} } + 2(n-m) \delta_Y$.
	\end{proof}

\section{Operator algebras on manifolds of bounded geometry}\label{sect_2}
	In this section, we prove that the set of operators on manifolds of bounded geometry, admitting certain bounds on the Schwartz kernels, forms an algebra under the composition.
	\par 
	This section is organized as follows.
	In Section \ref{sect_bish_gr}, we show that the set of operators with exponential decay of the Schwartz kernel forms an algebra on manifolds of bounded geometry.
	Then in Section \ref{sect_model_calc}, we give explicit formulas for the Schwartz kernels of the orthogonal Bergman projector and the extension operator on the pair $(\comp^n, \comp^m)$.
	We also state the composition rules for operators with related kernels.
	Finally, in Section \ref{sect_algtay_type}, we prove that the set of operators, whose Schwartz kernel admits Taylor-type expansion, forms an algebra under the composition.

	\subsection{Algebra of operators with exponential decay of the Schwartz kernel}\label{sect_bish_gr}
	In this section, we show that the set of operators with exponential decay essentially forms an algebra on manifolds of bounded geometry.
	\par 
	Let $(M, g^{TM})$ be a complete manifold with a positive bound on the injectivity radius, $r_M > 0$. We note $n := \dim M$.
	We fix an arbitrary sequence of Hermitian vector bundles $(E_p, \nabla^{E_p}, h^{E_p})$, $p \in \nat^*$, on $M$, endowed with a fixed connection.
	We fix $s > 0$ and assume that the Ricci curvature ${\rm{Ric}}_{g^{TM}}$ of $(M, g^{TM})$ satisfies the bound
	\begin{equation}\label{eq_ric_bnd}
		{\rm{Ric}}_{g^{TM}}  \geq  -(n-1)s.
	\end{equation}
	Clearly, (\ref{eq_ric_bnd}) is satisfied for some $s > 0$ once $(M, g^{TM})$ is of bounded geometry.
	\par 
	Let us fix a volume form $dv_M$ on $M$, such that the condition, analogous to (\ref{eq_vol_comp_unif}), is satisfied with respect to the metric $g^{TM}$.
	Let us fix $q \in \nat^*$, and a sequence of operators $A_p^{1}, \ldots, A_p^{q}$, $p \in \nat^*$, acting on $\ccal^{\infty}(M, E_p)$ by the convolutions with smooth kernels $A_p^{1}(x_1, x_2), \ldots, A_p^{q}(x_1, x_2) \in E_{p, x_1} \otimes  E_{p, x_2}^{*}$, $x_1, x_2 \in X$, with respect to the volume form $dv_M$.
	We assume that for any $k \in \nat$, there are $c, C_h > 0$, $h = 1, \ldots, q$, such that for any $x_1, x_2 \in X$, we have
	\begin{equation}\label{eq_a_bnd_1}
		\big| A_p^h(x_1, x_2) \big|_{\ccal^k(M \times M)} \leq C_h \cdot p^{\frac{n+k}{2}} \cdot \exp \big(- c \sqrt{p} \cdot \dist(x_1, x_2) \big),
	\end{equation}
	and there is $h$ among $1, \ldots, q$, such that even stronger bound holds
	\begin{multline}\label{eq_a_bnd_2}
		\big| A_p^h(x_1, x_2) \big|_{\ccal^k(M \times M)} 
		\\
		\leq C_h \cdot p^{\frac{n+k}{2}}  \cdot \exp \Big(- c \sqrt{p} \cdot \big(  \dist(x_1, x_2) + \dist(x_1, W) + \dist(x_2, W) \big) \Big),
	\end{multline}
	where $W$ is a closed subset in $M$, and the $\ccal^k$-norm is taken in the sense of Theorem \ref{thm_ext_exp_dc}.
	\begin{lem}\label{lem_bnd_prod_a}
		There is $C_0 > 0$, which depends only on $s, r_M, n$ and $c$ from (\ref{eq_a_bnd_1}) and (\ref{eq_a_bnd_2}), such that for any $p \in \nat^*$, $\sqrt{p} > 4 (n-1) \frac{\sqrt{s}}{c}$, the following operator is well-defined 
		\begin{equation}\label{eq_dp_defn_apl}
			D_p := A_p^{1} \circ \cdots \circ A_p^{q},
		\end{equation}
		and for $C := \prod_{h = 1}^{q} C_h$, the following bound holds
		\begin{multline}\label{eq_bnd_prod_a}
			\big| D_p(x_1, x_2) \big|_{\ccal^k(X \times X)} 
			\\
			\leq C_0^q C  p^{\frac{n+k}{2}} \cdot \exp \Big(- \frac{c}{8} \sqrt{p} \cdot \big(  \dist(x_1, x_2) + \dist(x_1, W) + \dist(x_2, W) \big) \Big).
		\end{multline}			 
	\end{lem}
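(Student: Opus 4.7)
My plan is to write $D_p(x_1, x_2)$ as an iterated integral over intermediate points $y_0 := x_1$, $y_1, \ldots, y_{q-1} \in M$, $y_q := x_2$, obtained from the composition formula with respect to $dv_M$, and then to bound this integral by combining two ingredients: iterated triangle inequalities to transfer the three distances on the right-hand side of (\ref{eq_bnd_prod_a}) onto the individual segments $(y_i, y_{i+1})$, and Bishop-Gromov volume comparison (enabled by the Ricci bound (\ref{eq_ric_bnd})) to absorb the remaining integration against the exponentials.

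More precisely, after multiplying the $q$ pointwise bounds from (\ref{eq_a_bnd_1}) with the sharper bound (\ref{eq_a_bnd_2}) used for a distinguished index $h_0$, the integrand is bounded by $C \cdot p^{qn/2}$ times the exponential factor
\[
\exp\Big(- c\sqrt{p}\Big(\sum_{i = 0}^{q - 1}\dist(y_i, y_{i+1}) + \dist(y_{h_0 - 1}, W) + \dist(y_{h_0}, W)\Big)\Big).
\]
Iterated triangle inequalities give $\dist(x_1, x_2) + \dist(x_1, W) + \dist(x_2, W) \leq 2\sum_i \dist(y_i, y_{i+1}) + \dist(y_{h_0 - 1}, W) + \dist(y_{h_0}, W)$, so I can factor out the desired prefactor $\exp(-\tfrac{c}{8}\sqrt{p}(\dist(x_1, x_2) + \dist(x_1, W) + \dist(x_2, W)))$ from (\ref{eq_bnd_prod_a}) and be left with $\exp(-\tfrac{3c}{4}\sqrt{p} \sum_i \dist(y_i, y_{i+1}))$ to be integrated (after discarding the remaining non-negative $W$-terms).

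Next, I would integrate iteratively in $y_{q - 1}, y_{q - 2}, \ldots, y_1$. The key estimate is that, under (\ref{eq_ric_bnd}), Bishop-Gromov's comparison gives $\mathrm{Vol}(B_y^M(r)) \leq C_1 e^{(n-1)\sqrt{s}\, r}$ uniformly in $y \in M$, whence the layer-cake formula yields, for any $\alpha$ with $\alpha \sqrt{p} > (n - 1)\sqrt{s}$,
\[
\int_M \exp\bigl(-\alpha \sqrt{p}\, \dist(y, z)\bigr) dv_M(z) \leq \frac{C_0'}{p^{n/2}},
\]
with $C_0'$ depending only on $\alpha, s, n$. The hypothesis $\sqrt{p} > 4(n - 1)\sqrt{s}/c$ makes this applicable with $\alpha = 3c/4$. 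Iterating $q - 1$ times multiplies the prefactor by $(C_0'/p^{n/2})^{q - 1}$, which combines with the $p^{qn/2}$ from the kernel bounds to leave precisely $p^{n/2}$, as required in (\ref{eq_bnd_prod_a}).

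The $\ccal^k$-bound is handled by differentiating under the integral sign: derivatives in $x_1$ (resp.\ $x_2$) fall only on $A_p^{1}$ (resp.\ $A_p^{q}$), so the $\ccal^k$-version of (\ref{eq_a_bnd_1}) contributes an extra factor $p^{k/2}$ while leaving the exponential structure untouched. The main obstacle in carrying out this plan is ensuring that the accumulated constant is indeed multiplicative in $q$ (of the form $C_0^q$) rather than factorially growing; this is secured by the uniformity in base point of the Bishop-Gromov estimate, which guarantees that each of the $q - 1$ integration steps contributes the same constant $C_0'$ regardless of the previously fixed intermediate points.
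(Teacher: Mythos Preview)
Your overall strategy matches the paper's proof: write $D_p(x_1,x_2)$ as an iterated integral, use triangle inequalities to extract the three distances in the exponent, and then absorb the remaining integrals via a volume-comparison estimate. The $\ccal^k$-reduction and the observation that the constant accumulates multiplicatively as $C_0^{q}$ are also correct.

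There is, however, a genuine gap in the core integral estimate. From the crude bound $\mathrm{Vol}(B_y^M(r)) \leq C_1 e^{(n-1)\sqrt{s}\,r}$ alone, the layer-cake formula gives only
\[
\int_M \exp\bigl(-\alpha\sqrt{p}\,\dist(y,z)\bigr)\,dv_M(z) \;\leq\; \frac{C_1\,\alpha\sqrt{p}}{\alpha\sqrt{p}-(n-1)\sqrt{s}} \;\leq\; 2C_1,
\]
i.e.\ a constant independent of $p$, \emph{not} $C_0'/p^{n/2}$. With that bound, after the $q-1$ integrations you would be left with $C\cdot p^{qn/2}$ instead of $C\cdot p^{n/2}$, and the lemma would fail. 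The missing ingredient is the polynomial behaviour $\mathrm{Vol}(B_y^M(r))\sim c_n r^n$ for small $r$: one needs the full Bishop--Gromov comparison $\mathrm{Vol}(B_y^M(r))\leq v(n,s,r)$ and to split the integral into a small-radius part (controlled by the sphere-volume bound, yielding $C/l^n$) and a large-radius part (controlled by the exponential growth, yielding $Ce^{-lr_M/2}$). This is exactly the content of Proposition~\ref{prop_exp_bound_int} in the paper, and it is where the lower bound $r_M>0$ on the injectivity radius enters---your sketch never invokes $r_M$, which is a further symptom of the gap. Once you replace your one-line ``whence'' by this refined estimate, the rest of your argument goes through essentially as in the paper.
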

	In the proof of Lemma \ref{lem_bnd_prod_a} and elsewhere, the following proposition plays a crucial role.
	\begin{prop}\label{prop_exp_bound_int}
		There is a constant $C' > 0$, which depends only on $n$, $s$, $r_M$, such that for any $x_0 \in M$, $l > 2 (n-1) \sqrt{s}$, the following bound holds
		\begin{equation}\label{eq_prop_exp_bound_int}
			\int_{M} \exp \big(-l \dist(x_0, x) \big) dv_{g^{TM}}(x) < \frac{C'}{l^{n}}.
		\end{equation}
	\end{prop}
	\begin{proof}
		The main idea of our proof is to use Bishop-Gromov inequality to bound the volumes of balls and spheres in our manifold.
		We denote by $v(n, s, r)$ the volume of a ball of radius $r$ in the space form of constant curvature $-s$, $s > 0$, cf. \cite[p. 69]{PetersRiemG}.
		From \cite[pp. 69, 269]{PetersRiemG}, we see that
		\begin{equation}\label{eq_vnr_bnd}
			v(n, s, r) = {\rm{Vol}}(S^{n-1}) \cdot \int_{0}^{r} \Big( \frac{\sinh (\sqrt{s} r)}{\sqrt{s}}  \Big)^{n-1} dr,
		\end{equation}
		where ${\rm{Vol}}(S^{n-1})$ is the volume of the unit sphere $S^{n-1}$, endowed with the standard Riemannian metric.
		From (\ref{eq_vnr_bnd}), we see that for any $s > 0$, there is $C > 0$, such that for any $r > 0$, we have
		\begin{equation}\label{eq_vnr_bnd2}
			v(n, s, r) \leq C \exp \big( (n-1) \sqrt{s} r \big).
		\end{equation}
		\par 
		Bishop-Gromov inequality \cite[Lemma 9.36]{PetersRiemG} states that the following function is non-increasing
		\begin{equation}\label{eq_bis_gr}
			\real \ni r \mapsto \frac{{\rm{Vol}}(\mathbb{B}_{x_0}^{M}(r))}{v(n, s, r)}.
		\end{equation}
		Moreover, the limit of (\ref{eq_bis_gr}), as $r \to 0$, is equal to $1$.
		In particular, for any $r \geq 0$, we have
		\begin{equation}\label{eq_bnd_vol_eas}
			{\rm{Vol}}(\mathbb{B}_{x_0}^{M}(r)) \leq v(n, s, r).
		\end{equation}
		\par 
		We now decompose the integral in (\ref{eq_prop_exp_bound_int}) into two parts: over $\mathbb{B}_{x_0}^{M}(r_M)$ and over its complement, $V$.
		For the second part, we have the following bound
		\begin{equation}\label{eq_exp_bound_int1}
			\int_{V} \exp \big(-l \dist(x_0, x) \big) dv_{g^{TM}}(x) 
			\leq
			\sum_{i = 1}^{\infty} \exp (-i l r_M) {\rm{Vol}}\Big(\mathbb{B}_{x_0}^{M}((i+1) r_M) \setminus \mathbb{B}_{x_0}^{M}(i r_M)\Big).
		\end{equation}
		However, from (\ref{eq_vnr_bnd2}) and (\ref{eq_bnd_vol_eas}), we see that 
		\begin{equation}\label{eq_exp_bound_int2}
			{\rm{Vol}}\Big(\mathbb{B}_{x_0}^{M}((i+1) r_M) \setminus \mathbb{B}_{x_0}^{M}(i r_M)\Big)
			\leq
			C \exp \big( (i + 1) (n-1) \sqrt{s} r_M \big).
		\end{equation}
		By combining (\ref{eq_exp_bound_int1}) and (\ref{eq_exp_bound_int2}), for $l > 2 (n-1) \sqrt{s}$, we easily get the bound
		\begin{equation}\label{eq_exp_bound_int22}
			\int_{V} \exp \big(-l \dist(x_0, x) \big) dv_{g^{TM}}(x) 
			\leq
			C
			\exp
			\Big(
				- \frac{l r_M}{2}
			\Big).
		\end{equation}
		\par 
		It is now only left to bound the integral over $\mathbb{B}_{x_0}^{M}(r_M)$.
		The coarea formula gives us for $r < r_M$ the following identity
		\begin{equation}\label{eq_coarea}
			{\rm{Vol}}(\mathbb{B}_{x_0}^{M}(r)) = \int_0^{r} {\rm{Vol}}(S_{x_0}^{M}(r')) dr',
		\end{equation}
		where $S_{x_0}^{M}(r)$ is the volume of the $r$-sphere around ${x_0}$.
		By differentiating the function (\ref{eq_bis_gr}), and using (\ref{eq_coarea}), we see that (\ref{eq_bis_gr}) is equivalent to the following bound
		\begin{equation}\label{eq_bis_gr_2}
			\frac{{\rm{Vol}}(S_{x_0}^{M}(r))}{{\rm{Vol}}(\mathbb{B}_{x_0}^{M}(r))} \leq \frac{t(n, s, r)}{v(n, s, r)},
		\end{equation}
		where $t(n, s, r) = \frac{d}{dr} v(n, s, r)$.
		From (\ref{eq_vnr_bnd}), (\ref{eq_bnd_vol_eas})  and (\ref{eq_bis_gr_2}), we deduce
		\begin{equation}\label{eq_exp_bound_int101}
			{\rm{Vol}}(S_{x_0}^{M}(r)) \leq {\rm{Vol}}(S^{n-1}) \cdot \Big( \frac{\sinh (\sqrt{s} r)}{\sqrt{s}}  \Big)^{n-1}.
		\end{equation}
		From (\ref{eq_exp_bound_int101}), we see that there is $C_0 > 0$, which depends only on $s$ and $r_M$, such that
		\begin{equation}\label{eq_exp_bound_int124}
			\int_{0}^{r_M} \exp (-l r) {\rm{Vol}}(S_{x_0}^{M}(r)) dr 
			\leq 
			\frac{C_0}{l^n}.
		\end{equation}
		Now, from the coarea formula as in (\ref{eq_coarea}), we have
		\begin{equation}\label{eq_exp_bound_int123}
			\int_{\mathbb{B}_{x_0}^{M}(r_M)} \exp \big(-l \dist(x_0, x) \big) dv_{g^{TM}}(x) 
			=
			\int_{0}^{r_M} \exp (-l r) {\rm{Vol}}(S_{x_0}^{M}(r)) dr.
		\end{equation}
		From (\ref{eq_exp_bound_int22}), (\ref{eq_exp_bound_int124}) and (\ref{eq_exp_bound_int123}), we conclude.
	\end{proof}		
	\begin{proof}[Proof of Lemma \ref{lem_bnd_prod_a}]
		For simplicity of the presentation, we only present the proof for $k = 0$, as the general case is treated in an analogous way.
		We trivially have the following identity
		\begin{equation}\label{eq_b_f}
			D_p(x_1, x_2) = \int_{M^{\times (q - 1)}} A_p^{1}(x_1, z_1) \cdot A_p^{2}(z_1, z_2) \cdot \ldots \cdot A_p^{q}(z_{q - 1}, x_2) dv_M(z_1) \cdot \ldots \cdot dv_M (z_{q - 1}).
		\end{equation}
		Now, the triangle inequality readily implies
		\begin{equation}\label{eq_triangle}
		\begin{aligned}
			&
			\dist(x_1, z_1) + \dist(z_1, z_2) + \cdots + \dist(z_{q - 1}, x_2) \geq \dist(x_1, x_2),
			\\
			&
			\dist(x_1, z_1) + \cdots + \dist(z_{q - 1}, x_2) + \dist(z_i, W)  \geq \dist(x_1, W).
		\end{aligned}
		\end{equation}
		Then for $C$ as in (\ref{eq_bnd_prod_a}), by (\ref{eq_a_bnd_1}), (\ref{eq_a_bnd_2}), (\ref{eq_b_f}) and (\ref{eq_triangle}), the following bound holds
		\begin{multline}\label{eq_b_bnd}
			\big| D_p(x_1, x_2) \big|
			\leq 
			C p^{\frac{n}{2}} \cdot  \exp \Big(- \frac{c}{4} \sqrt{p} \cdot \big( \dist(x_1, W) + \dist(x_1, x_2) \big) \Big)
			\cdot
			\\
			\cdot
			\int_M
			p^{\frac{n}{2}} \exp \Big(- \frac{c}{2} \sqrt{p} \cdot \dist(z_{q - 1}, x_2) \Big)
			\cdot
			\int_M
			p^{\frac{n}{2}} \exp \Big(- \frac{c}{2} \sqrt{p} \cdot \dist(z_{q - 2}, z_{q - 1}) \Big)
			\cdot
			\ldots
			\\
			\cdot
			\int_M
			p^{\frac{n}{2}} \exp \Big(- \frac{c}{2} \sqrt{p} \cdot \dist(z_1, z_2) \Big) 
			dv_M(z_1)
			\cdots
			dv_M(z_{q-1}).
		\end{multline}
		By triangle inequality, we also have
		\begin{equation}\label{eq_trng_2}
			\dist(x_1, W) + \dist(x_1, x_2)  \geq \dist(x_2, W).
		\end{equation}
		By applying Proposition \ref{prop_exp_bound_int} and (\ref{eq_vol_comp_unif}), for the integrals over $z_1, \ldots, z_{q - 1}$ from (\ref{eq_b_bnd}), and (\ref{eq_trng_2}), we get (\ref{eq_bnd_prod_a}) for $k = 0$ from (\ref{eq_b_bnd}) for $C_0 := C'$ from  Proposition \ref{prop_exp_bound_int}.
	\end{proof}
	\par 
	We fix a smooth submanifold $H \subset M$ such that $(M, H, g^{TM})$ is of bounded geometry.
	We will use the following consequence of Proposition \ref{prop_exp_bound_int} in what follows.
	\begin{cor}\label{cor_exp_bound_int}
		There are $c, C' > 0$, which depend only on $n$, $m$, $r_M$, $r_N$, $r_{\perp}$ $s$, $r_M$, and $C_0$ from (\ref{eq_bnd_curv_tm}) and (\ref{eq_bnd_a_ck}), such that for any $y_0 \in H$, $l > c$, the following bound holds
		\begin{equation}
			\int_{H} \exp \big(-l \dist_M(y_0, y) \big) dv_{g^{TH}}(y) < \frac{C'}{l^{m}}.
		\end{equation}
	\end{cor}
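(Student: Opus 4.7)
The plan is to split the integral over $H$ into a near piece $H_1 := \{ y \in H : \dist_M(y_0, y) < r_{\perp}/2 \}$ and a far piece $H_2 := H \setminus H_1$, and to handle each with a separate trick.

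For $H_1$, I would first prove a uniform Lipschitz comparison $\dist_H(y_0, y) \leq K \dist_M(y_0, y)$ on $H_1$, with $K$ depending only on the bounded-geometry data. The argument: any minimizing $M$-geodesic $\gamma$ from $y_0$ to $y$ has length $L < r_{\perp}/2$, so $\dist_M(\gamma(t), H) \leq t < r_{\perp}$ and $\gamma$ stays inside the tubular neighborhood $U = B_H^M(r_{\perp})$; projecting via $\pi$ produces a curve in $H$ from $y_0$ to $y$ of length $\leq K L$, where $K$ is the uniform bound on $|d\pi|$ furnished by Proposition \ref{prop_bndg_tripl} (in Fermi coordinates $\pi$ is the projection onto the first $2m$ coordinates, and the metric tensor is uniformly controlled). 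Consequently $\exp(-l \dist_M(y_0, y)) \leq \exp(-l \dist_H(y_0, y)/K)$ on $H_1$, and since $(H, g^{TH})$ is itself of bounded geometry, Proposition \ref{prop_exp_bound_int} applied to $(H, g^{TH})$ bounds the resulting integral by $C K^m / l^m$ as soon as $l/K$ exceeds the threshold there.

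For $H_2$, the idea is to exploit the lower bound $\dist_M(y_0, y) \geq r_{\perp}/2$ by writing $\exp(-l \dist_M(y_0, y)) \leq \exp(-l r_{\perp}/4) \exp(-l \dist_M(y_0, y)/2)$, thereby extracting a decay factor, and then dominating $\int_{H_2} \exp(-l \dist_M(y_0, y)/2) \, dv_{g^{TH}}$ by an $M$-integral via a tubular-integration trick: insert a bump $\rho(|Z_N|/r_{\perp})$ supported in the $r_{\perp}/2$-tube around $H$, use $dv_M = \kappa_N dv_H dv_N$ in Fermi coordinates, and apply Corollary \ref{cor_kap_n_bnd} to obtain a uniform positive lower bound on the fiber integral $\int_{N_y} \rho(|Z_N|/r_{\perp}) \kappa_N \, dv_N$. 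On the support of the integrand $\dist_M(y_0, \pi(x)) \geq \dist_M(y_0, x) - r_{\perp}/2$, so the trick introduces an exponential loss $\exp(l r_{\perp}/4)$, which is exactly cancelled by the factor $\exp(-l r_{\perp}/4)$ from the $H_2$ localization. What remains is bounded by $O(1/l^n)$ via Proposition \ref{prop_exp_bound_int} applied to $M$ at rate $l/2$, and for $l \geq 1$ this is absorbed into $O(1/l^m)$.

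The main subtlety, and the reason this split is necessary at all, is that along $H$ one cannot uniformly compare $\dist_M$ to $\dist_H$ globally (the submanifold may fold back extrinsically while remaining intrinsically distant), so Proposition \ref{prop_exp_bound_int} cannot be applied directly to $H$ with the extrinsic distance. The decomposition is calibrated so that the far-part decay precisely absorbs the exponential loss from the tube trick. Choosing $c$ large enough that $l/K$ and $l/2$ both exceed the respective thresholds from Proposition \ref{prop_exp_bound_int} (applied to $H$ and $M$), and that $l \geq 1$, then yields the claim with the stated dependencies on the geometric data.
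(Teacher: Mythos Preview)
Your proof is correct and follows essentially the same route as the paper: split into a near region and a far region, use a local Lipschitz comparison $\dist_H \le K\,\dist_M$ together with Proposition~\ref{prop_exp_bound_int} on $H$ for the near piece, and use the tubular-neighborhood trick with Corollary~\ref{cor_kap_n_bnd} plus Proposition~\ref{prop_exp_bound_int} on $M$ for the far piece. The only cosmetic difference is that you split according to $\dist_M(y_0,\cdot)$ while the paper splits according to $\dist_H(y_0,\cdot)$; your choice makes the exponential bookkeeping on the far piece slightly more transparent, and your geodesic-projection argument for the Lipschitz comparison is a more explicit version of what the paper extracts from Proposition~\ref{prop_bndg_tripl}.
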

	\begin{rem}
		One can easily construct an example of a triple $(M, H, g^{TM})$ of bounded geometry, for which the immersion $\iota : H \to M$ is not a quasi-isometry. Hence Corollary \ref{cor_exp_bound_int} doesn't follow directly from Proposition \ref{prop_exp_bound_int}.
	\end{rem}
	\begin{proof}
		Directly from the coordinate-wise description of bounded geometry condition, cf. Section \ref{sect_bnd_geom_cf}, we see that there are $\epsilon_0, c > 0$, depending only on $R$ from (\ref{eq_r_defn_const}) and $C_0$, for which (\ref{eq_bnd_curv_tm}), (\ref{eq_bnd_a_ck}) hold, such that for any $y_0, y_1 \in H$, verifying $\dist_M(y_0, y_1) < \epsilon_0$, we have
		\begin{equation}\label{eq_dist_hm_00}
			\dist_M(y_0, y_1) > c \dist_H(y_0, y_1).
		\end{equation}
		We let $\epsilon := \frac{1}{2} \min \{ \epsilon_0, R \}$.
		Similarly to the proof of Proposition \ref{prop_exp_bound_int}, we decompose the integration into two parts: over $\mathbb{B}_{y_0}^H(\epsilon)$ and over its complement, $V$.
		From (\ref{eq_dist_hm_00}), we then conclude that there is a constant $C''$, depending only on $R$ and $C_0$, for which (\ref{eq_bnd_curv_tm}), (\ref{eq_bnd_a_ck}) hold, such that
		\begin{equation}\label{eq_cor_int_amb_1}
			\int_{\mathbb{B}_{y_0}^H(\epsilon)} \exp \big(-l \dist_M(y_0, y) \big) dv_{g^{TH}}(y) < \frac{C''}{l^{m}}.
		\end{equation}
		\par 
		Now, let us estimate the integral over $V$.
		For this, remark that for $x \in \mathbb{B}_H^M(\frac{\epsilon}{2 \max(1, c)})$ and $y_1 := \pi_0(x)$, where $\pi_0$ is as introduced before (\ref{eq_kappan}), we have
		\begin{equation}\label{eq_cor_int_amb_121}
			\dist_M(y_0, y_1) \geq \dist_M(y_0, x) - \frac{\epsilon}{2 \max(1, c)}.
		\end{equation}
		But since for any $x \in \mathbb{B}_H^M(\frac{\epsilon}{2 \max(1, c)})$, such that $y_1 \in V$, we have $\dist_M(y_0, y_1) \geq \frac{\epsilon}{\max(1, c)}$ by (\ref{eq_dist_hm_00}), we conclude by (\ref{eq_cor_int_amb_121}) that we have
		\begin{equation}
			\dist_M(y_0, y_1) \geq \frac{1}{2} \dist_M(y_0, x).
		\end{equation}
		From this, we deduce that there is a constant $C'''$, depending only on $R$ and $C_0$, for which (\ref{eq_bnd_curv_tm}), (\ref{eq_bnd_a_ck}) hold, such that
		\begin{equation}\label{eq_cor_int_amb_2}
			\int_{V} \exp \big(-l \dist_M(y_0, y) \big) dv_{g^{TH}}(y) < C''' \cdot \int_{\mathbb{B}_V^M(\frac{\epsilon}{2 \max(1, c)})} \exp \big(- \frac{l}{2} \dist_M(y_0, x) \big) dv_{g^{TM}}(x).
		\end{equation}
		We conclude by (\ref{eq_exp_bound_int22}), (\ref{eq_cor_int_amb_1}) and (\ref{eq_cor_int_amb_2}).
	\end{proof}

	\subsection{Model operators; Fock-Bargmann space and kernel calculus}\label{sect_model_calc}
		In this section, we consider the model situation of the complex vector space, for which an explicit formula for the Schwartz kernels of Bergman projectors and the extension operator can be given.
		We then use those explicit formulas to give a description for the compositions of the operators, the Schwartz kernels of which can be expressed using the above kernels.
		This section (as well as the next one) is motivated in many ways by the works of Ma-Marinescu \cite{MaMarToepl}, \cite{MaHol}, and it motivates the formulation of Theorems \ref{thm_ext_as_exp}.
		\par 
		Endow $X := \comp^n$ with the standard metric and consider a trivialized complex line bundle $L_0$ on $\comp^n$.
		We endow $L_0$ with the Hermitian metric $h^{L_0}$, given by 
		\begin{equation}\label{eq_model_metrl}
			\| 1 \|_{h^{L_0}}(Z) = \exp \Big(- \frac{\pi}{2} |Z|^2 \Big),
		\end{equation}
		where $Z$ is the natural real coordinate on $\comp^n$, and $1$ is the trivializing section of $L_0$.
		An easy verification shows that (\ref{eq_model_metrl}) implies that (\ref{eq_gtx_def}) holds in our setting.
		Recall that \cite[\S 4.1.6]{MaHol} shows that the Kodaira Laplacian $\mathscr{L}$ on $\ccal^{\infty}(X, L_{0})$, multiplied by $2$, and viewed as an operator on $\ccal^{\infty}(X)$ using the orthonormal trivialization, given by $1 \cdot \exp (\frac{\pi}{2} |Z|^2 )$, is given by
		\begin{equation}\label{eq_mathscr_l_op}
			\mathscr{L} = \sum_{i = 1}^{n} b_i b_i^{+},
		\end{equation}
		where $b_i$, $b_i^{+}$ are \textit{creation} and \textit{annihilation} operators, defined as
		\begin{equation}
			b_i = -2 \frac{\partial}{\partial z_i} + \pi \overline{z}_i, \qquad b_i^{+} = 2 \frac{\partial}{\partial  \overline{z}_i} + \pi z_i. 
		\end{equation}
		\par 
		A classical calculation, cf. \cite[Theorem 4.1.20]{MaHol}, shows that the orthonormal basis with respect to the induced $L^2$-norm of $\ker \mathscr{L}$ is given in the orthonormal trivialization above by
		\begin{equation}\label{eq_orth_basker}
			\Big( \frac{\pi^{|\beta|}}{\beta!} \Big)^{1/2} z^{\beta} \exp \Big( -\frac{\pi}{2} |Z|^2 \Big), \qquad \beta \in \nat^n.
		\end{equation}
		In particular, \cite[(4.1.84)]{MaHol}, the Bergman kernel $\mathscr{P}_{n}$ of $\comp^n$ is given by
		\begin{equation}\label{eq_berg_k_expl}
			\mathscr{P}_n(Z, Z') = \exp \Big(
				-\frac{\pi}{2} \sum_{i = 1}^{n} \big( 
					|z_i|^2 + |z'_i|^2 - 2 z_i \overline{z}'_i
				\big)
			\Big), \quad \text{for } Z, Z' \in \comp^n.
		\end{equation}
		\par 
		Also, we see that the Schwartz kernel of the orthogonal Bergman kernel, corresponding to the projection onto holomoprhic sections orthogonal to those which vanish along $\comp^m$, is given by
		\begin{equation}
			\mathscr{P}_{n, m}^{\perp}(Z, Z') 
			= 
			\sum_{\beta \in \nat^m}
			\Big( \frac{\pi^{|\beta|}}{\beta!} \Big) z^{\beta} \overline{z}'^{\beta} \exp \Big( -\frac{\pi}{2} (|Z|^2 + |Z'|^2) \Big).
		\end{equation}		
		By simplifying the above expression using (\ref{eq_berg_k_expl}), we see that $\mathscr{P}_{n, m}^{\perp}(Z, Z')$ corresponds precisely to the quantity, defined in (\ref{eq_pperp_defn_fun}).
		\par 
		Let us calculate the $L^2$-extension operator $\mathscr{E}_{n, m}$, extending each element from $(\ker \mathscr{L})|_Y$ to an element from $\ker \mathscr{L}$ with the minimal $L^2$-norm.
		From (\ref{eq_orth_basker}), we easily see that for $Z_Y \in \comp^m$, $Z_N \in \comp^{n-m}$ and $g \in (\ker \mathscr{L})|_Y$, we have
		\begin{equation}\label{eq_escr}
			(\mathscr{E}_{n, m} g) (Z_Y, Z_N) =
			g(Z_Y) 
			\exp \Big(
				-\frac{\pi}{2} |Z_N|^2
			\Big).
		\end{equation}	
		We extend $\mathscr{E}_{n, m}$ to the whole $L^2$-space by $g \mapsto (\mathscr{E}_{n, m} \circ \mathscr{P}_{m}) g$.	
		From (\ref{eq_escr}), we see that the kernel of $\mathscr{E}_{n, m}$ corresponds precisely to the quantity, defined in  (\ref{eq_ext_defn_fun}).
	\par 
	Now, a lot of calculations in this article will have something to do with compositions of operators having Schwartz kernels, given by the product of polynomials with the above kernels.
	For that reason, the following lemma will be of utmost importance in what follows.
	\begin{lem}\label{lem_comp_poly}
			For any polynomials $A_1(Z, Z'), A_2(Z, Z')$, $Z, Z' \in \real^{2n}$, there is a polynomial $A_3 := \mathcal{K}_{n, m}[A_1, A_2]$, the coefficients of which are polynomials of the coefficients of $A_1, A_2$, such that
			\begin{equation}\label{eq_lem_comp_poly_1}
				(A_1 \cdot \mathscr{P}_{n, m}^{\perp}) \circ (A_2 \cdot \mathscr{P}_{n, m}^{\perp})
				=
				A_3 \cdot \mathscr{P}_{n, m}^{\perp}.
			\end{equation}
			Moreover, $\deg A_3 \leq \deg A_1 + \deg A_2$. Also, if both polynomials $A_1$, $A_2$ are even or odd (resp. one is even, another is odd), then the polynomial $A_3$ is even (resp. odd).
			\par 
			Similarly, there is a polynomial $A'_3 := \mathcal{K'}_{n, m}[A_1, A_2]$ with the same properties as $A_3$, such that 
			\begin{equation}\label{eq_lem_comp_poly_2}
				(A_1 \cdot \mathscr{P}_n) \circ (A_2 \cdot \mathscr{P}_{n, m}^{\perp})
				=
				A'_3 \cdot \mathscr{P}_{n, m}^{\perp}.
			\end{equation}
			\par 
			Also, for any polynomials $A_1(Z, Z'), A_2(Z_Y, Z'_Y)$, where $Z, Z' \in \real^{2n}$, $Z_Y, Z'_Y \in \real^{2m}$, there is a polynomial $A''_3 := \mathcal{K''}_{n, m}[A_1, A_2]$ in $(Z, Z'_Y)$, with the same properties as $A_3$, such that
			\begin{equation}\label{eq_lem_comp_poly_3}
				(A_1 \cdot \mathscr{P}_{n, m}^{\perp}) \circ \mathscr{E}_{n, m} \circ (A_2 \cdot \mathscr{P}_m)
				=
				A''_3 \cdot \mathscr{E}_{n, m}.
			\end{equation}
			Finally, for any polynomials $A_1(Z, Z'_Y), A_2(Z_Y, Z'_Y)$, where $Z \in \real^{2n}$, $Z_Y, Z'_Y \in \real^{2m}$, there is a polynomial $A'''_3 := \mathcal{K'''}_{n, m}[A_1, A_2]$ in $(Z, Z'_Y)$, with the same properties as $A_3$, such that
			\begin{equation}\label{eq_lem_comp_poly_4}
				(A_1 \cdot \mathscr{E}_{n, m}^{\perp}) \circ (A_2 \cdot \mathscr{P}_m)
				=
				A'''_3 \cdot \mathscr{E}_{n, m}.
			\end{equation}
		\end{lem}
		\begin{proof}
			First of all, since  $\mathscr{P}_{n, n}^{\perp} = \mathscr{P}_{n}$, for $n = m$, (\ref{eq_lem_comp_poly_1}) was proved in \cite[Lemma 7.1.1, (7.1.6)]{MaHol} by the use of so-called kernel calculus.
			Let us now show that the general case of (\ref{eq_lem_comp_poly_1}) can be reduced to this special one.
			For this, remark that by (\ref{eq_pperp_defn_fun}), we have
			\begin{equation}\label{eq_compa_1}
			\begin{aligned}
				&
				\mathscr{P}_{n, m}^{\perp}(Z, Z') = \mathscr{P}_{n, m}^{\perp}(Z, Z'_Y) \cdot \exp \Big(- \frac{\pi}{2} |Z'_N|^2 \Big),
				&&
				\mathscr{P}_{n, m}^{\perp}(Z, Z'_Y) = \mathscr{P}_n(Z, Z'_Y),
				\\
				&
				\mathscr{P}_{n, m}^{\perp}(Z, Z') 
				=
				\mathscr{P}_n(Z_Y, Z')
				\cdot
				\exp \Big(- \frac{\pi}{2} |Z_N|^2 \Big),
				&&
				\mathscr{P}_{n, m}^{\perp}(Z_Y, Z') = \mathscr{P}_n(Z_Y, Z').		
			\end{aligned}				
			\end{equation}	
			We decompose the polynomials $A_1$, $A_2$ as follows
			\begin{equation}\label{eq_compa_3}
				A_1(Z, Z')
				=
				\sum_{\alpha} Z_N^{\alpha} \cdot A_1^{\alpha}(Z_Y, Z'), 
				\qquad
				A_2(Z, Z')
				=
				\sum_{\alpha'} A_2^{\alpha'}(Z, Z'_Y) Z'_N{}^{\alpha'},
			\end{equation}
			where $\alpha, \alpha' \in \nat^{2(n - m)}$, $|\alpha| \leq \deg A_1, |\alpha'| \leq \deg A_2$.
			Now, by (\ref{eq_compa_1}) and (\ref{eq_compa_3}), we have
			\begin{multline}\label{eq_compa_4}
				\Big( (A_1 \cdot \mathscr{P}_{n, m}^{\perp}) \circ (A_2 \cdot \mathscr{P}_{n, m}^{\perp}) \Big)(Z, Z')
				=
				\exp \Big(- \frac{\pi}{2} \big( |Z_N|^2 + |Z'_N|^2 \big) \Big)
				\cdot
				\\
				\cdot
				\sum_{\alpha} \sum_{\alpha'} Z_N^{\alpha} Z'_N{}^{\alpha'} \cdot 
				\Big( (A_1^{\alpha} \cdot \mathscr{P}_n) \circ (A_2^{\alpha'} \cdot \mathscr{P}_n) \Big)(Z_Y, Z'_Y).
			\end{multline}
			From (\ref{eq_compa_1}) and (\ref{eq_compa_4}), we conclude
			\begin{equation}\label{eq_knm_from_knn}
				 \mathcal{K}_{n, m}[A_1, A_2](Z, Z')
				 =
				 \sum_{\alpha} \sum_{\alpha'} Z_N^{\alpha} Z'_N{}^{\alpha'}
				 \cdot
				  \mathcal{K}_{n, n}[A_1^{\alpha}, A_2^{\alpha'}](Z_Y, Z'_Y).
			\end{equation}
			Now, (\ref{eq_lem_comp_poly_1}) follows from (\ref{eq_knm_from_knn}) and the fact that (\ref{eq_lem_comp_poly_1}) holds for $n = m$.
			\par
			Along the same lines, from (\ref{eq_compa_1}), we obtain
			\begin{equation}\label{eq_knmpr_from_knn}
				 \mathcal{K'}_{n, m}[A_1, A_2](Z, Z')
				 =
				 \sum_{\alpha'} Z'_N{}^{\alpha'}
				 \cdot
				  \mathcal{K}_{n, n}[A_1, A_2^{\alpha'}](Z, Z'_Y),
			\end{equation}
			which implies (\ref{eq_lem_comp_poly_2}) since (\ref{eq_lem_comp_poly_1}) holds for $n = m$.
			\par
			Now, by (\ref{eq_escr}), we can write 
			\begin{equation}\label{eq_enm_pmfrm}
				\mathscr{E}_{n, m}(Z, Z'_Y) =
				\exp \Big(
				-\frac{\pi}{2} |Z_N|^2
				\Big)
				\cdot
				\mathscr{P}_m(Z_Y, Z'_Y).
			\end{equation}
			From (\ref{eq_lem_comp_poly_1}), (\ref{eq_compa_1}) and (\ref{eq_enm_pmfrm}), we deduce
			\begin{equation}\label{eq_kmprpr_form}
				\mathcal{K''}_{n, m}[A_1, A_2](Z, Z'_Y)
				=
				\mathcal{K}_{n, m}
				\Big[
				A_1, \mathcal{K}_{m, m}[1, A_2]
				\Big](Z, Z'_Y).
			\end{equation}
			Now, (\ref{eq_lem_comp_poly_3}) follows from (\ref{eq_kmprpr_form}) and the fact that (\ref{eq_lem_comp_poly_1}) holds.
			\par 
			Similarly, using the notations from (\ref{eq_compa_3}), we deduce
			\begin{equation}\label{eq_kmprpr_form1231}
				\mathcal{K'''}_{n, m}[A_1, A_2](Z, Z'_Y)
				=
				\sum_{\alpha} Z_N^{\alpha}
				\cdot
				\mathcal{K}_{m, m}
				\big[
				A_1^{\alpha}, A_2
				\big](Z_Y, Z'_Y),
			\end{equation}
			which clearly implies (\ref{eq_lem_comp_poly_4}).
		\end{proof}
		\begin{sloppypar}
		\begin{rem}\label{rem_k_calculc}
			Directly from the definitions, for a polynomial $P$, we have
			\begin{equation}\label{eq_k_calc_1}
			\begin{aligned}
				&
				\mathcal{K}_{n, m}[A_1 \cdot P(Z'), A_2]
				=
				\mathcal{K}_{n, m}[A_1, P(Z) \cdot A_2],
				\\
				&
				\mathcal{K}_{n, m}[A_1, A_2 \cdot P(Z')]
				=
				\mathcal{K}_{n, m}[A_1, A_2] \cdot P(Z').
			\end{aligned}
			\end{equation}
			Using the kernel calculus from \cite[\S 7.1]{MaHol} or the explicit calculations, one can verify, cf. \cite[(7.1.10)]{MaHol}, that for $i, j \leq m$, the following holds
			\begin{equation}\label{eq_k_calc_2}
				\begin{aligned}
				&
				\mathcal{K}_{n, m}[1, z_i z_j]
				=
				 z_i z_j, 
				&&
				\mathcal{K}_{n, m}[1, z_i \overline{z}_j]
				=
				\frac{1}{\pi} \delta_{ij} + z_i \overline{z}'_j,
				\\
				&
				\mathcal{K}_{n, m}[1, \overline{z}_i \overline{z}_j]
				=
				\overline{z}'_i \overline{z}'_j,
				&&
				\\
				&
				\mathcal{K}_{n, m}[1, P_i(Z) z_i]
				=
				\mathcal{K}_{n, m}[1, P_i(Z)]
				z_i,
				&&
				\mathcal{K}_{n, m}[1, P_i(Z) \overline{z}_i]
				=
				\mathcal{K}_{n, m}[1, P_i(Z)]
				\overline{z}'_i,
				\end{aligned}
			\end{equation}
			where the polynomial $P_i(Z)$ doesn't involve the variables $z_i$ and $\overline{z}_i$.
			Finally, for any $k = 2m + 1, \ldots, 2n$, from the trivial fact $\int_{\real} Z \exp(- \pi |Z|^2) dZ = 0$, we get
			\begin{equation}\label{eq_k_calc_3}
				\mathcal{K}_{n, m} \Big[ A_1(Z_Y, Z'_Y), Z_k \cdot A_2(Z_Y, Z'_Y) \Big]
				=
				0.
			\end{equation}
		\end{rem}
		\end{sloppypar}

	\subsection{Algebra of operators with Taylor-type expansion of the Schwartz kernel}\label{sect_algtay_type}
		The main goal of this section is to prove that the set of operators, acting on the sections of a trivial vector bundle over $\comp^n$ by the convolution with Schwartz kernel admitting Taylor-type expansion and exponential decay away from $\comp^m \subset \comp^n$, forms an algebra under composition.
		We also extend this result for operators on general triples of bounded geometry.
		\par 
		Let us now state precisely our results.
		The proofs will be given in the end of this section.
		We fix $q \in \nat$, $q \geq 2$, and operators $\mathcal{G}_t$, $\mathcal{A}_t^{1}, \ldots, \mathcal{A}_t^{q}$, $t \in [0, 1]$, acting on the sections of the trivial vector bundle $\comp^{r_0} \times \comp^n$ over $\comp^n$ by the convolutions with smooth kernels $\mathcal{G}_t(Z, Z')$, $\mathcal{A}_t^{1}(Z, Z'), \ldots, \mathcal{A}_t^{q}(Z, Z') \in \enmr{\comp^{r_0}}$ with respect to the volume form $dv_{\comp^n}$ on $\comp^n$.
		We assume that there are $c_0, q_1 > 0$, such that for any $l \in \nat$, there are $C > 0$, $Q_{h, 1} \geq 0$, $h = 1, \ldots, q$, such that for any $t \in [0, 1]$, $Z, Z' \in \real^{2n}$, $\alpha, \alpha' \in \nat^{2n}$, $|\alpha| + |\alpha'| \leq l$, we have
		\begin{align}
			& \nonumber
			\bigg| \frac{\partial^{|\alpha|+|\alpha'|}}{\partial Z^{\alpha} \partial Z'{}^{\alpha'}} \mathcal{A}_t^{h}(Z, Z') \bigg|
			 \leq 
			 C \Big(1 + |Z| + |Z'| \Big)^{Q_{h, 1} + q_1 l} \exp\Big(- c_0 \big( |Z_Y - Z'_Y| + |Z_N| + |Z'_N| \big) \Big),
			 \\ \label{eq_a_bnd_1loc}
			 &
			 \bigg| \frac{\partial^{|\alpha|+|\alpha'|}}{\partial Z^{\alpha} \partial Z'{}^{\alpha'}} \mathcal{G}_t(Z, Z') \bigg|
			 \leq 
			 C \Big(1 + |Z| + |Z'| \Big)^{Q_{1, 1} + q_1 l} \exp\Big(- c_0 |Z - Z'|  \Big).
		\end{align}
		\begin{lem}\label{lem_bnd_prod_aloc}
			The operators $\mathcal{D}_t := \mathcal{A}_t^{1} \circ \cdots \circ \mathcal{A}_t^{q}$,  $\mathcal{D}'_t := \mathcal{G}_t \circ \mathcal{A}_t^{2} \circ \cdots \circ \mathcal{A}_t^{q}$ are well-defined and have smooth Schwartz kernels $\mathcal{D}_t(Z, Z')$, $\mathcal{D}'_t(Z, Z')$ with respect to $dv_{\comp^n}$. 
			Moreover, for any $l \in \nat$, there is $C > 0$, such that for any $t \in [0, 1]$, $Z, Z' \in \real^{2n}$, $\alpha, \alpha' \in \nat^{2n}$, $|\alpha| + |\alpha'| \leq l$, we have
			\begin{multline}\label{eq_bnd_prod_aloc}
				\bigg| \frac{\partial^{|\alpha|+|\alpha'|}}{\partial Z^{\alpha} \partial Z'{}^{\alpha'}} \mathcal{R}_t(Z, Z') \bigg|
				\leq C \Big(1 + |Z| + |Z'| \Big)^{Q_{1, 1} + \cdots +Q_{q, 1} + q_1 l} \cdot
				\\
				\cdot
				\exp\Big(- \frac{c_0}{8} \big( |Z_Y - Z'_Y| + |Z_N| + |Z'_N| \big) \Big),
			\end{multline}
			where $\mathcal{R}_t$ designates either $\mathcal{D}_t$ or $\mathcal{D}'_t$.
		\end{lem}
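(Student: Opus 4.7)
\smallskip
\noindent\textbf{Proof plan for Lemma \ref{lem_bnd_prod_aloc}.}
The plan is to reduce to the case $q=2$ by a straightforward induction, and then to estimate the composition kernel by bounding an iterated integral via a careful bookkeeping of exponential and polynomial factors. Concretely, for $q=2$ the Schwartz kernel of the composition reads
\begin{equation*}
\mathcal{D}_t(Z,Z') \;=\; \int_{\real^{2n}} \mathcal{A}_t^{1}(Z,W)\,\mathcal{A}_t^{2}(W,Z')\,dv_{\comp^n}(W),
\end{equation*}
and derivatives in $Z$ (resp.\ $Z'$) can be moved under the integral to land on $\mathcal{A}_t^{1}$ (resp.\ $\mathcal{A}_t^{2}$) only. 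This reduces the problem to pointwise bounds on the integrand, which by (\ref{eq_a_bnd_1loc}) have the form $(1+|Z|+|W|+|Z'|)^{Q_1+Q_2+q_1 l}$ times
\begin{equation*}
\exp\Big(-c_0\big(|Z_Y-W_Y| + |Z_N| + |W_N| + |W_Y-Z'_Y| + |W_N| + |Z'_N|\big)\Big).
\end{equation*}

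First I would combine the exponentials using the two key elementary inequalities
$|Z_Y - W_Y| + |W_Y - Z'_Y| \geq \tfrac{1}{2}|Z_Y - Z'_Y| + \tfrac{1}{2}|Z_Y - W_Y|$
and $|W_N| + |W_N| = 2|W_N|$. After reserving a fraction (say $c_0/8$) of the exponential to give the target bound in $|Z_Y - Z'_Y| + |Z_N| + |Z'_N|$, a definite residual decay in $|Z_Y - W_Y|$ and $|W_N|$ remains. The polynomial factor is split by $(1+|Z|+|W|+|Z'|)^{Q} \leq C(1+|Z|)^{Q}(1+|Z'|)^{Q}(1+|W_Y - Z_Y|)^{Q}(1+|W_N|)^{Q}(1+|Z_Y|)^{Q}$, so that the pieces depending on $W$ are dominated by the residual exponential. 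A standard bound $\int_{\real^d} (1+|u|)^{Q} \exp(-c|u|)\,du \leq C'$ then yields a finite integral and gives (\ref{eq_bnd_prod_aloc}) with polynomial degree $Q_{1,1}+Q_{2,1}+q_1 l$ plus an extra $(1+|Z|)^{Q}(1+|Z'|)^{Q}$ factor that is absorbed into $(1+|Z|+|Z'|)^{Q_{1,1}+Q_{2,1}+q_1 l}$.

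The case of $\mathcal{D}'_t$ is handled identically once we rewrite the decay of $\mathcal{G}_t$ in the anisotropic form: since
$|Z - W| \geq \tfrac{1}{\sqrt{2}}\big(|Z_Y - W_Y| + |Z_N - W_N|\big) \geq \tfrac{1}{\sqrt{2}}\big(|Z_Y - W_Y| + |Z_N| - |W_N|\big)$,
the kernel $\mathcal{G}_t$ satisfies a bound of the same type as $\mathcal{A}_t^1$ after replacing $c_0$ by $c_0/\sqrt{2}$ and enlarging the polynomial exponent by a fixed constant; the previous argument then applies verbatim. The induction step from $q=2$ to general $q$ proceeds by grouping $\mathcal{A}_t^1 \circ \cdots \circ \mathcal{A}_t^{q-1}$ as a single operator whose kernel satisfies the hypothesis (\ref{eq_a_bnd_1loc}) with the expected polynomial exponent, and composing with $\mathcal{A}_t^q$ (or with $\mathcal{G}_t$ at the outermost level); the additivity of polynomial degrees and the $1/8$ factor in the exponent are designed so that the induction closes without further loss.

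The main technical obstacle is the correct bookkeeping of the polynomial growth. Each composition step, if naively performed, could add an unbounded polynomial in $|Z|$ and $|Z'|$ coming from the factors $(1+|Z|+|W|)^{Q_{h,1}+q_1 l}$. The key observation to circumvent this is that, for each fixed $l$, the polynomial degree of the derivatives is bounded uniformly by $Q_{1,1}+\cdots+Q_{q,1}+q_1 l$, independently of the number of compositions beyond the combinatorial sum, because the $|W|$-dependent factors are absorbed into the exponential during the integration. Once this is set up carefully for $q=2$, the iteration is immediate.
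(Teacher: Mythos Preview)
Your approach is correct and matches the paper's proof in spirit: the paper simply says the argument is completely analogous to Lemma~\ref{lem_bnd_prod_a} (the manifold version), with Proposition~\ref{prop_exp_bound_int} replaced by the elementary Euclidean estimate $\int_{\comp^n}|Z|^{Q}\exp(-c|Z-A|)\,dZ\le C(1+|A|)^{Q}$, which is exactly the integral bound you invoke. Your induction on $q$ is a harmless repackaging of the paper's direct iterated-integral argument; the triangle-inequality bookkeeping and the absorption of $(1+|W|)^{Q}$ into the residual exponential are the same in both. One small imprecision: $\mathcal{G}_t$ does not literally satisfy a bound ``of the same type as $\mathcal{A}_t^{1}$'' after your rewriting (the sign on $|W_N|$ flips), but this is immaterial since the missing $|W_N|$-decay is supplied by $\mathcal{A}_t^{2}$ in the composition, exactly as your argument implicitly uses.
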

		\par 
		Now, assume, in addition to (\ref{eq_a_bnd_1loc}), that for any $r \in \nat$, $h \in 1, \ldots, q$, there are $\mathcal{J}_r^{h}(Z, Z') \in \enmr{\comp^{r_0}}$ polynomials in $Z, Z' \in  \real^{2n}$, such that $\mathcal{F}_r^{h} := \mathcal{J}_r^{h} \cdot \mathscr{P}_{n, m}^{\perp}$ (resp. $\mathcal{F}'_r := \mathcal{J}_r^{1} \cdot \mathscr{P}_n$) appear as coefficients of Taylor-type expansion for $\mathcal{A}_t^{h}$ (resp. $\mathcal{G}_t$).
		More precisely, we assume that there are $\epsilon_0, c_1, q_1, q_2 > 0$, such that for any $k, l \in \nat$, $h = 1, \ldots, q$, there are $C > 0$, $Q_{h, 2} \geq 0$, such that for any $t \in [0, 1]$, $Z, Z' \in \real^{2n}$, $|Z|, |Z'| \leq \frac{\epsilon_0}{t}$, $\alpha, \alpha' \in \nat^{2n}$, $|\alpha| + |\alpha'| \leq l$, we have
		\begin{multline}\label{eq_at_tayl_type}
			\bigg| 
			\frac{\partial^{|\alpha|+|\alpha'|}}{\partial Z^{\alpha} \partial Z'{}^{\alpha'}}
			\bigg(
					\mathcal{A}_t^{h}(Z, Z')
					-
					\sum_{r = 0}^{k}
					t^{r}						
					\mathcal{F}_{r}^h(Z, Z') 
			\bigg)
			\bigg|
			\\
			\leq
			C t^{k + 1}
			\Big(1 + |Z| + |Z'| \Big)^{Q_{h, 2} + q_1 l + q_2 k}
			\exp\Big(- c_1 \big( |Z_Y - Z'_Y| + |Z_N| + |Z'_N| \big) \Big).		
		\end{multline}
		\vspace*{-1.15cm}
		\begin{multline}\label{eq_at_tayl_t_exp22}
			\bigg| 
			\frac{\partial^{|\alpha|+|\alpha'|}}{\partial Z^{\alpha} \partial Z'{}^{\alpha'}}
			\bigg(
					\mathcal{G}_t(Z, Z')
					-
					\sum_{r = 0}^{k}
					t^{r}						
					\mathcal{F}'_{r}(Z, Z') 
			\bigg)
			\bigg|
			\\
			\leq
			C t^{k + 1}
			\Big(1 + |Z| + |Z'| \Big)^{Q_{1, 2} + q_1 l + q_2 k}
			\exp\big(- c_1  |Z - Z'|  \big).
		\end{multline} 
		Define polynomials $\mathcal{J}_{r, 0}(Z, Z') \in \enmr{\comp^r}$ (resp. $\mathcal{J}'_{r, 0}(Z, Z') \in \enmr{\comp^r}$), $r \in \nat$, in $Z, Z' \in  \real^{2n}$, as follows
			\begin{equation}\label{eq_part_calc}
			\begin{aligned}
				&
				 \mathcal{J}_{r, 0}
				 =
				 \sum_{\lambda}
				 \mathcal{K}_{n, m} \Big[ \mathcal{J}_{\lambda_1}^{1},  \mathcal{K}_{n, m} \Big[ \mathcal{J}_{\lambda_2}^{2}, \cdots  ,  \mathcal{K}_{n, m} \big[ \mathcal{J}_{\lambda_{q-1}}^{q-1}, \mathcal{J}_{\lambda_{q}}^{q}  \big] \cdots \Big],
				 \\
				 &
				 \mathcal{J}'_{r, 0}
				 =
				 \sum_{\lambda}
				 \mathcal{K'}_{n, m} \Big[ \mathcal{J}_{\lambda_1}^{1},  \mathcal{K}_{n, m} \Big[ \mathcal{J}_{\lambda_2}^{2}, \cdots  ,  \mathcal{K}_{n, m} \big[ \mathcal{J}_{\lambda_{q-1}}^{q-1}, \mathcal{J}_{\lambda_{q}}^{q} \big] \cdots \Big],
			\end{aligned}
			\end{equation}
			where $\lambda$ runs over all partitions $(\lambda_1, \cdots, \lambda_q)$ of $r$ by natural numbers $\lambda_i$, and $\mathcal{K}_{n, m}$, $\mathcal{K'}_{n, m}$ are as in Lemma \ref{lem_comp_poly}. 
			We let $\mathcal{F}_{r, 0} := \mathcal{J}_{r, 0} \cdot \mathscr{P}_{n, m}^{\perp}$  (resp. $\mathcal{F}'_{r, 0} := \mathcal{J}'_{r, 0} \cdot \mathscr{P}_{n, m}^{\perp}$).
		\begin{lem}\label{lem_at_tayl_t_exp}
			In the notations of (\ref{eq_a_bnd_1loc}), (\ref{eq_at_tayl_type}), for any $k, l \in \nat$, there is $C > 0$, such that for any $t \in ]0, 1]$, $Z, Z' \in \real^{2n}$, $|Z|, |Z'| \leq \frac{\epsilon_0}{2t}$, $\alpha, \alpha' \in \nat^{2n}$, $|\alpha| + |\alpha'| \leq l$, and for $Q := \max \{ \sum_{h = 1}^{q} Q_{h, 1}, \sum_{h = 1}^{q} Q_{h, 2} \}$, $c_2 := \min \{ c_0, c_1 \}$, the following estimate holds
		\begin{multline}\label{eq_at_tayl_t_exp}
			\bigg| 
			\frac{\partial^{|\alpha|+|\alpha'|}}{\partial Z^{\alpha} \partial Z'{}^{\alpha'}}
			\bigg(
					R_t(Z, Z')
					-
					\sum_{r = 0}^{k}
					t^{r}						
					\mathcal{G}_{r, 0}(Z, Z') 
			\bigg)
			\bigg|
			\\
			\leq
			C t^{k + 1}
			\Big(1 + |Z| + |Z'| \Big)^{Q + q_1 l + q_2 k}
			\exp\Big(- \frac{c_2}{8} \big( |Z_Y - Z'_Y| + |Z_N| + |Z'_N| \big) \Big),
		\end{multline}
		where $\mathcal{R}_t$ (resp. $\mathcal{G}_{r, 0}$) designates either $\mathcal{D}_t$ or $\mathcal{D}'_t$ (resp. either $\mathcal{F}_{r, 0}$ or $\mathcal{F}'_{r, 0}$).
		\end{lem}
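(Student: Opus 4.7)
The plan is to proceed by induction on $q \geq 2$. For the inductive step, set $\mathcal{B}_t := \mathcal{A}_t^{2} \circ \cdots \circ \mathcal{A}_t^{q}$; Lemma \ref{lem_bnd_prod_aloc} shows that $\mathcal{B}_t$ satisfies a bound of the type (\ref{eq_a_bnd_1loc}), and the inductive hypothesis supplies for $\mathcal{B}_t$ a Taylor-type expansion as in (\ref{eq_at_tayl_type}) whose coefficients are precisely the right-nested kernel-calculus iterates that appear inside (\ref{eq_part_calc}). It therefore suffices to handle the case of two factors: $\mathcal{A}_t^{1} \circ \mathcal{B}_t$ for $\mathcal{D}_t$, and $\mathcal{G}_t \circ \mathcal{B}_t$ for $\mathcal{D}'_t$.

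For this two-factor case, write each operator as $\mathcal{A}_t^h = \mathcal{T}_{t,k}^h + \mathcal{E}_{t,k}^h$, where $\mathcal{T}_{t,k}^h := \sum_{r=0}^{k} t^{r} \mathcal{F}_r^{h}$ is the truncated Taylor part and $\mathcal{E}_{t,k}^h$ is the remainder, controlled by (\ref{eq_at_tayl_type}) on $\{|Z|, |Z'| \leq \epsilon_0/t\}$. Expanding $\mathcal{A}_t^{1} \circ \mathcal{A}_t^{2}$ produces four pieces. The principal one, $\mathcal{T}_{t,k}^{1} \circ \mathcal{T}_{t,k}^{2} = \sum_{r_1, r_2 = 0}^{k} t^{r_1+r_2} \mathcal{F}_{r_1}^{1} \circ \mathcal{F}_{r_2}^{2}$, can be computed directly via Lemma \ref{lem_comp_poly}, which identifies $\mathcal{F}_{r_1}^{1} \circ \mathcal{F}_{r_2}^{2} = \mathcal{K}_{n,m}[\mathcal{J}_{r_1}^{1}, \mathcal{J}_{r_2}^{2}] \cdot \mathscr{P}_{n,m}^{\perp}$; grouping by $r := r_1 + r_2$ recovers $\sum_{r=0}^{k} t^{r} \mathcal{F}_{r,0}$ up to a tail over $r \in [k+1, 2k]$ which, thanks to the degree bound $\deg \mathcal{K}_{n,m}[\mathcal{J}_{r_1}^{1}, \mathcal{J}_{r_2}^{2}] \leq 3(r_1+r_2)$ from Lemma \ref{lem_comp_poly} together with the exponential decay of $\mathscr{P}_{n,m}^{\perp}$, is absorbable into the error term on $\{|Z|, |Z'| \leq \epsilon_0/(2t)\}$.

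The main technical obstacle is bounding the three cross terms, each of which involves some $\mathcal{E}_{t,k}^h$. For a representative term such as $\mathcal{T}_{t,k}^{1} \circ \mathcal{E}_{t,k}^{2}$, evaluated at $(Z, Z')$ with $|Z|, |Z'| \leq \epsilon_0/(2t)$, I would split the $Z''$-integration into the good region $\{|Z''| \leq \epsilon_0/t\}$, where (\ref{eq_at_tayl_type}) yields $\mathcal{E}_{t,k}^{2}(Z'', Z') = O(t^{k+1})$ directly and the convolution integral is then controlled using Gaussian decay of $\mathscr{P}_{n,m}^{\perp}$ in $Z''_N$ (doubled by the composition) together with an $\real^{2m}$-analogue of Proposition \ref{prop_exp_bound_int} in $Z''_Y$, and its complement, where one reverts to (\ref{eq_a_bnd_1loc}). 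A key observation for the complement is that $|Z''| \geq \epsilon_0/t$ forces either $|Z''_N| \geq \epsilon_0/(2t)$ or, using $|Z_Y| \leq \epsilon_0/(2t)$, $|Z_Y - Z''_Y| \geq \epsilon_0/(4t)$, so the exponential factor $\exp(-c_0(|Z_Y - Z''_Y| + |Z_N| + |Z''_N|))$ contributes at least $\exp(-c_0 \epsilon_0 / (4t))$; this beats every polynomial in $1/t$ and in particular manufactures the required $O(t^{k+1})$ factor after absorbing the polynomial growth $(1+|Z|+|Z'|+|Z''|)^Q$ of both kernels. Derivatives in $Z, Z'$ are obtained by differentiating under the integral, merely raising $l$ in the intermediate bounds. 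Finally, the $\mathcal{D}'_t$ case runs identically: Lemma \ref{lem_comp_poly} (\ref{eq_lem_comp_poly_2}) yields $\mathcal{K}'_{n,m}$ instead of $\mathcal{K}_{n,m}$ in the outermost bracket, in precise agreement with (\ref{eq_part_calc}).
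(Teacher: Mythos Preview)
Your argument is correct and runs on exactly the same engine as the paper's proof: write each factor as its truncated Taylor polynomial plus a remainder, expand the product, identify the principal term via Lemma \ref{lem_comp_poly}, and kill the mixed terms by splitting the integration variable into a near region (where the remainder bound (\ref{eq_at_tayl_type}) applies) and a far region (where the exponential weight produces a factor $\exp(-c/t)$ that dominates any $t^{-(k+1)}$).

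The only real difference is organizational. The paper treats the full $q$-fold composition in one step, splitting the $(q-1)$-fold integral over $\{Z_1,\dots,Z_{q-1}\}$ into the cube $B(\epsilon_0/t)^{q-1}$ and its complement, and then expanding all $q$ factors simultaneously; this keeps the domain of validity at $|Z|,|Z'|\le\epsilon_0/(2t)$ regardless of $q$. Your induction on $q$ is cleaner to read, but note that as written each inductive step halves the radius: the hypothesis gives $\mathcal{B}_t$ an expansion only on $|Z|,|Z'|\le \epsilon_0/(2t)$, so the two-factor argument applied to $\mathcal{A}_t^1\circ\mathcal{B}_t$ outputs $|Z|,|Z'|\le \epsilon_0/(4t)$, and after $q-1$ steps you land on $\epsilon_0/(2^{q-1}t)$ rather than $\epsilon_0/(2t)$. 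Since $q$ is fixed this is harmless---either absorb the factor into $\epsilon_0$, or, in the two-factor step, take the ``good'' region for $\mathcal{E}_{t,k}^2$ to be $\{|Z''|\le \epsilon_0/(2t)\}$ while keeping $\{|Z''|\le \epsilon_0/t\}$ for $\mathcal{E}_{t,k}^1$, so that the output radius stabilizes.
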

		\par 
		Now, let us formulate similar results for general complex manifolds.
		More precisely, let $(X, Y, g^{TX})$ be a triple of bounded geometry. 
		Let $dv_X$ be a volume form over $X$, satisfying (\ref{eq_vol_comp_unif}).
		We fix a Hermitian line (resp. vector) bundle $(L, h^{L})$ (resp. $(F, h^{F})$) on $X$.
		\par 
		Let us fix $q \in \nat^*$, and a sequence of operators $A_p^{1}, \ldots, A_p^{q}$, $p \in \nat^*$, acting on $\ccal^{\infty}(X, L^p \otimes F)$ by the convolutions with smooth kernels $A_p^{1}(x_1, x_2), \ldots, A_p^{q}(x_1, x_2) \in (L^p \otimes F)_{x_1} \otimes  (L^p \otimes F)_{x_2}^{*}$ with respect to the volume form $dv_X$.
		We assume that $A_p^{1}, \ldots, A_p^{q}$ satisfy the assumptions (\ref{eq_a_bnd_1}) and (\ref{eq_a_bnd_2}) for $n := 2n$ and $W := Y$.
		\par 
		We fix $y_0 \in Y$ and trivialize $(L, h^{L})$ (resp. $(F, h^{F})$) and associated dual vector bundles in a neighborhood of $y_0$ using Fermi coordinates and parallel transport with respect to $\nabla^{L}$ (resp. $\nabla^{F}$) as we did before Theorem \ref{thm_ext_as_exp}.
		Assume that for any $h = 1, \ldots, q$, $r \in \nat$, there are $J_{r, h}^{\perp}(Z, Z') \in \enmr{F_{y_0}}$ polynomials in $Z, Z' \in \real^{2n}$, such that for $F_{r, h}^{\perp} := J_{r, h}^{\perp} \cdot \mathscr{P}_{n, m}^{\perp}$, the following holds.
		\par 
		There are $\epsilon_0, c_1, q_1, q_2 > 0$, $p_1 \in \nat^*$, such that for any $k, l \in \nat$, $h = 1, \ldots, q$, there are $C  > 0$, $Q_{h, 3} \geq 0$, such that for any $p \geq p_1$, $Z = (Z_Y, Z_N)$, $Z' = (Z'_Y, Z'_N)$, $Z_Y, Z'_Y \in \real^{2m}$, $Z_N, Z'_N \in \real^{2(n-m)}$, $|Z|, |Z'| \leq \epsilon_0$, $\alpha, \alpha' \in \nat^{2n}$, $|\alpha|+|\alpha'| \leq l$, the following bound holds
		\begin{multline}\label{eq_aph_bergm_like_orth}
			\bigg| 
				\frac{\partial^{|\alpha|+|\alpha'|}}{\partial Z^{\alpha} \partial Z'{}^{\alpha'}}
				\bigg(
					\frac{1}{p^n} A_p^{h} \big(\psi_{y_0}(Z), \psi_{y_0}(Z') \big)
					-
					\sum_{r = 0}^{k}
					p^{-\frac{r}{2}}						
					F_{r, h}^{\perp}(\sqrt{p} Z, \sqrt{p} Z') 
					\kappa_{X}^{-\frac{1}{2}}(Z)
					\kappa_{X}^{-\frac{1}{2}}(Z')
				\bigg)
			\bigg|
			\\
			\leq
			C p^{- \frac{k + 1 - l}{2} }
			\Big(1 + \sqrt{p}|Z| + \sqrt{p} |Z'| \Big)^{Q_{h, 3} + q_1 l + q_2 k}
			\cdot
			\\
			\cdot
			\exp\Big(- c_1 \sqrt{p} \big( |Z_Y - Z'_Y| + |Z_N| + |Z'_N| \big) \Big).
		\end{multline}
		Define polynomials $J_{r, D}^{\perp}(Z, Z') \in \enmr{F_{y_0}}$, $r \in \nat$, in $Z, Z' \in  \real^{2n}$, as follows
		\begin{equation}\label{eq_part_calc_dp}
			 J_{r, D}^{\perp}
			 =
			 \sum_{\lambda}
			 \mathcal{K}_{n, m} \Big[ J_{\lambda_1, 1}^{\perp}, \mathcal{K}_{n, m} \Big[ J_{\lambda_2, 2}^{\perp}, \cdots , \mathcal{K}_{n, m} \big[ J_{\lambda_{q-1}, q-1}^{\perp}, J_{\lambda_q, q}^{\perp}  \big] \cdots \Big],
		\end{equation}
		where $\lambda$ runs over all partitions $(\lambda_1, \cdots, \lambda_q)$ of $r$ by natural numbers $\lambda_i$, and let $F_{r, D}^{\perp} := J_{r, D}^{\perp} \cdot \mathscr{P}_{n, m}^{\perp}$.
		\begin{lem}\label{lem_apl_tayl_exp}
			In the notations of (\ref{eq_a_bnd_1}), (\ref{eq_a_bnd_2}), (\ref{eq_aph_bergm_like_orth}), for any $k, l \in \nat$, there is $C  > 0$, such that for any $p \geq p_1$, $Z = (Z_Y, Z_N)$, $Z' = (Z'_Y, Z'_N)$, $Z_Y, Z'_Y \in \real^{2m}$, $Z_N, Z'_N \in \real^{2(n-m)}$, $|Z|, |Z'| \leq \frac{\epsilon_0}{2}$, $\alpha, \alpha' \in \nat^{2n}$, $|\alpha|+|\alpha'| \leq l$, for $D_p$ as in (\ref{eq_dp_defn_apl}), $Q := \sum_{h = 1}^{q} Q_{h, 3}$, $c_2 = \min\{c_0, c_1 \}$, we have
		\begin{multline}\label{eq_dp_bergm_like_orth}
			\bigg| 
				\frac{\partial^{|\alpha|+|\alpha'|}}{\partial Z^{\alpha} \partial Z'{}^{\alpha'}}
				\bigg(
					\frac{1}{p^n} D_p \big(\psi_{y_0}(Z), \psi_{y_0}(Z') \big)
					-
					\sum_{r = 0}^{k}
					p^{-\frac{r}{2}}						
					F_{r, D}^{\perp}(\sqrt{p} Z, \sqrt{p} Z') 
					\kappa_{X}^{-\frac{1}{2}}(Z)
					\kappa_{X}^{-\frac{1}{2}}(Z')
				\bigg)
			\bigg|
			\\
			\leq
			C p^{- \frac{k + 1 - l}{2} }
			\Big(1 + \sqrt{p}|Z| + \sqrt{p} |Z'| \Big)^{Q + q_1 l + q_2 k}
			\cdot
			\\
			\cdot
			\exp\Big(- \frac{c_2}{8} \sqrt{p} \big( |Z_Y - Z'_Y| + |Z_N| + |Z'_N| \big) \Big).
		\end{multline}
		\end{lem}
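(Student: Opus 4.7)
My plan is to reduce the statement to its local counterpart Lemma \ref{lem_at_tayl_t_exp} via rescaling in Fermi coordinates. First I would fix $y_0 \in Y$ and use the Fermi coordinates $\psi_{y_0}$ together with the parallel-transport trivializations of $L$ and $F$ fixed before Theorem \ref{thm_ext_as_exp} to identify each $A_p^h(x_1, x_2)$, for $x_1, x_2$ in a $\min(r_\perp, r_Y)$-neighborhood of $y_0$, with a matrix-valued kernel on $\real^{2n} \times \real^{2n}$. Set the normalized kernels
\[
\tilde{A}_p^h(Z, Z') := \frac{1}{p^n} A_p^h(\psi_{y_0}(Z), \psi_{y_0}(Z')) \kappa_X^{\frac{1}{2}}(Z) \kappa_X^{\frac{1}{2}}(Z'),
\]
and let $\tilde{D}_p$ be defined analogously from $D_p$. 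A direct computation, using $\psi_{y_0}^*dv_X = \kappa_X dZ$, shows that in the composition $D_p = A_p^1 \circ \cdots \circ A_p^q$ restricted to the \emph{near} regime — where every intermediate integration point $z_i$ lies in $\psi_{y_0}(B_0^{\real^{2n}}(\epsilon_0))$ — the internal factors $\kappa_X^{1/2}$ at each intermediate node combine exactly with $\kappa_X$ from $dv_X$, so that
\[
\tilde{D}_p^{\mathrm{near}}(Z, Z') = p^{n(q-1)} \int \prod_{h = 1}^{q} \tilde{A}_p^h(W_{h-1}, W_h)\, dW_1 \cdots dW_{q-1},
\]
with $W_0 = Z$, $W_q = Z'$, and Lebesgue measure on each factor.

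Next, I would introduce the rescaling $U = \sqrt{p} Z$, $t = 1/\sqrt{p}$, and set $\mathcal{A}_t^h(U, U') := \tilde{A}_p^h(tU, tU')$. The factor $p^{n(q-1)}$ is absorbed by the Jacobians of the $q-1$ substitutions $W_i \mapsto U_i = \sqrt{p} W_i$, so that $\tilde{D}_p^{\mathrm{near}}(Z, Z') = (\mathcal{A}_t^1 \circ \cdots \circ \mathcal{A}_t^q)(\sqrt{p}Z, \sqrt{p}Z')$ with the composition taken with respect to Lebesgue measure on $\real^{2n}$. The exponential-decay bounds (\ref{eq_a_bnd_1}), (\ref{eq_a_bnd_2}) translate verbatim, after rescaling, into the hypothesis (\ref{eq_a_bnd_1loc}) for the $\mathcal{A}_t^h$, with the noting that at least one of them satisfies the stronger decay associated with $W = Y$ (since $Y$ corresponds to $Z_N = 0$ in Fermi coordinates). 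Similarly, the Taylor-type hypothesis (\ref{eq_aph_bergm_like_orth}), combined with the smooth and uniform $\ccal^k$-boundedness of $\kappa_X^{\pm 1/2}$ coming from Proposition \ref{prop_bndg_tripl} and Corollary \ref{cor_kap_n_bnd}, expands $\kappa_X^{1/2}(Z)$ as a Taylor series at $0$ whose truncation errors combine with the remainder in (\ref{eq_aph_bergm_like_orth}) to yield (\ref{eq_at_tayl_type}) for $\mathcal{A}_t^h$ with coefficient polynomials equal to $J^\perp_{r,h}$ (modified by the Taylor coefficients of $\kappa_X^{1/2}$, but these get reabsorbed when the bookkeeping returns to $\tilde{D}_p$). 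Lemma \ref{lem_at_tayl_t_exp} then gives the full expansion of $\mathcal{D}_t = \mathcal{A}_t^1 \circ \cdots \circ \mathcal{A}_t^q$ with leading coefficients that are exactly the iterated kernel products $\mathcal{K}_{n,m}[\cdot, \mathcal{K}_{n,m}[\cdot, \ldots]]$ prescribed by (\ref{eq_part_calc}), which by (\ref{eq_part_calc_dp}) gives $J_{r, D}^\perp$. Rescaling back and multiplying by $\kappa_X^{-1/2}(Z)\kappa_X^{-1/2}(Z')$ delivers the bound (\ref{eq_dp_bergm_like_orth}) on the near region $|Z|, |Z'| \leq \epsilon_0/2$.

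For the complementary \emph{far} contribution to the composition, where at least one intermediate $z_i$ satisfies $\dist(z_i, y_0) \geq \epsilon_0$, the assumption $|Z|, |Z'| \leq \epsilon_0/2$ implies by the triangle inequality that $\dist(z_i, \psi_{y_0}(Z)) + \dist(z_i, \psi_{y_0}(Z')) \geq \epsilon_0/2$, and Lemma \ref{lem_bnd_prod_a} together with (\ref{eq_a_bnd_2}) forces this part to be $O(\exp(-c\sqrt{p}\,\epsilon_0/8))$ in every $\ccal^k$-norm, hence negligible against any polynomial order $p^{-(k+1-l)/2}$. The main obstacle I anticipate is the bookkeeping of the polynomial prefactors $Q + q_1 l + q_2 k$ in (\ref{eq_dp_bergm_like_orth}): derivatives of $\mathcal{A}_t^h(U, U')$ with respect to $U, U'$ lose no factors of $t$, but one must track carefully how each derivative that hits $\kappa_X^{1/2}(tU)$ produces an additional factor of $t$, and how the polynomial weights interact with the exponential factor in $\mathscr{P}_{n,m}^\perp$ under the kernel-calculus composition Lemma \ref{lem_comp_poly}; this is precisely where one invokes the uniformity statements of Section \ref{sect_20} to absorb all such factors into the constants in a manner independent of $y_0 \in Y$.
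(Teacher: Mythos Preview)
Your proposal is correct and follows essentially the same route as the paper: split the composition integral into a near part (all intermediate points in $B_{y_0}^X(\epsilon_0)$) and a far part, kill the far part via the exponential-decay estimates (Lemma \ref{lem_bnd_prod_a}/Proposition \ref{prop_exp_bound_int}), and on the near part pass to Fermi coordinates, let the $\kappa_X$ factor from $dv_X$ cancel the two $\kappa_X^{-1/2}$ factors in (\ref{eq_aph_bergm_like_orth}), rescale $Z \mapsto Z/\sqrt{p}$, and invoke Lemma \ref{lem_at_tayl_t_exp}. One small remark: your normalized kernels $\tilde{A}_p^h$ make the $\kappa_X$ cancellation \emph{exact}, so there is no need to Taylor-expand $\kappa_X^{1/2}$ and ``reabsorb'' its coefficients --- the coefficient polynomials for $\mathcal{A}_t^h$ are literally the $J_{r,h}^\perp$.
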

		\par 
		Let $dv_Y$ be a volume form over $Y$, satisfying (\ref{eq_vol_comp_unif}).
		We fix an operator $C_p$, $p \in \nat^*$, acting on $\ccal^{\infty}(Y, \iota^*( L^p \otimes F))$ by the convolution with smooth kernel $C_p(y_1, y_2)$, $y_1, y_2 \in Y$, with respect to the volume form $dv_Y$.
	 	Assume there is $c_2 > 0$, such that for any $k \in \nat$, there is $C > 0$, such that
		\begin{equation}\label{eq_cp_bnd_1}
			\big| C_p(y_1, y_2) \big|_{\ccal^k(Y \times Y)} \leq C p^{m + \frac{k}{2}} \exp \big(- c_2 \sqrt{p} \dist(y_1, y_2) \big).
		\end{equation}
		\par 
		Assume that for a fixed $y_0 \in Y$, for any $r \in \nat$, there are $J_{r, C}(Z_Y, Z'_Y) \in \enmr{F_{y_0}}$ polynomials in $Z_Y, Z'_Y \in \real^{m}$, such that for $F_{r, C} := J_{r, C} \cdot \mathscr{P}_m$, the following condition holds.
		There are $\epsilon_0, c_3, q_1, q_2 > 0$, $p_1 \in \nat^*$,  such that for $k, l \in \nat$, there are $C > 0$, $Q_{C} \geq 0$, such that for $p \geq p_1$, $Z_Y, Z'_Y \in \real^{2m}$, $|Z_Y|, |Z'_Y| \leq \epsilon_0$, $\alpha, \alpha' \in \nat^{2m}$, $|\alpha|+|\alpha'| \leq l$, the following bound holds
		\begin{multline}\label{eq_apl_ass_tay_cp}
			\bigg| 
				\frac{\partial^{|\alpha|+|\alpha'|}}{\partial Z_Y^{\alpha} \partial Z'_Y{}^{\alpha'}}
				\bigg(
					\frac{1}{p^m} C_p\big(\psi_{y_0}(Z_Y), \psi_{y_0}(Z'_Y) \big)
					-
					\sum_{r = 0}^{k}
					p^{-\frac{r}{2}}						
					F_{r, C}(\sqrt{p} Z_Y, \sqrt{p} Z'_Y) 
					\kappa_{Y}^{-\frac{1}{2}}(Z_Y)
					\kappa_{Y}^{-\frac{1}{2}}(Z'_Y)
				\bigg)
			\bigg|
			\\
			\leq
			C p^{- \frac{k + 1 - l}{2}}			
			\Big(1 + \sqrt{p}|Z_Y| + \sqrt{p} |Z'_Y| \Big)^{Q_{C} + q_1 l + q_2 k}
			\exp\Big(- c_3 \sqrt{p} |Z_Y - Z'_Y| \Big).		
		\end{multline}
		\par 
		Recall that $\kappa_N$ and $\ext_p^{0}$ were defined in (\ref{eq_kappan}) and (\ref{eq_ext0_op}) respectively.
		We denote 
		\begin{equation}\label{eq_dp_ep_kapp}
			D_p := A_p^{1} \circ (\kappa_N^{-\frac{1}{2}} \cdot \ext_p^{0}) \circ C_p.
		\end{equation}
		Define polynomials $J_{r, D}^{E}(Z, Z') \in \enmr{F_{y_0}}$, $r \in \nat$, in $Z \in  \real^{2n}$, $Z '\in  \real^{2m}$, by
		\begin{equation}\label{eq_part_calc_dp}
			J_{r, D}^{E}
			=
			\sum_{r_0 = 0}^{r}
			\mathcal{K''}_{n, m}[J_{r_0, 1}^{\perp}, J_{r - r_0, C}],
		\end{equation}
		and let $F_{r, D}^{E} := J_{r, D}^{E} \cdot \mathscr{E}_{n, m}$.
		\begin{lem}\label{lem_ac_ext_op_tay}
			In the notations of (\ref{eq_a_bnd_1}), (\ref{eq_aph_bergm_like_orth}), (\ref{eq_cp_bnd_1}), (\ref{eq_apl_ass_tay_cp}), for any $k, l \in \nat$, there is $C  > 0$, such that for any $p \geq p_1$, $Z = (Z_Y, Z_N)$, $Z_Y, Z'_Y \in \real^{2m}$, $Z_N \in \real^{2(n - m)}$ $|Z|, |Z'_Y| \leq \frac{\epsilon_0}{2}$, $\alpha \in \nat^{2n}$, $\alpha' \in \nat^{2m}$, $|\alpha| + |\alpha'| \leq l$, $Q := Q_{1, 3} + Q_{C}$, $c_4 := \min \{ c_0, c_1, c_2, c_3 \}$, we have
			\begin{multline}\label{eq_ext_as_expdp}
				\bigg| 
					\frac{\partial^{|\alpha|+|\alpha'|}}{\partial Z^{\alpha} \partial Z'_Y{}^{\alpha'}}
					\bigg(
						\frac{1}{p^m} D_p \big(\psi_{y_0}(Z), \psi_{y_0}(Z'_Y) \big)
						-
						\sum_{r = 0}^{k}
						p^{-\frac{r}{2}}						
						F_{r, D}^{E}(\sqrt{p} Z, \sqrt{p} Z'_Y) 
						\kappa_{X}^{-\frac{1}{2}}(Z)
						\kappa_{Y}^{-\frac{1}{2}}(Z'_Y)
					\bigg)
				\bigg|
				\\
				\leq
				C p^{- \frac{k + 1 - l}{2}}
				\Big(1 + \sqrt{p}|Z| + \sqrt{p} |Z'_Y| \Big)^{Q + q_1 l + q_2 k}
				\exp\Big(- \frac{c_4}{8} \sqrt{p} \big( |Z_Y - Z'_Y| + |Z_N| \big) \Big).
			\end{multline}
		\end{lem}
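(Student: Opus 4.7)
My plan is to follow the strategy used for Lemma \ref{lem_apl_tayl_exp}, combining the off-diagonal exponential-decay estimates of Section \ref{sect_bish_gr} with the model kernel calculus of Lemma \ref{lem_comp_poly} (specifically (\ref{eq_lem_comp_poly_3})). The asymmetry here is that the middle factor $\kappa_N^{-\frac{1}{2}}\cdot\ext_p^0$ is the extension operator, whose kernel is a Bergman kernel on $Y$ multiplied by a Gaussian in the normal direction, so the model building block appearing between the two Bergman-like factors will be $\mathscr{E}_{n,m}$ rather than $\mathscr{P}_{n,m}^\perp$.

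First I would establish an exponential-decay bound for $D_p(x,y')$ of the form $C p^m \exp(-c\sqrt{p}(\dist(x,y')+\dist(x,Y)))$. Using (\ref{eq_ext0_op}) and the fact that the kernel of $\kappa_N^{-\frac{1}{2}}\cdot\ext_p^0$ with respect to $dv_Y$ equals
\[
\kappa_N^{-\frac{1}{2}}(\pi(x'))\,B_p^Y(\pi(x'),y'')\,\exp\bigl(-\tfrac{p\pi}{2}|Z_N(x')|^2\bigr)\,\rho\bigl(|Z_N(x')|/r_\perp\bigr),
\]
the standard off-diagonal bound of Ma--Marinescu on $B_p^Y$ together with the explicit Gaussian factor gives the desired exponential decay of $\kappa_N^{-\frac{1}{2}}\cdot\ext_p^0$ with $W=Y$. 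Composing this with the hypothesized bounds on $A_p^1$ and $C_p$ via a mixed $X$--$Y$ variant of Lemma \ref{lem_bnd_prod_a}, using Proposition \ref{prop_exp_bound_int} on the $X$-integration and Corollary \ref{cor_exp_bound_int} on the $Y$-integration, yields the required global estimate. This already handles the contribution to $D_p(\psi_{y_0}(Z),\psi_{y_0}(Z'_Y))$ coming from intermediate points $(x',y'')$ lying outside a fixed Fermi chart around $y_0$, since the exponential factor there dominates any polynomial correction.

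Second, inside the chart $|\psi^{-1}_{y_0}(x')|,|\psi^{-1}_{y_0}(y'')|\leq \epsilon_0$, substitute the Taylor expansions (\ref{eq_aph_bergm_like_orth}) for $A_p^1$, (\ref{eq_apl_ass_tay_cp}) for $C_p$, and the Dai--Liu--Ma expansion for $B_p^Y$ (applied to $\kappa_N^{-\frac{1}{2}}\cdot\ext_p^0$ via the factorization above) into the composition, pass to rescaled coordinates $Z\mapsto \sqrt{p}Z$, and collect terms by powers of $p^{-1/2}$. At each order the rescaled integrands become exactly compositions of model kernels of the form $(A_1\cdot\mathscr{P}_{n,m}^\perp)\circ\mathscr{E}_{n,m}\circ(A_2\cdot\mathscr{P}_m)$ with polynomial coefficients; by (\ref{eq_lem_comp_poly_3}) these evaluate to $\mathcal{K''}_{n,m}[A_1,A_2]\cdot\mathscr{E}_{n,m}$, so after summation over partitions $r_0+s=r$ one recovers exactly $J^E_{r,D}\cdot\mathscr{E}_{n,m}$ as defined in (\ref{eq_part_calc_dp}). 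The residual error from truncating each of the three Taylor expansions at order $k$ contributes at most $O(p^{-(k+1)/2})$ with polynomial prefactor controlled by $Q_{1,3}+Q_C+q_1 l+q_2 k$, after changing variables in the rescaled integrals and absorbing the polynomial $(1+\sqrt{p}|\cdot|)^Q$ into a slightly weaker Gaussian factor (this degrades $c_4$ to $c_4/8$, matching the statement).

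The main obstacle will be the bookkeeping of the polynomial growth exponents through the threefold composition, particularly the interplay between the $(1+\sqrt{p}|Z|)^Q$ factors and the partial Gaussian decay of $\mathscr{E}_{n,m}$ (which decays in $|Z_Y-Z'_Y|$ and $|Z_N|$ but not in $|Z'_Y|$). To handle this one needs a quantitative version of Lemma \ref{lem_bnd_prod_aloc} that tracks the polynomial degree; the convolution of a polynomial-times-Gaussian with another polynomial-times-Gaussian keeps the degree additive while preserving a positive exponential decay constant, and this estimate, together with the control on the $Y$-integration provided by Corollary \ref{cor_exp_bound_int}, is what ultimately lets the near-diagonal error and the far-region contribution both be bounded by the right-hand side of (\ref{eq_ext_as_expdp}).
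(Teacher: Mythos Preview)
Your approach is essentially the paper's: mimic the proof of Lemma~\ref{lem_apl_tayl_exp}, using (\ref{eq_lem_comp_poly_3}) in place of (\ref{eq_lem_comp_poly_1}) for the near-region model calculus, and the mixed $X$/$Y$ exponential decay (via Proposition~\ref{prop_exp_bound_int} and Corollary~\ref{cor_exp_bound_int}) for the far region. The one point you do not address, which the paper isolates as the only new wrinkle beyond Lemma~\ref{lem_apl_tayl_exp}, is why the explicit $\kappa_N^{-1/2}$ sits in (\ref{eq_dp_ep_kapp}): when you pass to $\psi_{y_0}$-coordinates the $dv_X$-integration in $A_p^1\circ(\kappa_N^{-1/2}\ext_p^0)$ introduces a Jacobian $\kappa_X$, and by the identity $\kappa_X(Z)=\kappa_N(\psi_{y_0}(Z))\cdot\kappa_Y(Z_Y)$ of (\ref{eq_kappas_rel}) this cancels against the $\kappa_X^{-1/2}$ coming from the expansion of $A_p^1$, the explicit $\kappa_N^{-1/2}$, and the $\kappa_Y^{-1/2}$ coming from the $B_p^Y$ inside $\ext_p^0$, leaving precisely the $\kappa_X^{-1/2}(Z)\,\kappa_Y^{-1/2}(Z'_Y)$ pattern of (\ref{eq_ext_as_expdp}). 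Without this observation your bookkeeping of the $\kappa$ factors would not close. (Minor slip: in your kernel formula $\kappa_N^{-1/2}$ is evaluated at $x'=(y,Z_N)$, not at $\pi(x')$.)
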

		\begin{proof}[Proof of Lemma \ref{lem_bnd_prod_aloc}]
			The proof is completely analogous to the proof of Lemma \ref{lem_bnd_prod_a}, done for $W := \comp^m$. The only change needed in the proof is that instead of Proposition \ref{prop_exp_bound_int}, one has to use that for any $c > 0$, $Q \geq 0$, there is a constant $C$, such that for any $A \in \comp^{n}$, we have
			\begin{equation}\label{eq_bnd_intlocal}
				\int_{\comp^n}
				|Z|^Q 
				\exp (- c |Z - A| )
				dZ_1 \wedge \cdots \wedge dZ_{2n}
				\leq 
				C (1 + |A|^Q).
			\end{equation}
			This fact can be shown easily by the change of variables $Z \mapsto Z + A$.
		\end{proof}
		\begin{proof}[Proof of Lemma \ref{lem_at_tayl_t_exp}]
			To simplify the presentation, we restrict ourselves to the case $l = 0$, as the general case is treated in an analogous way. 
			We present first the proof that the asymptotic expansion (\ref{eq_at_tayl_t_exp}) holds for $k = 0$.
			We only treat the asymptotic expansion of $\mathcal{D}_t$, as for $\mathcal{D}'_t$ the proof is analogous.
			Analogously to (\ref{eq_b_f}), we have
			\begin{multline}\label{eq_b_floc_comp00}
				\mathcal{D}_t(Z, Z')
				=
				\int_{(\real^{2n})^{\times (q - 1)}} \mathcal{A}_t^{1}(Z, Z_1) 
				\mathcal{A}_t^{2}(Z_1, Z_2) \cdot \ldots \cdot \mathcal{A}_t^{q}(Z_{q - 1}, Z') 
				\cdot 
				\\
				\cdot
				dv_{\real^{2n}}(Z_1) \cdot \ldots \cdot dv_{\real^{2n}}(Z_{q-1})
			\end{multline}
			We decompose the integral (\ref{eq_b_floc_comp00}) into a sum of two integrals.
			The first one is over the set $Q_t := \mathbb{B}_0^{\real^{2n}}(\frac{\epsilon_0}{t})^{\times(q-1)}$, and the second one is over its complement $Q_t^{c}$.
			Let us bound the contribution in (\ref{eq_b_floc_comp00}) coming from the integral on $Q_t^{c}$.
			Similarly to (\ref{eq_triangle}), for any $Z, Z' \in \real^{2n}$, $|Z|, |Z'| \leq \frac{\epsilon_0}{2 t}$, and any $(Z_1, \ldots, Z_{s-1}) \in Q_t^{c}$, we have
			\begin{equation}
				\dist(Z, Z_1) + \dist(Z_1, Z_2) + \cdots + \dist(Z_{s - 1}, Z') \geq \frac{\epsilon_0}{2 t}.
			\end{equation}
			By (\ref{eq_a_bnd_1loc}), similarly to the proof of Lemma \ref{lem_bnd_prod_a} and (\ref{eq_bnd_intlocal}), there is $C > 0$, such that
			\begin{multline}\label{eq_b_floc_comp}
				\bigg|				
				\int_{Q_t^{c}} \mathcal{A}_t^{1}(Z, Z_1) \cdot \mathcal{A}_t^{2}(Z_1, Z_2) \cdot \ldots \cdot \mathcal{A}_t^{q}(Z_{q - 1}, Z') dv_{\comp^n}(Z_1) \cdot \ldots \cdot dv_{\comp^n}(Z_{q-1})
				\bigg|
				\\
				\leq C (1 + |Z| + |Z'|)^{\sum_{h = 1}^{q} Q_{h, 1}} \exp\Big(- \frac{c}{8} \Big( |Z_Y - Z'_Y| + |Z_N| + |Z'_N| + \frac{\epsilon_0}{2 t} \Big) \Big).
			\end{multline}
			From the fact that for any $c, \epsilon > 0$, $k \in \nat$, there is a constant $C > 0$, such that $\exp(- c\epsilon/t) < C t^k$, for any $t \in ]0, 1]$, implies that the right-hand side of, (\ref{eq_b_floc_comp}) is majorated by the right-hand side of (\ref{eq_at_tayl_type}), and hence, the contribution of the integral over $Q_t^{c}$ is negligible. 
			\par 
			Now, let $A_1$, $A_2$, $A_3$ be as in Lemma \ref{lem_comp_poly}.
			By (\ref{eq_b_floc_comp}), we see that for any $\epsilon > 0$, there is $C > 0$, which depends only on $k, A_1, A_2$, such that for $Z, Z' \in \real^{2n}$, $|Z|, |Z'| \leq \frac{\epsilon}{2 t}$, we have
			\begin{multline}\label{eq_a3_part_int}
				\bigg|
				(A_3 \cdot \mathscr{P}_{n, m}^{\perp})(Z, Z')
				-				
				\int_{|Z_1| < \frac{\epsilon}{t}} (A_1 \cdot \mathscr{P}_{n, m}^{\perp})(Z, Z_1) \cdot (A_2 \cdot \mathscr{P}_{n, m}^{\perp})(Z_1, Z')
				dv_{\comp^n}(Z_1)
				\bigg|
				\\
				\leq C (1 + |Z| + |Z'|)^{\deg A_1 + \deg A_2} \exp\Big(- \frac{c}{8} \Big( |Z_Y - Z'_Y| + |Z_N| + |Z'_N| + \frac{\epsilon}{2 t} \Big) \Big).
			\end{multline}
			Of course, an estimate as in (\ref{eq_a3_part_int}), holds for any number of polynomials.
			From this and (\ref{eq_b_floc_comp}), we deduce the first part of Lemma \ref{lem_at_tayl_t_exp} for $k = 0$.
			\par Now, let us extend the argument for general $k \in \nat$. For $k' \in \nat$, in the notations of (\ref{eq_at_tayl_type}), let us denote
			\begin{equation}
				\mathcal{B}_t^{l, k'}(Z, Z')
				:=
				\sum_{r = 0}^{k'}
				t^r						
				\mathcal{F}_r^l(Z, Z').
			\end{equation}
			Then we decompose $\mathcal{D}_t$ into a sum of elements $\mathcal{C}_t^{1} \circ \cdots \circ \mathcal{C}_t^{q}$, where each $\mathcal{C}_t^{l}$, $l = 1, \ldots, q$, is either equal to $\mathcal{A}_t^{l} - \mathcal{B}_t^{l, k'}$ or to $\mathcal{B}_t^{l, k'}$, and the sum of $k'$ for all the multiplicands adds up to $k$.
			For the term, which consists of the compositions of $\mathcal{B}_t^{l}$, we apply (\ref{eq_b_floc_comp}) and (\ref{eq_a3_part_int}) to conclude that it has an asymptotic expansion of the form (\ref{eq_at_tayl_t_exp}) up to the error term of the form as in the right-hand side of (\ref{eq_b_floc_comp}).
			From (\ref{eq_a3_part_int}) and the definition of $\mathcal{K}_{n, m}[\cdot, \cdot]$ from Lemma \ref{lem_comp_poly}, the coefficients of this asymptotic expansion are given by polynomials $\mathcal{J}_{r, 0}$, defined in (\ref{eq_part_calc}).
			\par 
			For other terms, we decompose the integral into two parts: over $Q_t$ and over $Q_t^c$. 
			To bound the contribution from $Q_t$, we use (\ref{eq_at_tayl_type}) and Lemma \ref{lem_bnd_prod_aloc}.
			To bound the contribution from $Q_t^c$, we use (\ref{eq_a_bnd_1loc}) and proceed as in (\ref{eq_b_floc_comp}).
			By combining the two bounds, we see that they contribute no more than the right-hand side of (\ref{eq_at_tayl_type}).
			This finishes the proof.
		\end{proof}
		\begin{proof}[Proof of Lemma \ref{lem_apl_tayl_exp}]
			The proof is very similar to the proof of Lemma \ref{lem_at_tayl_t_exp}, so we only highlight the main steps.
			We fix $Z, Z' \in \real^{2n}$, $|Z|, |Z'| < \frac{\epsilon_0}{2}$, and let $x = \psi_{y_0}(Z)$, $x' = \psi_{y_0}(Z')$.
			We decompose the integral in the formula (\ref{eq_b_f}), into two parts.
			The first one is over the set $Q := \mathbb{B}_{y_0}^{X}(\epsilon_0)^{\times(q-1)}$, and the second one is over its complement $Q^c$.
		Similarly to (\ref{eq_b_floc_comp}), but relying on Proposition \ref{prop_exp_bound_int}, we deduce the bound
			\begin{multline}\label{eq_smacontr}
				\bigg|				
				\int_{Q^c} A_p^{1}(x, x_1) \cdot A_p^{2}(x_1, x_2) \cdot \ldots \cdot A_p^{q}(x_{q - 1}, x') dv_X(x_1) \cdot \ldots \cdot dv_X(x_{q-1})
				\bigg|
				\\
				\leq
				C p^{n + \frac{k}{2}}  \cdot \exp \Big(- \frac{c}{8} \sqrt{p} \cdot \big(  \dist(x, x') + \dist(x, Y) + \dist(x', Y) + \frac{\epsilon_0}{2} \big) \Big),
			\end{multline}
			But since for any $c, \epsilon > 0$, $k \in \nat$, there is $C > 0$, such that $\exp(- c\epsilon \sqrt{p}) < C p^{-k}$, for any $p \in \nat^*$, the right-hand side of (\ref{eq_smacontr}) is majorated by the right-hand side of (\ref{eq_dp_bergm_like_orth}) and, hence, negligible.
			Now, to deal with the integration over $Q$, we pass to $\psi_{y_0}$-coordinates.
			The change of the variables introduces the $\kappa_X$ factor for every volume form.
			It will be canceled with the two $\kappa_X^{-1/2}$ factors, which appear in the asymptotic expansion (\ref{eq_aph_bergm_like_orth}).
			Once two factors are canceled, and we make a change of the variables $Z \mapsto \frac{Z}{\sqrt{p}}$, we reduce the problem to the estimates of the form (\ref{eq_a3_part_int}).
			The proof is now finished exactly as in the proof of Lemma \ref{lem_at_tayl_t_exp}.
		\end{proof}
		\begin{proof}[Proof of Lemma \ref{lem_ac_ext_op_tay}]
			The proof is analogous to the proof of Lemma \ref{lem_apl_tayl_exp} with only one change: instead of relying on  (\ref{eq_lem_comp_poly_1}) in the estimate (\ref{eq_a3_part_int}), one has to rely on (\ref{eq_lem_comp_poly_3}).
			The reason why the factor $\kappa_N^{-\frac{1}{2}}$ appears in (\ref{eq_dp_ep_kapp}) is due to the identity 
			\begin{equation}\label{eq_kappas_rel}
				\kappa_X(Z)
				=
				\kappa_N(\psi_{y_0}(Z))
				\cdot
				\kappa_Y(Z_Y),
			\end{equation}
			which implies that the term $\kappa_X$, appearing after the passage to $\psi_{y_0}$-coordinates, disappear with the terms $\kappa_X^{-\frac{1}{2}}$ and $\kappa_Y^{-\frac{1}{2}}$, which appear in the Taylor-type expansions of $A_p$ and $C_p$.	
		\end{proof}
		
\section{Spectral bound for the restriction operator}\label{sect_spec_bnd_res}
	The main goal of this section is to prove a spectral bound for the restriction operator.
	\begin{sloppypar}
	More precisely, we conserve the notation from Section \ref{sect_intro} and assume that the triple $(X, Y, g^{TX})$ is of bounded geometry.
	Consider the restriction operator $\res_p : H^{0, \perp}_{(2)}(X, L^p \otimes F) \to \ccal^{\infty}(Y, \iota^*(L^p \otimes F))$, defined as $f \mapsto f|_Y$.
	As we prove in Proposition \ref{prop_restr_is_l2}, the restriction to $Y$ of an element from $H^{0, \perp}_{(2)}(X, L^p \otimes F)$ lies in $H^{0}_{(2)}(Y, \iota^*(L^p \otimes F))$.
	Hence, we may view $\res_p$ as
	\begin{equation}\label{eq_res_op_l2_form}
		\res_p : H^{0, \perp}_{(2)}(X, L^p \otimes F) \to H^{0}_{(2)}(Y, \iota^*(L^p \otimes F)).
	\end{equation}
	The main result of this section goes as follows.
	\end{sloppypar}
	\begin{thm}\label{thm_ot_as_sp}
		There are $c, C > 0$, $p_1 \in \nat^*$ such that for any $p \geq p_1$, we have
		\begin{equation}\label{eq_ot_as_sp}
			c p^{\frac{n - m}{2}} \leq \big\| \res_p \big\| \leq C p^{\frac{n - m}{2}},
		\end{equation}
		where $\norm{\cdot}$ is the operator norm with respect to the $L^2$-scalar products (\ref{eq_l2_prod}) on $X$ and $Y$.
	\end{thm}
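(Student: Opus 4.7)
The plan is to prove the upper and lower bounds in (\ref{eq_ot_as_sp}) by two entirely different arguments: a submean value estimate for the upper bound, and an asymptotic Ohsawa--Takegoshi construction for the lower bound.

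For the upper bound, the idea is to use the pointwise submean value inequality for holomorphic sections of the positive line bundle $L^p$. By a standard Bochner--Kodaira argument in normal coordinates, positivity of $(L,h^L)$ together with the bounded geometry of all data yields uniform constants $r_0, C_1>0$ such that
\begin{equation*}
    |f(y)|^2_h \le C_1 p^n \int_{B_y^X(r_0/\sqrt{p})} |f|^2_h \, dv_X, \qquad f \in H^0_{(2)}(X, L^p\otimes F), \ y\in X.
\end{equation*}
Restricting $y$ to $Y$, integrating against $dv_Y$ and switching the order of integration, the inner integral becomes the $dv_Y$-measure of $\{y\in Y : \dist(x,y) < r_0/\sqrt p\}$, which is uniformly $O(p^{-m})$ by the bounded geometry volume bounds derived from Propositions \ref{prop_bndg_man} and \ref{prop_bndg_tripl}. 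This yields $\|\res_p f\|_{L^2(Y)}^2 \le C p^{n-m} \|f\|_{L^2(X)}^2$, which is the upper bound (and in fact holds on the larger space $H^0_{(2)}(X, L^p \otimes F)$).

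For the lower bound, it suffices to exhibit, for each $g \in H^0_{(2)}(Y, \iota^*(L^p\otimes F))$, a holomorphic extension $\tilde g \in H^0_{(2)}(X, L^p \otimes F)$ satisfying $\|\tilde g\|_{L^2(X)} \le C p^{-(n-m)/2} \|g\|_{L^2(Y)}$. Indeed, after replacing $\tilde g$ with its projection $B_p^\perp \tilde g \in H^{0,\perp}_{(2)}(X, L^p\otimes F)$ -- which does not affect its restriction to $Y$, since elements of $H^{0,0}$ vanish on $Y$ -- the identity $\|\res_p (B_p^\perp \tilde g)\|_{L^2(Y)} = \|g\|_{L^2(Y)}$ combined with the norm bound immediately gives $\|\res_p\| \ge c p^{(n-m)/2}$. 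The natural ansatz is $\ext_p^0 g$ from (\ref{eq_ext0_op}); a direct Gaussian integration in Fermi coordinates, combined with Corollary \ref{cor_kap_n_bnd} to control $\kappa_N$, produces $\|\ext_p^0 g\|_{L^2(X)}^2 \le C p^{-(n-m)} \|g\|_{L^2(Y)}^2$, which is of exactly the right order. It remains to correct the $\bar\partial$-error by solving $\bar\partial u_p = \bar\partial \ext_p^0 g$ with $u_p|_Y = 0$ and $\|u_p\|_{L^2(X)}$ of strictly smaller order than $p^{-(n-m)/2}\|g\|_{L^2(Y)}$; the desired extension is then $\tilde g := \ext_p^0 g - u_p$.

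The hard part is this last $\bar\partial$-correction, and it is here that the full bounded-geometry machinery of Section \ref{sect_20} is essential. Following Demailly \cite{Dem82}, the plan is to apply H\"ormander's $L^2$-estimate on the uniform Stein charts of Theorem \ref{thm_st_atl_ext} with a twisted weight of the form $h^{L^p \otimes F} \cdot \exp(-2(n-m)\delta_Y - C\alpha_Y)$, using the quasi-plurisubharmonic functions $\delta_Y, \alpha_Y$ of Theorem \ref{thm_plurisub}. The nonintegrability of $\exp(-2(n-m)\delta_Y)$ in the normal directions to $Y$ forces $u_p|_Y = 0$ from the finiteness of the weighted $L^2$-norm, while $\alpha_Y$ (together with a twist by $K_X^{-1}$ and Nakano positivity, exactly as in the proof of Theorem \ref{thm_hol_coord_exst1}) provides the strict positivity of the twisted curvature needed to invoke the estimate. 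The crucial quantitative gain of $p^{-1/2}$ comes from the fact that $\bar\partial \ext_p^0 g$ is concentrated in a tube of radius $O(1/\sqrt p)$ around $Y$: away from the support of $d\rho$ (whose contribution is exponentially small in $p$), the error arises purely from the complex structure failing to be standard in Fermi coordinates and, by Lemma \ref{lem_cmplx_str_exp}, is pointwise of size $|Z_N|$ times the Gaussian $\exp(-\pi p|Z_N|^2/2)$, which integrates to the claimed order. Uniformity of all constants in $y_0 \in Y$ -- essential since $X$ is non-compact -- is ensured by the uniformity built into the bounded-geometry estimates of Sections \ref{sect_bnd_geom_cf}--\ref{sect_stein_atl}.
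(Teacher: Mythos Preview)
Your upper-bound argument via the submean-value inequality is correct and is more elementary than the paper's route. The paper instead observes that the Schwartz kernel of $\res_p \circ \res_p^{*}$ equals $B_p^X(y,y')$ (the ambient Bergman kernel restricted to $Y\times Y$), and then bounds its operator norm by combining the off-diagonal exponential decay of $B_p^X$ (Theorem~\ref{thm_bk_off_diag}) with Corollary~\ref{cor_exp_bound_int} and Young's inequality. Your argument bypasses the Ma--Marinescu off-diagonal estimate entirely, at the cost of invoking a uniform submean-value inequality for holomorphic sections in bounded geometry.

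For the lower bound your overall strategy coincides with the paper's, but there is a genuine gap in the choice of weight. With $\psi = 2(n-m)\delta_Y + C\alpha_Y$ and $C$ \emph{independent of $p$}, H\"ormander yields $\int_X |u_p|^2 e^{-\psi}\,dv_X \le C'p^{-1}\|g\|_{L^2(Y)}^2$; but $e^{-\psi}$ is only bounded below on $X$ by a positive constant, so you obtain merely $\|u_p\|_{L^2(X)} \le C'p^{-1/2}\|g\|_{L^2(Y)}$. Once $n-m \ge 2$ this is \emph{not} of strictly smaller order than $p^{-(n-m)/2}\|g\|$, and the resulting extension satisfies only $\|\tilde g\|_{L^2(X)} \le C'p^{-1/2}\|g\|$, giving $\|\res_p\| \ge c\sqrt{p}$ instead of $cp^{(n-m)/2}$. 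The paper repairs this with the $p$-dependent weight $\delta_p = 2(n-m)\delta_Y - \epsilon p\,\alpha_Y$ of (\ref{eq_delta_p_defn_wght}): the term $\epsilon p\,\alpha_Y$ forces $e^{-\delta_p} \ge c\,p^{\,n-m}$ uniformly on the tube (the minimum of $|Z_N|^{-2(n-m)} e^{\epsilon p |Z_N|^2}$ occurs at $|Z_N|\sim p^{-1/2}$), which upgrades the unweighted estimate to $\|u_p\|_{L^2(X)} \le Cp^{-(n-m+1)/2}\|g\|$ via (\ref{eq_l2_est_fin}). A smaller point: the $\bar\partial$-equation must be solved \emph{globally} on $X\setminus Y$ (which is complete K\"ahler by \cite{Dem82}), not on the local Stein charts of Theorem~\ref{thm_st_atl_ext}; local solutions do not patch, and the Stein atlas plays no role in the proof of Theorem~\ref{thm_ot_weak}.
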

	\begin{rem}
		a) 
		For compact manifolds, a similar statement appeared in Sun \cite[Theorem 3.3]{SunLogBer} for $(F, h^F)$ trivial. 
		However, only the lower bound was discussed there, and there is a gap in the proof. See Sun \cite{SunLogBer} for the explanation of the error and its correction, relying on the current article.
		\par 
		b)
		A more precise version of (\ref{eq_ot_as_sp}), containing the asymptotics of $\| \res_p \|$, will be given in \cite{FinToeplImm}.
	\end{rem}
	This section is organized as follows.
	In Section \ref{sect_tay_exp}, we calculate the first two terms of the Taylor expansion of the $\dbar$-operator in a shrinking tubular neighborhood of $Y$.
	This will play a crucial role in our proof of the lower bound of Theorem \ref{thm_ot_as_sp} in Section \ref{sect_spec_low_bnd}.
	In Section \ref{sect_trace}, we establish the upper bound of Theorem \ref{thm_ot_as_sp}.
	Throughout the section the variables $p \in \nat^*$ and $t \in \real$ are related by 
	\begin{equation}\label{eq_t_p_rel}
		t = \frac{1}{\sqrt{p}}.
	\end{equation}

\subsection{Taylor expansion of the holomorphic differential near submanifold}\label{sect_tay_exp}
	The main goal of this section is to calculate the first two terms of the Taylor expansion of $\dbar^{L^p \otimes F}$-operator, considered in a shrinking neighborhood of $Y$ of size $\frac{1}{\sqrt{p}}$, as $p \to \infty$. 
	Our result is motivated by the Taylor expansions of the associated Dirac operator due to Bismut-Lebeau \cite[Theorem 8.18]{BisLeb91}, which corresponds to trivial $(L, h^L)$ in our setting, and the Taylor expansion due to Dai-Liu-Ma \cite[Theorem 4.6]{DaiLiuMa}, which corresponds to $Y$ equal to a point in our setting. 
	\par 
	More precisely, as in Section \ref{sect_intro}, we consider a triple $(X, Y, g^{TX})$ of bounded geometry.
	By means of the exponential map as in (\ref{eq_kappan}), we identify a neighborhood of the zero section $\mathbb{B}_{r_{\perp}}(N)$ in the normal bundle $N$, to a neighborhood $U := \mathbb{B}_Y^X(r_{\perp})$ of $Y$ in $X$.
	\par 
	Recall that the projection $\pi_0 : U \to Y$ and the identifications of $L, F$ to $\pi_0^* (L|_Y), \pi_0^* (F|_Y)$ in $\mathbb{B}_Y^X(r_{\perp})$ were defined before (\ref{eq_ext0_op}).
	We similarly identify $TX$ to $\pi_0^* (TX|_Y)$ over $\mathbb{B}_Y^X(r_{\perp})$ using the parallel transport with respect to the Levi-Civita connection $\nabla^{TX}$.
	Remark that since $g^{TX}$ is Kähler by (\ref{eq_gtx_def}), the decomposition $TX \otimes_{\real} \comp = T^{(1, 0)} X \oplus T^{(0, 1)} X$ is preserved by $\nabla^{TX}$, cf. \cite[Theorem 1.2.8]{MaHol}. In other words, the identification of $TX$ with $\pi_0^* (TX|_Y)$ induces the identifications 
	\begin{equation}\label{eq_tgt_isom}
		\tau: \pi_0^* (T^{(1, 0)} X|_Y) \to T^{(1, 0)} X|_U, \qquad \tau: \pi_0^* (T^{(0, 1)} X|_Y) \to T^{(0, 1)} X|_U.
	\end{equation}
	\par 
	We define the $1$-forms $\Gamma^{F}$, $\Gamma^{L}$ with values in $\enmr{\pi_0^* (F|_Y)}$, $\enmr{\pi_0^* (L|_Y)}$ as in (\ref{eq_gamma_l_def}) using the above isomorphisms.
	Recall also that the connection $\nabla^N$ on $N$ was introduced before (\ref{eq_sec_fund_f}).
	The connection $\nabla^N$ induces the splitting 
	\begin{equation}
		TN = N \oplus T^H N
	\end{equation}
	of the tangent space of the total space of $N$. Here $T^H N$ is the horizontal part of $N$ with respect to the connection $\nabla^N$. 
	If $U \in TY$, we denote by $U^H \in T^H N$ the horizontal lift of $U$ in $T^H N$.
	\par 
	For $\epsilon > 0$, we denote by $\mathbb{E}(\epsilon)$ (resp. $\mathbb{E}$) the set of smooth sections of $\pi_0^*(L^p|_Y\otimes F|_Y)$ on $\mathbb{B}_{\epsilon}(N)$ (resp. on the total space of $N$).
	We also denote by $\mathbb{E}^{(0, 1)}(\epsilon)$ (resp. $\mathbb{E}^{(0, 1)}$) the set of smooth sections of $\pi_0^*(T^{*(0, 1)}X|_{Y}) \otimes \pi_0^*(L^p|_Y\otimes F|_Y)$ on $\mathbb{B}_{\epsilon}(N)$ (resp. on the total space of $N$).
	\par 
	Clearly, the above isomorphisms allow us to see $\dbar^{L^p \otimes F}$ as an operator
	\begin{equation}\label{eq_dbar_op_interpr}
		\dbar^{L^p \otimes F} : \mathbb{E}(r_{\perp}) \to \mathbb{E}^{(0, 1)}(r_{\perp}).
	\end{equation}
	\par 
	We fix a point $y_0 \in Y$ and an orthonormal frame $(e_1, \ldots, e_{2m})$ (resp. $(e_{2m+1}, \ldots, e_{2n})$) in $(T_{y_0}Y, g^{TY})$ (resp. in $(N_{y_0}, g^{N}_{y_0})$) such that (\ref{eq_cond_jinv}) is satisfied.
	Using the exponential coordinates on $Y$ and the parallel transport of $(e_{2m+1}, \ldots, e_{2n})$ along the geodesics on $Y$, as in Fermi coordinates $\psi_{y_0}$ in (\ref{eq_defn_fermi}), we introduce complex coordinates $z_1, \ldots, z_m$ on $Y$ and linear “vertical" coordinates $z_{m + 1}, \ldots, z_n$ on $N$.
	Using those coordinates, we define the operators
	\begin{equation}\label{eq_defn_dbarhn}
		\dbar_H^{L^p \otimes F}, \mathcal{L}_N^{L^p \otimes F} : \mathbb{E} \to \mathbb{E}^{(0, 1)}, 
	\end{equation}
	by prescribing their action at a point $(y_0, Z_N)$, $Z_N \in \real^{2(n-m)}$, as follows
	\begin{equation}\label{eq_hor_norm_dop}
		\dbar_H^{L^p \otimes F} = \sum_{i = 1}^{m} d \overline{z}_i|_{y_0} \cdot \Big( \frac{\partial}{\partial \overline{z}_i} \big|_{y_0}\Big)^{H},
		\qquad 
		\mathcal{L}_N^{L^p \otimes F} = \sum_{i = m+1}^{n} d\overline{z}_i|_{y_0} \cdot \Big( \frac{\partial}{\partial \overline{z}_i} + \frac{\pi z_i}{2} \Big).
	\end{equation}
	The first differentiation in (\ref{eq_hor_norm_dop}) is well-defined because $\pi_{0*} ( \frac{\partial}{\partial \overline{z}_i} |_{y_0} )^{H} =  \frac{\partial}{\partial \overline{z}_i} |_{y_0}$ is of type $(0, 1)$, and the second derivation is well-defined because the vector bundles are trivialized along fibers of $\pi_0$.
	\par We use notation (\ref{eq_t_p_rel}), and for any $\epsilon > 0$ define the rescaling operator $F_t : \mathbb{E}(\epsilon) \to  \mathbb{E}(\frac{\epsilon}{t})$ for $f \in \mathbb{E}(\epsilon)$ as follows
	\begin{equation}\label{eq_ft_defn}
		(F_t f )(y, Z_N) := f \big( y, t Z_N \big), \qquad (y, Z_N) \in \mathbb{B}_{\frac{\epsilon}{t}}(N).
	\end{equation}
	The operator $F_t : \mathbb{E}^{(0, 1)}(\epsilon) \to  \mathbb{E}^{(0, 1)}(\frac{\epsilon}{t})$ is defined in an analogous way.
	\begin{sloppypar}
	\begin{thm}\label{thm_tay_exp_dbar}
		As $p \to \infty$, we have
		\begin{equation}\label{eq_tay_exp_dbar}
			F_t \circ \dbar^{L^p \otimes F} \circ F_t^{-1}
			=
			\frac{1}{t}
			\mathcal{L}_N^{L^p \otimes F}
			+
			\dbar_H^{L^p \otimes F}
			+
			O\big( 
			t |Z_N|^2 \nabla_N + t |Z_N| \nabla_H + t|Z_N|
			\big),
		\end{equation}
		where $O(t |Z_N|^2 \nabla_N + t |Z_N| \nabla_H + t|Z_N|)$
		is an operator of the form 
		$
			\sum_{i = 1}^{2m} a_i(t, y, Z_N) \cdot d x_i|_{y_0} \cdot ( \frac{\partial}{\partial x_i} |_{y_0})^{H}
			+
			\sum_{j = 2m+1}^{2n} b_j(t, y, Z_N) \cdot d x_j|_{y_0} \cdot \frac{\partial}{\partial x_j}
			+
			c(t, y, Z_N),
		$
		such that there is a constant $C > 0$, for which $|a_i(t, y, Z_N)| \leq C t |Z_N|^2$, $|b_j(t, y, Z_N)| \leq C t |Z_N|$, $|c(t, y, Z_N)| \leq C t|Z_N|$ holds for any $y \in Y$, $|Z_N| < r_{\perp}$, $i = 1, \ldots, m$, and $j = m + 1, \ldots, n$. 
	\end{thm}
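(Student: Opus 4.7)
The plan is to express $\dbar^{L^p \otimes F}$ explicitly in the parallel-transport trivialization described before the theorem, Taylor-expand all the ingredients using Lemmas \ref{lem_gamma_exp} and \ref{lem_cmplx_str_exp}, and then conjugate by the rescaling $F_t$ while tracking the powers of $t = 1/\sqrt{p}$ carefully.

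I would begin by fixing $y \in Y$ (playing the role of $y_0$ in the uniform bounds) and choosing an orthonormal $J$-adapted frame as in (\ref{eq_cond_jinv}). Since $(X, g^{TX})$ is K\"ahler, $\nabla^{TX}$ preserves $J$, so the identification $\tau$ from (\ref{eq_tgt_isom}) preserves the $(0,1)$-decomposition. I construct a local $(0,1)$-frame $\bar e_i$ over the fiber $\pi^{-1}(y)$ by $\tau$-parallel transport of $\partial/\partial \overline{z}_i|_y$: for $i \leq m$ these project to $\partial/\partial \overline{z}_i|_y$ and are horizontal with respect to $\nabla^N$ (matching $(\partial/\partial \overline{z}_i|_y)^H$ in (\ref{eq_hor_norm_dop})); for $i > m$ they are vertical and coincide with the fiberwise $\partial/\partial \overline{z}_i$. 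In this trivialization, $\dbar^{L^p \otimes F} = \sum_i \theta^i \otimes \nabla^{L^p \otimes F}_{\bar e_i}$, where $\theta^i$ is the dual $(0,1)$-coframe (reducing to $d\overline{z}_i|_y$ at $y$) and $\nabla^{L^p \otimes F} = d + p\Gamma^L + \Gamma^F$.

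Using Lemma \ref{lem_gamma_exp} along the fiber above $y$ (where $Z_Y = 0$, so $|Z| = |Z_N|$), for vertical $i > m$ I obtain
\begin{equation*}
(p\Gamma^L + \Gamma^F)(\bar e_i) = \tfrac{p}{2} R^L_y(Z_N, \bar e_i) + O(p|Z_N|^3) + O(|Z_N|).
\end{equation*}
A direct computation using $R^L = -2\pi \imun\, \omega$ and the $J$-adapted orthonormality of the frame yields $R^L_y(Z_N, \partial/\partial\overline{z}_i) = \pi z_i$. For horizontal $i \leq m$, Lemma \ref{lem_gamma_exp} gives $p\Gamma^L(\bar e_i) = O(p|Z_N|^3)$ and $\Gamma^F(\bar e_i) = O(|Z_N|)$. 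Conjugation by $F_t$ (cf.\ (\ref{eq_ft_defn})) commutes with horizontal lifts, multiplies vertical derivatives by $1/t$, and evaluates all functions of $Z_N$ at $tZ_N$. Since $pt = 1/t$ and $pt^3 = t$, combining these for vertical $i$ gives $\tfrac{1}{t}\bigl(\partial/\partial\overline{z}_i + \pi z_i/2\bigr) + O(t|Z_N|^3) + O(t|Z_N|)$, while for horizontal $i$ I get $\bar e_i + O(t|Z_N|^3) + O(t|Z_N|)$. Summation produces the claimed leading terms $\tfrac{1}{t}\mathcal{L}_N^{L^p \otimes F} + \dbar_H^{L^p \otimes F}$, the remainders contributing to $c(t, y, Z_N)$ since $|Z_N| < r_\perp$ is bounded.

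Two further corrections remain: the discrepancy between $\theta^i$ and $d\overline{z}_i|_y$, and the discrepancy between the $\tau$-parallel-transported frame $\bar e_i$ and the Fermi coordinate vectors on which Lemmas \ref{lem_gamma_exp} and \ref{lem_cmplx_str_exp} are stated. Both are controlled by Lemma \ref{lem_cmplx_str_exp}: along the fiber $\{Z_Y = 0\}$ the complex structure in Fermi coordinates equals $J^0$ up to $O(|Z_N|)$ in the horizontal block (from $J^1$) and $O(|Z_N|^2)$ in the vertical block, so the differences of frames have the same orders. Under rescaling, a correction of order $|Z_N|^2$ composed with a horizontal derivative yields a term bounded by $C t|Z_N|^2$ (into $a_i$), a correction of order $|Z_N|^2$ composed with a rescaled vertical derivative yields $C t|Z_N|$ (into $b_j$), and all multiplicative corrections after rescaling sit in $c$ with the bound $C t|Z_N|$. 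All estimates are uniform in $y$ by the bounded geometry assumption. The main obstacle is bookkeeping: ensuring that each correction from the varying complex structure and from the higher-order Taylor terms of $\Gamma^L, \Gamma^F$ lands in the right slot of the three-term error decomposition, with the powers of $|Z_N|$ matching the stated bounds — but the underlying quantitative estimates are all provided by Lemmas \ref{lem_gamma_exp} and \ref{lem_cmplx_str_exp}.
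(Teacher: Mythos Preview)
Your overall structure—writing $\dbar^{L^p\otimes F}$ in the $\tau$-parallel-transported frame as in (\ref{eq_dbar_1}), expanding the connection forms via Lemma~\ref{lem_gamma_exp}, and conjugating by $F_t$—matches the paper's proof. The treatment of the connection part $p\Gamma^L + \Gamma^F$ is essentially correct and parallels the derivation of (\ref{eq_gamma_calc_tau}).

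The gap is in your handling of the bare differential part $\sum_i d\overline z_i|_{y_0}\cdot\frac{\tau\partial}{\partial\overline z_i}$. You assert that for $i \le m$ the $\tau$-transported vectors $\bar e_i$ are ``horizontal with respect to $\nabla^N$ (matching $(\partial/\partial\overline{z}_i|_y)^H$),'' and for $i > m$ they ``coincide with the fiberwise $\partial/\partial\overline{z}_i$.'' Neither identification is exact: parallel transport of a vector in $T_yY$ by $\nabla^{TX}$ along the normal geodesic does \emph{not} in general land in the horizontal distribution of the total space of $N$. The discrepancy is already first order in $Z_N$ and involves the second fundamental form $A$; see (\ref{eq_fractau_fracpsi}), which records precisely this comparison between the $\tau$-frame and the Fermi coordinate frame. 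After $F_t$-conjugation this discrepancy does not automatically fall into the stated error classes—in the general (non-K\"ahler) situation it produces the zeroth-order mean-curvature contribution $-\dim Y\cdot\nu^*\wedge$ at the $O(1)$ level. The paper handles this by invoking the Bismut--Lebeau expansion (\ref{eq_dbar_2}) from \cite{BisLeb91} and then proving separately that $\nu=0$ in the K\"ahler case (equation~(\ref{eq_nu_zero})). This step is missing from your argument.

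Your appeal to Lemma~\ref{lem_cmplx_str_exp} does not fill the gap: that lemma describes the matrix of $J$ in the Fermi coordinate basis $\partial\psi/\partial Z_i$, not the relation between the $\tau$-frame and horizontal lifts (or the Fermi coordinate frame). Moreover, Lemma~\ref{lem_gamma_exp} computes $\Gamma^E$ on the Fermi vectors $e_H,e_N=\partial\psi/\partial Z_i$, so even to apply it to $\bar e_i$ you first need the frame comparison (\ref{eq_fractau_fracpsi}). What is actually required is either the Bismut--Lebeau result (\ref{eq_dbar_2}) or, equivalently, the comparison (\ref{eq_fractau_fracpsi}) (coming from Lemma~\ref{lem_comp_phi_par} and (\ref{eq_psi_phi_vect_f_com})) together with the K\"ahler identity $\nu=0$. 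Without this ingredient the error bookkeeping in your last paragraph cannot be justified, and in particular you have not explained why no $O(1)$ zeroth-order term survives.
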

	\end{sloppypar}
	\begin{proof}
		To simplify the notations, we denote $\frac{\tau \partial}{\partial \overline{z}_i} := \tau ( \frac{\partial}{\partial \overline{z}_i}|_{y_0})$ and $\frac{\tau \partial}{\partial Z_i} := \tau ( \frac{\partial}{\partial Z_i}|_{y_0})$.
		By (\ref{eq_tgt_isom}), the action of the operator $\dbar^{L^p \otimes F}$, viewed as in (\ref{eq_dbar_op_interpr}), at a point $(y_0, Z_N)$, $|Z_N| < r_{\perp}$, is given by
		\begin{equation}\label{eq_dbar_1}
			\dbar^{L^p \otimes F}
			=
			\sum_{i = 1}^{n} dz_i|_{y_0} \cdot \frac{\tau \partial}{\partial \overline{z}_i}
			 +
			 \sum_{i = 1}^{n} dz_i|_{y_0} \cdot \Big( p \Gamma^{L}_{(y_0, Z_N)}\big( \frac{\tau \partial}{\partial \overline{z}_i} \big) + \Gamma^{F}_{(y_0, Z_N)}\big( \frac{\tau \partial}{\partial \overline{z}_i} \big) \Big).
		\end{equation}
		\par 
		A calculation from Bismut-Lebeau \cite[p. 94-96]{BisLeb91} shows
		\begin{multline}\label{eq_dbar_2}
			F_t \circ \Big( \sum_{i = 1}^{n} dz_i|_{y_0} \cdot \frac{\tau \partial}{\partial \overline{z}_i} \Big) \circ F_t^{-1}
			=
			\frac{1}{t}
			\Big(
			\sum_{i = m+1}^{n} d \overline{z}_i|_{y_0} \cdot \frac{\partial}{\partial \overline{z}_i}
			\Big)
			+
			\dbar_H^{L^p \otimes F}
			\\
			-
			\dim Y \cdot \nu^*(y) \wedge
			+
			 O(t |Z_N|^2  \nabla_N + t |Z_N| \nabla_H),
		\end{multline}
		where the $O$-term is interpreted as in (\ref{eq_tay_exp_dbar}), and $\nu \in \ccal^{\infty}(Y, N)$ is the dual of the \textit{mean curvature} of $\iota$, defined as follows
		\begin{equation}\label{eq_mn_curv_d}
			\nu := \frac{1}{2m} \sum_{i=1}^{2m} A(e_i)e_i,
		\end{equation}
		where the sum runs over an orthonormal basis $e_1, \ldots, e_{2m}$ of $(TY, g^{TY})$ as in (\ref{eq_cond_jinv}).
		However, as the manifolds are Kähler (hence, $J$ commutes with $A$), by (\ref{eq_a_no_tors}), for $i = 1, \ldots, m$, we have
		\begin{equation}
			A(e_{2i - 1})e_{2i - 1} = A(J e_{2i})J e_{2i} = J A(J e_{2i})e_{2i} = J A(e_{2i})J e_{2i} = - A(e_{2i}) e_{2i}.
		\end{equation}
		Hence, we have
		\begin{equation}\label{eq_nu_zero}
			\nu = 0.
		\end{equation}
		\par 
		Remark that in \cite{BisLeb91}, no assumption of bounded geometry was made, and (\ref{eq_dbar_2}) was stated locally in $Y$.
		However, in the proof of Bismut-Lebeau, the $O$-term comes from the expansion of the Christoffel symbols and the connection forms, and hence it can be bounded uniformly in $Y$ by the results of Section \ref{sect_bnd_geom_cf}.
		\par Now, from Lemma \ref{lem_comp_phi_par} and (\ref{eq_psi_phi_vect_f_com}), we see that for $i = 1, \ldots, 2m$, $j = 2m+1, \ldots, 2n$, at the point $(y_0, Z_N)$, we have
		\begin{equation}\label{eq_fractau_fracpsi}
		\begin{aligned}
			&
			\frac{\tau \partial}{\partial  Z_j}
			=
			\frac{\partial \psi}{\partial  Z_j}
			+
			O(|Z_N|^2),
			\\
			&
			\frac{\tau \partial}{\partial  Z_i}
			=
			\frac{\partial \psi}{\partial  Z_i}
			-
			\sum_{l = 1}^{2m}
 			\frac{\partial \psi}{\partial Z_l}
 			g^{TM}_{y_0}
 			\big(
 				A(
 					e_i
 				)Z_N
 				,
 				e_l
 			\big)
 			+
			O(|Z_N|^2).
		\end{aligned}
		\end{equation}
		From Lemma \ref{lem_gamma_exp} and (\ref{eq_fractau_fracpsi}), we deduce 
		\begin{equation}\label{eq_gamma_calc_tau}
		\begin{aligned}
			&
			\Gamma^{F}_{(y_0, Z_N)}\big( \frac{\tau \partial}{\partial \overline{z}_i} \big) = O(|Z_N|), 
			&&
			\Gamma^{F}_{(y_0, Z_N)}\big( \frac{\tau \partial}{\partial \overline{z}_j} \big) = O(|Z_N|), 
			\\
			&
			\Gamma^{L}_{(y_0, Z_N)}\big( \frac{\tau \partial}{\partial \overline{z}_i} \big) = O(|Z_N|^3), 
			&& 
			\Gamma^{L}_{(y_0, Z_N)}\big( \frac{\tau \partial}{\partial \overline{z}_j} \big) = \frac{1}{2}
			R^L_{y_0} \big(Z_N, \frac{\partial}{\partial \overline{z}_j}\big) + O(|Z_N|^3).
		\end{aligned}
		\end{equation} 
		Now, from (\ref{eq_dbar_1}),  (\ref{eq_dbar_2}), (\ref{eq_nu_zero}) and (\ref{eq_gamma_calc_tau}), we get
		\begin{multline}\label{eq_dbar_3}
			F_t \circ \dbar^{L^p \otimes F} \circ F_t^{-1}
			=
			\frac{1}{t}
				\sum_{i = m+1}^{n} d \overline{z}_i \cdot \Big( \frac{\partial}{\partial \overline{z}_i}
				+
				\frac{1}{2}
				R^L_{y_0} \big(Z_N, \frac{\partial}{\partial \overline{z}_i}\big)
			\Big)
			+
			\dbar_H^{L^p \otimes F}
			\\
			+
			 O(t |Z_N|^2  \nabla_N + t |Z_N| \nabla_H + t|Z_N|).
		\end{multline}
		Finally, by (\ref{eq_omega}), we have $R^L_{y_0} (Z_N, \frac{\partial}{\partial \overline{z}_i} ) = \pi z_i$. 
		By this and (\ref{eq_dbar_3}), we conclude.
	\end{proof}

\subsection{Extension theorem, a proof of the lower bound in Theorem \ref{thm_ot_as_sp}}\label{sect_spec_low_bnd}
	In this section, we establish the lower bound in Theorem \ref{thm_ot_as_sp}.
	For this, in the notations from Section \ref{sect_tay_exp},  we show that the following refined version of Ohsawa-Takegoshi extension theorem holds.
	\begin{thm}\label{thm_ot_weak}
		There are $C > 0$, $p_1 \in \nat^*$, such that for any $p \geq p_1$ and $g \in H^{0}_{(2)}(Y, \iota^*(L^p \otimes F))$, there is $f \in H^{0}_{(2)}(X, L^p \otimes F)$, such that $f|_Y = g$ and 
		\begin{equation}\label{eq_ot_weak}
			\norm{f}_{L^2(X)} \leq \frac{C}{p^{\frac{n - m}{2}}} \norm{g}_{L^2(Y)}.
		\end{equation}
	\end{thm}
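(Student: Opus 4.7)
\textit{Proof proposal.} I plan to adapt the classical Ohsawa--Takegoshi strategy of approximate extension plus $L^2$-correction to the asymptotic regime $p \to \infty$, following the spirit of Bismut--Lebeau \cite{BisLeb91} and Demailly \cite{Dem82}. The approximate extension will be $\tilde f := \ext_p^0 g$: since $g$ is already holomorphic, $B_p^Y g = g$, and in Fermi coordinates
\[
\tilde f(y, Z_N) = g(y) \exp\bigl(-\tfrac{p\pi}{2}|Z_N|^2\bigr)\,\rho(|Z_N|/r_{\perp}).
\]
The Gaussian integral $\int_{\real^{2(n-m)}} e^{-p\pi|Z_N|^2}\,dv_N = p^{-(n-m)}$, combined with Corollary \ref{cor_kap_n_bnd} and (\ref{eq_vol_comp_unif}), yields $\|\tilde f\|_{L^2(X)}^2 \leq C p^{-(n-m)} \|g\|_{L^2(Y)}^2$, matching the expected size of the optimal extension.

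Next I would control $\dbar \tilde f$ via Theorem \ref{thm_tay_exp_dbar}. Under the rescaling $F_t$ with $t = 1/\sqrt{p}$, the rescaled section $F_t \tilde f = g(y)\,e^{-\pi|Z_N|^2/2}$ is killed by the leading piece $t^{-1}\mathcal{L}_N^{L^p\otimes F}$ (the Gaussian lies in its kernel), by $\dbar_H^{L^p\otimes F}$ (holomorphicity of $g$), and by the $\nabla_H$-part of the remainder (only $(0,1)$-derivatives feed into the target). Only the $O(t|Z_N|^2\nabla_N + t|Z_N|)$ pieces survive, and unscaling yields
\[
|\dbar \tilde f|(y, Z_N) \leq C\bigl(1 + p|Z_N|^2\bigr)|Z_N|\,e^{-p\pi|Z_N|^2/2}|g(y)|,
\]
plus an $O(e^{-cp})$ contribution from $\dbar\rho$. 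Integration then gives $\|\dbar \tilde f\|_{L^2(X)}^2 \leq C p^{-(n-m)-1}\|g\|_{L^2(Y)}^2$.

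I would then solve $\dbar u = \dbar \tilde f$ with $u|_Y = 0$ by Hörmander's $L^2$-theorem on $X \setminus Y$, using the singular weight $2(n-m)\delta_Y$ from (\ref{eq_delta_defn_y}). Theorem \ref{thm_plurisub} provides $\delta_Y \in {\rm PSH}(X, C\omega)$, and together with bounded-geometry bounds on $R^F$ and on $c_1(K_X, h^{K_X})$ this yields, for $p$ large, the uniform Nakano positivity
\[
\imun\bigl(p R^L + R^F - R^{K_X}\bigr) + 2(n-m)\imun\partial\dbar\delta_Y \geq \tfrac{p}{2}\,\omega \otimes {\rm Id}_F
\]
on $X \setminus Y$. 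The complete Kähler metric on $X \setminus Y$ required to run Hörmander is supplied by Demailly \cite{Dem82}, and non-integrability of $e^{-2(n-m)\delta_Y}$ near $Y$ forces $u|_Y = 0$; setting $f := \tilde f - u$ then produces a holomorphic section with $f|_Y = g$.

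The main obstacle will be extracting the sharp factor $p^{-(n-m)/2}$ for $\|u\|_{L^2(X)}$ in high codimension. The direct estimate $\|u\|_{L^2(X)} \leq \|u\|_{L^2(e^{-2(n-m)\delta_Y})} \leq Cp^{-1}\|g\|$ is only adequate when $n-m \leq 2$. The resolution I anticipate is that the minimal-norm $u$ concentrates in the normal tube of radius $O(1/\sqrt p)$ matching the Gaussian profile of $\dbar \tilde f$; there the weight $|Z_N|^{-2(n-m)}$ inflates the weighted norm by a factor of order $p^{n-m}$ over the unweighted one, which should upgrade the bound to $\|u\|_{L^2(X)} \leq C p^{-(n-m+2)/2}\|g\|_{L^2(Y)}$. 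Making this concentration rigorous will require either iterating the codimension-one Ohsawa--Takegoshi theorem across a filtration of $N$ (gaining $p^{-1/2}$ per step, as in Ohsawa's higher-codimension argument), or invoking a higher-codimension extension with a weight twisted by $\det N^*$; both variants remain uniformly controlled thanks to the uniform Stein atlas of Theorem \ref{thm_st_atl_ext} and the uniform holomorphic coordinates of Theorem \ref{lem_hol_coord_exst}.
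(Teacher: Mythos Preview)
Your overall architecture matches the paper's: take $\tilde f = \ext_p^0 g$, estimate $\alpha = \dbar\tilde f$ via Theorem~\ref{thm_tay_exp_dbar}, and correct by a H\"ormander solution with a weight singular along $Y$. Two points need repair.

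First, a minor one: the claim that the $O(t|Z_N|\nabla_H)$ remainder in Theorem~\ref{thm_tay_exp_dbar} does not contribute is incorrect. That term involves \emph{real} horizontal derivatives $(\partial/\partial x_i)^H$ of $\tilde g$, not only $(0,1)$-derivatives, so it produces a contribution of size $t|Z_N|\,|\nabla g(y)|\,e^{-\pi|Z_N|^2/2}$. This is harmless once you invoke Proposition~\ref{prop_der_bound} on $Y$ to bound $\|\nabla g\|_{L^2(Y)} \leq C\sqrt{p}\,\|g\|_{L^2(Y)}$, but it does mean the weighted integral $\int|\alpha|^2 e^{-2(n-m)\delta_Y}$ is of order $\|g\|^2$, not $p^{-1}\|g\|^2$, so your $\|u\|\leq Cp^{-1}\|g\|$ should read $Cp^{-1/2}\|g\|$.

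Second, and this is the real gap: you correctly diagnose that the weight $2(n-m)\delta_Y$ alone fails in codimension $\geq 2$, but your proposed remedies (iterating codimension-one OT, twisting by $\det N^*$) are speculative and not what the paper does. The paper's device is far simpler. One augments the weight to
\[
\delta_p := 2(n-m)\delta_Y - \epsilon p\,\alpha_Y,
\]
with $\alpha_Y$ the truncated squared distance to $Y$ from (\ref{eq_delta_defn_y}). By Theorem~\ref{thm_plurisub}, $-\alpha_Y$ is quasi-psh with a fixed constant, so for $\epsilon$ small and $p$ large one still has $p\omega + \frac{\imun}{2\pi}\partial\dbar\delta_p \geq \frac{p}{2}\omega$; and since $\epsilon\alpha_Y \leq \frac{\pi}{4}|Z_N|^2$, the extra factor $e^{\epsilon p\alpha_Y}$ is absorbed by the Gaussian in $|\alpha|^2$, so (\ref{eq_int_alph}) persists. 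The payoff is that on the tube
\[
e^{-\delta_p} = |Z_N|^{-2(n-m)} e^{\epsilon p|Z_N|^2} \geq C\,p^{n-m}
\]
by minimising $u^{-(n-m)}e^{\epsilon p u}$ over $u>0$, while away from $Y$ the factor $e^{\epsilon p\alpha_Y}$ grows exponentially in $p$. Hence $\int|f_0|^2 e^{-\delta_p} \geq Cp^{n-m}\int|f_0|^2$, which combined with the H\"ormander bound $\int|f_0|^2 e^{-\delta_p} \leq Cp^{-1}\|g\|^2$ gives $\|f_0\|_{L^2(X)}\leq Cp^{-(n-m+1)/2}\|g\|$ directly, with no iteration.
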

	\begin{proof}[Proof of the lower bound in Theorem \ref{thm_ot_as_sp}.]
		Let $C$ be as in (\ref{eq_ot_weak}). As $\ext_p$ is defined as the minimal extension with respect to the $L^2$-norm, for any $g \in H^{0}_{(2)}(Y, \iota^*(L^p \otimes F))$, by Theorem \ref{thm_ot_weak}, we have
		\begin{equation}
			\big\| \ext_p g \big\|_{L^2(X)} \leq \frac{C}{p^{\frac{n - m}{2}}} \norm{g}_{L^2(Y)}.
		\end{equation}
		This means exactly that the lower bound in Theorem \ref{thm_ot_as_sp} holds for $c := C^{-1}$.
	\end{proof}
	\par 
	By the above, to settle the main goal of this section, we need to establish Theorem  \ref{thm_ot_weak}.
	The main idea of the proof of Theorem \ref{thm_ot_weak} is to pass through the general framework of the proof of Ohsawa-Takegoshi extension theorem, cf. \cite[\S 11]{DemBookAnMet}. We choose a smooth extension of $g$ over $X$, and then obtain the holomorphic extension by modifying the smooth one using a solution of $\dbar$-equation with singular weight, which forces the solution to annihilate along $Y$.
	\par The novelty here is that instead of choosing an arbitrary smooth extension, as it is done in \cite[\S 11]{DemBookAnMet}, we choose a specific one, given by the operator (\ref{eq_ext0_op}). 
	The incentive for doing so comes from the fact that we would like to get some estimates, related to this extension, which would be uniform in $p$, and this doesn't seem doable for an arbitrary choice of the extension.
	Choosing a specific extension allows us to use Theorem \ref{thm_tay_exp_dbar} to prove the uniform versions of some $L^2$-estimates, which are indispensable in the proof.
	For those estimates and after, we need the following proposition.
	\begin{prop}\label{prop_der_bound}
		For any $k \in \nat$, there are $C > 0$, $p_1 \in \nat^*$, such that for any $p \geq p_1$ and $f \in H^{0}_{(2)}(X, L^p \otimes F)$, we have
		\begin{equation}\label{eq_der_bound}
			\big\| \nabla^k f \big\|_{L^2(X)} \leq C p^{\frac{k}{2}} \big\| f \big\|_{L^2(X)},
		\end{equation}
		where $\nabla$ is the covariant derivative with respect to the induced Chern and Levi-Civita connections.
	\end{prop}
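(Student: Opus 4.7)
The plan is to use the reproducing property of the Bergman kernel together with the off-diagonal exponential estimate of Ma-Marinescu \cite{MaMarOffDiag}. Since $f \in H^{0}_{(2)}(X, L^p \otimes F)$, we have $f = B_p^X f$, and differentiating under the integral yields the pointwise inequality
\begin{equation*}
	|\nabla^k f(x)| \leq \int_X \big| \nabla_x^k B_p^X(x, y) \big| \cdot |f(y)|_h \, dv_X(y),
\end{equation*}
so it suffices to bound the $L^2 \to L^2$ operator norm of the integral operator with scalar kernel $K_k(x, y) := | \nabla_x^k B_p^X(x, y) |$.

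The key input is the off-diagonal exponential decay of the Bergman kernel due to Ma-Marinescu, which under our bounded geometry assumptions asserts the existence of $c > 0$, $p_1 \in \nat^*$, and, for each $k$, a constant $C > 0$, such that for any $p \geq p_1$, $x_1, x_2 \in X$,
\begin{equation*}
	\big| B_p^X(x_1, x_2) \big|_{\ccal^k(X \times X)} \leq C p^{n + \frac{k}{2}} \exp\big( -c \sqrt{p} \dist(x_1, x_2) \big).
\end{equation*}
This is exactly the hypothesis (\ref{eq_a_bnd_1}) of Section \ref{sect_bish_gr} applied to $A_p := B_p^X$, so in particular $K_k(x, y) \leq C p^{n + k/2} \exp(-c \sqrt{p} \dist(x, y))$.

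By Proposition \ref{prop_exp_bound_int}, combined with the volume comparison (\ref{eq_vol_comp_unif}), for $\sqrt{p}$ large enough we obtain the uniform bounds
\begin{equation*}
	\sup_{x \in X} \int_X K_k(x, y) \, dv_X(y), \quad \sup_{y \in X} \int_X K_k(x, y) \, dv_X(x) \leq C' p^{n + \frac{k}{2}} \cdot \frac{C''}{(c \sqrt{p})^{2n}} = \tilde{C} p^{\frac{k}{2}}.
\end{equation*}
By Schur's test, the integral operator with kernel $K_k$ is bounded on $L^2(X)$ with operator norm at most $\tilde{C} p^{k/2}$. Combined with the pointwise inequality from the first paragraph, this yields $\| \nabla^k f \|_{L^2(X)} \leq \tilde{C} p^{k/2} \| f \|_{L^2(X)}$, as desired.

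No serious obstacle is expected; all ingredients—the reproducing property, the Ma-Marinescu exponential off-diagonal estimate, Proposition \ref{prop_exp_bound_int}, and Schur's test—are already available in the setting of the paper, and the argument is essentially a direct consequence of bundling them together.
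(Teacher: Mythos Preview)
Your proposal is correct and essentially identical to the paper's own proof: both use the reproducing property $f = B_p^X f$, the Ma--Marinescu exponential off-diagonal estimate (Theorem~\ref{thm_bk_off_diag}), and Proposition~\ref{prop_exp_bound_int} to bound the row/column $L^1$-norms of the kernel, then conclude via Schur's test (the paper phrases this last step as Young's inequality for integral operators with $p=q=2$, $r=1$, which is the same thing).
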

	\par For the proof of Proposition \ref{prop_der_bound} and in many places later, the following result will be crucial.
	\par 
	\begin{thm}[{Ma-Marinescu \cite[Theorem 1]{MaMarOffDiag}}]\label{thm_bk_off_diag}
		For any $k \in \nat$, there are $c, C > 0$, $p_1 \in \nat^*$ such that for $p \geq p_1$, we have
		\begin{equation}\label{eq_bk_off_diag}
			\Big|  B_p^X(x_1, x_2) \Big|_{\ccal^k(X \times X)} \leq C p^{n + \frac{k}{2}} \cdot \exp \big(- c \sqrt{p} \cdot \dist(x_1, x_2) \big),
		\end{equation}
		where $\ccal^k$-norm here is interpreted as in Theorem \ref{thm_ext_exp_dc}.
	\end{thm}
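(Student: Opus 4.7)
The plan is to adapt the argument of Ma--Marinescu \cite{MaMarOffDiag} for compact manifolds to the bounded geometry setting, the key point being to verify that every constant entering the argument is uniform on $X$. Set $D_p := 2\laplcomp^{L^p \otimes F}$, acting on $\ccal^{\infty}(X, L^p \otimes F)$. The first ingredient is a uniform spectral gap: by the Lichnerowicz formula (cf.\ \cite[Theorem 1.4.7]{MaHol}) and the positivity of $\omega$, there exist $c_0, C_0 > 0$ and $p_1 \in \nat^*$ such that $D_p \geq c_0 p - C_0$ on the orthogonal complement of $H^0_{(2)}(X, L^p \otimes F)$ in $L^2(X, L^p \otimes F)$; the bounded geometry assumption ensures that the curvature terms in Lichnerowicz are uniformly bounded and so these constants are genuinely uniform in $p$ and in the base point. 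Consequently, $B_p^X = \mathbf{1}_{\{0\}}(D_p)$, and one may write $B_p^X = \varphi(\sqrt{p}\,D_p/\sqrt{p})$ up to spectrally negligible error for any even Schwartz function $\varphi$ with $\varphi \equiv 1$ near $0$ and $\varphi$ vanishing on $[c_0 p/2, +\infty)$.

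The second ingredient is the finite propagation speed of the wave equation $\partial_t u + i \sqrt{D_p}\, u = 0$. Since $(X, g^{TX})$ is complete (this is built into bounded geometry), the wave operator $\cos(t\sqrt{D_p})$ has propagation speed at most $1$ in the Riemannian distance, and this statement requires only completeness, not compactness. Choose an even $\varphi \in \mathscr{S}(\real)$ with Fourier transform supported in $[-\varepsilon, \varepsilon]$ and such that $\varphi(0) = 1$, $|\varphi(\lambda)| \leq C_N(1+|\lambda|)^{-N}$ for all $N$. Writing $\varphi(u\sqrt{D_p}) = (2\pi)^{-1}\int \hat\varphi(t) \cos(tu\sqrt{D_p})\,dt$ with $u := 1/\sqrt{p}$ and using finite propagation speed yields that the Schwartz kernel of $\varphi(u\sqrt{D_p})$ vanishes for $\dist(x_1, x_2) > \varepsilon u$; combined with the spectral gap and the fast decay of $\varphi$, the difference $B_p^X - \varphi(u\sqrt{D_p})$ has $L^2$-operator norm bounded by any negative power of $p$. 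Standard Paley--Wiener / almost-analytic extension manipulations then convert this into the pointwise off-diagonal Gaussian decay $|B_p^X(x_1, x_2)| \leq C p^n \exp(-c \sqrt{p}\, \dist(x_1, x_2))$; the key check is that the elliptic estimates needed to pass from $L^2$-control to pointwise control (as in Sobolev embedding applied in geodesic balls) hold with uniform constants, which is exactly the content of Lemma \ref{lem_ell_reg_sob_bnd_g} together with Propositions \ref{prop_bndg_man} and \ref{prop_bndg_vect_1}.

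Finally, the $\ccal^k$ bound is obtained by differentiating the spectral representation: derivatives of the Schwartz kernel of $\varphi(u\sqrt{D_p})$ at $(x_1, x_2)$ amount to inserting additional powers of $\sqrt{D_p}$ (each contributing a factor $\sqrt{p}$ through the rescaling $\lambda \mapsto u\lambda$), and again applying uniform elliptic regularity in Fermi/geodesic balls of bounded geometry. The main obstacle, and the only point where one must be careful compared to the compact case, is precisely the uniformity of all constants across $X$: the Sobolev constants, the constants in elliptic $\ccal^k$ estimates for $\laplcomp^{L^p \otimes F}$ in a local trivialization, and the spectral gap constant. Each of these is guaranteed by the hypotheses collected in Section \ref{sect_bnd_geom_cf}, so the Ma--Marinescu argument goes through verbatim.
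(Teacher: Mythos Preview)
The paper does not actually prove this theorem: its entire ``proof'' is a citation to Ma--Marinescu \cite[Theorem 1]{MaMarOffDiag} (and, for the compact case, to Dai--Liu--Ma \cite{DaiLiuMa}). So there is nothing to compare at the level of argument; the paper simply imports the result.

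Your sketch is a reasonable outline of the Ma--Marinescu method itself (spectral gap from Lichnerowicz, finite propagation speed for $\cos(t\sqrt{D_p})$ on a complete manifold, cutoff via an even Schwartz function with compactly supported Fourier transform, and uniform local elliptic estimates to upgrade $L^2$ bounds to pointwise $\ccal^k$ bounds). A couple of imprecisions are worth flagging. First, the expression $\varphi(\sqrt{p}\,D_p/\sqrt{p})$ is just $\varphi(D_p)$; what you actually want is to apply a rescaled cutoff to $D_p/p$ (or equivalently to $u\sqrt{D_p}$ with $u=p^{-1/2}$), so that the spectral gap $[0,c_0 p]$ becomes a fixed interval and the Fourier-side localization occurs on a scale $\sim u$. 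Second, the passage from finite propagation speed plus rapid decay of $\varphi$ to the \emph{exponential} off-diagonal bound (rather than merely rapid polynomial decay) requires choosing $\varphi$ carefully---typically one takes a Gaussian-like function and exploits analyticity, or decomposes the spectral cutoff via a heat-kernel/Fourier integral argument as in \cite{MaMarOffDiag}; the Paley--Wiener step alone gives only compact support, not the exponential tail. These are details you would need to fill in if you were writing the proof in earnest, but since the paper is content to cite the result, your level of detail already exceeds what is required here.
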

	\begin{proof}[Proof of Proposition \ref{prop_der_bound}]
	For compact manifolds, the statement is very classical.
	For manifolds of bounded geometry, the proof can be obtained by a slight modification of the argument in \cite[Lemma 2]{MaMarOffDiag}.
	We present here an alternative proof, based on Theorem \ref{thm_bk_off_diag}.
		First of all, by Proposition \ref{prop_exp_bound_int} and Theorem \ref{thm_bk_off_diag}, there are $C > 0$, $p_1 \in \nat^*$ such that for any $x_0 \in X$, for $p \geq p_1$, we have
		\begin{equation}\label{eq_bound_int_berg}
			\int_{X} \big| (\nabla^k B_p^X)(x_0, x)  \big|  dv_X(x) \leq C p^{\frac{k}{2}}, 
			\qquad 
			\int_{X} \big| (\nabla^k B_p^X)(x, x_0)  \big|  dv_X(x) \leq C p^{\frac{k}{2}}.
		\end{equation}
		Also, since $f = B_p^X f$, $\nabla^k f$ admits the integral representation
		\begin{equation}\label{eq_res_int_repr}
			\nabla^k f(x_1) = \int_{X} (\nabla^k B_p^X)(x_1, x_2) \cdot f(x_2)  dv_X(x_2).
		\end{equation}
		We deduce (\ref{eq_der_bound}) directly from (\ref{eq_bound_int_berg}), (\ref{eq_res_int_repr}) and Young's inequality for integral operators, cf. \cite[Theorem 0.3.1]{SoggBook} applied for $p, q = 2$, $r = 1$ in the notations of \cite{SoggBook}.
	\end{proof}
	
	\begin{proof}[Proof of Theorem \ref{thm_ot_weak}]
		Recall that the operator $\ext_p^{0}$ was defined in (\ref{eq_ext0_op}). 
		We would like to verify that for any $g \in H^{0}_{(2)}(Y, \iota^*(L^p \otimes F))$, $y_0 \in Y$, we have
		\begin{equation}\label{eq_dbar_y_zero}
			\alpha(y_0) = 0, \quad \text{where} \quad \alpha := \dbar^{L^p \otimes F} (\ext_p^{0} g).
		\end{equation}
		\par 
		Indeed, let us work in a neighborhood $V := \mathbb{B}_Y^X(\frac{r_{\perp}}{4})$ of $Y$ in $X$. 
		Recall that $t \in \real_+$ and $F_t$ were defined in (\ref{eq_ft_defn}).
		Then in the notations of (\ref{eq_ext0_op}), on $V$, we have
		\begin{equation}\label{eq_gtild_d}
			\ext_p^{0} g = F_t^{-1} \tilde{g}, \qquad \tilde{g}(y, Z_N) = g(y)  \exp \Big(- \frac{\pi}{2} |Z_N|^2 \Big).
		\end{equation}
		Recall that $\dbar_H^{L^p \otimes F}, \mathcal{L}_N^{L^p \otimes F}$ were defined in (\ref{eq_hor_norm_dop}).
		A trivial calculation shows that on $V$, we have
		\begin{equation}\label{eq_gtild_n}
			\mathcal{L}_N^{L^p \otimes F} \tilde{g} = 0.
		\end{equation}
		Also, since $\nabla^N$ preserves $g^N$, in the notations of (\ref{eq_hor_norm_dop}), similarly to \cite[(8.97)]{BisLeb91}, we have
		\begin{equation}\label{eq_gtild_h}
			\Big( \Big( \frac{\partial}{\partial \overline{z}_i} \big|_{y_0}\Big)^{H} \tilde{g}\Big) (y_0, Z_N)
			=
			\Big( \frac{\partial}{\partial \overline{z}_i} g \Big)(y_0)  \exp \Big(- \frac{\pi}{2} |Z_N|^2 \Big).
		\end{equation}
		As a consequence of (\ref{eq_gtild_h}) and the fact that $g$ is holomorphic, we obtain 
		\begin{equation}\label{eq_gtild_h2}
			\dbar_H^{L^p \otimes F}  \tilde{g} = 0.
		\end{equation}
		From (\ref{eq_gtild_d}), (\ref{eq_gtild_n}), (\ref{eq_gtild_h2}) and the fact that all the residue terms in Theorem \ref{thm_tay_exp_dbar} contain $|Z_N|$, we deduce (\ref{eq_dbar_y_zero}).
		\par 
		Now, using the $L^2$-estimates, let us construct a holomorphic perturbation of $\ext_p^{0} g$, satisfying the assumptions of Theorem \ref{thm_ot_weak}.
		Recall that $\delta_Y : X \setminus Y \to \real$, $\alpha_Y : X \to \real$,  were defined in (\ref{eq_delta_defn_y}).
		For $\epsilon > 0$, let us now define the weight $\delta_p : X \setminus Y \to \real$ as follows
		\begin{equation}\label{eq_delta_p_defn_wght}
			\delta_p := 2(n - m)\delta_Y - \epsilon p \alpha_Y.
		\end{equation}
		By taking $\epsilon$ small, by Lemma \ref{thm_plurisub}, we see that there exists $p_1 \in \nat^*$, such that for any $p \geq p_1$, over $X$, the following inequality holds in the distributional sense
		\begin{equation}\label{eq_pomg_deltap}
			p \omega + \frac{\imun}{2 \pi}  \partial \dbar \delta_p >  \frac{p}{2}\omega.
		\end{equation} 
		Let us fix $\epsilon$ small enough, such that for any $|Z_N| < r_{\perp}$, we have
		\begin{equation}\label{eq_zn_diff_phi}
			\frac{\pi}{2}  |Z_N|^2 - \epsilon \alpha_Y(y, Z_N) \geq \frac{\pi}{4} |Z_N|^{2}.
		\end{equation}
		\par 
		We will now prove that there are $C_1 > 0$, $p_1 \in \nat^*$, such that for any $p \geq p_1$, $g \in H^{0}_{(2)}(Y, \iota^*(L^p \otimes F))$ and $\alpha$ defined in (\ref{eq_dbar_y_zero}), we have
		\begin{equation}\label{eq_int_alph}
			\int_{X} | \alpha |^2 e^{-\delta_p} dv_X \leq C_1 \| g \|_{L^2(Y)}^{2}.
		\end{equation}
		\par 
		Since $\alpha$ has support over $\mathbb{B}_Y^X(r_{\perp})$, we have
		\begin{equation}
			\int_{X \setminus \mathbb{B}_Y^X(\frac{r_{\perp}}{4})} | \alpha |^2 e^{-\delta_p} dv_X 
			=
			\int_{\mathbb{B}_Y^X(r_{\perp}) \setminus \mathbb{B}_Y^X(\frac{r_{\perp}}{4})} | \alpha |^2 e^{-\delta_p} dv_X 
		\end{equation}
		From this, by (\ref{eq_ext0_op}) and (\ref{eq_zn_diff_phi}), there are $c, C_2 > 0$, $p_1 \in \nat^*$, such that for any $p \geq p_1$, we have
		\begin{equation}\label{eq_int_alph222}
			\int_{X \setminus \mathbb{B}_Y^X(\frac{r_{\perp}}{4})} | \alpha |^2 e^{-\delta_p} dv_X \leq C_2 \exp(-cp) \Big( \| g \|^2_{L^2(Y)} + \| \nabla g \|^2_{L^2(Y)}	 \Big).
		\end{equation}
		Now, as $\alpha$ has support in $\mathbb{B}_Y^X(\frac{r_{\perp}}{2})$, it is enough to work in $(y, Z_N)$, $y \in Y$, $Z_N \in N_y$ coordinates.
		To estimate the integral over $\mathbb{B}_Y^X(\frac{r_{\perp}}{4})$, we use (\ref{eq_gtild_d}) and make the change of variables by $F_t$ to get
		\begin{multline}\label{eq_int_rescalling}
			\int_{\mathbb{B}_Y^X(\frac{r_{\perp}}{4})} | \alpha |^2 e^{-\delta_p} dv_Y \wedge dv_N
			\\
			=
			\int_{\mathbb{B}_{\frac{r_{\perp}}{4t}}(N)} \Big| \big(F_t \circ \dbar^{L^p \otimes F} \circ F_t^{-1} \big) \tilde{g} \Big|^2(y, tZ_N) 
			 \frac{e^{\epsilon p \alpha_Y(y, Z_N)}}{|Z_N|^{2(n - m)}} dv_Y \wedge dv_N.
		\end{multline}
		By  (\ref{eq_gtild_n}), (\ref{eq_gtild_h}), (\ref{eq_gtild_h2}), (\ref{eq_zn_diff_phi}), (\ref{eq_int_rescalling}), and the fact that for $j = 1, 2$,
		\begin{equation}
			\int_{\real^{2(n-m)}} \frac{|Z_N|^j \exp (- \frac{\pi}{4} |Z_N|^2) dv_{\real^{2(n - m)}}(Z_N)}{|Z_N|^{2(n - m)}} < + \infty,
		\end{equation}		 
		we conclude that there are $C_3 > 0$, $p_1 \in \nat$, such that for any $p \geq p_1$, $g \in H^{0}_{(2)}(Y, \iota^*(L^p \otimes F))$ and $\alpha$ as in (\ref{eq_dbar_y_zero}), we have
		\begin{equation}\label{eq_int_alph2}
			\int_{\mathbb{B}_Y^X(\frac{r_{\perp}}{4})} | \alpha |^2 e^{-\delta_p} dv_X \leq C_3 \cdot t^2 \cdot \Big( \| g \|^2_{L^2(Y)} + \| \nabla g \|^2_{L^2(Y)}	 \Big).
		\end{equation}
		From Proposition \ref{prop_der_bound} and (\ref{eq_int_alph2}), we deduce (\ref{eq_int_alph}).
		\par 
		From (\ref{eq_pomg_deltap}), the fact that we may resolve the $\dbar$-equation, see \cite[Théorème 5.1]{Dem82}, the trivial fact that $\dbar^{L^p \otimes F} \alpha = 0$ and (\ref{eq_int_alph}), we see that there are $C > 0$, $p_1 \in \nat^*$, such that for any $p \geq p_1$, $g \in H^{0}_{(2)}(Y, \iota^*(L^p \otimes F))$, there is $f_0 \in L^2(X, L^p \otimes F)$, such that
		\begin{equation}\label{eq_dbar_sol}
			\dbar^{L^p \otimes F} f_0 = \alpha, \qquad \int_{X \setminus Y} | f_0 |^2 e^{-\delta_p} dv_X \leq  \frac{C}{p} \| g \|_{L^2(Y)}^{2}.
		\end{equation}
		Let us prove that $f := \ext_p^{0} g - f_0$ verifies the assumptions of Theorem \ref{thm_ot_weak}.
		\par 
		From the $L^2$-condition and the standard regularity results, \cite[Lemme 6.9]{Dem82}, $f$ is a smooth holomorphic section.
		In particular, $f_0$ is smooth as well.
		However, since $\exp(-2(n - m)\delta_Y)$ is not integrable, the $L^2$-bound (\ref{eq_dbar_sol}) implies that $f_0|_Y = 0$.
		Hence, we conclude that $f|_Y = g$.
		It is only left to verify that $f$ satisfies the needed $L^2$-bound (\ref{eq_ot_weak}).
		\par 
		From the Gauss integral calculation, and the fact that our triple is of bounded geometry, there is $C_4 > 1$, such that we have
		\begin{equation}\label{eq_exp0_bnd_norm}
			\frac{C_4^{-1}}{p^{\frac{n - m}{2}}} \norm{g}_{L^2(Y)} \leq \big \| \ext_p^{0} g \big \| _{L^2(X)} \leq \frac{C_4}{p^{\frac{n - m}{2}}} \norm{g}_{L^2(Y)}.
		\end{equation}
		Let us now prove that there is $C_5 > 0$ such that for any $p \geq p_1$, $g \in H^{0}_{(2)}(Y, \iota^*(L^p \otimes F))$, the following bound holds
		\begin{equation}\label{eq_l2_est_fin}
			\int_{X} | f_0 |^2 e^{-\delta_p} dv_X  \geq C_5 p^{n-m} \int_{X} | f_0 |^2 dv_X.
		\end{equation}
		This will be clearly enough for our needs, as from the $L^2$-bound in (\ref{eq_dbar_sol}), (\ref{eq_exp0_bnd_norm}) and (\ref{eq_l2_est_fin}), we would deduce the $L^2$-bound (\ref{eq_ot_weak}).
		\par 
		First of all, since $\alpha_Y \geq \min\{  \frac{1}{2} (\frac{r_{\perp}}{4})^2, \frac{1}{2} \}$ on $X \setminus \mathbb{B}_Y^X(\frac{r_{\perp}}{4})$, there are $c, C_6 > 0$, such that 
		\begin{equation}\label{eq_l2_est_fin0}
			\int_{X \setminus \mathbb{B}_Y^X(\frac{r_{\perp}}{4})} | f_0 |^2 e^{-\delta_p} dv_X 
			\geq
			C_6 \exp(\epsilon c p) \int_{X \setminus \mathbb{B}_Y^X(\frac{r_{\perp}}{4})} | f_0 |^2 dv_X 
		\end{equation}
		\par 
		It is now only left to give the lower bound for the integrand on the left-hand side of (\ref{eq_l2_est_fin}), where the integration is done over $\mathbb{B}_Y^X(\frac{r_{\perp}}{4})$.
		But remark that from (\ref{eq_delta_defn_y}) and (\ref{eq_delta_p_defn_wght}), over $\mathbb{B}_Y^X(\frac{r_{\perp}}{4})$, there is $C_7 > 0$, such that for any $p \in \nat^*$, we have $e^{-\delta_p} \geq C_7 p^{n - m}$.
		From this, we deduce 
		\begin{equation}\label{eq_l2_est_fin132}
			\int_{\mathbb{B}_Y^X(\frac{r_{\perp}}{4})} | f_0 |^2 e^{-\delta_p} dv_X  \geq C_7 p^{n-m} \int_{\mathbb{B}_Y^X(\frac{r_{\perp}}{4})} | f_0 |^2 dv_X.
		\end{equation}
		From (\ref{eq_l2_est_fin0}) and (\ref{eq_l2_est_fin132}), we obtain (\ref{eq_l2_est_fin}).
	\end{proof}
	\begin{rem}
		Our proof shows that there is $\ext_p^{1} :  H^{0}_{(2)}(Y, \iota^*(L^p \otimes F)) \to H^{0}_{(2)}(X, L^p \otimes F)$, verifying  $(\ext_p^{1} g) |_Y = g$ for $g \in  H^{0}_{(2)}(Y, \iota^*(L^p \otimes F))$, and such that (\ref{eq_ext_as}) holds for $\ext_p^{1}$ instead of $\ext_p^{0}$.
	\end{rem}
	
\subsection{Asymptotic trace theorem, a proof of the upper bound in Theorem \ref{thm_ot_as_sp}}\label{sect_trace}
	The main goal of this section is to show that the restriction to $Y$ of an element from $H^{0, \perp}_{(2)}(X, L^p \otimes F)$ lies in $H^{0}_{(2)}(Y, \iota^*(L^p \otimes F))$ and to give a proof of the upper bound in Theorem \ref{thm_ot_as_sp}. 
	The main idea of our proof is to study the Schwartz kernel of the restriction operator and to use the exponential bound on the Bergman kernel.
	\par 
	Let us explain more precisely how to relate the Schwartz kernel $\res_p(y, x)$, $y \in Y$, $x \in X$, of the restriction operator, evaluated with respect to the volume form $dv_X$, with the Bergman kernel.
	In fact, directly from the identity $\res_Y \circ B_p^X = \res_p$, we obtain that 
	\begin{equation}\label{eq_relat_res_oper_bergm_kern}
		\res_p(y, x) =  B_p^X(y, x).
	\end{equation}
	\begin{prop}\label{prop_restr_is_l2}
		There is $p_1 \in \nat^*$, such that the restriction to $Y$ of any element from $H^{0, \perp}_{(2)}(X, L^p \otimes F)$ lies in $H^{0}_{(2)}(Y, \iota^*(L^p \otimes F))$.
	\end{prop}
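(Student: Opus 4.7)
The plan is to deduce the proposition as a direct corollary of the exponential off-diagonal decay of the Bergman kernel from Theorem \ref{thm_bk_off_diag}, combined with the uniform bounds on integrals of exponentially decaying functions over $X$ and $Y$ provided by Proposition \ref{prop_exp_bound_int} and Corollary \ref{cor_exp_bound_int}. In fact, the same computation simultaneously yields the upper bound in Theorem \ref{thm_ot_as_sp}, which is presumably the reason the proposition and the upper bound are grouped in the same section.

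The starting point is the identity (\ref{eq_relat_res_oper_bergm_kern}), which says that the Schwartz kernel of $\res_p$, viewed as an operator on all of $L^2(X, L^p \otimes F)$ via $f \mapsto (B_p^X f)|_Y$, is nothing but the restriction of $B_p^X(y,x)$ to $y \in Y$, $x \in X$. Since any $f \in H^{0,\perp}_{(2)}(X, L^p \otimes F)$ lies a fortiori in $H^0_{(2)}(X, L^p \otimes F)$, the reproducing property $f = B_p^X f$ gives, for every $y \in Y$, the integral representation
\begin{equation*}
f(y) = \int_X B_p^X(y,x) f(x) \, dv_X(x).
\end{equation*}
Thus it suffices to control the $L^2(Y)$-norm of the operator with kernel $B_p^X|_{Y \times X}$.

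I would use Schur's test (equivalently, the Young inequality for integral operators, cf.\ \cite[Theorem 0.3.1]{SoggBook} applied with $p = q = 2$, $r = 1$, as already done in the proof of Proposition \ref{prop_der_bound}). By Theorem \ref{thm_bk_off_diag} applied for $k = 0$, there exist $c, C > 0$ and $p_1 \in \nat^*$ such that for $p \geq p_1$, $|B_p^X(y,x)| \leq C p^n \exp(-c \sqrt{p} \dist(x,y))$. Proposition \ref{prop_exp_bound_int}, together with the volume comparison (\ref{eq_vol_comp_unif}), then gives $\sup_{y \in Y} \int_X |B_p^X(y,x)|\, dv_X(x) \leq C'$ uniformly in $p \geq p_1$, since the factor $p^n$ is absorbed by the $(\sqrt{p})^{-2n}$ coming from the Gaussian-type integral. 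In the other direction, Corollary \ref{cor_exp_bound_int} gives $\sup_{x \in X} \int_Y |B_p^X(y,x)|\, dv_Y(y) \leq C' p^{n-m}$, since now only a factor of $(\sqrt{p})^{-2m}$ comes out of the integral over the submanifold $Y$.

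Combining these two estimates via Schur's test yields the bound $\|\res_p f\|_{L^2(Y)} \leq C p^{(n-m)/2} \|f\|_{L^2(X)}$ for every $f \in H^0_{(2)}(X, L^p \otimes F)$; in particular $f|_Y \in L^2(Y, \iota^*(L^p \otimes F))$, and since $f|_Y$ is clearly holomorphic, this gives the claim of the proposition and also the upper bound in Theorem \ref{thm_ot_as_sp}. The only delicate point in carrying out the argument is the proper use of Corollary \ref{cor_exp_bound_int}: as the remark following it stresses, the ambient distance on $Y$ need not be quasi-isometric to the intrinsic distance, so one must invoke that corollary rather than apply Proposition \ref{prop_exp_bound_int} directly to $(Y, g^{TY})$; once this is done, everything falls into place.
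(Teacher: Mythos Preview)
Your proof is correct and follows essentially the same approach as the paper: represent $\res_p$ through the Schwartz kernel $B_p^X(y,x)$, invoke the exponential decay of Theorem~\ref{thm_bk_off_diag}, and bound the operator norm via Young's inequality/Schur's test using Proposition~\ref{prop_exp_bound_int} and Corollary~\ref{cor_exp_bound_int}. The paper actually records only the cruder bound $\|\res_p\|\leq Cp^{n-m}$ at this stage and then obtains the sharp $p^{(n-m)/2}$ bound separately by applying the same argument to $\res_p\circ\res_p^*$ (whose kernel is $B_p^X$ restricted to $Y\times Y$); your use of the Schur test in the form $\|\res_p\|\leq\sqrt{AB}$ with $A\sim 1$ and $B\sim p^{n-m}$ collapses these two steps into one, which is a small gain in economy.

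One minor point to tighten: Corollary~\ref{cor_exp_bound_int} is stated for a center $y_0\in Y$, whereas your column bound requires $\sup_{x\in X}\int_Y|B_p^X(y,x)|\,dv_Y(y)$ with $x\in X$ possibly far from $Y$. This is easily fixed by choosing $y_0\in Y$ with $\dist_X(x,y_0)\leq\dist_X(x,Y)+\epsilon$ and noting that then $\dist_X(y,x)\geq\tfrac12\dist_X(y,y_0)-\tfrac\epsilon2$ for all $y\in Y$; since the bound in the corollary is uniform in $y_0$, letting $\epsilon\to 0$ yields the needed estimate. With that one line added your argument is complete.
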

	\begin{proof}
		By Proposition \ref{prop_exp_bound_int} and Theorem \ref{thm_bk_off_diag}, there are $C > 0$, $p_1 \in \nat^*$ such that for any $x_0 \in X$, for $p \geq p_1$, we have
		\begin{equation}\label{eq_bound_int_berg21}
			\int_{Y} \big| B_p^X(y, x_0)  \big|  dv_Y(y) \leq C p^{n - m}, 
		\end{equation}
		We deduce directly from (\ref{eq_bound_int_berg}), (\ref{eq_relat_res_oper_bergm_kern}), (\ref{eq_bound_int_berg21}) and Young's inequality for integral operators, cf. \cite[Theorem 0.3.1]{SoggBook} applied for $p, q = 2$, $r = 1$ in the notations of \cite{SoggBook}, that Proposition \ref{prop_restr_is_l2} holds, and, moreover, there are $C > 0$ and $p_1 \in \nat^*$, such that for $p \geq p_1$, $\| \res_p \| \leq C p^{n-m}$.
	\end{proof}
	\begin{proof}[Proof of the upper bound in Theorem \ref{thm_ot_as_sp}.]
		The proof is essentially identical to the proof of Proposition \ref{prop_restr_is_l2} with only one change: instead of looking at $\res_p$, we consider the operator $\res_p \circ \res_p^*$.
		 In fact, directly from (\ref{eq_relat_res_oper_bergm_kern}) and the fact that $B_p^X \circ B_p^X = B_p^X$, we conclude that the Schwartz kernel $(\res_p \circ \res_p^*)(y, y')$, $y, y' \in Y$, of $\res_p \circ \res_p^*$, evaluated with respect to the volume form $dv_Y$, satisfies
		 \begin{equation}
		 	(\res_p \circ \res_p^*)(y, y') = B_p^X(y, y').
		 \end{equation}
		 The proof now proceeds in the same way as in Proposition \ref{prop_restr_is_l2} with an additional remark $\| \res_p \circ \res_p^* \| = \| \res_p \|^2$.
	\end{proof}
	
\section{Asymptotic expansions of the two kernels}\label{sect_lgbk_prfs}
	The main goal of this section is to study the asymptotics of the Schwartz kernels of the orthogonal Bergman projector and the extension operator.
	This section is organized as follows. 
	In Section \ref{sect_exp_dec_log}, we establish Theorem \ref{thm_logbk_exp_dc}.
	We also show that despite the global nature of the orthogonal Bergman kernel, the asymptotic expansion of it depends only locally on the geometry of the problem.
	In Section \ref{sect_ct_asymp}, we prove that after certain reparametrization given by a homothety in Fermi coordinates, Bergman kernel admits a complete asymptotic expansion. 	
	In Section \ref{sect_as_exp_logbk}, we establish Theorem \ref{thm_berg_perp_off_diag}.
	Finally, in Section \ref{sect_exp_dec_ot}, we prove all the other results announced in Section \ref{sect_intro}.
	We use the notation (\ref{eq_t_p_rel}) throughout the section.
	
	\subsection{Exponential decay and localization of the orthogonal Bergman kernel}\label{sect_exp_dec_log}
		The first main goal of this section is to prove that the orthogonal Bergman kernel has exponential off-diagonal decay, i.e. to establish Theorem \ref{thm_logbk_exp_dc}.
		The second main goal is to establish that the asymptotic expansion of the orthogonal Bergman kernel is localized.
		\par 
		The main difficulty here is that the projection $B_p^{\perp}$ is not the spectral projection associated to the Laplacian. 
		Hence, the methods, developed by Dai-Liu-Ma in \cite{DaiLiuMa}, cf. \cite[Proposition 4.1.5]{MaHol}, based on the finite propagation speed property for the wave equation, cannot be applied.
		\par 
		One can remedy this by extending a slightly more technical approach through the $L^2$-estimates with varying weights, similar to what has been done by Lindholm \cite[Proposition 9]{Lindholm}.
		We follow here an alternative approach.
		The main idea of our proof of Theorem \ref{thm_logbk_exp_dc} is to construct the “approximate projection" $B_p^{\perp, a}$ onto the $H^{0, \perp}_{(2)}(X, L^p \otimes F)$ (the precise meaning of this is given after (\ref{eq_bpa_sp_gap})), such that the Schwartz kernel of it verifies the estimate analogous to (\ref{eq_logbk_exp_dc}). 
		Then by means of the spectral theory, essentially relying on Theorem \ref{thm_ot_as_sp}, we relate $B_p^{\perp, a}$ and $B_p^{\perp}$, and show that the exponential estimate of the Schwartz kernel persist through this relation. An exponential estimate of the Bergman kernel, see Theorem \ref{thm_bk_off_diag}, plays a crucial role in our approach.
		The advantage of this approach over the one of Lindholm is that it introduces some of the techniques, which will later play a crucial role in our proof of Theorem \ref{thm_berg_perp_off_diag}, where $L^2$-estimates, it seems, do not apply.
		\par 
		We use the notations and assumptions from Theorem \ref{thm_logbk_exp_dc}.
		The next proposition is crucial for our further estimates.
		\begin{prop}[{Ma-Marinescu \cite[Lemma 2]{MaMarOffDiag}}]\label{prop_der_bound2}
			For any $k \in \nat$, there is $C > 0$, such that for any $p \in \nat^*$, $f \in H^{0}_{(2)}(X, L^p \otimes F)$ and $x \in X$, we have
			\begin{equation}
				\big| \nabla^k f (x) \big| \leq C p^{\frac{n + k}{2}} \| f \|_{L^2(X)}.
			\end{equation}
		\end{prop}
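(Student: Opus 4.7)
The plan is to exploit the reproducing property $f = B_p^X f$ for any $f \in H^{0}_{(2)}(X, L^p \otimes F)$, combined with the off-diagonal exponential decay of the Bergman kernel from Theorem \ref{thm_bk_off_diag}. Concretely, since the Schwartz kernel reproduces holomorphic sections, we have the pointwise integral representation
\begin{equation*}
    \nabla^k f(x) = \int_X (\nabla_1^k B_p^X)(x, y) \cdot f(y) \, dv_X(y),
\end{equation*}
where $\nabla_1$ denotes the covariant derivative in the first variable. The Cauchy-Schwarz inequality then yields
\begin{equation*}
    |\nabla^k f(x)| \leq \bigg( \int_X \big| (\nabla_1^k B_p^X)(x, y) \big|^2 dv_X(y) \bigg)^{\! 1/2} \cdot \| f \|_{L^2(X)}.
\end{equation*}

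The task is thus reduced to bounding the $L^2$-norm in $y$ of $(\nabla_1^k B_p^X)(x, y)$ uniformly in $x \in X$. By Theorem \ref{thm_bk_off_diag}, there are $c, C > 0$ and $p_1 \in \nat^*$ such that for any $p \geq p_1$ and $x, y \in X$,
\begin{equation*}
    \big| (\nabla_1^k B_p^X)(x, y) \big| \leq C p^{n + \frac{k}{2}} \exp\big( - c \sqrt{p} \, \dist(x, y) \big).
\end{equation*}
Squaring and applying Proposition \ref{prop_exp_bound_int} with $l := 2c\sqrt{p}$ (which is $> 2(2n-1)\sqrt{s}$ for $p$ sufficiently large, thanks to the bounded geometry assumption), together with (\ref{eq_vol_comp_unif}) to pass from $dv_{g^{TX}}$ to $dv_X$, yields a constant $C' > 0$ independent of $x$ and $p$ such that
\begin{equation*}
    \int_X \big| (\nabla_1^k B_p^X)(x, y) \big|^2 dv_X(y) \leq C^2 p^{2n + k} \cdot \frac{C'}{(2 c \sqrt{p})^{2n}} \leq C'' p^{n + k}.
\end{equation*}
Taking square roots and combining with the Cauchy-Schwarz estimate above produces the desired bound $|\nabla^k f(x)| \leq C''' p^{(n+k)/2} \|f\|_{L^2(X)}$ for $p \geq p_1$. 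The finitely many cases $p < p_1$ can then be absorbed into the constant by Sobolev embedding applied on balls of fixed radius, using Propositions \ref{prop_bndg_man} and \ref{prop_bndg_vect_1} to ensure uniform control.

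There is no serious obstacle here: the argument is essentially a direct consequence of Theorem \ref{thm_bk_off_diag} and the volume estimate of Proposition \ref{prop_exp_bound_int}. The only mild subtlety is ensuring that the constants from Proposition \ref{prop_exp_bound_int} and from the volume comparison (\ref{eq_vol_comp_unif}) can be chosen uniformly in $x \in X$, which is precisely what the bounded geometry hypothesis guarantees.
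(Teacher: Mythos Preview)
Your proof is correct. The paper itself does not give an argument here: it simply cites Ma--Marinescu \cite[Lemma 2]{MaMarOffDiag} for the bounded-geometry case and says the compact case is classical. Your route --- reproducing formula plus the off-diagonal decay of Theorem \ref{thm_bk_off_diag} plus Cauchy--Schwarz and the volume bound of Proposition \ref{prop_exp_bound_int} --- is exactly the pointwise analogue of the alternative proof the paper supplies for the closely related $L^2$ estimate in Proposition \ref{prop_der_bound} (where Young's inequality replaces Cauchy--Schwarz). So you have effectively written out what the paper leaves as a citation, using the same circle of ideas already deployed a few pages earlier.
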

		\par 
		Let us first explain the construction of the approximate projection.
		Define $A_p := \ext_p^{0} \circ \res_Y \circ B_p^{X}$, where $\ext_p^{0}$ is as in (\ref{eq_ext0_op}).
		Then define $B_p^{\perp, a}$ as follows
		\begin{equation}\label{eq_bpperp_a_ap_defn}
			B_p^{\perp, a} : L^2(X, L^p \otimes F) \to L^2(X, L^p \otimes F), \qquad B_p^{\perp, a} := A_p^{*} \circ A_p.
		\end{equation}
		\par 
		Clearly, for any $f \in L^2(X, L^p \otimes F)$, we have $\scal{B_p^{\perp, a} f}{f}_{L^2} = \scal{A_p f}{A_p f}_{L^2}$. Hence in the notations of (\ref{eq_h00_defn}), we have
		\begin{equation}\label{eq_ker_bpa}
			\ker B_p^{\perp, a} = H^{0, 0}_{(2)}(X, L^p \otimes F) \oplus \ker B_p^{X}.
		\end{equation}
		By Theorem \ref{thm_ot_as_sp} and (\ref{eq_exp0_bnd_norm}), the operators $B_p^{\perp, a}$, $p \in \nat$, posses a \textit{uniform spectral gap}, i.e. there are $a, b > 0$, $p_1 \in \nat^*$, such that for any $p \geq p_1$, we have
		\begin{equation}\label{eq_bpa_sp_gap}
			{\rm{Spec}}(B_p^{\perp, a}) \subset \{0\} \cup [a, b].
		\end{equation}
		\par 
		The properties (\ref{eq_ker_bpa}) and (\ref{eq_bpa_sp_gap}) justify the name “approximate projection", as $B_p^{\perp}$ is the only self-adjoint operator on $H^0_{(2)}(X, L^p \otimes F)$, satisfying (\ref{eq_ker_bpa}) and (\ref{eq_bpa_sp_gap}) for $a, b = 1$.
		The property (\ref{eq_bpa_sp_gap}) is what we call by “spectral relation" between $B_p^{\perp, a}$ and $B_p^{\perp}$.
		\begin{proof}[Proof of Theorem \ref{thm_logbk_exp_dc}]
			Clearly, by Lemma \ref{lem_bnd_prod_a}, Theorem \ref{thm_bk_off_diag} and (\ref{eq_ext0_op}), we see that there are $c > 0$, $p_1 \in \nat^*$, such that for any $k \in \nat$, there is $C_1 > 0$, such that for any $p \geq p_1$ and $x_1, x_2 \in X$,
			\begin{equation}\label{eq_bpa_bnd_2}
				\big| B_p^{\perp, a}(x_1, x_2) \big|_{\ccal^k(X \times X)} \leq C_1 p^{n + \frac{k}{2}} \exp \Big(- c \sqrt{p} \cdot \big(  \dist(x_1, x_2) + \dist(x_1, Y) + \dist(x_2, Y) \big) \Big).
			\end{equation}
			\par 
			We will now show that the estimate (\ref{eq_logbk_exp_dc}) follows formally from (\ref{eq_ker_bpa}), (\ref{eq_bpa_sp_gap}) and (\ref{eq_bpa_bnd_2}).
			To do so, from (\ref{eq_ker_bpa}) and (\ref{eq_bpa_sp_gap}), we see that for $\epsilon := 1 - \frac{a}{2b}$, and for any $r \in \nat^*$, the following bound holds
			\begin{equation}\label{eq_spec_est_bpa}
				\Big\| 
					\Big(
						B_p^X - \frac{B_p^{\perp, a}}{2b} 
					\Big)^{r}
					-
					B_p^X
					+
					B_p^{\perp}
				\Big\|
				\leq
				\epsilon^r.
			\end{equation}
			\begin{sloppypar}
			Since the operator under the norm of (\ref{eq_spec_est_bpa}) vanishes on the orthogonal complement of $H^0_{(2)}(X, L^p \otimes F)$ and takes values inside of $H^0_{(2)}(X, L^p \otimes F)$, Proposition \ref{prop_der_bound2} and (\ref{eq_spec_est_bpa}) then imply that for any $k \in \nat$, there are $C_2 > 0$, $l \in \nat$, such that for any $p \geq p_1$, $r \in \nat^*$, we have
			\begin{equation}\label{eq_spec_est_bpa_2}
				\Big|
				\Big(
				\Big(
						B_p^X - \frac{B_p^{\perp, a}}{2b} 
					\Big)^{r}
					-
					B_p^X
					+
					B_p^{\perp}
				\Big)
				(x_1, x_2)
				\Big|_{\ccal^k(X \times X)}
				\leq
				C_2 p^{n + k/2} \epsilon^r.
			\end{equation}
			\end{sloppypar}
			\par 
			Now, using the trivial identity $(B_p^X)^{r} = B_p^X$, we expand $(B_p^X - \frac{B_p^{\perp, a}}{2b})^{r} - B_p^X$ into $2^r - 1$ summands, such that each summand contains $B_p^{\perp, a}$ as a multiple. By Theorem \ref{thm_bk_off_diag} and (\ref{eq_bpa_bnd_2}), we may apply Lemma \ref{lem_bnd_prod_a} for $W := Y$, to bound each of those summands. 
			Hence, there are $C_3 > 0$, $p_1 \in \nat^*$, such that for any $x_1, x_2 \in X$, $p \geq p_1$, $r \in \nat$, we have
			\begin{multline}\label{eq_spec_est_bpa_3}
				\Big|
				\Big(
				\Big(
						B_p^X - \frac{B_p^{\perp, a}}{2b} 
					\Big)^{r}
					-
					B_p^X
				\Big)
				(x_1, x_2)
				\Big|_{\ccal^k(X \times X)}
				\\
				\leq 
				C_3^r p^{n + k/2} \cdot \exp \Big(- \frac{c}{8} \sqrt{p} \cdot \big(  \dist(x_1, x_2) + \dist(x_1, Y) + \dist(x_2, Y) \big) \Big).
			\end{multline}
			Now, we fix $x_1, x_2 \in X$, and let
			\begin{equation}\label{eq_r_choice}
				r := 
				\bigg \lceil 
					\frac{c}{16 \log (\max(C_3, 2))} \sqrt{p} \cdot \Big( \dist(x_1, x_2) + \dist(x_1, Y) + \dist(x_2, Y) \Big)
				\bigg \rceil,
			\end{equation}
			where $\lceil \cdot \rceil$ is the ceil function.
			Then in the right-hand side of (\ref{eq_spec_est_bpa_3}), for this choice of $r$, the power of $C_3$ becomes negligible with respect to the last multiplicand.
			Hence, (\ref{eq_logbk_exp_dc}) holds by (\ref{eq_spec_est_bpa_2}), (\ref{eq_spec_est_bpa_3}), (\ref{eq_r_choice}) and the inequality $\epsilon < 1$.
		\end{proof}
		\par 
		Now, in the second part of this section, we show that despite the global nature of the orthogonal Bergman kernel, the asymptotic expansion of it depends only on the local geometry of the problem.
		\par
		\begin{sloppypar}		 
		More precisely, we fix $X, Y, (L, h^L), (F, h^F), dv_X, dv_Y$ as in Section \ref{sect_intro}.
		We denote by $X', Y', (L', h^{L'}), (F', h^{F'}), dv_{X'}, dv_{Y'}$ some other manifold, submanifold, etc.
		We define $g^{TX}$, $g^{TX'}$ as in (\ref{eq_gtx_def}).
		We fix $y_0 \in Y$, $y'_0 \in Y'$ and assume that there is a biholomorphism between $U := \mathbb{B}_{y_0}^{X}(r_0)$ and $U' := \mathbb{B}_{y'_0}^{X'}(r_0)$ for $r_0 < r_X, r_Y$, such that it induces a biholomorphism of $Y \cap U$ to $Y' \cap U'$ and sends $dv_X, dv_Y$ to $dv_{X'}, dv_{Y'}$ respectively.
		We also assume that the biholomorphism can be extended to holomorphic isometries between $(L, h^L), (F, h^F)$ and $(L', h^{L'}), (F', h^{F'})$.
		In particular, it is a local isometry between $(X, g^{TX}, y_0)$ and $(X', g^{TX'}, y'_0)$.
		\par 
		We denote by $B_p^X{}'$, $B'_p{}^{\perp}$ the Bergman projector and the orthogonal Bergman projector associated with $X', (L', h^{L'}), (F', h^{F'}), dv_{X'}$.
		For $x_1, x_2 \in X$, we denote by $B_p^X{}'(x_1, x_2)$, $B'_p{}^{\perp}(x_1, x_2)$ the Schwartz kernels of those operators, evaluated with respect to $dv_{X'}$.
		\begin{thm}\label{thm_local_logbk}
			There is $p_1 \in \nat^*$, such that for any $r_0 > 0$ as above, there is $c > 0$, such that for any $k \in \nat$, there is $C > 0$, such that for $V := \mathbb{B}_{y_0}^{X}(r_0/2)$, $x, x' \in V$, and  any $p \geq p_1$, we have
			\begin{equation}\label{eq_local_logbk}
				\big|  (B_p^{\perp} - B'_p{}^{\perp})(x, x')  \big|_{\ccal^k(V \times V)} \leq C \exp (- c \sqrt{p} ).
			\end{equation}
		\end{thm}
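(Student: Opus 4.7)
The plan is to follow the template of the proof of Theorem \ref{thm_logbk_exp_dc}: express both $B_p^{\perp}$ and $B'_p{}^{\perp}$ through their respective approximate projections $B_p^{\perp, a}$ and $B'_p{}^{\perp, a}$ via the spectral identity (\ref{eq_spec_est_bpa_2}), and separately show that the building blocks $B_p^X$ and $B_p^{\perp, a}$ are local up to exponentially small errors.

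First, I would establish the analog of (\ref{eq_local_logbk}) for the ordinary Bergman kernel: identifying $V \subset U$ with its image under the biholomorphism, one has $\| B_p^X - B_p^X{}' \|_{\ccal^k(V \times V)} \leq C \exp(-c \sqrt{p})$. This is a classical consequence of the spectral gap for the Kodaira Laplacian combined with finite propagation speed for the associated wave operator, as developed in Dai-Liu-Ma \cite{DaiLiuMa} and Ma-Marinescu \cite{MaMarOffDiag}, \cite{MaHol}. The same argument, applied to $Y$ with data $(\iota^*L, \iota^*F)$, yields the analogous statement for $B_p^Y$ on a corresponding subset of $Y \cap U$.

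Second, I would deduce the local property $\| B_p^{\perp, a} - B'_p{}^{\perp, a} \|_{\ccal^k(V \times V)} \leq C \exp(-c \sqrt{p})$ for the approximate projection. The crucial observation is that $\ext_p^{0}$ is defined by the explicit formula (\ref{eq_ext0_op}) using only local data: Fermi coordinates, the metric on $N$, a cutoff $\rho$, and $B_p^Y$. Under the local isomorphism hypothesis, all of these pieces are literally identified on $U$, so $\ext_p^{0}$ and $\ext'_p{}^{0}$ differ only through the difference of the $Y$-Bergman projectors, which is exponentially small by the preceding step. Since $A_p = \ext_p^{0} \circ \res_Y \circ B_p^X$ and $B_p^{\perp, a} = A_p^* \circ A_p$ are compositions of operators with the exponential off-diagonal decay of (\ref{eq_bpa_bnd_2}), cutting off far from $V$ introduces only exponentially small errors (by Proposition \ref{prop_exp_bound_int}), and Lemma \ref{lem_bnd_prod_a} packages the composition estimates to yield the claim on $V \times V$.

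Finally, I would apply (\ref{eq_spec_est_bpa_2}) to both setups and take the difference. Each telescoping term in $(B_p^X - \tfrac{B_p^{\perp, a}}{2b})^{r} - (B_p^X{}' - \tfrac{B'_p{}^{\perp, a}}{2b})^{r}$ contains at least one factor that is a difference $B_p^X - B_p^X{}'$ or $B_p^{\perp, a} - B'_p{}^{\perp, a}$, which is exponentially small on $V \times V$; the remaining factors enjoy the exponential bounds from Theorem \ref{thm_bk_off_diag} and (\ref{eq_bpa_bnd_2}). Applying Lemma \ref{lem_bnd_prod_a} and then choosing $r$ of order $\sqrt{p}$ exactly as in (\ref{eq_r_choice}) absorbs the geometric factor $C^r$ into the exponential decay and gives (\ref{eq_local_logbk}). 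The main technical obstacle will be handling the convolutions cleanly across two different ambient manifolds: one must introduce cutoff functions supported in $U$ to make expressions like $B_p^X - B_p^X{}'$ globally defined, and verify that the errors from these cutoffs are exponentially suppressed by the off-diagonal decay together with the fact that $\dist(V, X \setminus U) \geq r_0/2$.
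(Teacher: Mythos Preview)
Your proposal is correct and follows essentially the same route as the paper's proof: invoke the localization of the ordinary Bergman kernel (the paper states this as Proposition \ref{prop_bk_local}, proved via the heat-kernel/finite-propagation-speed argument you describe), deduce the localization of $B_p^{\perp,a}$, then take the difference of the spectral identities (\ref{eq_spec_est_bpa_2}) for the two geometries, telescope, apply Lemma \ref{lem_bnd_prod_a}, and choose $r \sim \sqrt{p}$. The paper organizes the last step slightly differently---it inserts nested cutoffs $\rho_0, \rho_1$ supported in $U$ directly into the expanded product (\ref{eq_bp_bppr_1}) and separates terms containing a $(1-\rho_1)$ factor (controlled by off-diagonal decay since $\dist(V,\partial U)\geq r_0/2$) from pure-$\rho_1$ terms (which telescope to contain a difference factor)---but this is exactly the cutoff mechanism you anticipate in your final paragraph.
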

		\end{sloppypar}
		\par 
		The proof of Theorem \ref{thm_local_logbk} is based on the following proposition.
		\begin{prop}\label{prop_bk_local}
			There is $p_1 \in \nat^*$, such that for any $r_0 > 0$ as above, there is $c > 0$, such that for any $k \in \nat$, there is $C > 0$, such that for $V := \mathbb{B}_{y_0}^{X}(r_0/2)$, $x, x' \in V$, and any $p \geq p_1$, we have
			\begin{equation}\label{eq_bk_local}
				\big|  (B_p^X - B_p^X{}')(x, x') \big|_{\ccal^k(V \times V)} \leq C \exp (- c \sqrt{p} ).
			\end{equation}
		\end{prop}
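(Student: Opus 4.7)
The argument applies the standard localization principle for the Bergman kernel, combining the exponential off-diagonal decay of $B_p^X$ and $B_p^{X'}$ (Theorem \ref{thm_bk_off_diag}) with a cutoff-and-$\dbar$-correction procedure modelled on the proof of Theorem \ref{thm_ot_weak}. Fix a smooth cutoff $\chi : X \to [0,1]$ with $\chi \equiv 1$ on $B_{y_0}^X(3r_0/4)$ and $\operatorname{supp}\chi \ssubset B_{y_0}^X(r_0)$. For $x_2 \in V$, set $\sigma_{p,x_2} := \chi \cdot B_p^X(\cdot, x_2) \in \ccal^\infty_c(U, L^p\otimes F)$, and transport it via $\Phi$ to a smooth section $\tilde\sigma_{p,x_2}$ on $X'$ compactly supported in $U'$. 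Since $\dbar\chi$ is supported in the annulus $B_{y_0}^X(r_0)\setminus B_{y_0}^X(3r_0/4)$, at positive distance from $x_2 \in V$, Theorem \ref{thm_bk_off_diag} yields $\|\dbar\tilde\sigma_{p,x_2}\|_{L^2(X')} = O(\exp(-c\sqrt{p}))$, uniformly in $x_2\in V$.

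Using Hörmander's $L^2$-estimate on $X'$, which is applicable thanks to the strictly plurisubharmonic exhaustion supplied by Theorem \ref{thm_plurisub} (applied exactly as in the proof of Theorem \ref{thm_ot_weak}), solve $\dbar u_{p,x_2} = \dbar\tilde\sigma_{p,x_2}$ on $X'$ with
\begin{equation*}
\|u_{p,x_2}\|_{L^2(X')}^2 \leq \frac{C}{p} \|\dbar\tilde\sigma_{p,x_2}\|_{L^2(X')}^2 = O(\exp(-c\sqrt{p})).
\end{equation*}
Then $h_{p,x_2} := \tilde\sigma_{p,x_2} - u_{p,x_2} \in H^0_{(2)}(X', L'^p\otimes F')$. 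Since $\dbar\tilde\sigma_{p,x_2}$ vanishes on a neighborhood of $V'$, $u_{p,x_2}$ is holomorphic there, and Lemma \ref{lem_ell_reg_sob_bnd_g} upgrades the $L^2$-bound to $\|u_{p,x_2}\|_{\ccal^k(V')} = O(\exp(-c\sqrt{p}))$ for every $k$. Hence, on $V'$ (identified with $V$ via $\Phi$), $h_{p,x_2} = B_p^X(\cdot, x_2) + O_{\ccal^k}(\exp(-c\sqrt{p}))$.

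Plugging $h_{p,x_2}$ into the reproducing formula $h_{p,x_2}(x_1) = \int_{X'} B_p^{X'}(x_1, x') h_{p,x_2}(x') dv_{X'}(x')$, decomposing $h_{p,x_2} = \tilde\sigma_{p,x_2} - u_{p,x_2}$, and using Proposition \ref{prop_der_bound2} to control $B_p^{X'} u_{p,x_2}$ pointwise, one obtains for $x_1\in V$,
\begin{equation*}
B_p^X(x_1, x_2) = \int_{U'} B_p^{X'}(x_1, x')\, \chi(x')\, B_p^X(x', x_2)\, dv_{X'}(x') + O(\exp(-c\sqrt{p})).
\end{equation*}
A parallel computation for $B_p^{X'}(\cdot, x_2)$, using the idempotent identity $B_p^{X'}=(B_p^{X'})^2$ and the off-diagonal decay of $B_p^{X'}(\cdot, x_2)$ to truncate the integration to $U'$, gives
\begin{equation*}
B_p^{X'}(x_1, x_2) = \int_{U'} B_p^{X'}(x_1, x')\, \chi(x')\, B_p^{X'}(x', x_2)\, dv_{X'}(x') + O(\exp(-c\sqrt{p})).
\end{equation*}
Subtracting yields an integral equation for $D_p := B_p^X - B_p^{X'}$ on $V\times V$ (identifications implicit). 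Since $D_p(\cdot, x_2)$ is holomorphic on $U\cong U'$, the kernel of the integral transform concentrates near the diagonal (up to exponentially small tails coming from the exterior of a small ball around $x_1$), and the bootstrap closes, giving the pointwise bound $|D_p(x_1,x_2)| = O(\exp(-c\sqrt{p}))$. Holomorphicity of $D_p$ in $x_1$ and antiholomorphicity in $x_2$ together with Lemma \ref{lem_ell_reg_sob_bnd_g} promote this to the $\ccal^k$-estimate (\ref{eq_bk_local}).

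\textbf{Main obstacle.} The delicate step is closing the bootstrap for the integral equation: a naive iteration only gains a factor of size $\|B_p^{X'}(x_1,\cdot)\chi\|_{L^1} \leq C p^{-n/2}$, which is polynomial, not exponential. The resolution uses holomorphicity of $D_p(\cdot, x_2)$ in a controlled neighborhood of $x_1$ combined with the exponential decay of $B_p^{X'}(x_1, \cdot)$ outside this neighborhood to extract genuine exponential gain. Alternatively, one can sidestep the bootstrap entirely by passing to the spectral-calculus / finite-propagation-speed framework of Dai-Liu-Ma and Ma-Marinescu: by the uniform spectral gap of $\square^{L^p\otimes F}$, $B_p^X = \phi(\square^{L^p\otimes F}) + O(\exp(-c p))$ for an even Schwartz function $\phi$ with $\phi(0)=1$, and a Paley--Wiener type truncation of the Fourier representation of $\phi$ combined with finite propagation speed of the wave operator $\cos(\xi\sqrt{\square^{L^p\otimes F}})$ localizes $\phi(\square^{L^p\otimes F})(x_1,x_2)$ to dependence on the geometry inside $U$, hence identically matching the corresponding kernel on $X'$.
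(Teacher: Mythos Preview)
Your main $\dbar$-correction approach gets as far as the integral equation for $D_p$, but the closing step is not a proof. The bootstrap does not gain anything: the kernel $B_p^{X'}(x_1,\cdot)\chi$ has $L^1$-norm of order $1$ (not $O(p^{-n/2})$), so iteration is useless, and the sentence ``holomorphicity of $D_p(\cdot,x_2)$ \ldots combined with exponential decay of $B_p^{X'}(x_1,\cdot)$'' does not explain why $D_p$ should be small precisely where $B_p^{X'}(x_1,\cdot)$ concentrates. A correct way to close the argument avoids the integral equation entirely and uses the extremal property of the Bergman kernel: from your $h_{p,x_2}\in H^0_{(2)}(X')$ you get, by exponential off-diagonal decay and the symmetric construction on $X$, that $B_p^X(x_2,x_2)=B_p^{X'}(x_2,x_2)+O(e^{-c\sqrt p})$; then write $h_{p,x_2}=\lambda\,B_p^{X'}(\cdot,x_2)+w$ with $\lambda=h_{p,x_2}(x_2)/B_p^{X'}(x_2,x_2)=1+O(e^{-c\sqrt p})$, so that $w(x_2)=0$ and hence $w\perp B_p^{X'}(\cdot,x_2)$. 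Pythagoras and the norm bound on $h_{p,x_2}$ force $\|w\|_{L^2(X')}^2=O(e^{-c\sqrt p})$, and Proposition~\ref{prop_der_bound2} turns this into a pointwise (and then $\ccal^k$) bound on $V'$. This is the missing idea in your write-up. (Also, the invocation of Theorem~\ref{thm_plurisub} is misplaced: you only need the plain H\"ormander estimate on the complete K\"ahler manifold $X'$ with $L'^p$ positive for $p$ large, no submanifold weight is involved.)

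Your ``alternative'' at the end is in fact the paper's proof. The paper proceeds via heat kernels and finite propagation speed: it proves exponential bounds for the \emph{difference} $\exp(-\tfrac{u}{p}\square_p)-\exp(-\tfrac{u}{p}\square'_p)$ and for $\square_p\exp(-\tfrac{u}{p}\square_p)-\square'_p\exp(-\tfrac{u}{p}\square'_p)$ by repeating the Ma--Marinescu argument with the Paley--Wiener cutoff parameter $h$ chosen so that the finite-speed pieces depend only on the common data in $U$ and hence cancel; integrating in $u$ then recovers $B_p^X-B_p^{X'}$. So once completed with the extremal step above, your $\dbar$-approach is a genuinely different, heat-kernel-free route to the same conclusion.
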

		\begin{sloppypar}
		\begin{proof}
			Our proof here is an easy modification of the proof of \cite[Theorem 1]{MaMarOffDiag}, so we only briefly explain the main steps.
			First, we denote by $\laplcomp_p$ (resp. $\laplcomp'_p$) the Kodaira Laplacians on $\ccal^{\infty}(X, L^p \otimes F)$ (resp. $\ccal^{\infty}(X', L'{}^p \otimes F')$).
			We would like to prove that there are constants $a, c_1, c_2 > 0$ such that for any $k \in \nat$, $u_0 > 0$, there is $C > 0$, such that for any $u \geq u_0$, $p \in \nat^*$, $x_1, x_2 \in V := \mathbb{B}_{y_0}^{X}(r_0/2)$, the following estimates hold
			\begin{multline}\label{eq_hk_est_12}
				\Big| \Big( 
				\exp \Big( - \frac{u}{p} \laplcomp_p \Big) - 
				\exp \Big( - \frac{u}{p} \laplcomp'_p \Big)
				\Big)
				 (x_1, x_2) \Big|_{\ccal^k(V \times V)}
				\\
				 \leq C p^{n + k/2} \cdot  \exp \Big(c_1 u - \frac{ap}{u} \cdot \big( \dist(x_1, x_2)  + 1 \big) \Big).
			\end{multline}
			\vspace*{-0.8cm}
			\begin{multline}\label{eq_hk_est_1}
				\Big| \Big( 
				\laplcomp_p \exp \Big( - \frac{u}{p} \laplcomp_p \Big) - 
				\laplcomp'_p \exp \Big( - \frac{u}{p} \laplcomp'_p \Big)
				\Big)
				 (x_1, x_2) \Big|_{\ccal^k(V \times V)}
				\\
				  \leq C p^{n + 1 + k/2} \cdot  \exp \Big(- c_2 u - \frac{ap}{u} \cdot \big( \dist(x_1, x_2)  + 1 \big) \Big).
			\end{multline}
			The proof is a direct modification of the proof of \cite[(3.1), (3.2)]{MaMarOffDiag}, which states in the same notations that 
			\begin{align}
				\label{eq_hk_est_22}
				&
				\Big| 
				\exp \Big( - \frac{u}{p} \laplcomp_p \Big)
				 (x_1, x_2) \Big|_{\ccal^k(X \times X)} \leq
				 C p^{n + k/2}
				\cdot  \exp \Big(c_1 u - \frac{ap}{u} \cdot \dist(x_1, x_2) \Big),
				\\
				\label{eq_hk_est_2}
				&
				\Big| 
				\laplcomp_p \exp \Big( - \frac{u}{p} \laplcomp_p \Big)
				 (x_1, x_2) \Big|_{\ccal^k(X \times X)} \leq
				 C p^{n + 1 + k/2}
				\cdot  \exp \Big(- c_2 u - \frac{ap}{u} \cdot \dist(x_1, x_2) \Big).
			\end{align}
			\par 
			More precisely, remark first that, (\ref{eq_hk_est_12}), (\ref{eq_hk_est_1}) are consequences of (\ref{eq_hk_est_22}) and (\ref{eq_hk_est_2}) for $ \dist(x_1, x_2) > \frac{r_0}{2}$. 
			Now,  in the case $ \dist(x_1, x_2) < \frac{r_0}{2}$, the proof of (\ref{eq_hk_est_12}), (\ref{eq_hk_est_1}) is the same as the proof of (\ref{eq_hk_est_22}), (\ref{eq_hk_est_2}) in \cite[proof of Theorem 4]{MaMarOffDiag} with only one change. In the notations of \cite[proof of Theorem 4]{MaMarOffDiag}, for $h$ instead of $\sqrt{p} \dist(x_1, x_2)/\epsilon$, one should take $\sqrt{p} \frac{r_0}{2 \epsilon}$.
			Then, again in the notations of \cite[proof of Theorem 4]{MaMarOffDiag}, due to finite propagation speed of solutions of hyperbolic equations, cf. \cite[(3.9)]{MaMarOffDiag}, and the fact that the Laplacians $\laplcomp_p$ and $\laplcomp'_p$ coincide in $U$, the difference $(H_{u, h} (\frac{1}{\sqrt{p}} \laplcomp_p) - H_{u, h} (\frac{1}{\sqrt{p}} \laplcomp'_p) )(x_1, \cdot)$ vanishes. 
			Once (\ref{eq_hk_est_12}) and (\ref{eq_hk_est_1}) are established, the proof now proceeds in exactly the same way as in \cite[proof of Theorem 4]{MaMarOffDiag} with one final modification: the estimates of the quantities associated to $\laplcomp_p$ should be replaced by the estimates of the difference of the same quantities associated to $\laplcomp_p$ and $\laplcomp'_p$.
			\par
			Now, (\ref{eq_bk_local}) follows from (\ref{eq_hk_est_12}), (\ref{eq_hk_est_1}) and the following identity, cf. \cite[(3.14)]{MaMarOffDiag},
			\begin{multline}
				B_p^X
				-
				B_p^X{}'
				=
				\Big(
				\exp \Big( - \frac{u}{p} \laplcomp_p \Big)
				-
				\exp \Big( - \frac{u}{p} \laplcomp'_p \Big)
				\Big)
				\\	
				+			
				\int_u^{+ \infty}
				\Big(
				\frac{1}{p} \laplcomp'_p \exp \Big( - \frac{u_1}{p} \laplcomp'_p \Big) 
				-
				\frac{1}{p} \laplcomp_p \exp \Big( - \frac{u_1}{p} \laplcomp_p \Big) 
				\Big) du_1,
			\end{multline}
			by using exactly the same estimates as in \cite[proof of Theorem 1]{MaMarOffDiag}.
		\end{proof}
		\begin{proof}[Proof of Theorem \ref{thm_local_logbk}]
			The proof is an easy modification of the proof of Theorem \ref{thm_logbk_exp_dc}.
			The only essential difference is that instead of working with the approximate projection, we work with the difference of the two operators associated to different geometries.
			We use the notation from the proof of Theorem \ref{thm_logbk_exp_dc}.
			We denote by $B'_p{}^{\perp, a}$ the operator, constructed as $B_p^{\perp, a}$ in (\ref{eq_bpperp_a_ap_defn}), but for  $X', Y', (L', h^{L'}), (F', h^{F'}), dv_{X'}, dv_{Y'}$.
			We fix a smooth cut-off function $\rho_0 : U \to [0, 1]$ (resp.  $\rho_1$), equal to $1$ on $V$ (resp. on $\mathbb{B}_{y_0}^{X}(\frac{3}{4} r_0)$), and $0$ on $\partial \mathbb{B}_{y_0}^{X}(\frac{2}{3} r_0)$ (resp. on $\partial \mathbb{B}_{y_0}^{X}(r_0)$).
			Since $U$ and $U'$ are identified by a fixed diffeomorphism, we may regard $\rho_0$, $\rho_1$ as functions, defined on $X$ or $X'$ by extending them by zero.
			For $r \in \nat$, we consider the difference
			\begin{equation}\label{eq_bp_bppr_1}
				D_p := 
				\rho_0
				\Big(				
				\Big(
						B_p^X - \frac{B_{p}^{\perp, a}}{2b} 
					\Big)^{r}
					-
					B_p^X
				\Big)
				\rho_0
				-
				\rho_0
				\Big(
				\Big(
						B_p^X{}'  - \frac{B'_{p}{}^{\perp, a}}{2b} 
					\Big)^{r}
					-
					B_p^X{}' 
				\Big)
				\rho_0.
			\end{equation}
			Once the brackets in (\ref{eq_bp_bppr_1}) are opened, one can replace each multiplicand $A$ in the resulting expression by the sum $\rho_1 A \rho_1 + (1 - \rho_1) A \rho_1 + \rho_1 A (1 - \rho_1) + (1 - \rho_1) A (1 - \rho_1)$.
			We denote by $R_p$ the sum of the terms, which contain at least one $(1 - \rho_1)$-term.
			Clearly, by Lemma \ref{lem_bnd_prod_a} applied for $W := \partial \mathbb{B}_{y_0}^{X}(\frac{5}{7} r_0)$, there is $p_1 \in \nat^*$, $x, x' \in V$ such that  for $p \geq p_1$, for any $r_0 > 0$, there are $c_1, C_1 > 0$, such that
			\begin{equation}\label{eq_spec_est_bpaprime_3222}
				\big|
				R_p(x, x')
				\big|_{\ccal^k(V \times V)}
				\leq 
				C_1^r p^{n + k/2} \cdot \exp (
				-c_1 \sqrt{p} 
				).
			\end{equation}
			\par 
			As ${\rm{supp}} \, \rho_1 \subset U$, using the diffeomorphism between $U$ and $U'$, we may interpret all the operators $\rho_1 B_{p}^{\perp, a} \rho_1$, $\rho_1 (B'_{p}{}^{\perp, a}) \rho_1$, $\rho_1 B_p^X \rho_1$, $\rho_1 B_p^X{}' \rho_1$ as operators, acting over the same space, $X$.
			Let us now study the terms, where only $\rho_1$ appear.
			For those terms, it is easy to see that one can rearrange the summands so that the terms in (\ref{eq_bp_bppr_1}) with $\rho_1$ can be expressed as a sum of $(2^r- 1)2^r$ elements, each of which would contain as a multiplicand either $\rho_1 (B_{p}^{\perp, a} - B'_{p}{}^{\perp, a}) \rho_1$, or $\rho_1 (B_p^X - B_p^X{}') \rho_1$ and one of $\rho_1 (B_{p}^{\perp, a} ) \rho_1$ or $\rho_1 (B'_{p}{}^{\perp, a}) \rho_1$.
			Then by Lemma \ref{lem_bnd_prod_a}  applied for $W := \partial U$, Theorem \ref{thm_bk_off_diag}, Proposition \ref{prop_bk_local} and (\ref{eq_bpa_bnd_2}), we see that there are $c_2, C_2 > 0$, such that for any $p \geq p_1$, $x, x' \in V$, we get the bound
			\begin{equation}\label{eq_spec_est_bpaprime_3}
				\big|
				(D_p - R_p)(x, x')
				\big|_{\ccal^k(V \times V)}
				\leq 
				C_2^r p^{n + k/2} \cdot 
				\exp 
				(
					- c_2 \sqrt{p} 
				).
			\end{equation}
			We assume for simplicity that $C_2 > C_1$ and $c_2 < c_1$.
			By summing up (\ref{eq_spec_est_bpaprime_3222}) and (\ref{eq_spec_est_bpaprime_3}), we finally deduce that for any $p \geq p_1$, $x, x' \in V$, we have
			\begin{equation}\label{eq_spec_est_bpaprime_355}
				\big|
				D_p(x, x')
				\big|_{\ccal^k(V \times V)}
				\leq 
				2 C_2^r p^{n + k/2} \cdot
				\exp 
				(
					- c_2 \sqrt{p} 
				).
			\end{equation}
			\par 
			Now, by  taking a sum of (\ref{eq_spec_est_bpa_2}) and the analogous estimate for $X', Y', (L', h^{L'}), (F', h^{F'}), dv_{X'}, dv_{Y'}$, we get that for any $k \in \nat$, there is $C_3 > 0$, such that for any $p \geq p_1$, $r \in \nat^*$, $x, x' \in V$, we have
			\begin{equation}\label{eq_spec_est_bpaprime_2}
				\big|
					(
					D_p
					+
					B_{p}^{\perp}
					-
					B'_{p}{}^{\perp}
					)(x, x')
				\big|_{\ccal^k(V \times V)}
				\leq
				C_3 p^{n + k/2} \epsilon^r.
			\end{equation}
			We now adjust $r$ as follows
			\begin{equation}\label{eq_r_choice2}
				r := \Big \lceil \frac{c}{4 \log (\max(C_2, 2))} \sqrt{p} \Big \rceil.
			\end{equation}
			Then the contribution of $C_2^r$ in (\ref{eq_spec_est_bpaprime_355}) gets eliminated by the last multiplicand in the right-hand side of  (\ref{eq_spec_est_bpaprime_355}).
			The proof is now finished by (\ref{eq_spec_est_bpaprime_355}) and (\ref{eq_spec_est_bpaprime_2}).
		\end{proof}
		\end{sloppypar}
		
\subsection{Bergman kernel asymptotics in Fermi coordinates}\label{sect_ct_asymp}
	The main goal of this section is to prove that after a reparametrization given by a homothety with factor $\sqrt{p}$ in Fermi coordinates, Bergman kernel admits a complete asymptotic expansion in powers of $\sqrt{p}$, as $p \to \infty$. 	
	The proof relies on the analogous result of Dai-Liu-Ma \cite{DaiLiuMa}, stated in geodesic coordinates, and the calculations from Sections \ref{sect_coord_syst}, \ref{sect_par_transport}.
	\par
	We use notations from Section \ref{sect_intro} and assume that $(X, g^{TX})$ is of bounded geometry.
	Recall that $A \in \ccal^{\infty}(Y, T^*Y \otimes \enmr{TX|_Y})$, $R > 0$, $B \in \ccal^{\infty}(Y, {\rm{Sym}}^2(T^*X|_Y) \otimes TX|_Y)$,  and $\nu \in \ccal^{\infty}(Y, N)$ were defined in (\ref{eq_sec_fund_f}), (\ref{eq_r_defn_const}), (\ref{eq_b_defn}) and (\ref{eq_mn_curv_d}) respectively.
	\par 
	We fix a point $y_0 \in Y$ and an orthonormal frame $(e_1, \ldots, e_{2m})$ (resp. $(e_{2m+1}, \ldots, e_{2n})$) in $(T_{y_0}Y, g^{TY})$ (resp. in $(N_{y_0}, g^{N}_{y_0})$) as in (\ref{eq_cond_jinv}).
	Recall that Fermi coordinates, $\psi_{y_0}$, were defined in (\ref{eq_defn_fermi}).
	Recall that the function $\kappa_X$ in a neighborhood of $y_0$, was defined in (\ref{eq_defn_kappaxy}).
	We fix an orthonormal frame $(f_1, \ldots, f_r)$ of $(F_{y_0}, h^{F}_{y_0})$ and define the orthonormal frame $(\tilde{f}_1, \ldots, \tilde{f}_r)$ of $(F, h^F)$ in a neighborhood of $y_0$, as in Section \ref{sect_bnd_geom_cf}.
	Similarly, we trivialize $(L, h^L)$.
	The choice of those frames and the associated dual frames allows us to interpret $B_p^X(x_1, x_2)$ as an element of $\enmr{F_{y_0}}$ for $x_1, x_2 \in X$ in a neighborhood of $y_0$.
	Recall that $\mathscr{P}_{n}$ was defined in (\ref{eq_berg_k_expl}).
	\begin{prop}\label{prop_berg_off_diag}
			For any $r \in \nat$, there are $J_r(Z, Z') \in \enmr{F_{y_0}}$ polynomials in $Z, Z' \in \real^{2n}$, satisfying the same assumptions as polynomials from Theorem \ref{thm_berg_perp_off_diag}, such that for $F_r := J_r \cdot \mathscr{P}_{n}$, the following holds.
			There are $\epsilon, c > 0$, such that for any $k, l, l' \in \nat$, there is $C > 0$, such that for any $y_0 \in Y$, $p \in \nat^*$, $Z, Z' \in \real^{2n}$, $|Z|, |Z'| \leq \epsilon$, $\alpha, \alpha' \in \nat^{2n}$, $|\alpha|+|\alpha'| \leq l$, $Q^3_{k, l, l'} := 3 (n + k + l' + 2) + l$, the following bound holds
			\begin{multline}\label{eq_berg_off_diag}
				\bigg| 
					\frac{\partial^{|\alpha|+|\alpha'|}}{\partial Z^{\alpha} \partial Z'{}^{\alpha'}}
					\bigg(
						\frac{1}{p^n} B_p^X\big(\psi_{y_0}(Z), \psi_{y_0}(Z') \big)
						-
						\sum_{r = 0}^{k}
						p^{-\frac{r}{2}}						
						F_r(\sqrt{p} Z, \sqrt{p} Z') 
						\kappa_X^{-\frac{1}{2}}(Z)
						\kappa_X^{-\frac{1}{2}}(Z')
					\bigg)
				\bigg|_{\ccal^{l'}(Y)}
				\\
				\leq
				C p^{-(k + 1 - l) / 2}
				\Big(1 + \sqrt{p}|Z| + \sqrt{p} |Z'| \Big)^{Q^3_{k, l, l'}}
				\exp\Big(- c \sqrt{p} |Z - Z'| \Big),
			\end{multline}
			where the $\ccal^{l'}$-norm is taken with respect to $y_0$.
			Also, the following identity holds
			\begin{equation}\label{eq_jo_expl_form}
				J_0(Z, Z') = {\rm{Id}}_{F_{y_0}}.
			\end{equation}
			Moreover, under the assumptions (\ref{eq_comp_vol_omeg}), we have
			\begin{equation}\label{eq_j1_expl_form}
					J_1(Z, Z') = {\rm{Id}}_{F_{y_0}} \cdot \pi 
					\Big(
			 g^{TX}_{y_0} \big(z_N, A(\overline{z}_Y - \overline{z}'_Y) (\overline{z}_Y - \overline{z}'_Y) \big)
			 +
			 g^{TX}_{y_0} \big(\overline{z}'_N, A(z_Y - z'_Y) (z_Y - z'_Y) \big)			 
			 \Big).
			\end{equation}
		\end{prop}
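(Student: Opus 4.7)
The plan is to reduce the statement to the analogous asymptotic expansion of the Bergman kernel in geodesic coordinates, due to Dai--Liu--Ma \cite{DaiLiuMa}, and then transport the expansion through the comparison results of Sections \ref{sect_coord_syst} and \ref{sect_par_transport}. More precisely, I will start with the expansion
\[
\frac{1}{p^n} B_p^X\bigl(\phi_{y_0}(Z),\phi_{y_0}(Z')\bigr)
\;\sim\;
\sum_{r\ge 0} p^{-r/2}\,\widetilde F_r(\sqrt{p}Z,\sqrt{p}Z')\,
\kappa_{X,y_0}^{-1/2}(h^{-1}(Z))^{?}\cdots
\]
in geodesic coordinates with the usual orthonormal trivializations $\tilde f'_i$ (for $F$) and the parallel transport trivialization of $L$; for compact $X$ this is \cite[Thm.~4.18']{DaiLiuMa}, and the bounded geometry version follows from Ma--Marinescu \cite{MaMarOffDiag} together with the uniform off-diagonal remainder estimates recorded in Theorem~\ref{thm_bk_off_diag}. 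In particular, the leading term is $\mathscr P_n$ with coefficient ${\rm Id}_{F_{y_0}}$.

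Next I transport this expansion to Fermi coordinates using $\psi_{y_0}=\phi_{y_0}\circ h_{y_0}$ from \eqref{eq_h_defn_tr_m}. Two corrections enter: a coordinate correction given by substituting $h_{y_0}(Z)$ for $Z$, controlled by the Taylor expansion $h(Z)=Z+B(Z)+O(|Z|^3)$ from Proposition~\ref{prop_diff_exp}; and a frame correction given by the gauge transformation $\exp(p\,\xi_L+\xi_F)$ between the two trivializations of $L^p\otimes F$, controlled by Proposition~\ref{prop_phi_fun_exp}, namely $\xi_L(\psi(Z))=-\frac{1}{6}R^L_{y_0}(Z,B(Z))+O(|Z|^4)$ and $\xi_F(\psi(Z))=O(|Z|^2)$. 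After the rescaling $Z\mapsto Z/\sqrt p$, one has $\sqrt p\, h(Z/\sqrt p)=Z+p^{-1/2}B(Z)+O(p^{-1})$ and $p\,\xi_L(\psi(Z/\sqrt p))=-\frac{1}{6\sqrt p}R^L_{y_0}(Z,B(Z))+O(p^{-1})$; the corrections are therefore polynomial in $(Z,Z')$ with coefficients expressible in $R^{TX}$, $A$, $R^F$, $R^L$ and their derivatives at $y_0$, and they respect the degree and parity constraints inherited from the Dai--Liu--Ma coefficients. Finally, the $\kappa_X^{-1/2}$ normalization in \eqref{eq_berg_off_diag} is obtained from the Jacobian identity between $\psi_{y_0}^*dv_X$ and $\phi_{y_0}^*dv_X$, which differs by $|\det dh_{y_0}|$, expanded in Taylor series and then absorbed into $\kappa_X^{-1/2}(Z)\kappa_X^{-1/2}(Z')$ as in \cite[\S4]{DaiLiuMa}; the exponent $Q^3_{k,l,l'}$ and the exponential factor $\exp(-c\sqrt p|Z-Z'|)$ are inherited from the Dai--Liu--Ma remainder, which in turn follows from the sharp off-diagonal estimate Theorem~\ref{thm_bk_off_diag} and standard rescaling.

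For the leading coefficients, \eqref{eq_jo_expl_form} is immediate from the fact that Dai--Liu--Ma's leading term in geodesic coordinates is ${\rm Id}_{F_{y_0}}\cdot\mathscr P_n$ and none of the corrections affect order zero. For \eqref{eq_j1_expl_form}, under the normalization \eqref{eq_comp_vol_omeg} the $p^{-1/2}$-coefficient $\widetilde J_1$ in geodesic coordinates is known, cf.\ \cite[Thm.~4.18']{DaiLiuMa}, to be an odd polynomial that vanishes at $Z=Z'$ and, more importantly, to be independent of the submanifold data. The new $p^{-1/2}$ contribution therefore comes solely from (i) the substitution $Z\mapsto Z+p^{-1/2}B(Z)$ in $\mathscr P_n(\sqrt p Z,\sqrt p Z')$, producing a linear-in-$p^{-1/2}$ term involving $B(Z)$ and $B(Z')$, and (ii) the gauge correction $\exp(p\xi_L)$ producing $-\frac{1}{6\sqrt p}R^L_{y_0}(Z,B(Z))$ and the analogous $Z'$-contribution. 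Expanding $\mathscr P_n$ using \eqref{eq_berg_k_expl} and using $R^L=-2\pi\imun\,g^{TX}(J\cdot,\cdot)$ from \eqref{eq_omega}--\eqref{eq_gtx_def}, together with the definition \eqref{eq_b_defn} of $B$ and the fact that $A$ exchanges $TY$ with $N$ and commutes with $J$, one collects the surviving terms and, after grouping the $z_N$ and $\overline z'_N$ parts, obtains the closed formula \eqref{eq_j1_expl_form}.

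The technical heart of the argument is step two: propagating the rescaled Taylor expansions of $h_{y_0}$ and $\xi_L,\xi_F$ through the Dai--Liu--Ma expansion while keeping the remainder uniform in $y_0\in Y$ and preserving the parity/degree bookkeeping. Uniformity on $Y$ is guaranteed by Propositions~\ref{prop_bndg_tripl}, \ref{prop_htr_map}, \ref{prop_bndg_vect_2} and \ref{prop_xi_un_bnd}, and the parity and degree constraints ($\deg J_r\le 3r$, same parity as $r$) follow because $B$ and $\xi_L$ are of order $\ge 2$ in $Z$, so each power of $p^{-1/2}$ introduced by these corrections is matched by at least one extra factor of $Z$. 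This book-keeping, rather than any new analytic input, is expected to be the main obstacle.
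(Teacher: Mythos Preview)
Your proposal is correct and follows essentially the same approach as the paper: start from the Dai--Liu--Ma expansion in geodesic coordinates (the paper records this as Theorem~\ref{thm_berg_dailiuma}), then transport to Fermi coordinates via the change of variables $h_{y_0}$ from Proposition~\ref{prop_diff_exp} and the frame comparison $\xi_L,\xi_F$ from Proposition~\ref{prop_phi_fun_exp}, exactly as you outline. The paper writes the transport identity explicitly as \eqref{eq_rel_kernels} and then expands $\mathscr P_n(\sqrt p\,h(Z),\sqrt p\,h(Z'))$, $\exp(-p\xi_L-\xi_F)$, and $\kappa_X^{\pm 1/2}$ term by term; for $J_1$ it uses that $J'_1=0$ in geodesic coordinates under \eqref{eq_comp_vol_omeg} (your phrasing ``independent of the submanifold data'' should be sharpened to this vanishing), and then simplifies the resulting cubic expression via the identities $A(Z_Y)z_Y=A(z_Y)z_Y$ and the skew-adjointness and $J$-commutation of $A$ to reach \eqref{eq_j1_expl_form}.
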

		\par 
		Before describing the proof of Proposition \ref{prop_berg_off_diag}, let us recall the relevant asymptotic expansion of Dai-Liu-Ma.
		Recall first that for $x_0 \in X$, the coordinates $\phi_{x_0}$ in a neighborhood of $x_0$ were defined in (\ref{eq_phi_defn}).
		Let us define the function $\kappa'_{X}$ in a neighborhood of $x_0$ by the following formula
		\begin{equation}
			(\phi_{x_0}^* dv_X)(Z) 
			=
			\kappa'_{X}(Z)
			d Z_1 \wedge \cdots \wedge d Z_{2n}.
		\end{equation}
		We use the orthonormal frame $(\tilde{f}'_1, \ldots, \tilde{f}'_r)$ of $(F, h^F)$, defined in (\ref{eq_frame_tilde}) in a neighborhood of $y_0$.
		Similarly, we trivialize $(L, h^L)$.
		The choice of those frames as well as the associated dual ones allows us to interpret the Schwartz kernel of $B_p^X$ as an element of $\enmr{F_{y_0}}$ for $x_1, x_2 \in X$ in a neighborhood of $y_0$. We denote this element here by $B_p^{X \phi}(x_1, x_2)$ to distinguish it from $B_p^X(x_1, x_2)$ previously defined.
		\begin{thm}[{Dai-Liu-Ma \cite[Theorem 4.1.18]{DaiLiuMa}, cf. \cite[Theorems 4.2.9 and Problem 6.1]{MaHol} and \cite[Theorem 4.3]{KordMaMarin} }]\label{thm_berg_dailiuma}
			For any $r \in \nat$, $x_0 \in X$, there are $J^{\phi}_r(Z, Z') \in \enmr{F_{x_0}}$ polynomials in $Z, Z' \in \real^{2n}$, satisfying the same assumptions as polynomials from Theorem \ref{thm_berg_perp_off_diag}, such that for $F^{\phi}_r := J^{\phi}_r \cdot \mathscr{P}_n$, the following holds.
			There are $\epsilon, c > 0$, such that for any $k, l, l' \in \nat$, there exists $C > 0$, such that for any $x_0 \in X$, $p \in \nat^*$, $Z, Z' \in \real^{2n}$, $|Z|, |Z'| \leq \epsilon$, $\alpha, \alpha' \in \nat^{2n}$, $|\alpha|+|\alpha'| \leq l$, and $Q^4_{k, l, l'} := 2 (n + k + l' + 2) + l$, we have
			\begin{multline}\label{eq_berg_dailiuma}
				\bigg| 
					\frac{\partial^{|\alpha|+|\alpha'|}}{\partial Z^{\alpha} \partial Z'{}^{\alpha'}}
					\bigg(
						\frac{1}{p^n} B_p^{X \phi}\big(\phi_{x_0}(Z), \phi_{x_0}(Z') \big)
						-
						\sum_{r = 0}^{k}
						p^{-\frac{r}{2}}						
						F^{\phi}_r(\sqrt{p} Z, \sqrt{p} Z') 
						\kappa'_{X}{}^{-\frac{1}{2}}(Z)
						\kappa'_{X}{}^{-\frac{1}{2}}(Z')
					\bigg)
				\bigg|_{\ccal^{l'}(X)}
				\\
				\leq
				C p^{-(k + 1 - l) / 2}
				\Big(1 + \sqrt{p}|Z| + \sqrt{p} |Z'| \Big)^{Q^4_{k, l, l'}}
				\exp\Big(- c \sqrt{p} |Z - Z'| \Big),
			\end{multline}
			where the $\ccal^{l'}$-norm is taken with respect to $y_0$.
			Also, the following identity holds
			\begin{equation}\label{eq_jopr_calc}
				J^{\phi}_0(Z, Z') = {\rm{Id}}_{F_{y_0}}.
			\end{equation}
			Moreover, under the assumptions (\ref{eq_comp_vol_omeg}), we have
			\begin{equation}\label{eq_j1pr_calc}
				J^{\phi}_1(Z, Z') = 0.
			\end{equation}
		\end{thm}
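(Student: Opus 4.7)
The strategy is classical in the Bergman kernel literature going back to Bismut-Vasserot and Dai-Liu-Ma: exploit the spectral gap of the Kodaira Laplacian $\Box_p$, rescale the geodesic coordinates by $\sqrt{p}$, and analyze the rescaled Laplacian as a small perturbation of a harmonic oscillator on $\comp^n$.

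\medskip

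First, I would fix $x_0\in X$ and pass to geodesic coordinates $\phi_{x_0}$, trivializing $L$ and $F$ by parallel transport along radial geodesics (as in Section \ref{sect_bnd_geom_cf}, Propositions \ref{prop_bndg_vect_1} and \ref{prop_bndg_man}). In this trivialization, $\Box_p$ becomes a second-order elliptic operator on $B_0^{\real^{2n}}(r_X)$ whose coefficients admit uniform $\ccal^k$-bounds on $X$ in terms of the bounded geometry data. Define the rescaling $F_t f(Z)=f(Z/\sqrt{p})$ with $t=1/\sqrt{p}$ and set $\mathcal{L}_t := t^2 F_t^{-1}\Box_p F_t$. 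Using Lemma \ref{lem_comp_phi_par} and Proposition \ref{prop_bndg_vect_1}, $\mathcal{L}_t$ admits a formal Taylor expansion in $t$ of the form $\mathcal{L}_0+\sum_{r\ge 1}t^r\mathcal{O}_r$, where $\mathcal{L}_0$ is exactly the model operator $\mathscr{L}$ from (\ref{eq_mathscr_l_op}) on $\comp^n$ and each $\mathcal{O}_r$ is a polynomial-coefficient differential operator of order $\le 2$ whose coefficients are polynomials in $R^{TX}, R^F, R^L$ and their derivatives at $x_0$ of order $\le r-1$.

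\medskip

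Second, a standard spectral-gap estimate (cf.\ Ma-Marinescu \cite{MaHol}, which holds uniformly in bounded geometry) gives ${\rm Spec}(\Box_p)\subset\{0\}\cup[\mu_0 p,\infty)$ for some $\mu_0>0$. Hence $B_p^X$ equals the contour integral $\frac{1}{2\pi\imun}\oint_{\Gamma}(\lambda-\Box_p)^{-1}d\lambda$ around a small loop $\Gamma$ around $0$, and after rescaling this becomes $\frac{1}{2\pi\imun}\oint_{\Gamma}(\lambda-\mathcal{L}_t)^{-1}d\lambda$. Expanding the resolvent as a Neumann series,
\begin{equation*}
(\lambda-\mathcal{L}_t)^{-1}=(\lambda-\mathcal{L}_0)^{-1}\sum_{k\ge 0}\Bigl(\sum_{r\ge 1}t^r\mathcal{O}_r(\lambda-\mathcal{L}_0)^{-1}\Bigr)^k,
\end{equation*}
and using the explicit description of $\ker\mathcal{L}_0$ via (\ref{eq_orth_basker}), which yields the Bergman kernel $\mathscr{P}_n$ in (\ref{eq_berg_k_expl}), one obtains $B_p^X$ as a power series in $t$ whose $r$-th coefficient has a Schwartz kernel of the form $J'_r\cdot\mathscr{P}_n$ with $J'_r$ a polynomial. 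The polynomial structure, the bound $\deg J'_r\le 3r$, and the parity follow by induction: each $\mathcal{O}_r$ raises the polynomial degree by at most $3$ and has the same parity as $r$, while $(\lambda-\mathcal{L}_0)^{-1}$ respects these filtrations thanks to the creation/annihilation calculus (\ref{eq_mathscr_l_op}).

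\medskip

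Third, to convert this formal expansion into the quantitative pointwise estimate (\ref{eq_berg_dailiuma}) with its Sobolev weight and exponential off-diagonal decay, I would combine three ingredients: (i) the off-diagonal heat-kernel estimates (\ref{eq_hk_est_22}), (\ref{eq_hk_est_2}), which give the Gaussian decay; (ii) finite propagation speed of the wave operator $\cos(u\sqrt{\Box_p})$, which localizes everything in a ball of radius $r_X/2$ and reduces the problem to a model computation on $\real^{2n}$, uniformly in $x_0$ by bounded geometry; (iii) elliptic/Sobolev bounds as in Lemma \ref{lem_ell_reg_sob_bnd_g} to upgrade the $L^2$ control obtained from the resolvent expansion to $\ccal^{l'}$-bounds in the parameter $x_0$. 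Carrying the tails through this procedure gives precisely the polynomial factor $(1+\sqrt{p}|Z|+\sqrt{p}|Z'|)^{Q^4_{k,l,l'}}$ and the Gaussian $\exp(-c\sqrt{p}|Z-Z'|)$. I expect this step to be the main technical obstacle: tracking the polynomial weights through the resolvent expansion and the localization, and verifying the uniformity in $x_0\in X$ relying only on the bounded-geometry constants.

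\medskip

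Finally, (\ref{eq_jopr_calc}) is immediate from $J'_0\cdot\mathscr{P}_n=\mathscr{P}_n$, the leading-order coincidence of $B_p^X$ with the model Bergman projector on $\comp^n$. For (\ref{eq_j1pr_calc}) under (\ref{eq_comp_vol_omeg}), the first-order term $\mathcal{O}_1$ in the expansion of $\mathcal{L}_t$ is odd in $Z$ (the linear corrections to the metric, connection, and volume form vanish at the origin in geodesic coordinates when $dv_X=dv_{g^{TX}}$, thanks to the Kähler identity $\nabla^{TX}J=0$ and Lemma \ref{lem_comp_phi_par}); combined with the even/odd parity preserved by $(\lambda-\mathcal{L}_0)^{-1}$, this forces $J'_1$ to be an odd polynomial that, upon evaluation against $\mathscr{P}_n$, yields zero by direct kernel calculus as in Remark \ref{rem_k_calculc}.
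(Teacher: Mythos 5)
Your proposal and the paper's proof are doing entirely different things. The paper does not re-prove the Dai--Liu--Ma expansion: its proof of Theorem \ref{thm_berg_dailiuma} is a citation of the bounded-geometry version in \cite[Theorems 4.1.21, 6.1.1]{MaHol}, plus the one genuinely new point, namely a change of normalization. The cited result is stated in geodesic coordinates of an auxiliary metric $g^{TX}_0$ with $dv_{g^{TX}_0}=dv_X$, whereas the statement here uses geodesic coordinates of $g^{TX}$ and an arbitrary volume form $dv_X$ subject only to (\ref{eq_vol_comp_unif}); the paper applies the cited expansion to $g^{TX}_0:=g^{TX}\cdot(dv_X/dv_{g^{TX}})^{1/2n}$ and controls the transition map between the two geodesic coordinate systems via Proposition \ref{prop_tr_map} and Corollary \ref{cor_hpr_diffeo}. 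Your route — spectral gap, $\sqrt{p}$-rescaling, resolvent expansion around $\mathscr{L}$ of (\ref{eq_mathscr_l_op}), localization by finite propagation speed — is the correct skeleton of the proof \emph{of the cited theorem itself}, so it is admissible, but it buys only length here, and it silently assumes $dv_X=dv_{g^{TX}}$: your identification of the leading rescaled operator with $\mathscr{L}$ and your discussion of $\mathcal{O}_1$ both use the Riemannian volume, while the statement's $\kappa'_X{}^{-1/2}$ factors exist precisely because $dv_X$ is arbitrary. You would need to carry the Taylor expansion of $dv_X/dv_{g^{TX}}$ through the adjoint $\dbar^*$ at every order, which is exactly the bookkeeping the paper's conformal-rescaling trick avoids.

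The step that is actually wrong is your justification of (\ref{eq_j1pr_calc}). An odd polynomial multiplied by $\mathscr{P}_n$ does not ``yield zero by direct kernel calculus'': the coefficients $J_1$ of Proposition \ref{prop_berg_off_diag} and $J_1^{\perp}$ of Theorem \ref{thm_berg_perp_off_diag} in this very paper are nonzero odd polynomials of degree $3$, so parity alone can never force $J'_1=0$. The correct mechanism — which your parenthetical states and which you then abandon in favor of the parity argument — is that under (\ref{eq_comp_vol_omeg}) the first-order operator $\mathcal{O}_1$ vanishes identically: in geodesic coordinates of a Kähler metric the metric tensor and the Riemannian volume density have no linear term, $\Gamma^F$ contributes only at order $t^2$, and the radial-gauge expansion $\Gamma^L=\tfrac12 R^L(Z,\cdot)+O(|Z|^3)$ has no quadratic term because that term is proportional to $\nabla R^L=-2\pi\imun\,\nabla\omega=0$ (this is the content of Lemma \ref{lem_gamma_exp} applied with $H=\{x_0\}$). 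The quadratic term of $\Gamma^L$ is the dangerous one, since it is multiplied by $p$ and hence would survive the rescaling at order $t$. With $\mathcal{O}_1=0$ one gets $F'_1=0$ outright; no parity or kernel-calculus argument is needed, and none would suffice.
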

		\begin{proof}[Proof of Proposition \ref{prop_berg_off_diag}]
			Recall that the diffeomorphism $h$ was defined in (\ref{eq_h_defn_tr_m}), and the functions $\xi_L$, $\xi_F$ were defined in (\ref{eq_frame_tilde}).
			Directly from the definitions, we obtain the following relation between the Schwartz kernels
			\begin{multline}\label{eq_rel_kernels}
				B_p^X\big(\psi(Z), \psi(Z') \big)
				=
				\exp \big(-p \xi_L^{*} - \xi_F^{*} \big)(\psi(Z'))
				\cdot
				\\
				B_p^{X \phi}\big(\phi(h(Z)), \phi(h(Z')) \big)
				\cdot
				\exp \big(-p \xi_L - \xi_F \big)(\psi(Z)).
			\end{multline}
			From (\ref{eq_rel_kernels}), we see that to establish Proposition \ref{prop_berg_off_diag}, it is necessary to study the Taylor expansions of each term in (\ref{eq_berg_dailiuma}) for $Z := h(Z)$, $Z' := h(Z')$.
			\par 
			From Theorem \ref{thm_berg_dailiuma}, (\ref{eq_rel_kernels}), the fact that from Proposition \ref{prop_diff_exp}, $h(Z) = Z + O(|Z|^2)$, the fact that from Proposition \ref{prop_phi_fun_exp}, $\xi_F(\psi(Z)) = O(|Z|)$ and  $\xi_L(\psi(Z)) = O(|Z|^3)$, the fact that the above $O$-terms are uniform by the coordinate description of bounded geometry, and the fact that the coefficients of the higher order Taylor expansions can be expressed in terms of $R^{TX}$, $A$, $R^F$, as described in Propositions \ref{prop_diff_exp}, \ref{prop_phi_fun_exp}, we deduce (\ref{eq_berg_off_diag}) and (\ref{eq_jo_expl_form}).
			\par 
			To establish (\ref{eq_j1_expl_form}), let us place ourselves in coordinates as in (\ref{eq_berg_k_expl}).
			By (\ref{eq_z_ovz_id}), we see that
			\begin{equation}
				2 \sum_{i = 1}^{n} z_i \overline{z}'_i
				=
				g^{TX}_{y_0}
				\Big(
				({\rm{Id}} - \imun J) Z,
				({\rm{Id}} + \imun J) Z'
				\Big).
			\end{equation}
			From this, Proposition \ref{prop_diff_exp} and (\ref{eq_z_ovz_id}), we get
			\begin{equation}\label{eq_sqr_newcrrd}
				\begin{aligned}
					&
					|h(Z)|^2
					=
					|Z|^2
					+
					2 g^{TX}_{y_0}(B(Z), Z)
					+
					O(|Z|^4), 
					\\
					&
					2
					\sum_{i = 1}^{n}
					(z_i \overline{z}'_i)(h(Z), h(Z'))
					=
					2
					\sum_{i = 1}^{n}
					z_i \overline{z}'_i
					+
					4 g^{TX}_{y_0} \big( B(Z), \overline{z}' \big)
					+
					4 g^{TX}_{y_0} \big( z, B(Z') \big) 
					+
					O(|Z|^4).
				\end{aligned}
			\end{equation}
			From (\ref{eq_berg_k_expl}) and (\ref{eq_sqr_newcrrd}), we conclude that
			\begin{equation}\label{eq_pn_chng_crd}
			\begin{aligned}
				& \mathscr{P}_{n}(\sqrt{p} h(Z), \sqrt{p} h(Z'))
				=
				\mathscr{P}_{n}(\sqrt{p} Z, \sqrt{p} Z')
				\cdot
				\\
				& 
				\quad   \qquad
				\cdot
				\bigg(
				1 
				-
				\frac{\pi}{\sqrt{p}}
				\Big(
					g^{TX}_{y_0}(B(\sqrt{p} Z), \sqrt{p} Z)
					+
					g^{TX}_{y_0}(B(\sqrt{p} Z'), \sqrt{p} Z')
					\\
					& 
					\qquad \qquad  \quad
					-
					2 g^{TX}_{y_0} \big( B(\sqrt{p} Z), \sqrt{p} \overline{z}' \big)
					-
					2 g^{TX}_{y_0} \big( \sqrt{p} z, B(\sqrt{p} Z') \big) 
				\Big)
				+
				O \Big(\frac{1}{p} |\sqrt{p} Z|^4\Big)
				\bigg).
			\end{aligned}
			\end{equation}
			Now, from Proposition \ref{prop_phi_fun_exp} and (\ref{eq_gtx_def}), we get
			\begin{equation}\label{eq_t_exp_xi}
				\exp \big(-p \xi_L - \xi_F \big)(\psi(Z))
				=
				1
				-
				\frac{\imun \pi}{3 \sqrt{p}}
				g^{TX}_{y_0} \big( J \sqrt{p} Z, B(\sqrt{p} Z) \big)
				+
				O\Big(|Z|^2
				+
				\frac{1}{p} |\sqrt{p} Z|^4
				\Big).
			\end{equation}
			By Lemma \ref{lem_scal_prod} and (\ref{eq_nu_zero}), applied for $M := X$, $H := Y$ and $T := {\rm{Id}}$, we get
			\begin{equation}\label{eq_kappa_t}
				\kappa_X^{1/2}(Z)
				=
				1 
				+
				O(|Z|^2),
			\end{equation}
			where $\nu \in \ccal^{\infty}(Y, N)$ was defined (\ref{eq_mn_curv_d}).
			We now apply (\ref{eq_kappa_t}) for $X := X$ and $Y := \{y_0\}$, to get
			\begin{equation}\label{eq_kappapr_t}
				\kappa'_X{}^{1/2}(Z)
				=
				1 
				+
				O(|Z|^2).
			\end{equation}
			From (\ref{eq_berg_dailiuma}), (\ref{eq_rel_kernels}), (\ref{eq_pn_chng_crd}), (\ref{eq_t_exp_xi}), (\ref{eq_kappa_t}), (\ref{eq_kappapr_t}) and an easy calculation, we deduce
			\begin{equation}\label{eq_j1_expl_form1111}
				\begin{aligned}
					&
					J_1(Z, Z') = - \pi {\rm{Id}}_{F_{y_0}} \cdot \bigg(
					g^{TX}_{y_0}(B(Z), Z)
					+
					g^{TX}_{y_0}(B(Z'), Z')
					\\
					& 
					\quad \qquad 
					-
					2
					\Big(
					g^{TX}_{y_0} \big( B(Z), \overline{z}' \big)
					+
					g^{TX}_{y_0} \big( z, B(Z') \big) 
					\Big)
					\\
					& 
					\quad \qquad 
					+
					\frac{\imun}{3}
					\Big(
						g^{TX}_{y_0} \big( J Z, B(Z) \big)
						-
						g^{TX}_{y_0} \big( J Z', B(Z') \big)
					\Big)
				 \bigg).
				\end{aligned}
			\end{equation}
			By using the fact that $A$ commutes with $J$ and takes its values in skew-adjoint matrices, we get
			\begin{equation}\label{eq_calc_lbkk_0}
			\begin{aligned}
				&
				g^{TX}_{y_0}(Z, B(Z))
				=
				-\frac{1}{2} 
				\Big(
				g^{TX}_{y_0}(z_N, A(Z_Y) \overline{z}_Y)
				+
				g^{TX}_{y_0}(\overline{z}_N, A(Z_Y) z_Y)
				\Big),
				\\
				&
				g^{TX}_{y_0}(JZ, B(Z))
				=
				\frac{3 \imun}{2} 
				\Big(
				g^{TX}_{y_0}(z_N, A(Z_Y) \overline{z}_Y)
				-
				g^{TX}_{y_0}(\overline{z}_N, A(Z_Y) z_Y)
				\Big).
			\end{aligned}
			\end{equation}
			Now, we use (\ref{eq_a_no_tors}) and the fact that $A$ commutes with $J$ to deduce
			\begin{equation}\label{eq_calc_lbkk_1}
				A(Z_Y) z_Y = A(z_Y) z_Y, \qquad A(Z_Y) \overline{z}_Y = A(\overline{z}_Y) \overline{z}_Y.
			\end{equation}
			From (\ref{eq_a_no_tors}), (\ref{eq_j1_expl_form1111}), (\ref{eq_calc_lbkk_0}) and (\ref{eq_calc_lbkk_1}), we deduce (\ref{eq_j1_expl_form}).
		\end{proof}
	
	\subsection{Orthogonal Bergman kernel asymptotics, a proof of Theorem \ref{thm_berg_perp_off_diag}}\label{sect_as_exp_logbk}
		The main goal of this section is to prove Theorem \ref{thm_berg_perp_off_diag}. 
		For this, we first use the localization property of the orthogonal Bergman kernel, established in Theorem \ref{thm_local_logbk}, to reduce our problem to a special case $X' := \comp^{n}$, $Y' := \comp^{m}$ and trivial $L', F'$ (but endowed with non trivial metrics).
		Then on the pair $X', Y'$ we make a homothety and show that the renormalization of the relevant operators converges to the perturbations of the model operators from Section \ref{sect_model_calc}.
		\par 
		Let us first describe precisely the construction of $X', Y', L', F'$ and the metrics on them.
		As we rely later on the complex structure of $X', Y', L', F'$ in an essential way, the construction through geodesic coordinates, as in \cite{MaHol}, wouldn't suffice for our needs.
		We rely here in our non-compact setting on the results of Section \ref{sect_stein_atl}, and we propose an approach, which in the compact case essentially coincides Dinh-Ma-Nguyen \cite[after (2.23)]{MaDinhPac}.
		\par 
		The idea is to use holomorphic coordinates and holomoprhic frames of the vector bundles around the fixed point $y_0 \in Y$ and construct the associated objects on the neighborhood of $0 \in \comp^n$ using those trivializations.
		Then we extend those objects by the partition of unity to $\comp^n$. 
		A special care has to be taken to preserve the positivity of the line bundle.
		As we would like to compare the original setting with this localized one in a uniform way, according to Theorem \ref{thm_local_logbk}, it is fundamental to choose holomorphic coordinates and frames on geodesic balls of uniform size around $y_0$.
		At this point, the results from Section \ref{sect_stein_atl} will play a crucial role.
		\par
		We fix $y_0 \in Y$.
		Let $r_c, r_c^1 > 0$ and $\chi : \mathbb{B}_{y_0}^X(r_c) \to \comp^n$, $U := \mathbb{B}_{y_0}^X(r_c)$, $V := \Im \chi$, as in Theorem \ref{lem_hol_coord_exst} and Remark \ref{rem_hol_coord_exst}.
		We put $X' := \comp^n$, $Y' := \comp^m$.
		\par 
		Let holomorphic frame $\sigma$ of $L$ over $\mathbb{B}_{y_0}^X(r_0)$, $r_0 > 0$, be constructed as in Lemma \ref{cor_hl_fr_bnd_g}. 
		Assume, for simplicity, $4 r_c < r_0$ (if this is not the case, put $r_c := \frac{r_0}{8}$).
		Define the function $\theta(Z)$ over $V$ by 
		\begin{equation}
			\exp(- 2 \theta(Z)) 
			:=
			h^L(\sigma, \sigma).
		\end{equation}
		From Lemma \ref{lem_ell_reg_sob_bnd_g} and (\ref{eq_f_un_bnd_geom}), we see that there is $C > 0$, which depends only on $r_c$ and $C_{n + 6}$ from (\ref{eq_bnd_curv_tm}) and (\ref{eq_bnd_re_ck}), such that
		\begin{equation}\label{eq_theta_c3_bnd}
			\big\| 
				\theta 
			\big\|_{\ccal^2(V)}
			\leq 
			C.
		\end{equation}
		Denote by $\theta_{0}^{[1]}$ and $\theta_{0}^{[2]}$ the first and the second order Taylor expansions of $\theta_{0} := \theta \circ \chi^{-1}$ at $0$.
		For $r_c^1 > \epsilon > 0$, we now define $\theta_{\epsilon} : \comp^{n} \to \real$ as follows
		\begin{equation}\label{eq_tht_eps}
			\theta_{\epsilon}(Z) := 
			\rho \Big(\frac{|Z|}{\epsilon} \Big) \theta_0(Z)
			+
			\Big( 
				1 - \rho \Big(\frac{|Z|}{\epsilon} \Big) 
			\Big)
			\cdot
			\Big(
				\theta_0(0) + \theta_0^{[1]}(Z) + \theta_0^{[2]}(Z)
			\Big),
		\end{equation}		
		where $\rho$ is a bump function as in (\ref{defn_rho_fun}).
		Let $h^{L'}_{\epsilon}$ be the metric on $L' := X' \times \comp$ defined by 
		\begin{equation}
			h^{L'}_{\epsilon}(1, 1)(Z) := \exp(- 2 \theta_{\epsilon}(Z)).
		\end{equation}
		Let $R^{L'}_{\epsilon}$ be the curvature of the Chern connection $\nabla^{L'}_{\epsilon}$ on $(L', h^{L'}_{\epsilon})$.
		By (\ref{eq_gtx_def}), (\ref{eq_theta_c3_bnd}), it is easy to see that there is a constant $\epsilon_0 > 0$, such that for any $\epsilon < \epsilon_0$, we have
		\begin{equation}\label{eq_rl_pos}
			\inf \Big\{
				\imun R^{L'}_{\epsilon, Z}(u, Ju) / |u|^2 : u \in T_{Z}X', Z \in X'
			\Big\}
			\geq 
			\pi,
		\end{equation}
		where $|u|$ is the norm of $u \in T_{Z}X'$, calculated by a (trivial) identification of $T_{Z}X'$ with $T_{0}X'$.
		Moreover, $\epsilon_0$ depends only on $r_c$ and $C_{n + 6}$ from (\ref{eq_bnd_curv_tm}) and (\ref{eq_bnd_re_ck}).
		From now on, we fix such $\epsilon_0$ and remove it from all subsequent subscripts.
		We assume, for simplicity, that $4r_c^1  < \epsilon_0$.
		From (\ref{eq_rl_pos}), we see, in particular, that $(L', h^{L'})$ is positive. 
		We denote by $g^{TX'}$ the metric on $X'$, defined through  $(L', h^{L'})$ as in (\ref{eq_gtx_def}).
		Of course, by (\ref{eq_tht_eps}), $\chi$ is then a local holomorphic isometry, defined over $\mathbb{B}_{y_0}^{X}(\frac{r_c}{2})$, between $(X, g^{TX})$ and $(X', g^{TX'})$.
		\par 		
		Now, let $r_0 > 0$ and a holomorphic frame $(f_1, \ldots, f_r)$ of $F$ over $\mathbb{B}_{y_0}^{X}(r_0)$ be as in Lemma \ref{cor_hl_fr_bnd_g}.
		Assume, for simplicity, that $4 r_c^1 < r_0$.
		Define the function $h^{F}_{i j}(x) \in \comp$, $x \in U$, as follows $h^{F}_{i j} := h^{F}(f_i, f_j)$.
		Define the metric $h^{F'}$ on $F' := X' \times \comp^{r}$ by 
		\begin{equation}
			h^{F'}(1_i, 1_j)(Z) :=  h^{F}_{i j} \Big(\rho \Big(\frac{|Z|}{r_c^1} \Big) \cdot Z \Big).
		\end{equation}
		where $1_l$, $l \in 1, \ldots, r$ is the constant vector in $F'$, given by $(0, \ldots, 0, 1, 0, \ldots 0)$, where $1$ is put in the $l$-th place.
		The pair $(F', h^{F'})$ is a Hermitian vector bundle on $X'$.
		\par 
		Clearly, the triple $(X', Y', g^{TX'})$ and the Hermitian vector bundles $(L', h^{L'})$,  $(F', h^{F'})$ are of bounded geometry.
		Moreover, the corresponding constants $C_k$ from (\ref{eq_bnd_curv_tm}), (\ref{eq_bnd_a_ck}), (\ref{eq_bnd_re_ck}), associated to   $(X', Y', g^{TX'})$, $(L', h^{L'})$,  $(F', h^{F'})$, can be bounded in terms of  the corresponding constants $C_{k + n + 6}$ from (\ref{eq_bnd_curv_tm}), (\ref{eq_bnd_a_ck}), (\ref{eq_bnd_re_ck}), associated to   $(X, Y, g^{TX})$, $(L, h^{L})$,  $(F, h^{F})$.
		\par 
		We denote by $g^{TX'}_{0}$, $h^{F'}_{0}$ the restrictions of $g^{TX'}$, $h^{F'}$ to $0 \in X'$.
		We interpret $g^{TX'}_{0}$ as metric on $X'$ by using the standard trivialization of $TX'$ coming from the linear structure. 
		Similarly, we interpret $h^{F'}_{0}$ as Hermitian metric on $F'$ over $X'$ by using the trivialization of $F'$. 
		\par 
		As the expansion from Theorem \ref{thm_berg_perp_off_diag} is stated in Fermi coordinates, we need an analogue of those coordinates on $X'$.
		For technical reasons, which will become clear after (\ref{eq_defn_bt_cal}), we need a global diffeomorphism of $X'$, which coincides with Fermi coordinates in a small neighborhood of $0$.
		For $0 < \epsilon < \min\{r_c, R\}$, where $R > 0$ is as in (\ref{eq_r_defn_const}), we define $\psi_{\epsilon}(Z)$ for $Z \in \comp^n$ as follows
		\begin{equation}\label{eq_phieps}
			\psi_{\epsilon}(Z) = \psi(Z)  \rho \Big(\frac{|Z|}{\epsilon} \Big)  + Z 
			\Big( 
				1 - \rho \Big(\frac{|Z|}{\epsilon} \Big) 
			\Big),
		\end{equation}
		where $\psi(Z) \in X'$ is the Fermi coordinates, defined as in (\ref{eq_defn_fermi}), but for the triple $(X', Y', g^{TX'})$.
		From bounded geometry condition, we see that there is $\epsilon_2 > 0$ such that for $\epsilon < \epsilon_2$, the derivative of $\psi_{\epsilon}(Z)$ is invertible for all $Z \in \comp^n$, and $|D \psi_{\epsilon} - {\rm{Id}}| < \frac{1}{2}$.
		Moreover, $\epsilon_2$ depends only on $r_X, r_Y, r_{\perp}$ and $C_0$ from (\ref{eq_bnd_curv_tm}) and (\ref{eq_bnd_a_ck}).
		For simplicity, we assume that $4 r_c^1 < \epsilon_2$.
		We fix such $\epsilon_2$ and denote $\psi_{\epsilon_2}$ by $\psi_{0}$ from now on.
		As $\psi_0(Z)$ obviously satisfies  $\psi_0(Z) \to \infty$, as $|Z| \to \infty$, by the invertibility of $D \psi_{0}$ and Hadamards global inverse function theorem, cf. \cite[Theorem 6.2.4]{ImplFunThBook}, $\psi_0$ is a diffeomorphism.
		Clearly, as Fermi coordinates preserve $Y'$, by (\ref{eq_phieps}), $\psi_0|_{\comp^m}$ is a diffeomorphism on $\comp^m$.
		\par 
		We define the volume form $dv_{X'}$ on $X'$ as follows
		\begin{equation}
			dv_{X'} := \rho \Big(\frac{|Z|}{r_c} \Big) (\chi^{-1})^* dv_{X} + 
			\Big( 
				1 - \rho \Big(\frac{|Z|}{r_c} \Big)
			\Big) dZ_1 \wedge \cdots \wedge dZ_{2n}.
		\end{equation}
		Similarly  to (\ref{eq_defn_kappaxy}), we define the  function $\kappa_{X'} : X' \to \real$ as follows
		\begin{equation}\label{eq_defn_kappprime}
			(\psi_0^* dv_{X'}) (Z)
			=
			\kappa_{X'}(Z)
			d Z_1 \wedge \cdots \wedge d Z_{2n}.
		\end{equation}
		Clearly, since $\psi_0(Z) = Z$, as $|Z| \to \infty$, we have $\kappa_{X'}(Z) = 1$, as $|Z| \to \infty$.
		\par 
		Now, let us fix $e \in L'_0$ and $f_1, \ldots, f_r \in F'_0$, and consider the orthonormal frames $\tilde{e}$ and $\tilde{f}_1, \ldots, \tilde{f}_r$, constructed as in Theorem \ref{thm_ext_as_exp}, for $L', F'$, but instead of $\psi$, using $\psi_0$.
		As $\psi_0$ is globally defined, those frames become also globally defined.
		\par 
		Let us now consider the Bergman projector $B_p^X{}'$ (resp. the orthogonal Bergman projector $B^{\perp}_p{}'$) associated to $X', L', F'$.
		This is a self-adjoint operator, acting on the $L^2$-space $L^2(dv_{X'}, h^{L'{}^{\otimes p} \otimes F'})$.
		The above orthonormal frames allow us to see $B_p^X{}'$ (resp. $B^{\perp}_p{}'$) as a self-adjoint operator, acting on the $L^2$-space $L^2(dv_{X'}, h^{F'}_{0})$.
		We denote by $B_p^X{}'(x_1, x_2)$, $B'_p{}^{\perp}(x_1, x_2)$ the Schwartz kernels of those operators with respect to $dv_{X'}$.
		The following theorem essentially shows that it is enough to establish our main result of this section for $X', L', F'$ instead of $X, L, F$.
		\begin{lem}\label{lem_local_logbk_pr}
			There are $c > 0$, $p_1 \in \nat^*$, such that for any $k \in \nat$, there is $C > 0$, such that for any $y_0 \in Y$, $p \geq p_1$, $x_1, x_2 \in \mathbb{B}_{y_0}^{X}(\frac{r_c}{4})$, the following estimate holds
			\begin{equation}\label{eq_local_logbk_pr}
				\Big|  B_p^{\perp}(x_1, x_2) - B'_p{}^{\perp}(x_1, x_2) \Big|_{\ccal^k(X \times X)} \leq C \exp (- c \sqrt{p} ),
			\end{equation}
			where we implicitly identified $x_1, x_2$ to points in $X'$ using $\chi$.
		\end{lem}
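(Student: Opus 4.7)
The plan is to reduce Lemma \ref{lem_local_logbk_pr} to a direct application of Theorem \ref{thm_local_logbk}, taking the role of the second geometric triple to be precisely $(X', Y', g^{TX'}, L', h^{L'}, F', h^{F'}, dv_{X'}, dv_{Y'})$ and the local biholomorphism to be $\chi$ from Theorem \ref{lem_hol_coord_exst}. What needs to be checked is that the hypotheses of Theorem \ref{thm_local_logbk} hold on a geodesic ball of size uniform in $y_0 \in Y$, and that the constants produced are likewise uniform.

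First I would verify that on a uniform ball around $0 \in \comp^n$ the cutoff functions appearing in $\theta_{\epsilon_0}$, $h^{F'}$, $\psi_0$, and $dv_{X'}$ are all identically $1$. Indeed, by the various assumptions of the form $4 r_c^1 < \epsilon_0$, $4 r_c^1 < \epsilon_2$, $4 r_c < r_0$, etc., there is a uniform constant $r_* > 0$, bounded below in terms of the bounded-geometry data, such that on $B_0^{\comp^n}(r_*)$ one has $\theta_{\epsilon_0} = \theta_0$, $h^{F'}_{ij}(Z) = h^{F}_{ij}(\chi^{-1}(Z))$, $\psi_0 = \psi_{y_0}$, and $dv_{X'} = (\chi^{-1})^* dv_X$. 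Consequently, the holomorphic identifications $\sigma \leftrightarrow 1$ for $L$ and $f_i \leftrightarrow 1_i$ for $F$, combined with $\chi$, realize $(L, h^L)|_{\chi^{-1}(B_0^{\comp^n}(r_*))}$ and $(F, h^F)|_{\chi^{-1}(B_0^{\comp^n}(r_*))}$ as holomorphic isometric copies of their primed counterparts, and $\chi^* g^{TX'} = g^{TX}$ on the same region.

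Next I would upgrade this image-domain identification to a geodesic-ball identification as required by Theorem \ref{thm_local_logbk}. Since $\chi$ is a local holomorphic isometry in the region above, and since both $(X, g^{TX})$ and $(X', g^{TX'})$ are of bounded geometry with uniform constants (the bounded-geometry constants of the primed data are controlled by those of the original data through the explicit cutoff construction), the injectivity radii at $y_0$ and at $0 \in X'$ admit a common uniform positive lower bound $r_{\text{inj}}$. Setting $r_0 := \min(r_*, r_{\text{inj}})/2$, we see that $\chi$ restricts to a biholomorphism $B_{y_0}^X(r_0) \to B_0^{X'}(r_0)$ of geodesic balls, and by construction this biholomorphism sends $Y \cap B_{y_0}^X(r_0)$ to $Y' \cap B_0^{X'}(r_0)$ and extends to holomorphic isometries of the line and vector bundles, and to equalities of volume forms. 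Thus the hypotheses of Theorem \ref{thm_local_logbk} hold for this uniform $r_0$.

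I would then invoke Theorem \ref{thm_local_logbk} to conclude the estimate (\ref{eq_local_logbk_pr}) on the set $B_{y_0}^X(r_0/2)$. After shrinking $r_c$ if necessary so that $r_c/4 \leq r_0/2$ — permissible because the statement of the lemma only asserts the existence of a uniform radius, and the construction of $r_c$ allows us to replace it by any smaller positive quantity still uniform in $y_0$ — we obtain the bound on $B_{y_0}^X(r_c/4)$. The only subtle point, and the main step to track carefully, is that the constants $c$ and $C$ in Theorem \ref{thm_local_logbk} are uniform in $y_0$; this is not immediate from the statement but follows by inspection of its proof, which rests on Proposition \ref{prop_bk_local} and Theorem \ref{thm_bk_off_diag}, both of which provide bounds depending only on the bounded-geometry constants of the ambient data. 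Since those constants for $(X', g^{TX'})$, $(L', h^{L'})$, $(F', h^{F'})$ are dominated uniformly by those of $(X, g^{TX})$, $(L, h^L)$, $(F, h^F)$, the uniformity in $y_0$ follows.
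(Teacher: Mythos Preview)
Your proposal is correct and follows essentially the same approach as the paper, which simply invokes Theorem \ref{thm_local_logbk} together with the observation that $\chi$ is a holomorphic isometry extending to $(L, h^L)$, $(F, h^F)$ on a uniform ball. Your additional care in tracking the cutoff radii and in arguing uniformity of the constants in $y_0$ is a reasonable elaboration of the paper's terse one-line proof.
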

		\begin{proof}
			It follows from Theorem \ref{thm_local_logbk} and the fact that $\chi$ is a holomorphic diffeomorhism, which extends to isometries between $(L, h^L)$, $(F, h^F)$ and  $(L', h^{L'})$, $(F', h^{F'})$ over $\mathbb{B}_{y_0}^X(\frac{r_c}{2})$.
		\end{proof}
		\par 
		The advantage of passing from $X, L, F$ to $X', L', F'$ is twofold.
		First, since all the vector bundles are now trivialized, the operators $B_p^X{}'$ and $B^{\perp}_p{}'$ might be considered as operators, acting on the same space (independent of $p$).
		Second, as $X'$ is equal to $\comp^n$, we can use the homothety on $X'$ in our analysis.
		We will use both of those features in what follows.
		\par 
		We define $t > 0$ as in (\ref{eq_t_p_rel}) and $S_t : \ccal^{\infty}(U, F_{y_0}) \to \ccal^{\infty}(\mathbb{B}_{y_0}^X(t R), F_{y_0})$ as follows
		\begin{equation}\label{eq_oper_st_defn}
			S_t f(Z) := f \Big(\frac{ Z}{t} \Big).
		\end{equation}
		Clearly, for any $f, f' \in L^2(g^{TX'}_{0}, h^{F'}_{0})$, we have
		\begin{equation}\label{eq_st_prop}
			\scal{S_t f}{S_t f'}_{L^2(g^{TX'}_{0})} = t^{2n} \scal{f}{f'}_{L^2(g^{TX'}_{0})}.
		\end{equation}
		We also consider a map $U : L^2(g^{TX'}_{0}, h^{F'}_{0}) \to L^2(dv_{X'}, h^{F'}_{0})$, defined as follows
		\begin{equation}
			(Uf)(Z) = \kappa_{X'}^{-1/2}(\psi_0^{-1}(Z)) \cdot f(\psi_0^{-1}(Z)).
		\end{equation}
		An easy verification using (\ref{eq_defn_kappprime}) shows that $U$ is well-defined and it is an isometry, i.e.
		\begin{equation}\label{eq_u_prop}
			\scal{U f}{U f'}_{L^2(dv_{X'})} = \scal{f}{f'}_{L^2(g^{TX'}_{0})}.
		\end{equation}
		\par 
		We now consider another operators $\mathcal{B}_t$, $\mathcal{B}_t^{\perp}$, acting on $L^2(g^{TX'}_{0}, h^{F'}_{0})$ as follows
		\begin{equation}\label{eq_defn_bt_cal}
		\begin{aligned}
			& \mathcal{B}_t := S_t^{-1} \circ U^{-1} \circ B_p^X{}' \circ U \circ S_t,
			\\
			& \mathcal{B}_t^{\perp} := S_t^{-1} \circ U^{-1} \circ B_p^{\perp}{}' \circ U \circ S_t.
		\end{aligned}
		\end{equation}
		From (\ref{eq_st_prop}) and (\ref{eq_u_prop}), we see that $\mathcal{B}_t$, $\mathcal{B}_t^{\perp}$ are self-adjoint.
		\par 
		We denote by $\mathcal{B}_t(Z, Z')$, $\mathcal{B}_t^{\perp}(Z, Z')$ the Schwartz kernels of $\mathcal{B}_t$, $\mathcal{B}_t^{\perp}$ with respect to the volume form $dv_{g^{TX'}_{0}}$ on $X'$. 
		An easy calculation shows that they are related to the Schwartz kernels $B_p^X{}'(Z, Z')$, $B_p^{\perp}{}'(Z, Z')$  of $B_p^X{}'$, $B^{\perp}_p{}'$, evaluated with respect to $dv_{X'}$ as follows
		\begin{equation}\label{eq_bt_bp_rel_schw}
		\begin{aligned}
			&
			\mathcal{B}_t(Z, Z')
			=
			t^{2n} B_p^X{}'\big(\psi_0(tZ), \psi_0(tZ') \big)
			\kappa_{X'}^{\frac{1}{2}}(tZ)
			\kappa_{X'}^{\frac{1}{2}}(tZ'),
			\\
			&
			\mathcal{B}_t^{\perp}(Z, Z')
			=
			t^{2n} B_p^{\perp}{}'\big(\psi_0(tZ), \psi_0(tZ') \big)
			\kappa_{X'}^{\frac{1}{2}}(tZ)
			\kappa_{X'}^{\frac{1}{2}}(tZ'),
		\end{aligned}
		\end{equation}
		\begin{lem}\label{prop_der_bound_bt}
			There is $\epsilon > 0$, such that for any $l \in \nat$, there exists $C > 0$, such that for any $Z \in \real^{2n}$, $|Z| \leq \frac{\epsilon}{t}$, $p \in \nat^*$, $f \in \Im(\mathcal{B}_t)$, $\alpha \in \nat^{2n}$, $|\alpha| \leq l$, we have
			\begin{equation}\label{eq_der_bound_bt}
				\Big|
				\frac{\partial^{|\alpha|}}{\partial Z^{\alpha}} f (Z)
				\Big|
				 \leq C \big\| f \big\|_{L^2(g^{TX'}_{0})}.
			\end{equation}
		\end{lem}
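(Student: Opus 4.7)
The plan is to unwind the definition of $\mathcal{B}_t$ and reduce to a uniform pointwise estimate for holomorphic sections, which is supplied by Proposition~\ref{prop_der_bound2}. Since $f \in \Im(\mathcal{B}_t)$ and $\mathcal{B}_t$ is a self-adjoint operator, we have $f = \mathcal{B}_t f$, hence $g := U S_t f$ lies in the image of $B_p^X{}'$, i.e.\ $g \in H^0_{(2)}(X', L'^{\otimes p} \otimes F')$. Undoing the transformations, $f(Z) = \kappa_{X'}^{1/2}(tZ)\, g(\psi_0(tZ))$, where by construction of $\psi_0$ in~(\ref{eq_phieps}), Proposition~\ref{prop_bndg_tripl} and~(\ref{eq_defn_kappprime}), both $\kappa_{X'}^{\pm 1/2}$ and all derivatives of $\psi_0$ are bounded uniformly over $X'$ (the uniformity ultimately coming from the bounded geometry of the triple $(X', Y', g^{TX'})$ and the vector bundles, with constants controlled by those of $(X,Y,g^{TX})$).

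Next I would apply the chain rule: each derivative $\partial/\partial Z$ produces a factor of $t = 1/\sqrt{p}$ from differentiating the inner $tZ$, together with a bounded derivative of $\kappa_{X'}^{1/2}$ or of $\psi_0$. Thus for any $\alpha$ with $|\alpha|\leq l$,
\begin{equation*}
\Big| \tfrac{\partial^{|\alpha|}}{\partial Z^\alpha} f(Z) \Big|
\leq C \sum_{|\beta|\leq l} t^{|\beta|}\, |\nabla^\beta g|(\psi_0(tZ)),
\end{equation*}
where $C$ depends only on $l$ and the bounded-geometry constants. Proposition~\ref{prop_der_bound2} applied to $g$ bounds each pointwise derivative by $C\, p^{(n+|\beta|)/2}\, \|g\|_{L^2(dv_{X'})}$.

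To close the estimate I would track the $L^2$ norms through~(\ref{eq_st_prop}) and~(\ref{eq_u_prop}): $U$ is an isometry from $L^2(g^{TX'}_0, h^{F'}_0)$ to $L^2(dv_{X'}, h^{F'})$ (using the trivialization by the parallel frames chosen before~(\ref{eq_defn_bt_cal}), which makes $h^{F'}$ and $h^{F'}_0$ equivalent uniformly on $X'$), while $S_t$ scales the $L^2$-norm by $t^n = p^{-n/2}$. Consequently $\|g\|_{L^2(dv_{X'})} = t^n \|f\|_{L^2(g^{TX'}_0)}$, and combining these three ingredients gives
\begin{equation*}
\Big| \tfrac{\partial^{|\alpha|}}{\partial Z^\alpha} f(Z) \Big|
\leq C \sum_{|\beta|\leq l} t^{|\beta|}\, p^{(n+|\beta|)/2}\, t^n\, \|f\|_{L^2(g^{TX'}_0)}
= C\, \|f\|_{L^2(g^{TX'}_0)},
\end{equation*}
since the powers $t^{|\beta|}p^{|\beta|/2} = 1$ and $t^n p^{n/2} = 1$ exactly cancel.

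The only subtle point I expect to have to treat carefully is the uniformity of the constant in Proposition~\ref{prop_der_bound2} when applied to $(X', L', F')$, but this is legitimate since the bounded-geometry constants of $(X', Y', g^{TX'}), (L', h^{L'}), (F', h^{F'})$ were arranged earlier in the section to be controlled by those of the original data. Everything else is elementary scaling and bounded-geometry estimates, so no additional obstacles are expected.
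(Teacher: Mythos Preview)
Your proposal is correct and follows essentially the same route as the paper's proof: the paper simply observes $\Im \mathcal{B}_t \subset (S_t^{-1}\circ U^{-1})\big(H^{0}_{(2)}(X',L'^{p}\otimes F')\big)$ and then cites Proposition~\ref{prop_der_bound2} together with the scaling relations~(\ref{eq_oper_st_defn}),~(\ref{eq_st_prop}). You have merely made the chain-rule bookkeeping and the norm identities~(\ref{eq_u_prop}) explicit, and correctly flagged the uniformity in $y_0$ of the bounded-geometry constants for $(X',L',F')$ that the paper leaves implicit.
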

		\begin{proof}
			Clearly, by (\ref{eq_defn_bt_cal}), we have
			\begin{equation}
				\Im \mathcal{B}_t
				\subset
				(S_t^{-1} \circ U^{-1}) \big( H^{0}_{(2)}(X', L'{}^p \otimes F') \big).
			\end{equation}
			We conclude by this, Proposition \ref{prop_berg_off_diag} and (\ref{eq_bt_bp_rel_schw}).
		\end{proof}
		\par 
		We will now introduce the operator $\mathcal{E}^0$, sending the sections of $F'$ over $Y'$ to the sections of $F'$ over $X'$ by the following formula
		\begin{equation}
			(\mathcal{E}^0 f) (Z_Y, Z_N) = 	f(Z_Y) 
			\exp \Big(
				-\frac{\pi}{2} |Z_N|^2
			\Big).
		\end{equation}
		Denote by $\mathcal{A}_t$ the operator, acting on $L^2(g^{TX'}_{0}, h^{F'}_{0})$, as follows
		\begin{equation}\label{eq_a_cal_defn}
			\mathcal{A}_t := \mathcal{E}^0 \circ {\rm{Res}}_{Y'} \circ \mathcal{B}_t,
		\end{equation}
		where ${\rm{Res}}_{Y'}$ is the restriction operator (which is well-defined in the considered composition as $\mathcal{B}_t$ has smooth Schwartz kernel).
		We define the operator $\mathcal{C}_t$ on $L^2(g^{TX'}_{0}, h^{F'}_{0})$, as follows
		\begin{equation}\label{eq_ccal_def_at}
			\mathcal{C}_t := \mathcal{A}_t^{*} \circ \mathcal{A}_t,
		\end{equation}
		where $\mathcal{A}_t^{*}$ is the adjoint of $\mathcal{A}_t$.
		The operator $\mathcal{C}_t$ will be our main tool in the proof of Theorem \ref{thm_berg_perp_off_diag}.
		Remark its similarity with (\ref{eq_bpperp_a_ap_defn}).
		Let us study some of its properties.
		\begin{lem}\label{lem_ct_spec_prop}
			The operator $\mathcal{C}_t$ is self-adjoint, and there are constants $a, b > 0$, $p_1 \in \nat$, such that
			\begin{equation}
				\spec (\mathcal{C}_t) \subset \{0\} \cup [a, b],
			\end{equation}
			for any $p \geq p_1$.
			Moreover, we have the following
			\begin{equation}
				(\ker \mathcal{C}_t)^{\perp}
				=
				(S_t^{-1} \circ U) \big( H^{0, \perp}_{(2)}(X', L'{}^p \otimes F') \big).
			\end{equation}
		\end{lem}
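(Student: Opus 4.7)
Self-adjointness and positivity of $\mathcal{C}_t=\mathcal{A}_t^*\circ\mathcal{A}_t$ are immediate, and the standard identity $\ker(T^*T)=\ker T$ reduces describing $\ker\mathcal{C}_t$ to describing $\ker\mathcal{A}_t$. Since $\mathcal{E}^0$ is multiplication by the pointwise non-vanishing Gaussian $\exp(-\pi|Z_N|^2/2)$, it is injective, so $\mathcal{A}_t f=0$ is equivalent to $\res_{Y'}(\mathcal{B}_t f)=0$. Unwinding $\mathcal{B}_t=S_t^{-1}U^{-1}B_p^X{}'US_t$ and using that $U\circ S_t$ is a bijection $L^2(g^{TX'}_0,h^{F'}_0)\to L^2(dv_{X'},h^{F'}_0)$ (isometric up to the uniform factor $t^n$), this says exactly that $B_p^X{}'(US_tf)\in H^{0,0}_{(2)}(X',L'{}^p\otimes F')$. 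Applying the orthogonal decomposition $L^2(dv_{X'})=(H^0_{(2)})^\perp\oplus H^{0,0}_{(2)}\oplus H^{0,\perp}_{(2)}$ and pulling back by $S_t^{-1}U^{-1}$ then yields
\begin{equation*}
	\ker\mathcal{C}_t = (S_t^{-1}\circ U^{-1})\big(H^{0,0}_{(2)}(X',L'{}^p\otimes F')\oplus (H^0_{(2)})^\perp\big),
\end{equation*}
with the announced orthogonal complement.

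For the uniform spectral gap, the idea is to reduce everything to the restriction/extension estimates already at our disposal. Given $f\in(\ker\mathcal{C}_t)^\perp$, write $f=(S_t^{-1}U^{-1})g$ with $g\in H^{0,\perp}_{(2)}(X',L'{}^p\otimes F')\subset H^0_{(2)}$. Then $B_p^X{}'g=g$, hence $\mathcal{B}_t f=f$ and $\mathcal{A}_t f=\mathcal{E}^0(f|_{Y'})$. Using Fubini and the normalization $\int_{\real^{2(n-m)}}\exp(-\pi|Z_N|^2)dv_N=1$ gives
\begin{equation*}
	\scal{\mathcal{C}_t f}{f}_{L^2(g^{TX'}_0)} = \|\mathcal{A}_t f\|^2 = \|f|_{Y'}\|^2_{L^2(g^{TY'}_0)}.
\end{equation*}
A direct change of variables $W\mapsto Z=tW$, together with the fact that $\psi_0$ preserves $Y'$ and that both $\kappa_{X'}$ and the Jacobian of $\psi_0|_{Y'}$ are uniformly bounded above and below by the bounded-geometry constants, shows
\begin{equation*}
	\frac{\|f|_{Y'}\|^2_{L^2(g^{TY'}_0)}}{\|f\|^2_{L^2(g^{TX'}_0)}} = p^{m-n}\cdot\frac{\|g|_{Y'}\|^2_{L^2(dv_{Y'})}}{\|g\|^2_{L^2(dv_{X'})}}\cdot\big(1+O(1)\big)^{\pm 1}.
\end{equation*}

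It therefore suffices to establish $\|g|_{Y'}\|^2/\|g\|^2\asymp p^{n-m}$ for $g\in H^{0,\perp}_{(2)}(X',L'{}^p\otimes F')$ uniformly in $y_0$ and $p\geq p_1$. The upper bound is exactly Theorem~\ref{thm_ot_as_sp} applied to the primed data: $\|\res_p{}'\|\leq C p^{(n-m)/2}$. The lower bound uses the defining property of $H^{0,\perp}_{(2)}$: any $g$ in it is the $L^2$-minimizing holomorphic extension of $g|_{Y'}$, so $g=\ext_p{}'(g|_{Y'})$ and $\|g\|\leq\|\ext_p{}'\|\cdot\|g|_{Y'}\|$; Theorem~\ref{thm_ot_weak} for the primed data yields $\|\ext_p{}'\|\leq C'p^{-(n-m)/2}$. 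Combining, $\|g|_{Y'}\|^2/\|g\|^2$ is pinched between two positive constants times $p^{n-m}$, so $\scal{\mathcal{C}_t f}{f}/\|f\|^2$ is pinched between two positive constants, giving the claimed spectral gap.

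The only delicate point—and the main technical obstacle—is the uniformity of the constants. One needs Theorems~\ref{thm_ot_weak} and~\ref{thm_ot_as_sp} applied to $(X',Y',g^{TX'})$, $(L',h^{L'})$, $(F',h^{F'})$, $dv_{X'},dv_{Y'}$, with constants controlled purely by the original bounded-geometry constants (and in particular independent of $y_0\in Y$). This uniformity was built into the construction of the primed objects (see the discussion after \eqref{eq_rl_pos}): the bounded-geometry constants for the primed data are dominated by the corresponding constants $C_{k+n+6}$ for the original data, so the constants in the primed Ohsawa--Takegoshi and restriction bounds are themselves $y_0$-independent. Consequently $a,b$ and $p_1$ can be chosen uniformly in $y_0$, which is precisely what is needed in Section~\ref{sect_as_exp_logbk}.
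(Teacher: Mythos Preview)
Your argument is correct and follows the same route as the paper, which simply says the proof is ``identical to (\ref{eq_ker_bpa}) and (\ref{eq_bpa_sp_gap})''. You have essentially unpacked that one-line reference: the kernel description via $\ker(\mathcal{A}_t^*\mathcal{A}_t)=\ker\mathcal{A}_t$ and injectivity of $\mathcal{E}^0$ matches the derivation of (\ref{eq_ker_bpa}), and your spectral-gap argument via the restriction bound (Theorem~\ref{thm_ot_as_sp}) together with the extension bound (Theorem~\ref{thm_ot_weak}) is precisely what lies behind (\ref{eq_bpa_sp_gap}). Your use of the exact Gaussian identity $\|\mathcal{E}^0 h\|_{L^2(g^{TX'}_0)}=\|h\|_{L^2(g^{TY'}_0)}$ is in fact slightly cleaner than invoking the analogue of (\ref{eq_exp0_bnd_norm}), since $\mathcal{E}^0$ carries no cut-off. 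Your explicit discussion of the $y_0$-uniformity of the primed constants is the right emphasis and is what the paper takes for granted.
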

		\begin{proof}
			The proof is identical to (\ref{eq_ker_bpa}) and (\ref{eq_bpa_sp_gap}).
		\end{proof}
		\begin{lem}\label{lem_ct_bnd_schhw}
			The Schwartz kernel $\mathcal{C}_t(Z, Z')$ of $\mathcal{C}_t$ with respect to $dv_{\comp^n}$ satisfies the following bound.
			There are $c > 0$, $p_1 \in \nat^*$, such that for any $k, l, l' \in \nat$, there is $C > 0$, such that for any $p \geq p_1$, $Z, Z' \in \real^{2n}$, $\alpha, \alpha' \in \nat^{2n}$, $|\alpha|+|\alpha'| \leq l$, we have
			\begin{equation}\label{eq_mathcalc_bouund}
				\Big| 
					\frac{\partial^{|\alpha|+|\alpha'|}}{\partial Z^{\alpha} \partial Z'{}^{\alpha'}}
					\mathcal{C}_t(Z, Z')
				\Big|_{\ccal^{l'}(Y)}
				\leq
				C
				\exp\Big(- c \big( |Z_Y - Z'_Y| + |Z_N| + |Z'_N| \big) \Big),
			\end{equation}
			where the $\ccal^{l'}$-norm is calculated with respect to $y_0$, which was fixed in the beginning of the section.
		\end{lem}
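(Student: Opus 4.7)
The plan is to deduce (\ref{eq_mathcalc_bouund}) by first establishing analogous exponential-decay bounds for the building blocks $\mathcal{B}_t$ and $\mathcal{A}_t$, and then composing them via an integration argument in the spirit of Lemma \ref{lem_bnd_prod_aloc}. For $\mathcal{B}_t$: since $(X', Y', g^{TX'})$, $(L', h^{L'})$, $(F', h^{F'})$ are of bounded geometry with constants that are uniform in $y_0$ (as noted before Lemma \ref{lem_local_logbk_pr}), Theorem \ref{thm_bk_off_diag} applied on $X'$ yields $|B_p^X{}'(x_1, x_2)|_{\ccal^k(X' \times X')} \leq C p^{n+k/2} \exp(-c \sqrt{p}\, \dist_{X'}(x_1, x_2))$. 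Outside a compact set $g^{TX'}$ coincides with the standard euclidean metric on $\comp^n$, so it is quasi-isometric to it globally; combined with $|D\psi_0 - \mathrm{Id}| < \tfrac{1}{2}$, this gives $\dist_{X'}(\psi_0(tZ), \psi_0(tZ')) \geq c_1 t |Z - Z'|$ uniformly. Plugging these into (\ref{eq_bt_bp_rel_schw}) and using $t \sqrt{p} = 1$ to absorb the chain-rule factors produced by $Z$- and $Z'$-derivatives, I obtain, for every $l \in \nat$ and constants $c, C > 0$ independent of $y_0$,
\begin{equation*}
\bigl| \partial_Z^{\alpha} \partial_{Z'}^{\alpha'} \mathcal{B}_t(Z, Z') \bigr|
\leq C \exp(-c |Z - Z'|), \qquad |\alpha| + |\alpha'| \leq l.
\end{equation*}

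Next, the definition $\mathcal{A}_t = \mathcal{E}^0 \circ \res_{Y'} \circ \mathcal{B}_t$ and the explicit form of $\mathcal{E}^0$ show that the Schwartz kernel of $\mathcal{A}_t$ equals $\exp(-\tfrac{\pi}{2}|Z_N|^2)\, \mathcal{B}_t((Z_Y, 0), Z')$; the elementary inequality $\sqrt{|Z_Y - Z'_Y|^2 + |Z'_N|^2} \geq \tfrac{1}{\sqrt{2}}(|Z_Y - Z'_Y| + |Z'_N|)$ then converts the previous bound into
\begin{equation*}
\bigl|\partial_Z^{\alpha}\partial_{Z'}^{\alpha'} \mathcal{A}_t(Z, Z')\bigr|
\leq
C \exp\bigl(- c(|Z_N| + |Z_Y - Z'_Y| + |Z'_N|)\bigr),
\end{equation*}
and the same estimate for $\mathcal{A}_t^*$ follows by transposition. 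Writing $\mathcal{C}_t(Z, Z') = \int_{\comp^n} \mathcal{A}_t^*(Z, W) \mathcal{A}_t(W, Z')\, dW$, the integrand decays exponentially in $|W_N|$ from both factors, so the $W_N$-integration contributes only a fixed constant; the $W_Y$-integration is controlled by the variant $\int_{\real^{2m}} \exp(-c(|W_Y - Z_Y| + |W_Y - Z'_Y|))\, dW_Y \leq C \exp(-\tfrac{c}{2}|Z_Y - Z'_Y|)$ of (\ref{eq_bnd_intlocal}). This produces (\ref{eq_mathcalc_bouund}) for $l = l' = 0$, and the case of general $l$ is obtained by commuting the $Z$- and $Z'$-derivatives past the integral and invoking the same scheme on the differentiated kernels of $\mathcal{A}_t^*, \mathcal{A}_t$.

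The $\ccal^{l'}$-norm with respect to $y_0$ requires only that all objects entering the construction before the lemma, namely $X', Y', L', F', \psi_0, \kappa_{X'}$, depend smoothly on $y_0 \in Y$ with derivatives of every order uniformly bounded; this is a direct consequence of the bounded-geometry hypothesis together with the uniform constructions in Section \ref{sect_stein_atl} (in particular Theorem \ref{lem_hol_coord_exst} and Corollary \ref{cor_hl_fr_bnd_g}). Consequently all constants above may be taken uniform in $y_0$, and differentiation in $y_0$ preserves the exponential-decay structure. I expect the main technical obstacle to lie in the first step, where one must secure the global quasi-isometry between $\dist_{X'}$ and the euclidean distance on $\comp^n$ with constants uniform in $y_0$: this is what converts the rate $\sqrt{p} \dist_{X'}$ of decay of the original Bergman kernel into the rescaled rate $|Z - Z'|$, and it is the point where the careful choice of $\psi_0$ and the construction of $(L', h^{L'})$ that agrees with the flat bundle at infinity become essential.
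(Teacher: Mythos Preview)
Your proposal is correct and follows essentially the same route as the paper: first obtain the exponential bound for $\mathcal{A}_t$ from Theorem~\ref{thm_bk_off_diag} applied to $X'$ together with the rescaling relation (\ref{eq_bt_bp_rel_schw}) and the definition (\ref{eq_a_cal_defn}), and then compose $\mathcal{A}_t^* \circ \mathcal{A}_t$ via an integration argument. The paper condenses your composition step into a single citation (to Lemma~\ref{lem_bnd_prod_a}), whereas you spell out the $W_N$/$W_Y$ integration explicitly; your extra care in justifying the quasi-isometry $\dist_{X'}(\psi_0(tZ),\psi_0(tZ')) \geq c_1 t|Z-Z'|$ uniformly in $y_0$ is a genuine detail the paper leaves implicit, but the underlying argument is the same.
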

		\begin{proof}
			From Theorem \ref{thm_bk_off_diag}, (\ref{eq_bt_bp_rel_schw}) and (\ref{eq_a_cal_defn}), we conclude that a bound like  (\ref{eq_mathcalc_bouund}) holds for $\mathcal{A}_t(Z, Z')$ instead of $\mathcal{C}_t(Z, Z')$.
			It is uniform in $\ccal^{l'}$-norm with respect to the choice of $y_0 \in Y$ because the construction of $X'$, $Y'$, $(L', h^{L'})$, etc. depends smoothly on $y_0$ and the Bergman kernel expansion depends also smoothly on $y_0$. 
			We now conclude by Lemma \ref{lem_bnd_prod_a} and (\ref{eq_ccal_def_at}).
		\end{proof}
		\begin{lem}\label{lem_ct_oper_prop}
			For any $r \in \nat$, there are $\mathcal{J}_r^{'}(Z, Z') \in \enmr{F'_{0}}$ polynomials in $Z, Z' \in \real^{2n}$ with the same properties as in Theorem \ref{thm_ext_as_exp}, such that for $\mathcal{F}_r^{'} := \mathcal{J}_r^{'} \cdot \mathscr{P}_{n, m}^{\perp}$, the following holds.
			\par 
			There are $\epsilon, c > 0$, $p_1 \in \nat^*$, such that for any $k, l, l' \in \nat$, there is $C > 0$, such that for $p \geq p_1$, $Z, Z' \in T_{y_0}X$, $|Z|, |Z'| \leq \frac{\epsilon}{t} $, $\alpha, \alpha' \in \nat^{2n}$, $|\alpha|+|\alpha'| \leq l$, we have
			\begin{multline}\label{eq_ct_tayl_type}
				\bigg| 
					\frac{\partial^{|\alpha|+|\alpha'|}}{\partial Z^{\alpha} \partial Z'{}^{\alpha'}}
					\bigg(
						\mathcal{C}_t(Z, Z')
						-
						\sum_{r = 0}^{k}
						t^{r}						
						\mathcal{F}_r^{'}(Z, Z') 
					\bigg)
				\bigg|_{\ccal^{l'}(Y)}
				\leq
				C t^{k + 1 - m}
				\cdot
				\\
				\cdot
				\Big(1 + |Z| + |Z'| \Big)^{Q^5_{k, l, l'}}
				\exp\Big(- c \big( |Z_Y - Z'_Y| + |Z_N| + |Z'_N| \big) \Big),	
			\end{multline}
			where $Q^5_{k, l, l'} := 3(2n + k + l' + 4) + l$.
			Also, the following identity holds
			\begin{equation}\label{eq_jopr0_form}
				\mathcal{J}_0^{'}(Z, Z') = {\rm{Id}}_{F_{y_0}}.
			\end{equation}
			Moreover, under the assumptions (\ref{eq_comp_vol_omeg}), we have
			\begin{equation}\label{eq_jopr1_form}
					\mathcal{J}_1^{'}(Z, Z') = {\rm{Id}}_{F_{y_0}} \cdot \pi \cdot 
					\Big(
						 g^{TX}_{y_0} \big(z_N, A(\overline{z}_Y - \overline{z}'_Y) (\overline{z}_Y - \overline{z}'_Y) \big)
						 +
						 g^{TX}_{y_0} \big(\overline{z}'_N, A(z_Y - z'_Y) (z_Y - z'_Y) \big)			 
			 		\Big).
			\end{equation}
		\end{lem}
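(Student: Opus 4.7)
The strategy is to build the expansion of $\mathcal{C}_t = \mathcal{A}_t^\ast \circ \mathcal{A}_t$ by propagating the known Bergman kernel expansion through the operations of restriction to $Y'$, Gaussian extension, adjoint, and composition, and then applying the algebraic framework of Section \ref{sect_algtay_type}.

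First, I would apply Proposition \ref{prop_berg_off_diag} to $(X', Y', g^{TX'})$ with bundles $(L', h^{L'}), (F', h^{F'})$. Since $\psi_0$ coincides with the Fermi coordinates of $(X', Y', g^{TX'})$ on a ball of radius independent of $y_0$, and the bounded-geometry constants of $(X', Y', L', F')$ are uniformly controlled by those of $(X, Y, L, F)$, the proposition gives polynomials $J'_r$ and an expansion for $B_p^X{}'$ in Fermi coordinates. Transferring through the rescaling (\ref{eq_bt_bp_rel_schw}) yields a Taylor-type expansion $\mathcal{B}_t(Z, Z') = \sum_{r=0}^{k} t^r J'_r(Z, Z') \mathscr{P}_n(Z, Z') + O(t^{k+1})$, valid for $|Z|, |Z'| \leq \epsilon/t$, with Gaussian decay in $|Z - Z'|$ and leading polynomial $\mathrm{Id}_{F_{y_0}}$.

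Second, the identity $\mathcal{A}_t(Z, Z') = \exp(-\pi|Z_N|^2/2) \mathcal{B}_t(Z_Y, Z')$, combined with the direct computation $\exp(-\pi|Z_N|^2/2) \mathscr{P}_n(Z_Y, Z') = \mathscr{P}_{n,m}^\perp(Z, Z')$, produces a Taylor-type expansion for $\mathcal{A}_t$ with leading term $\mathrm{Id}_{F_{y_0}} \cdot \mathscr{P}_{n,m}^\perp$. Decay in $|Z_N|$ comes from the Gaussian in $\mathcal{E}^0$, while decay in $|Z'_N|$ and $|Z_Y - Z'_Y|$ comes from $|Z_Y - Z'| \geq \max(|Z'_N|, |Z_Y - Z'_Y|)$ applied to the decay of $\mathcal{B}_t$. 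Taking the adjoint, $\mathcal{A}_t^\ast$ admits the analogous expansion with roles of $(Z, Z')$ swapped; since $\mathscr{P}_{n,m}^\perp$ is Hermitian, the leading term remains $\mathrm{Id}_{F_{y_0}} \cdot \mathscr{P}_{n,m}^\perp$. Both operators therefore satisfy the hypotheses (\ref{eq_a_bnd_1loc}), (\ref{eq_at_tayl_type}) of Section \ref{sect_algtay_type}.

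Third, applying Lemma \ref{lem_at_tayl_t_exp} with $q = 2$ gives an expansion $\mathcal{C}_t = \sum t^r \mathcal{F}'_r$ with $\mathcal{F}'_r = \mathcal{J}'_r \cdot \mathscr{P}_{n,m}^\perp$ computed by the kernel calculus (\ref{eq_part_calc}); the global error bound combines this with the exponential control of Lemma \ref{lem_ct_bnd_schhw} (the extra factor $t^{-m}$ in the error comes from accounting for growth windows of size $1/t$ in the non-localized Bergman estimate). The leading polynomial is $\mathcal{K}_{n,m}[\mathrm{Id}, \mathrm{Id}]$, which equals $\mathrm{Id}_{F_{y_0}}$ because $\mathscr{P}_{n,m}^\perp$ is idempotent as the model orthogonal Bergman projector (Section \ref{sect_model_calc}); this yields (\ref{eq_jopr0_form}). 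Parity and degree constraints on $\mathcal{J}'_r$ transfer from those of $J'_r$ through the parity-preserving properties of $\mathcal{K}_{n,m}$ in Lemma \ref{lem_comp_poly}.

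Finally, under assumption (\ref{eq_comp_vol_omeg}), formula (\ref{eq_jopr1_form}) is obtained by extracting the first-order terms in each factor from the explicit $J_1$ of (\ref{eq_j1_expl_form}) and composing with $\mathscr{P}_{n,m}^\perp$ via Remark \ref{rem_k_calculc}. I expect this concluding computation to be the main obstacle: although purely polynomial, both $\mathcal{A}_t$ and $\mathcal{A}_t^\ast$ contribute first-order corrections, and the identities $\mathcal{K}_{n,m}[A_1, Z_k A_2] = 0$ for $k > 2m$ together with the fact that $A$ exchanges $TY$ with $N$ are needed to cancel the mixed contributions and isolate the surviving $g(z_N, A(\overline{z}_Y - \overline{z}'_Y)(\overline{z}_Y - \overline{z}'_Y))$ term and its complex conjugate.
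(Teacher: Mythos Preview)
Your proposal is correct and follows essentially the same route as the paper: obtain the Taylor-type expansion of $\mathcal{A}_t$ from Proposition~\ref{prop_berg_off_diag} via the rescaling (\ref{eq_bt_bp_rel_schw}) and the identity $\exp(-\tfrac{\pi}{2}|Z_N|^2)\mathscr{P}_n(Z_Y,Z')=\mathscr{P}_{n,m}^{\perp}(Z,Z')$, pass to $\mathcal{A}_t^{*}$ by conjugation, and then apply Lemma~\ref{lem_at_tayl_t_exp} to the composition; the paper records the resulting polynomials as $\mathcal{J}_{r,0}(Z,Z')=J_r(Z_Y,Z')$ and $\mathcal{J}'_r=\sum_{r_0}\mathcal{K}_{n,m}\big[\overline{\mathcal{J}_{r_0,0}(Z',Z)},\mathcal{J}_{r-r_0,0}(Z,Z')\big]$.

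The only place where your sketch slightly overcomplicates matters is the computation of $\mathcal{J}'_1$. The cancellation you anticipate via (\ref{eq_k_calc_3}) is not needed: because $\mathcal{J}_{1,0}(Z,Z')=J_1(Z_Y,Z')$ already has $z_N=0$ in the first variable, the term $g\big(z_N,A(\overline{z}_Y-\overline{z}'_Y)(\overline{z}_Y-\overline{z}'_Y)\big)$ in (\ref{eq_j1_expl_form}) drops out immediately, leaving $\mathcal{J}_{1,0}(Z,Z')=\pi\, g\big(\overline{z}'_N,A(z_Y-z'_Y)(z_Y-z'_Y)\big)$. Then $\mathcal{K}_{n,m}[1,\mathcal{J}_{1,0}]$ is computed directly from the rule $\mathcal{K}_{n,m}[1,z_iz_j]=z_iz_j$ for $i,j\le m$ in (\ref{eq_k_calc_2}), and the second summand in $\mathcal{J}'_1$ is its adjoint, giving (\ref{eq_jopr1_form}) without any further cancellation.
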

		\begin{rem}\label{eq_rem_ab_1}
			From (\ref{eq_mathcalc_bouund}), (\ref{eq_ct_tayl_type}), (\ref{eq_jopr0_form}) and the fact that $\mathscr{P}_{n, m}^{\perp}$ is the orthogonal projector by the results of Section \ref{sect_model_calc}, we see that for $a, b$ from Lemma \ref{lem_ct_spec_prop}, we have $a \leq 1 \leq b$.
		\end{rem}
		\begin{proof}
			Recall that polynomials $J_r(Z, Z')$ and $\epsilon, c > 0$ were defined in Proposition \ref{prop_berg_off_diag}.
			From Proposition \ref{prop_berg_off_diag}, (\ref{eq_compa_1}), (\ref{eq_bt_bp_rel_schw}) and (\ref{eq_a_cal_defn}), we conclude that for 
			\begin{equation}\label{eq_jro_jry}
				\mathcal{J}_{r, 0}(Z, Z') := J_r(Z_Y, Z'),
			\end{equation}
			we can define $\mathcal{F}_{r, 0} := \mathcal{J}_{r, 0} \cdot \mathscr{P}_{n, m}^{\perp}$, so that there is $C > 0$, such that for any $|Z|, |Z'| < \frac{\epsilon}{t}$, $p \in \nat^*$, we have
			\begin{multline}\label{eq_at_tayl_type111}
				\bigg| 
					\frac{\partial^{|\alpha|+|\alpha'|}}{\partial Z^{\alpha} \partial Z'{}^{\alpha'}}
					\bigg(
						\mathcal{A}_t(Z, Z')
						-
						\sum_{r = 0}^{k}
						t^{r}						
						\mathcal{F}_{r, 0}(Z, Z') 
					\bigg)
				\bigg|_{\ccal^{l'}(Y)}
				\\
				\leq
				C t^{k + 1 - m}
				\cdot
				\Big(1 + |Z| + |Z'| \Big)^{Q^3_{k, l, l'}}
				\exp\Big(- c \big( |Z_Y - Z'_Y| + |Z_N| + |Z'_N| \big) \Big).		
			\end{multline}
			From Lemma \ref{lem_at_tayl_t_exp}, (\ref{eq_ccal_def_at}) and (\ref{eq_at_tayl_type111}), we conclude that (\ref{eq_ct_tayl_type}) holds for $c := \frac{c}{8}$, $\epsilon := \frac{\epsilon}{4}$ and 
			\begin{equation}\label{eq_jopr01_form_k}
				\mathcal{J}_r^{'}(Z, Z')
				=
				\sum_{r' = 0}^{r}
				\mathcal{K}_{n,m}
				[\overline{\mathcal{J}_{r_0, 0}(Z', Z) }
				,
				\mathcal{J}_{r - r_0, 0}(Z, Z') 
				],
			\end{equation}
			where we borrowed the notation from Lemma \ref{lem_comp_poly}.
			From (\ref{eq_jo_expl_form}), (\ref{eq_jro_jry}) and (\ref{eq_jopr01_form_k}), we deduce (\ref{eq_jopr0_form}).
			From (\ref{eq_k_calc_2}) and (\ref{eq_j1_expl_form}), we deduce 
			\begin{equation}\label{eq_aux_knmj0010}
				\mathcal{K}_{n,m}
				[\overline{\mathcal{J}_{0, 0}(Z', Z) }
				,
				\mathcal{J}_{1, 0}(Z, Z') 
				]
				=
				{\rm{Id}}_{F_{y_0}} \cdot \pi  \cdot
						 g^{TX}_{y_0} \big(\overline{z}'_N, A(z_Y - z'_Y) (z_Y - z'_Y) \big).
			\end{equation}
			Now, remark that the summands from the equation (\ref{eq_jopr01_form_k}), for $r = 1$, are the adjoints of each other.
			From this and (\ref{eq_aux_knmj0010}), we deduce (\ref{eq_jopr1_form}).
		\end{proof}
		\begin{proof}[Proof of Theorem \ref{thm_berg_perp_off_diag}]
			Let us show that Theorem \ref{thm_berg_perp_off_diag} follows from Lemmas \ref{lem_local_logbk_pr}, \ref{prop_der_bound_bt}, \ref{lem_ct_spec_prop},  \ref{lem_ct_bnd_schhw}, \ref{lem_ct_oper_prop}.
			More precisely, from Lemma \ref{lem_local_logbk_pr} and (\ref{eq_bt_bp_rel_schw}), we see that it is enough to establish that there are polynomials  $J_r^{\perp}$, $r \in \nat$ as in Theorem \ref{thm_berg_perp_off_diag}, and $\epsilon, c > 0$, $p_1 \in \nat^*$, such that for $F_r^{\perp}$ as in Theorem \ref{thm_berg_perp_off_diag} and any $k, l, l' \in \nat$, there exists $C > 0$, such that for any $p \geq p_1$, $Z, Z' \in \real^{2n}$, $|Z|, |Z'| \leq \frac{\epsilon}{t}$, $\alpha, \alpha' \in \nat^{2n}$, $|\alpha|+|\alpha'| \leq l$, the following bound holds
			\begin{multline}\label{eq_bt_perp_offdg0}
				\bigg| 
					\frac{\partial^{|\alpha|+|\alpha'|}}{\partial Z^{\alpha} \partial Z'{}^{\alpha'}}
					\bigg(
						\mathcal{B}_t^{\perp}(Z, Z')
						-
						\sum_{r = 0}^{k}
						t^{r}					
						F_r^{\perp}(Z, Z') 
					\bigg)
				\bigg|_{\ccal^{l'}(Y)}
				\\
				\leq
				C t^{k + 1}
				\Big(1 + |Z| + |Z'| \Big)^{Q^2_{k, l, l'}}
				\exp\Big(- c \big( |Z_Y - Z'_Y| + |Z_N| + |Z'_N| \big) \Big).		
			\end{multline}
			We will establish now that there are polynomials  $J_r^{\perp}$, $r \in \nat$, as described above, such that for some constant $C_1 > 0$, the following bound holds
			\begin{equation}\label{eq_bt_perp_offdg}
				\bigg| 
					\frac{\partial^{|\alpha|+|\alpha'|}}{\partial Z^{\alpha} \partial Z'{}^{\alpha'}}
					\bigg(
						\mathcal{B}_t^{\perp}(Z, Z')
						-
						\sum_{r = 0}^{2k + 1}
						t^{r}					
						F_r^{\perp}(Z, Z') 
					\bigg)
				\bigg|_{\ccal^{l'}(Y)}
				\leq
				C_1 t^{2k + 2}
				\Big(1 + |Z| + |Z'| \Big)^{Q''_{2k + 1, l, l'}},
			\end{equation}
			where $Q''_{k, l, l'} := 3(4(n+2)(k+1) + l') + l$.
			Before this, let us show that this will imply (\ref{eq_bt_perp_offdg0}). 
			Remark that from Theorem \ref{thm_logbk_exp_dc}, the bound on the degrees of $J_r^{\perp}$, $r \in \nat$, and the fact that the polynomials $J_r^{\perp}$ depend smoothly on $y_0$, we see that there are $c, C_2 > 0$, $p_1 \in \nat$, such that for $p \geq p_1$, $Z, Z' \in \real^{2n}$, $\alpha, \alpha' \in \nat^{2n}$, $|\alpha|+|\alpha'| \leq l$, the following bound holds
			\begin{multline}\label{eq_exp_btperp_not}
				\bigg| 
					\frac{\partial^{|\alpha|+|\alpha'|}}{\partial Z^{\alpha} \partial Z'{}^{\alpha'}}
					\bigg(
						\mathcal{B}_t^{\perp}(Z, Z')
						-
						\sum_{r = 0}^{2k + 1}
						t^{r}					
						F_r^{\perp}(Z, Z') 
					\bigg)
				\bigg|_{\ccal^{l'}(Y)}
				\\
				\leq
				C_2
				\Big(1 + |Z| + |Z'| \Big)^{6k + l + l' + 6}
				\exp\Big(- c \big( |Z_Y - Z'_Y| + |Z_N| + |Z'_N| \big) \Big).		
			\end{multline}
			From (\ref{eq_bt_perp_offdg}), (\ref{eq_exp_btperp_not}) and Cauchy inequality, we deduce (\ref{eq_bt_perp_offdg0}).
			Hence, it is left to establish (\ref{eq_bt_perp_offdg}), on which we concentrate from now on.
			\par 
			From (\ref{eq_defn_bt_cal}), we see that $\mathcal{B}_t^{\perp}$, $\mathcal{B}_t$ are the only self-adjoint operators on $L^2(g^{TX'}_{0}, h^{F'}_{0})$ such that
			\begin{equation}\label{eq_spec_bt_ct0}
			\begin{aligned}
				&
				\spec{\mathcal{B}_t^{\perp}} \subset \{ 0, 1\}, 
				&&
				(\ker \mathcal{B}_t^{\perp})^{\perp}
				=
				(S_t^{-1} \circ U) \big( H^{0, \perp}_{(2)}(X', L'{}^p \otimes F') \big),
				\\
				&
				\spec{\mathcal{B}_t} \subset \{ 0, 1\},  
				&&(\ker \mathcal{B}_t)^{\perp}
				=
				(S_t^{-1} \circ U) \big( H^{0}_{(2)}(X', L'{}^p \otimes F') \big),
			\end{aligned}
			\end{equation}
			From this and Lemma \ref{lem_ct_spec_prop}, we see that for $a, b$ as in Lemma \ref{lem_ct_spec_prop}, we have
			\begin{equation}\label{eq_spec_bt_ct}
				\mathcal{B}_t^{\perp}
				=
				\int_{\Omega} \frac{1}{\lambda - \mathcal{C}_t} d \lambda,
			\end{equation}
			where $\Omega \subset \comp$ is a circle not containing $\{0\}$ inside of it, but containing $\{a, b\}$. By Remark \ref{eq_rem_ab_1}, we see that $\Omega$ contains $1$ inside of it.
			\par 
			Let us now show the existence of polynomials $J_r^{\perp}(Z, Z') \in \enmr{F_{y_0}}$ as in Theorem \ref{thm_berg_perp_off_diag} and provide an algorithmic way of constructing them.
			We denote by $\mathcal{C}_0$ the operator, acting on $L^2(g^{TX'}_{0}, h^{F'}_{0})$ by the convolution with smooth kernel $\mathscr{P}_{n, m}^{\perp}(Z, Z') \cdot {\rm{Id}}_{F'_0}$.
			By the results of Section \ref{sect_model_calc}, the operator $\mathcal{C}_0$ is an orthogonal projection, hence we have 
			\begin{equation}\label{eq_res_f1}
				\frac{1}{\lambda - \mathcal{C}_0}
				=
				\frac{1- \mathcal{C}_0}{\lambda}
				+
				\frac{\mathcal{C}_0}{\lambda - 1}.
			\end{equation}
			Now, let us apply the resolvent formula
			\begin{equation}\label{eq_res_f}
				\frac{1}{\lambda - \mathcal{C}_t} - \frac{1}{\lambda - \mathcal{C}_0} 
				=
				\frac{1}{\lambda - \mathcal{C}_t} 
				\big(
					\mathcal{C}_t - \mathcal{C}_0
				\big)
				\frac{1}{\lambda - \mathcal{C}_0}.
			\end{equation}
			By using (\ref{eq_res_f1}) and (\ref{eq_res_f}) inductively, we see that for a given $k \in \nat$, we can represent
			\begin{equation}\label{eq_res_f2}
				\frac{1}{\lambda - \mathcal{C}_t}
				=
				\frac{1}{\lambda - \mathcal{C}_t} A_k
				+
				B_k,
			\end{equation}
			where the operators $A_k$ (resp. $B_k$) are the linear combinations with coefficients given by some universal rational functions (in $\lambda$) of the operators of the form
			\begin{equation}\label{eq_res_f3}
				B
				\big(
					\mathcal{C}_t - \mathcal{C}_0
				\big)^{k_1}
				\mathcal{C}_0
				\cdots
				\mathcal{C}_0
				\big(
					\mathcal{C}_t - \mathcal{C}_0
				\big)^{k_l}
				B',
			\end{equation}
			where $l \in \nat$, $k_1, \ldots, k_l \in \nat^*$, $k_1 + \cdots + k_l = k$, (resp. $k_1 + \cdots + k_l \leq k$) and $B$, $B'$ are either the identity operators or $\mathcal{C}_0$.
			Now, from Lemma \ref{lem_ct_spec_prop} and (\ref{eq_spec_bt_ct0}), we deduce that
			\begin{equation}\label{eq_res_f0}
				\frac{1}{\lambda - \mathcal{C}_t}
				=
				\mathcal{B}_t \frac{1}{\lambda - \mathcal{C}_t}
				+
				\frac{1- \mathcal{B}_t}{\lambda}.
			\end{equation}
			We now rewrite (\ref{eq_res_f2}) using (\ref{eq_res_f0}) as follows
			\begin{equation}\label{eq_res_f343}
				\frac{1}{\lambda - \mathcal{C}_t}
				=
				\mathcal{B}_t \frac{1}{\lambda - \mathcal{C}_t} A_k
				+
				\frac{1- \mathcal{B}_t}{\lambda} A_k
				+
				B_k.
			\end{equation}
			\par 
			Now, by Lemmas \ref{lem_bnd_prod_aloc}, \ref{lem_ct_oper_prop}, each of the terms in (\ref{eq_res_f3}) admit Taylor-type expansions as in (\ref{eq_ct_tayl_type}) for $Z, Z' \in \comp^n$, $|Z|, |Z'| < \frac{\epsilon}{2t}$, where $\epsilon$ is as in Lemma \ref{lem_ct_oper_prop}.
			From this and Lemma \ref{lem_at_tayl_t_exp}, we deduce that there are polynomials, having the same properties as described in Theorem \ref{thm_berg_perp_off_diag}, which we denote by $J_r^{\perp}(Z, Z') \in \enmr{F_{y_0}}$, $r \leq k$, such that for $F_r^{\perp}$, defined as in Theorem \ref{thm_berg_perp_off_diag}, there are $\epsilon, c > 0$, $p_1 \in \nat^*$, such that for any $l, l' \in \nat$, there is $C_3 > 0$, such that for $p \geq p_1$, $Z, Z' \in T_{y_0}X$, $|Z|, |Z'| \leq \frac{\epsilon}{2t}$, $\alpha, \alpha' \in \nat^{2n}$, $|\alpha|+|\alpha'| \leq l$, the following bound holds
			\begin{multline}\label{eq_bt_perp_offdg_fin00}
				\bigg| 
					\frac{\partial^{|\alpha|+|\alpha'|}}{\partial Z^{\alpha} \partial Z'{}^{\alpha'}}
					\bigg(
						\Big(
						\int_{\Omega} B_k d \lambda
						\Big)(Z, Z')
						-
						\sum_{r = 0}^{k}
						t^{r}					
						F_r^{\perp}(Z, Z') 
					\bigg)
				\bigg|_{\ccal^{l'}(Y)}
				\\
				\qquad \qquad \qquad \qquad \qquad  \qquad \qquad \qquad  \qquad \qquad 
				\leq
				C_3 t^{k + 1}
				\Big(1 + |Z| + |Z'| \Big)^{Q''_{k, l, l'}}.
			\end{multline}
			Also, since by Lemma \ref{lem_ct_oper_prop}, the first term of the Taylor-type expansion of $\mathcal{C}_t - \mathcal{C}_0$ vanishes, and each term in $A_k$ has exactly $k$ multiplicands  $\mathcal{C}_t - \mathcal{C}_0$, by Lemma \ref{lem_bnd_prod_aloc}, we conclude that there is $c > 0$, $p_1 \in \nat^*$, such that for any $k, l, l' \in \nat$, there is $C_4 > 0$, such that for $p \geq p_1$, $Z, Z' \in T_{y_0}X$, $|Z|, |Z'| \leq \frac{\epsilon}{2t}$, $\alpha, \alpha' \in \nat^{2n}$, $|\alpha|+|\alpha'| \leq l$, the following bound holds
			\begin{equation}\label{eq_bt_perp_offdg_fin002}
				\bigg| 
					\frac{\partial^{|\alpha|+|\alpha'|}}{\partial Z^{\alpha} \partial Z'{}^{\alpha'}}
					\bigg(
						\Big(\int_{\Omega}  \frac{1- \mathcal{B}_t}{\lambda} A_k  d \lambda \Big)
						(Z, Z')
					\bigg)
				\bigg|_{\ccal^{l'}(Y)}
				\leq
				C_4 t^k
				\Big(1 + |Z| + |Z'| \Big)^{Q''_{k, l, l'}}.
			\end{equation}
			\par 
			We will now show that the first summand on the right-hand side of (\ref{eq_res_f343}) is bounded by the term in the right-hand side of (\ref{eq_bt_perp_offdg}).
			For this, let us fix $Z' \in \comp^n$, $|Z'| < \frac{\epsilon}{2t}$.
			From Lemma \ref{lem_bnd_prod_aloc}, similarly to (\ref{eq_bt_perp_offdg_fin002}), we deduce that there is $C_5 > 0$, such that we have
			\begin{equation}\label{eq_bnd_prod_aloc111}
				\Big| \frac{\partial^{|\alpha'|}}{\partial Z'{}^{\alpha}} A_k(Z, Z') \Big|  \leq C_5 t^k (1 + |Z| + |Z'|)^{Q''_{k, l, l'}} \exp\Big(- \frac{c}{8} \big( |Z_Y - Z'_Y| + |Z_N| + |Z'_N| \big) \Big).
			\end{equation}
			From (\ref{eq_bnd_intlocal}) and  (\ref{eq_bnd_prod_aloc111}), we readily deduce that there is $C_6 > 0$, such that
			\begin{equation}\label{eq_bnd_prod_aloc222}
				\Big\| \frac{\partial^{|\alpha'|}}{\partial Z'{}^{\alpha'}} A_k(\cdot, Z') \Big\|_{L^2(g^{TX'}_{0})}
				\leq 
				C_6 t^k (1 + |Z'|)^{Q''_{k, l, l'}}.		
			\end{equation}
			However, by the choice of $\Omega$, there exists $c > 0$, such that for any $\lambda \in \Omega$, we have
			\begin{equation}\label{eq_spec_bnd_rsl}
				\spec \Big( \mathcal{B}_t \frac{1}{\lambda - \mathcal{C}_t} \Big) \subset \mathbb{B}_0^{\comp}(c).
			\end{equation}						
			From (\ref{eq_bnd_prod_aloc222}) and (\ref{eq_spec_bnd_rsl}), we deduce that there is $C_7 > 0$, such that
			\begin{equation}\label{eq_spec_bnd_rsl221}
				\Big\| \frac{\partial^{|\alpha'|}}{\partial Z'{}^{\alpha'}} \Big( \mathcal{B}_t \frac{1}{\lambda - \mathcal{C}_t} A_k \Big) (\cdot, Z') \Big\|_{L^2(g^{TX'}_{0})}
				\leq 
				C_7 t^k (1 + |Z'|)^{Q''_{k, l, l'}}.		
			\end{equation}
			From Lemma \ref{prop_der_bound_bt} and (\ref{eq_spec_bnd_rsl221}), we deduce that there is $C_8 > 0$, such that for any $Z, Z' \in \comp^n$, $|Z|, |Z'| < \frac{\epsilon}{2t}$, we have
			\begin{equation}\label{eq_spec_bnd_rsl2213}
				\Big|
				\frac{\partial^{|\alpha| + |\alpha'|}}{\partial Z{}^{\alpha} \partial Z'{}^{\alpha'}} \Big( \mathcal{B}_t \frac{1}{\lambda - \mathcal{C}_t} A_k \Big) (Z, Z')
				\Big|
				\leq
				C_8 t^k (1 + |Z'|)^{Q''_{k, l, l'}}.		
			\end{equation}
			From (\ref{eq_spec_bt_ct}), (\ref{eq_res_f2}), (\ref{eq_res_f343}), (\ref{eq_bt_perp_offdg_fin00}), (\ref{eq_bt_perp_offdg_fin002}) and (\ref{eq_spec_bnd_rsl2213}), we deduce (\ref{eq_bt_perp_offdg}).
			Also, from (\ref{eq_bt_perp_offdg_fin00}), we deduce the general algorithm for the construction of the polynomials $J_r^{\perp}$.
			\par 
			Now it only left to prove (\ref{eq_jopep_0}) and (\ref{eq_jopep_1}).
			For this, let us find explicit formula for $B_1$, as the polynomials $J_0^{\perp}$, $J_1^{\perp}$, can be then read off from (\ref{eq_bt_perp_offdg_fin00}).
			For this, we apply once (\ref{eq_res_f0}) and then twice (\ref{eq_res_f}) to get			
			\begin{equation}\label{eq_res_fb1}
				\frac{1}{\lambda - \mathcal{C}_t} =  
				\frac{1}{\lambda - \mathcal{C}_0} 
				+
				\frac{1}{\lambda - \mathcal{C}_0} 
				\big(
					\mathcal{C}_t - \mathcal{C}_0
				\big)
				\frac{1}{\lambda - \mathcal{C}_0}
				+
				\frac{1}{\lambda - \mathcal{C}_t} 
				\big(
					\mathcal{C}_t - \mathcal{C}_0
				\big)
				\frac{1}{\lambda - \mathcal{C}_0}
				\big(
					\mathcal{C}_t - \mathcal{C}_0
				\big)
				\frac{1}{\lambda - \mathcal{C}_0}.
			\end{equation}
			From (\ref{eq_res_f1}), (\ref{eq_res_f2}) and (\ref{eq_res_fb1}), we deduce that
			\begin{equation}\label{eq_b1_nonint_form}
				B_1 
				=
				\Big(
					\frac{1- \mathcal{C}_0}{\lambda}
					+
					\frac{\mathcal{C}_0}{\lambda - 1}
				\Big)
				+
				\Big(
					\frac{1- \mathcal{C}_0}{\lambda}
					+
					\frac{\mathcal{C}_0}{\lambda - 1}
				\Big)
				\big(
					\mathcal{C}_t - \mathcal{C}_0
				\big)
				\Big(
					\frac{1- \mathcal{C}_0}{\lambda}
					+
					\frac{\mathcal{C}_0}{\lambda - 1}
				\Big).
			\end{equation}
			Now, by Remark \ref{eq_rem_ab_1} and the choice of $\Omega$, we have the identities
			\begin{equation}\label{eq_ev_int_dlam}
			\begin{aligned}
				&
				\int_{\Omega} \frac{1}{\lambda} d\lambda = 0, 
				&&
				\int_{\Omega} \frac{1}{\lambda - 1} d\lambda = 1, 
				&&&
				\int_{\Omega} \frac{1}{\lambda^2} d\lambda = 0, 
				\\
				&
				\int_{\Omega} \frac{1}{(\lambda - 1)^2} d\lambda = 0, 
				&&
				\int_{\Omega} \frac{1}{\lambda (\lambda - 1)} d\lambda = 1.
				&&&
			\end{aligned}
			\end{equation}
			From (\ref{eq_b1_nonint_form}) and (\ref{eq_ev_int_dlam}), we deduce
			\begin{equation}\label{eq_b1_int_form}
				\int_{\Omega}
				B_1
				d \lambda 
				=
				\mathcal{C}_0
				+
				(1- \mathcal{C}_0)
				(
					\mathcal{C}_t - \mathcal{C}_0
				)
				\mathcal{C}_0
				+
				\mathcal{C}_0
				(
					\mathcal{C}_t - \mathcal{C}_0
				)
				(1- \mathcal{C}_0).
			\end{equation}
			Hence, by using the fact that $\mathcal{C}_0^{2} = \mathcal{C}_0$, the fact that  by Lemma \ref{lem_ct_oper_prop}, the first term of the Taylor-type expansion of $\mathcal{C}_t - \mathcal{C}_0$ vanishes, and (\ref{eq_b1_int_form}), we deduce (\ref{eq_jopep_0}).
			\par Now, let us establish (\ref{eq_jopep_1}).
			By using the notations from Proposition \ref{prop_berg_off_diag}, Remark \ref{rem_k_calculc} and (\ref{eq_jopr1_form}), we deduce from (\ref{eq_bt_perp_offdg_fin00}) and (\ref{eq_b1_int_form}) the following identity
			\begin{equation}\label{eq_j1_perp_kkpr_expr}
				J_1^{\perp}(Z, Z')
				=
				\mathcal{K}_{n, m}
				\big[
					\mathcal{J}_1^{'},
					\mathcal{J}_0^{'}
				\big]
				-
				2
				\mathcal{K}_{n, m}
				\Big[
					\mathcal{J}_0^{'}, 
					\mathcal{K}_{n, m}
					\big[
						\mathcal{J}_1^{'},
						\mathcal{J}_0^{'}
					\big]
				\Big]
				+
				\mathcal{K}_{n, m}
				\big[
					\mathcal{J}_0^{'}, 
						\mathcal{J}_1^{'}
				\big].
			\end{equation}
			We will now calculate each term in (\ref{eq_j1_perp_kkpr_expr}).
			From (\ref{eq_k_calc_2}), (\ref{eq_k_calc_3}) and (\ref{eq_jopr1_form}), we deduce
			\begin{equation}\label{eq_calc_lbkk_8}
					\mathcal{K}_{n, m}
					\big[
						\mathcal{J}_1^{'},
						\mathcal{J}_0^{'}
					\big](Z, Z') = {\rm{Id}}_{F_{y_0}} \cdot \pi 
					 g \big(z_N, A(\overline{z}_Y - \overline{z}'_Y) (\overline{z}_Y - \overline{z}'_Y) \big).
			\end{equation}
			From (\ref{eq_k_calc_3}) and (\ref{eq_calc_lbkk_8}), we deduce
			\begin{equation}\label{eq_calc_lbkk_11}
					\mathcal{K}_{n, m}
					\Big[
					\mathcal{J}_0^{'}, 
					\mathcal{K}_{n, m}
					\big[
						\mathcal{J}_1^{'},
						\mathcal{J}_0^{'}
					\big]
					\Big]
					(Z, Z') = 0.
			\end{equation}
			Now, (\ref{eq_jopep_1}) follows directly from  (\ref{eq_j1_perp_kkpr_expr}), (\ref{eq_calc_lbkk_8}) and (\ref{eq_calc_lbkk_11}).
		\end{proof}

	\subsection{Asymptotics of the extension, proofs of Theorems \ref{thm_high_term_ext}, \ref{thm_ext_exp_dc}, \ref{thm_ext_as_exp}, \ref{thm_opt_linf_bnd}}\label{sect_exp_dec_ot}
	The main goal of this section is to prove Theorem \ref{thm_high_term_ext}, Corollary \ref{cor_ot_higher} and Theorems \ref{thm_ext_exp_dc}, \ref{thm_ext_as_exp}, \ref{thm_opt_linf_bnd}.
	Theorem \ref{thm_high_term_ext}, Corollary \ref{cor_ot_higher} and Theorem \ref{thm_opt_linf_bnd} will follow almost directly from Theorems \ref{thm_ext_exp_dc}, \ref{thm_ext_as_exp}.
	The main idea for the proof of Theorems \ref{thm_ext_exp_dc}, \ref{thm_ext_as_exp}, is to find an algebraic expression for $\ext_p$ in terms of $B_p^{\perp}$, $B_p^Y$ and $\res_Y$ and then to get the needed results by transferring the statements from Theorems \ref{thm_logbk_exp_dc}, \ref{thm_berg_perp_off_diag}.
	\par 
	We conserve the notations from Section \ref{sect_intro}.
	Define the operators $G_p : L^2(Y, \iota^*( L^p \otimes F)) \to L^2(Y, \iota^*( L^p \otimes F))$ and $I_p : L^2(Y, \iota^*( L^p \otimes F)) \to L^2(X, L^p \otimes F)$ as follows
	\begin{equation}\label{eq_gp_defn}
		G_p := 
		\res_Y \circ I_p - B_p^{Y},
		\qquad
		I_p := B_p^{\perp} \circ \ext_p^{0} \circ B_p^{Y}.
	\end{equation}
	The formula for $\ext_p$ in terms of $B_p^{\perp}$, $B_p^Y$ and $\res_Y$ is based on the infinite summation with the operator $G_p$.
	To get a grip on this infinite sum, we need to study the properties of $G_p$.
	\begin{lem}\label{lem_gp_dec}
		There are $c > 0$, $p_1 \in \nat^*$, such that for any $k \in \nat$, there is $C > 0$, such that for any $p \geq p_1$, $y_1, y_2 \in Y$, the following estimate holds
		\begin{equation}\label{eq_gp_dec}
			\Big|  G_p(y_1, y_2) \Big|_{\ccal^k(Y \times Y)} \leq C p^{m + \frac{k - 1}{2}} \cdot \exp \big(- c \sqrt{p} \cdot \dist_X(y_1, y_2) \big),
		\end{equation}
		where the pointwise $\ccal^{k}$-norm is interpreted as in Theorem \ref{thm_ext_exp_dc}.
		\par 
		Also, for any $r \in \nat$, $y_0 \in Y$, there are $J_{r, G}(Z_Y, Z'_Y) \in \enmr{F_{y_0}}$ polynomials in $Z_Y, Z'_Y \in \real^{2m}$, satisfying the same properties as in Theorem \ref{thm_ext_as_exp}, such that for $F_{r, G} := J_{r, G} \cdot \mathscr{P}_m$, the following holds.
		There are $\epsilon, c > 0$, $p_1 \in \nat^*$,  such that for any $k, l, l' \in \nat$, there is $C > 0$, such that for any $y_0 \in Y$, $p \geq p_1$, $Z_Y, Z'_Y \in \real^{2m}$, $|Z_Y|, |Z'_Y| \leq \epsilon$, $\alpha, \alpha' \in \nat^{2m}$, $|\alpha|+|\alpha'| \leq l$, we have
		\begin{multline}\label{eq_apl_ass_tay_gp}
			\bigg| 
				\frac{\partial^{|\alpha|+|\alpha'|}}{\partial Z_Y^{\alpha} \partial Z'_Y{}^{\alpha'}}
				\bigg(
					\frac{1}{p^m} G_p\big(\psi_{y_0}(Z_Y), \psi_{y_0}(Z'_Y) \big)
					-
					\sum_{r = 0}^{k}
					p^{-\frac{r}{2}}						
					F_{r, G}(\sqrt{p} Z_Y, \sqrt{p} Z'_Y) 
					\kappa_{Y}^{-\frac{1}{2}}(Z_Y)
					\kappa_{Y}^{-\frac{1}{2}}(Z'_Y)
				\bigg)
			\bigg|_{\ccal^{l'}(Y)}
			\\
			\leq
			C p^{- \frac{k + 1 - l}{2}}			
			\Big(1 + \sqrt{p}|Z_Y| + \sqrt{p} |Z'_Y| \Big)^{2Q^2_{k, l, l'}}
			\exp\Big(- c \sqrt{p} |Z_Y - Z'_Y| \Big),
		\end{multline}
		where $Q^2_{k, l, l'} > 0$ is defined in Theorem \ref{thm_berg_perp_off_diag}.
		Moreover, we have
		\begin{equation}\label{eq_j01g_nul0}
			J_{0, G} = 0.
		\end{equation}
		And under assumption (\ref{eq_comp_vol_omeg}), we even have
		\begin{equation}\label{eq_j01g_nul}
			J_{1, G} = 0,
		\end{equation}
		so that $p^{m + \frac{k - 1}{2}}$ in (\ref{eq_gp_dec}) in this case can be replaced by $p^{m - 1 + \frac{k}{2}}$.
	\end{lem}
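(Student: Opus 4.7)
The starting point is the identity
\[
  G_p = \res_Y \circ B_p^{\perp} \circ \ext_p^0 \circ B_p^Y - B_p^Y,
\]
in which each of the three factors $B_p^{\perp}$, $\ext_p^0$, $B_p^Y$ has known off-diagonal exponential decay: the first by Theorem~\ref{thm_logbk_exp_dc}, the second by the Gaussian in (\ref{eq_ext0_op}), and the third by Theorem~\ref{thm_bk_off_diag} applied to $(Y, g^{TY}, dv_Y)$. Plugging these into Lemma~\ref{lem_bnd_prod_a} immediately delivers a bound of the shape (\ref{eq_gp_dec}) but with the weaker prefactor $p^{m + k/2}$. The improvement to $p^{m + (k-1)/2}$, and then, under (\ref{eq_comp_vol_omeg}), to $p^{m - 1 + k/2}$, will emerge as a byproduct of the cancellations $J_{0,G} = 0$ and $J_{1,G} = 0$, by the same mechanism used in the proof of Theorem~\ref{thm_berg_perp_off_diag}.

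For the Taylor-type expansion (\ref{eq_apl_ass_tay_gp}), I would pass to Fermi coordinates at $y_0 \in Y$ and apply a direct variant of Lemma~\ref{lem_ac_ext_op_tay} with $A_p^1 := B_p^{\perp}$ and $C_p := B_p^Y$; their asymptotic expansions are furnished by Theorem~\ref{thm_berg_perp_off_diag} and by Theorem~\ref{thm_berg_dailiuma} applied on the K\"ahler manifold $(Y, g^{TY})$, respectively. This yields an asymptotic expansion for the Schwartz kernel of $I_p$, with polynomial coefficients assembled by iterated application of the kernel calculus operator $\mathcal{K}''_{n,m}$ from Lemma~\ref{lem_comp_poly}; setting $Z_N = 0$ realizes the restriction to $Y$, and subtracting the known expansion of $B_p^Y$ produces the polynomials $J_{r,G}$. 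The exponent $2Q^2_{k,l,l'}$ in (\ref{eq_apl_ass_tay_gp}) reflects the two $\mathscr{P}_{n,m}^{\perp}$-type factors entering the composition and is automatic from the proof of Lemma~\ref{lem_ac_ext_op_tay}.

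The identity $J_{0,G} = 0$ reduces to a model computation in the Bargmann--Fock setting of Section~\ref{sect_model_calc}: the orthonormal basis (\ref{eq_orth_basker}) shows that the range of $\mathscr{E}_{n,m}$ already lies in the orthogonal complement of the ideal of sections vanishing on $\comp^m$ and factors through $\mathscr{P}_m$, so
\[
  \mathscr{P}_{n,m}^{\perp} \circ \mathscr{E}_{n,m} \circ \mathscr{P}_m = \mathscr{E}_{n,m},
\]
and evaluating its kernel at $Z_N = 0$ returns $\mathscr{P}_m(Z_Y, Z'_Y)$, the leading model kernel for $B_p^Y$. This cancellation then produces the improvement in (\ref{eq_gp_dec}) by a factor of $p^{-1/2}$, exactly as in the proof of Theorem~\ref{thm_berg_perp_off_diag}.

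The main obstacle is the identity $J_{1,G} = 0$ under (\ref{eq_comp_vol_omeg}). Under this assumption the $r=1$ coefficient in the expansion of $B_p^Y$ vanishes by the analogue of (\ref{eq_j1pr_calc}) applied on $(Y, g^{TY})$, so only the $r=1$ contribution coming from $B_p^{\perp}$, assembled via $\mathcal{K}''_{n,m}$ with the leading kernels of $\ext_p^0$ and $B_p^Y$, remains. The explicit form (\ref{eq_jopep_1}) shows that $J_1^{\perp}$ is linear in $z_N + \bar z'_N$, and the identities (\ref{eq_k_calc_2})--(\ref{eq_k_calc_3}) of the kernel calculus prevent the $\mathcal{K}_{n,m}$-composition with $Z_N$-independent kernels from producing $Z_N$-free remainders; hence the evaluation at $Z_N = 0$ vanishes, giving $J_{1,G} = 0$. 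A second application of the same cancellation mechanism upgrades (\ref{eq_gp_dec}) by the final factor of $p^{-1/2}$.
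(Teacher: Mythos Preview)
Your plan matches the paper's proof almost exactly: a crude exponential bound via Lemma~\ref{lem_bnd_prod_a}, then Lemma~\ref{lem_ac_ext_op_tay} with $A_p^1=B_p^{\perp}$ (expansion from Theorem~\ref{thm_berg_perp_off_diag}) and $C_p=B_p^Y$ (Theorem~\ref{thm_berg_dailiuma} applied on $Y$) to expand $I_p$, restrict to $Z_N=0$, subtract the $B_p^Y$ expansion, and read off $J_{0,G}=J_{1,G}=0$ from the explicit first polynomials and the kernel-calculus rules. Your mechanism for $J_{1,G}=0$ --- that $J_1^\perp$ is linear in $z_N,\bar z'_N$, the $\bar z'_N$ piece dies under the Gaussian integration by (\ref{eq_k_calc_3}), and the surviving $z_N$ piece vanishes upon $\res_Y$ --- is exactly what the paper's computation (\ref{eq_jogo_form2}) encodes.

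The one bookkeeping point you gloss over is that Lemma~\ref{lem_ac_ext_op_tay} is formulated for $A_p^1\circ(\kappa_N^{-1/2}\ext_p^0)\circ C_p$, not for $A_p^1\circ\ext_p^0\circ C_p$. The paper therefore first rewrites $I_p=(B_p^{\perp}\cdot\kappa_N^{1/2})\circ(\kappa_N^{-1/2}\ext_p^0)\circ B_p^Y$ and absorbs the Taylor series of $\kappa_N^{1/2}$ into modified polynomials $J_r^{\perp,\kappa}:=\sum_{r_0}J_{r_0}^{\perp}\cdot\kappa_{N,[r-r_0]}^{1/2}$; after applying $\res_Y$ one must also convert the normalization $\kappa_X^{-1/2}(Z_Y,0)=\kappa_N^{-1/2}\,\kappa_Y^{-1/2}$ via (\ref{eq_kappas_rel}) to match the $\kappa_Y^{-1/2}\kappa_Y^{-1/2}$ form in (\ref{eq_apl_ass_tay_gp}). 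This is why the paper's formula (\ref{eq_jrg_calc}) carries the extra $\kappa_{N,[r_0]}^{-1/2}$ factors and why $J_{0,I}^E=\kappa_N^{1/2}(y_0)\cdot\mathrm{Id}_{F_{y_0}}$ rather than $\mathrm{Id}_{F_{y_0}}$; the two $\kappa_N^{\pm 1/2}$ then cancel to give $J_{0,G}=0$ in the general case, not only under (\ref{eq_comp_vol_omeg}). Your model identity $\mathscr{P}_{n,m}^{\perp}\circ\mathscr{E}_{n,m}\circ\mathscr{P}_m=\mathscr{E}_{n,m}$ is precisely the content of that cancellation once the $\kappa_N$ factors are tracked.
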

	\begin{proof}
		First of all, by Lemma \ref{lem_bnd_prod_a} and Theorem \ref{thm_bk_off_diag}, there are $c > 0$, $p_1 \in \nat^*$, such that for any $k \in \nat$, there is $C > 0$, such that for $p \geq p_1$, $y_1, y_2 \in Y$, the following estimate holds
		\begin{equation}\label{eq_gp_dec1}
			\Big|  G_p(y_1, y_2) \Big|_{\ccal^k(Y \times Y)} \leq C p^{n + \frac{k}{2}} \cdot \exp \big(- c \sqrt{p} \cdot \dist_X(y_1, y_2) \big).
		\end{equation}
		Now (\ref{eq_gp_dec1}) implies (\ref{eq_gp_dec}) for $y_1, y_2 \in Y$ verifying $\dist_X(y_1, y_2) > R$, where $R$ is defined in (\ref{eq_r_defn_const}) (and for a different choice of $c, C$). Hence, it is enough to establish (\ref{eq_apl_ass_tay_gp}), (\ref{eq_j01g_nul0}) to get (\ref{eq_gp_dec}) for all $y_1, y_2 \in Y$.
		Let us concentrate on this now.
		\par 
		Let $J'_{r, Y}(Z_Y, Z'_Y) \in \enmr{F_{y_0}}$ be the polynomials in $Z_Y, Z'_Y \in \real^{2m}$, $r \in \nat$, given by Theorem \ref{thm_berg_dailiuma}, applied for $X := Y$.
		Recall that polynomials $J_r^{\perp}(Z, Z') \in \enmr{F_{y_0}}$, were defined in Theorem \ref{thm_berg_perp_off_diag}.
		We rewrite $I_p$ in the following equivalent form
		\begin{equation}
			I_p := \big( B_p^{\perp} \cdot \kappa_N^{\frac{1}{2}} \big) \circ \big( \kappa_N^{- \frac{1}{2}} \cdot \ext_p^{0} \big) \circ B_p^{Y}.
		\end{equation}
		For $k \in \nat$, let us now write the Taylor expansions of $\kappa_N^{\frac{1}{2}}$, $\kappa_N^{- \frac{1}{2}}$ in a neighborhood of $y_0$ as follows 
		\begin{equation}
			\kappa_N^{\frac{1}{2}}(Z) = \sum_{i = 0}^{k} \kappa_{N, [i]}^{\frac{1}{2}}(Z) + O(|Z|^{k + 1}),
			\qquad 
			\kappa_N^{-\frac{1}{2}}(Z) = \sum_{i = 0}^{k} \kappa_{N, [i]}^{-\frac{1}{2}}(Z) + O(|Z|^{k + 1}),
		\end{equation}
		where $\kappa_{N, [i]}^{\frac{1}{2}}(Z)$, $\kappa_{N, [i]}^{-\frac{1}{2}}(Z)$ are homogeneous polynomials of degree $i$.
		We denote now
		\begin{equation}
			J_{r}^{\perp, \kappa}
			:=
			\sum_{r_0 = 0}^{r} J_{r_0}^{\perp}(Z, Z') \cdot \kappa_{N, [r - r_0]}^{\frac{1}{2}}(Z').
		\end{equation}
		By Theorems \ref{thm_logbk_exp_dc}, \ref{thm_berg_perp_off_diag}, Lemma \ref{lem_ac_ext_op_tay} and Theorems \ref{thm_bk_off_diag}, \ref{thm_berg_dailiuma}, we conclude that for 
		\begin{equation}\label{eq_jogo_form}
			J_{r, I}^{E}(Z, Z'_Y) 
			=
			\sum_{r_0 = 0}^{r}
			\mathcal{K''}_{n, m}[J_{r_0}^{\perp, \kappa}, J'_{r - r_0, Y}]
		\end{equation}
		 and for $F_{r, I}^{E} := J_{r, I}^{E} \cdot \mathscr{E}_{n, m}$, the following holds.
		There are $\epsilon, c > 0$, $p_1 \in \nat^*$, such that for any $k, l, l' \in \nat$, there is $C  > 0$, such that for any $p \geq p_1$, $y_0 \in Y$, $Z = (Z_Y, Z_N)$, $Z_Y, Z'_Y \in \real^{2m}$, $Z_N \in \real^{2(n - m)}$, $|Z|, |Z'_Y| \leq \epsilon$, $\alpha \in \nat^{2n}$, $\alpha' \in \nat^{2m}$, $|\alpha| + |\alpha'| \leq l$, we have
		\begin{multline}\label{eq_ext_as_exp_I_op}
			\bigg| 
				\frac{\partial^{|\alpha|+|\alpha'|}}{\partial Z^{\alpha} \partial Z'_Y{}^{\alpha'}}
				\bigg(
					\frac{1}{p^m}  I_p \big(\psi_{y_0}(Z), \psi_{y_0}(Z'_Y) \big)
					-
					\sum_{r = 0}^{k}
					p^{-\frac{r}{2}}						
					F_{r, I}^{E}(\sqrt{p} Z, \sqrt{p} Z'_Y) 
					\kappa_{X}^{-\frac{1}{2}}(Z)
					\kappa_{Y}^{-\frac{1}{2}}(Z'_Y)
				\bigg)
			\bigg|_{\ccal^{l'}(Y)}
			\\
			\leq
			C p^{- \frac{k + 1 - l}{2}}
			\Big(1 + \sqrt{p}|Z| + \sqrt{p} |Z'_Y| \Big)^{2Q^2_{k, l, l'}}
			\exp\Big(- c \sqrt{p} \big( |Z_Y - Z'_Y| + |Z_N| \big) \Big).
		\end{multline} 
		From Theorem \ref{thm_berg_dailiuma}, (\ref{eq_gp_defn}) and (\ref{eq_ext_as_exp_I_op}), we conclude that (\ref{eq_apl_ass_tay_gp}) holds for
		\begin{equation}\label{eq_jrg_calc}
			J_{r, G}(Z_Y, Z'_Y) 
			:= 
			\sum_{r_0 = 0}^{r}
			\kappa_{N, [r_0]}^{-\frac{1}{2}}(Z_Y)
			\cdot
			J^E_{r - r_0, I}(Z_Y, Z'_Y) 
			-
			J'_{r, Y}(Z_Y, Z'_Y).
		\end{equation}
		From (\ref{eq_jopep_0}), (\ref{eq_jopr_calc}) and  (\ref{eq_jogo_form}), we conclude
		\begin{equation}\label{eq_jogo_form1}
			J_{0, I}^{E}
			=
			{\rm{Id}}_{F_{y_0}}
			\cdot
			\kappa_{N}^{\frac{1}{2}}(y_0).
		\end{equation}
		From (\ref{eq_jopr_calc}), (\ref{eq_jrg_calc}) and (\ref{eq_jogo_form1}), we obtain (\ref{eq_j01g_nul0}).
		\par
		Now, we assume (\ref{eq_comp_vol_omeg}).
		From (\ref{eq_jopep_1}), (\ref{eq_kmprpr_form}), (\ref{eq_k_calc_1}), (\ref{eq_k_calc_3}), (\ref{eq_nu_zero}), (\ref{eq_j1pr_calc}), (\ref{eq_kappa_t}), and (\ref{eq_jogo_form}), we get
		\begin{equation}\label{eq_jogo_form2}
			J_{1, I}^{E}(Z, Z'_Y)
			=
			 {\rm{Id}}_{F_{y_0}} \cdot \pi 
					 g^{TX}_{y_0} \big(z_N, A(\overline{z}_Y - \overline{z}'_Y) (\overline{z}_Y - \overline{z}'_Y) \big).
		\end{equation}
		We obtain (\ref{eq_j01g_nul}) from (\ref{eq_jrg_calc}) and (\ref{eq_jogo_form2}).
	\end{proof}
	\begin{lem}\label{lem_norm_bnd}
		There are $C > 0$, $p_1 \in \nat^*$, such that for any $p \geq p_1$, we have
		\begin{equation}
			\norm{G_p} \leq \frac{C}{\sqrt{p}}.
		\end{equation}
		Moreover, under assumption (\ref{eq_comp_vol_omeg}), in the above inequality, we can replace $\sqrt{p}$ by $p$.
	\end{lem}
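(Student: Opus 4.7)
The plan is to deduce the operator norm bound from a pointwise bound on the Schwartz kernel $G_p(y_1,y_2)$ combined with Young's inequality for integral operators, in the same spirit as the proof of Proposition \ref{prop_restr_is_l2}. The essential input — that the top-order term in the Taylor-type expansion of $G_p$ vanishes — is already encoded in (\ref{eq_j01g_nul0}) and (\ref{eq_j01g_nul}) of Lemma \ref{lem_gp_dec}, so this step is where all the substantive work happens; what remains below is mechanical.

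More precisely, applying (\ref{eq_gp_dec}) with $k = 0$ gives constants $c, C > 0$ and $p_1 \in \nat^*$ such that for any $p \geq p_1$ and $y_1, y_2 \in Y$,
\begin{equation*}
	\big| G_p(y_1, y_2) \big| \leq C \, p^{m - \frac{1}{2}} \cdot \exp \big(- c \sqrt{p} \cdot \dist_X(y_1, y_2) \big).
\end{equation*}
Next, Corollary \ref{cor_exp_bound_int} applied to $H := Y$, $M := X$ and $l := c \sqrt{p}$ (taking $p_1$ larger if necessary so that $c\sqrt{p}$ exceeds the threshold of the corollary) yields a constant $C' > 0$ such that for any $y_1 \in Y$ and $p \geq p_1$,
\begin{equation*}
	\int_Y \exp\big(-c \sqrt{p} \dist_X(y_1, y_2)\big) \, dv_Y(y_2) \leq \frac{C'}{p^{m}}.
\end{equation*}
Combining the two inequalities (and using the symmetric statement in $y_2$), we obtain
\begin{equation*}
	\sup_{y_1 \in Y} \int_Y \big| G_p(y_1, y_2) \big| dv_Y(y_2), \
	\sup_{y_2 \in Y} \int_Y \big| G_p(y_1, y_2) \big| dv_Y(y_1) \leq \frac{C''}{\sqrt{p}}.
\end{equation*}
Then Young's inequality for integral operators (\cite[Theorem 0.3.1]{SoggBook}, applied as in Proposition \ref{prop_restr_is_l2} with $p = q = 2$, $r = 1$) gives $\norm{G_p} \leq C/\sqrt{p}$.

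Under assumption (\ref{eq_comp_vol_omeg}), Lemma \ref{lem_gp_dec} provides the improved kernel bound
$| G_p(y_1, y_2)| \leq C \, p^{m - 1} \exp (- c \sqrt{p} \dist_X(y_1, y_2))$, and repeating the argument above with this replacement gives $\norm{G_p} \leq C/p$, completing the proof. The only real obstacle — establishing that the leading (and, under (\ref{eq_comp_vol_omeg}), subleading) terms in the expansion of $G_p$ vanish — was handled in Lemma \ref{lem_gp_dec} by the explicit computations (\ref{eq_jogo_form1}) and (\ref{eq_jogo_form2}), so this final step is essentially a Schur-test packaging of that cancellation.
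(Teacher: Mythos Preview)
Your proof is correct and follows essentially the same route as the paper's: both combine the kernel estimate (\ref{eq_gp_dec}) from Lemma \ref{lem_gp_dec} with Corollary \ref{cor_exp_bound_int} to bound the $L^1$-norms of the rows and columns of $G_p$, then apply Young's inequality (Schur's test) exactly as in \cite[Theorem 0.3.1]{SoggBook}. The treatment of the improved bound under (\ref{eq_comp_vol_omeg}) is likewise identical.
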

	\begin{proof}
		First of all, by Corollary \ref{cor_exp_bound_int} and Lemma \ref{lem_gp_dec}, there are $C > 0$, $p_1 \in \nat^*$, such that for any $y_0 \in Y$, for $p \geq p_1$, we have
		\begin{equation}\label{eq_bound_int_berg_gp}
			\int_{Y} \big| G_p(y_0, y)  \big|  dv_Y(y) \leq \frac{C}{\sqrt{p}}, \qquad \int_{Y} \big| G_p(y, y_0)  \big|  dv_Y(y) \leq \frac{C}{\sqrt{p}}.
		\end{equation}
		The result now follows from (\ref{eq_bound_int_berg_gp}) and Young's inequality for integral operators, cf. \cite[Theorem 0.3.1]{SoggBook} applied for $p, q = 2$, $r = 1$ in the notations of \cite{SoggBook}.
		The second part is proved using (\ref{eq_j01g_nul}) in exactly the same way, one only has to rely on the comment after (\ref{eq_j01g_nul}).
	\end{proof}
	\begin{sloppypar}
	Lemma \ref{lem_norm_bnd} now implies that there is $p_1 \in \nat^*$, such that for $p \geq p_1$, the infinite sum 
	\begin{equation}
			T_p := \sum_{i = 1}^{\infty} (-1)^i G^i_p.
	\end{equation}
	is well-defined as an operator on $L^2(Y, \iota^*( L^p \otimes F))$.
	\end{sloppypar}
	\begin{lem}\label{lem_ext_op_inf_sum}
		The following identity holds
		\begin{equation}\label{eq_ext_op_inf_sum}
			\ext_p
			=
			I_p
			+
			I_p
			\circ
			T_p
			.
		\end{equation}
	\end{lem}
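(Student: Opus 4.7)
The plan is to verify that the operator $E_p' := I_p + I_p \circ T_p$ has the two properties that uniquely characterize $\ext_p$. Since $\ext_p g$ is the $L^2$-minimal extension of $B_p^Y g$, it is the unique element of $H^{0,\perp}_{(2)}(X, L^p \otimes F)$ whose restriction to $Y$ equals $B_p^Y g$: the difference of two such candidates would lie in $H^{0,0}_{(2)} \cap H^{0,\perp}_{(2)} = \{0\}$. So it suffices to show (i) the image of $E_p'$ is contained in $H^{0,\perp}_{(2)}(X, L^p \otimes F)$, and (ii) $\res_Y \circ E_p' = B_p^Y$. Property (i) is immediate: $I_p$ factors through $B_p^{\perp}$, hence so does $E_p' = I_p \circ (\operatorname{Id} + T_p)$.

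For property (ii), the key remark is that $G_p$ has image in $H^{0}_{(2)}(Y, \iota^*(L^p \otimes F))$. Indeed, both summands of $G_p = \res_Y \circ I_p - B_p^Y$ do: elements of $H^{0,\perp}_{(2)}(X, L^p \otimes F)$ are holomorphic, so their restrictions to $Y$ are too, while $B_p^Y$ projects onto $H^{0}_{(2)}(Y, \iota^*(L^p \otimes F))$ by definition. Consequently $T_p = \sum_{i\geq 1}(-1)^i G_p^i$ also has image in $H^{0}_{(2)}(Y, \iota^*(L^p \otimes F))$, which gives the crucial identity $B_p^Y \circ T_p = T_p$. By Lemma \ref{lem_norm_bnd}, $\|G_p\| < 1$ for $p$ large, so $\operatorname{Id} + G_p$ is invertible on $L^2(Y, \iota^*(L^p \otimes F))$ with Neumann series $(\operatorname{Id} + G_p)^{-1} = \sum_{i \geq 0}(-G_p)^i = \operatorname{Id} + T_p$; expanding $(\operatorname{Id}+G_p)(\operatorname{Id}+T_p) = \operatorname{Id}$ yields the algebraic identity
\begin{equation*}
G_p + T_p + G_p \circ T_p = 0.
\end{equation*}

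Putting these together and using $\res_Y \circ I_p = B_p^Y + G_p$ by the definition of $G_p$, I compute
\begin{equation*}
\res_Y \circ (I_p + I_p \circ T_p) = (B_p^Y + G_p) + (B_p^Y + G_p)\circ T_p = B_p^Y + (G_p + T_p + G_p\circ T_p) = B_p^Y,
\end{equation*}
which establishes (ii) and hence (\ref{eq_ext_op_inf_sum}). The proof is essentially algebraic; the only analytic ingredient is the operator-norm estimate of Lemma \ref{lem_norm_bnd}, needed to define $T_p$ via the Neumann series. There is no serious obstacle: once the image property of $G_p$ is recorded, the verification reduces to the commutation $B_p^Y T_p = T_p$ and the Neumann identity above.
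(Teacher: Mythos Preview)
Your proof is correct and follows essentially the same route as the paper: both reduce to showing that the image of $I_p + I_p\circ T_p$ lies in $H^{0,\perp}_{(2)}$ and that $\res_Y\circ(I_p + I_p\circ T_p) = B_p^Y$, using $B_p^Y\circ G_p = G_p$. The only cosmetic difference is that you package the telescoping computation via the Neumann identity $G_p + T_p + G_p\circ T_p = 0$, whereas the paper verifies it term by term.
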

	\begin{proof}
		Since $B_p^{\perp}$ has values in $H^{0, \perp}_{(2)}(X, L^p \otimes F)$, and $G_p$ vanishes on the kernel of $B_p^{Y}$ by (\ref{eq_gp_defn}), it is enough to establish that
	\begin{equation}
		\res_Y
		\circ
		\big(I_p
		+
		I_p
		\circ
		T_p
		\big)
		=
		B_p^{Y}.
	\end{equation}
	This identity follows from the observation that for any $i \in \nat^*$, we have
	\begin{equation}
		\res_Y
		\circ
		B_p^{\perp}
		\circ
		\ext_p^{0} 
		\circ
		G_p^{i}
		=
		\res_Y
		\circ
		I_p
		\circ
		G_p^{i}
		,
		\qquad
		\res_Y
		\circ
		B_p^{\perp}
		\circ
		\ext_p^{0} 
		\circ
		G_p^{i}
		=
		G_p^{i+1}
		+
		G_p^{i},
	\end{equation}
	which, on its turn, follows by an application of the right composition with $G_p^i$ in (\ref{eq_gp_defn}),
	and the identity $B_p^{Y} \circ G_p = G_p$, following from (\ref{eq_res_op_l2_form}).
	\end{proof}
	\begin{proof}[Proof of Theorem \ref{thm_ext_exp_dc}]
		By Lemmas \ref{lem_bnd_prod_a}, \ref{lem_gp_dec}, and the fact that for any $C > 0$ there is $p_1 \in \nat^*$ such that for $p \geq p_1$, the sum $\sum_{i = 0}^{\infty} (\frac{C}{\sqrt{p}})^i$ converges, we deduce that there is $c > 0$, such that for any $k \in \nat$, there is $C > 0$, such that for $p \geq p_1$, the following estimate for the Schwartz kernel holds
		\begin{equation}\label{eq_gp_dec_inf_sum}
			\Big|  T_p(y_1, y_2) \Big|_{\ccal^k(Y \times Y)} \leq C p^{m + \frac{k - 1}{2}} \cdot \exp \big(- c \sqrt{p} \cdot \dist_X(y_1, y_2) \big).
		\end{equation}
		Moreover, under assumption (\ref{eq_comp_vol_omeg}), we can replace $p^{m + \frac{k - 1}{2}}$ by $p^{m - 1 + \frac{k}{2}}$.
		We conclude by Theorem \ref{thm_logbk_exp_dc}, Lemmas \ref{lem_bnd_prod_a}, \ref{lem_ext_op_inf_sum} and (\ref{eq_gp_dec_inf_sum}).
	\end{proof}
	\begin{proof}[Proof of Theorem \ref{thm_ext_as_exp}]
		Let us fix $k \in \nat$ and establish Theorem \ref{thm_ext_as_exp} for it.
		By Lemmas \ref{lem_apl_tayl_exp}, \ref{lem_gp_dec}, we conclude that  for any $r \in \nat$, $y_0 \in Y$, there are $J_{r, T}(Z_Y, Z'_Y) \in \enmr{F_{y_0}}$ polynomials in $Z_Y, Z'_Y \in \real^{2m}$, satisfying the same properties as in Theorem \ref{thm_ext_as_exp}, such that for $F_{r, T} := J_{r, T} \cdot \mathscr{P}_m$, the following holds.
		There are $\epsilon, c > 0$, $p_1 \in \nat^*$,  such that for any $k, l, l' \in \nat$, there is $C > 0$, such that for any $y_0 \in Y$, $p \geq p_1$, $Z_Y, Z'_Y \in \real^{2m}$, $|Z_Y|, |Z'_Y| \leq \epsilon$, $\alpha, \alpha' \in \nat^{2m}$, $|\alpha|+|\alpha'| \leq l$, the following bound holds
		\begin{multline}\label{eq_apl_ass_tay_tp}
			\bigg| 
				\frac{\partial^{|\alpha|+|\alpha'|}}{\partial Z_Y^{\alpha} \partial Z'_Y{}^{\alpha'}}
				\bigg(
					\frac{1}{p^m} T_p\big(\psi_{y_0}(Z_Y), \psi_{y_0}(Z'_Y) \big)
					-
					\sum_{r = 0}^{k}
					p^{-\frac{r}{2}}						
					F_{r, T}(\sqrt{p} Z_Y, \sqrt{p} Z'_Y) 
					\kappa_{Y}^{-\frac{1}{2}}(Z_Y)
					\kappa_{Y}^{-\frac{1}{2}}(Z'_Y)
				\bigg)
			\bigg|_{\ccal^{l'}(Y)}
			\\
			\leq
			C p^{- \frac{k + 1 - l}{2}}			
			\Big(1 + \sqrt{p}|Z_Y| + \sqrt{p} |Z'_Y| \Big)^{Q''_{k, l, l'}}
			\exp\Big(- c \sqrt{p} |Z_Y - Z'_Y| \Big),
		\end{multline}
		where $Q'''_{k, l, l'} := 6(8(n+2)(2k+1) +  l') + 2l$.
		Moreover, we have
		\begin{equation}\label{eq_j01t_nul0}
			J_{0, T} = 0.
		\end{equation}
		And under assumption (\ref{eq_comp_vol_omeg}), we even have
		\begin{equation}\label{eq_j01t_nul}
			J_{1, T} = 0.
		\end{equation}
		Now, by Lemma \ref{lem_ac_ext_op_tay}, (\ref{eq_ext_as_exp_I_op}) and (\ref{eq_apl_ass_tay_tp}), we deduce the asymptotic expansion (\ref{eq_ext_as_exp}) for 
		\begin{equation}\label{eq_jre_form_comm_e}
			J_r^E 
			:=
			J_{r, I}^{E}
			+
			\sum_{r_1 = 0}^{r}
			\mathcal{K'''}_{n, m}[J_{r_1, I}^{E}, J_{r - r_1, T}].
		\end{equation}
		From (\ref{eq_jogo_form1}), (\ref{eq_j01t_nul0}) and (\ref{eq_jre_form_comm_e}), we deduce (\ref{eq_je0_exp}).
		Moreover, under assumption (\ref{eq_comp_vol_omeg}), from  (\ref{eq_jogo_form2}), (\ref{eq_j01t_nul0}), (\ref{eq_j01t_nul}) and (\ref{eq_jre_form_comm_e}), we deduce (\ref{eq_je1_exp}).
	\end{proof}
	\begin{proof}[Proof of Theorem \ref{thm_high_term_ext}]
		The main idea of the proof is to compare the Schwartz kernels of $\ext_{p}$ and $\ext_{p}^{0}$.
		Let us denote $K_p := \ext_p - \ext_p^{0}$. 
		From (\ref{eq_ext0_op}), remark that the Schwartz kernel, $\ext_{p}^{0}(x, y)$, $x = (y', Z_N) \in \mathbb{B}_Y^X(r_{\perp})$, $y \in Y$, of $\ext_{p}^{0}$, evaluated with respect to $dv_Y$, equals to
		\begin{equation}\label{eq_bergm_triv_extension}
			\ext_{p}^{0}(x, y)
			=
			\rho \Big(\frac{|Z_N|}{r_{\perp}} \Big)
			\cdot
			\exp \Big(- p \frac{\pi}{2} |Z_N|^2 \Big)
			\cdot
			B_{p}^Y (y', y).
		\end{equation}
		From this and Theorem \ref{thm_bk_off_diag}, we conclude that there are $c_1, C_1 > 0$, $p_1 \in \nat^*$, such that for any $p \geq p_1$, $x \in X$, $y \in Y$, the following estimate holds
		\begin{equation}\label{eq_ext_kp}
			\Big|  \ext_{p}^{0}(x, y) \Big| \leq C_1 p^{m} \exp \big(- c_1 \sqrt{p} \dist(x, y) \big).
		\end{equation}
		From Theorem \ref{thm_ext_exp_dc} and (\ref{eq_ext_kp}), we conclude that there are $c_2, C_2 > 0$, $p_1 \in \nat^*$,  such that for any $p \geq p_1$, $x \in X$, $y \in Y$, the following estimate holds
		\begin{equation}\label{eq_ext_kp}
			\Big|  K_p(x, y) \Big| \leq C_2 p^{m + \frac{k}{2}} \exp \big(- c_2 \sqrt{p} \dist(x, y) \big).
		\end{equation}
		Let us now denote by $J^{\phi}_{r, Y}(Z_Y, Z'_Y)$, $r \in \nat$, the polynomials from Theorem \ref{thm_berg_dailiuma}, applied for $X := Y$.
		From Theorems \ref{thm_ext_as_exp}, \ref{thm_berg_dailiuma}, we deduce that for polynomials 
		\begin{equation}\label{eq_jrke}
			J_{r, K}^E(Z, Z'_Y) := J_r^E(Z, Z'_Y) -  \sum_{r_0 = 0}^{r}  \kappa_{N, [r - r_0]}^{\frac{1}{2}}(Z_Y) \cdot J^{\phi}_{r_0, Y}(Z_Y, Z'_Y), \quad r \in \nat,
		\end{equation}
		and the functions $F_{r, K}^E := J_{r, K}^E \cdot \mathscr{E}_{n, m}$ over $\real^{2n} \times \real^{2m}$, the following holds.
		There are $\epsilon, c > 0$, $p_1 \in \nat^*$, such that for any $k \in \nat$, there is $C_3 > 0$, such that for any $y_0 \in Y$, $p \geq p_1$, $Z = (Z_Y, Z_N)$, $Z_Y, Z'_Y \in \real^{2m}$, $Z_N \in \real^{2(n - m)}$, $|Z|, |Z'_Y| \leq \epsilon$, we have
		\begin{multline}\label{eq_kp_as_exp}
			\bigg| 
					\frac{1}{p^m} K_p \big(\psi_{y_0}(Z), \psi_{y_0}(Z'_Y) \big)
					-
					\sum_{r = 0}^{k}
					p^{-\frac{r}{2}}						
					F_{r, K}^E(\sqrt{p} Z, \sqrt{p} Z'_Y) 
					\kappa_{X}^{-\frac{1}{2}}(Z)
					\kappa_{Y}^{-\frac{1}{2}}(Z'_Y)
			\bigg|
			\\
			\leq
			C_3 p^{- \frac{k + 1}{2}}
			\Big(1 + \sqrt{p}|Z| + \sqrt{p} |Z'_Y| \Big)^{Q^1_{k, 0, 0}}
			\exp\Big(- c \sqrt{p} \big( |Z_Y - Z'_Y| + |Z_N| \big) \Big).
		\end{multline}
		From (\ref{eq_je0_exp}), (\ref{eq_jopr_calc}) and (\ref{eq_jrke}), we deduce 
		\begin{equation}\label{eq_jk0_exp}
			J_{0, K}^E(Z, Z'_Y) = 0.
		\end{equation}
		Moreover, under assumption (\ref{eq_comp_vol_omeg}), from (\ref{eq_je1_exp}), (\ref{eq_j1pr_calc}) and (\ref{eq_jrke}), we deduce 
		\begin{equation}\label{eq_jk1_exp}
			J_{1, K}^E(Z, Z'_Y) =  J_{1}^E(Z, Z'_Y).
		\end{equation}
		In particular, from (\ref{eq_kp_as_exp}) and (\ref{eq_jk0_exp}), we see that for some $C_4 > 0$, we can improve (\ref{eq_ext_kp}) as follows
		\begin{equation}\label{eq_ext_kp_impro}
			\Big|  K_p(x, y) \Big| \leq C_4 p^{m - \frac{1}{2}} \exp \big(- c \sqrt{p} \dist(x, y) \big).
		\end{equation}
		\par 
		From (\ref{eq_ext_kp_impro}) and the use of Young's inequality for integral operators as in Section \ref{sect_trace}, we deduce that there are $C_5 > 0$, $p_1 \in \nat^*$, such that for any $p \geq p_1$, we have  $\| K_p \| \leq \frac{C_5}{p^{\frac{n - m+ 1}{2}}}$, which implies (\ref{eq_ext_as}).
		In an analogous way, from (\ref{eq_je1_exp}), (\ref{eq_ext_kp}), (\ref{eq_kp_as_exp}), (\ref{eq_jk0_exp}) and (\ref{eq_jk1_exp}), under additional assumptions (\ref{eq_comp_vol_omeg}) and $A = 0$, we see that in  (\ref{eq_ext_as}), one can replace $p^{\frac{n - m+ 1}{2}}$ by $p^{\frac{n - m + 2}{2}}$.
		\par 
		Now, an easy calculation, using (\ref{eq_kappan}), shows that for any $g \in L^2(Y, \iota^*(L^p \otimes F))$, we have
		\begin{equation}\label{eq_norm_1stterm_er}
			\big\| \ext_p^{0} g \big\|_{L^2(dv_X)}
			=
			\big\| f(p, y) \cdot B_p^Y g \big\|_{L^2(dv_Y)},
		\end{equation}
		where the function $f :\nat \times Y \to \real$ is defined as 
		\begin{equation}\label{eq_func_f_defn}
			f(p, y)^2 := \int_{\real^{2(n - m)}} \kappa_N(y, \sqrt{p} Z_N) \cdot  \exp (- p \pi |Z_N|^2 ) \rho \Big(  \frac{|Z_N|}{r_{\perp}} \Big)^2 dZ_{2m + 1} \wedge \cdots \wedge dZ_{2n}.
		\end{equation}
		From the calculation of Gaussian integral, as $p \to \infty$, we have
		\begin{equation}\label{eq_calc_fp_ex}
			f(p, y)^2 = \frac{\kappa_N(y)}{p^{n - m}}  + O \Big( \frac{1}{p^{n - m + \frac{1}{2}}} \Big).
		\end{equation}
		From (\ref{eq_ext_as}), (\ref{eq_norm_1stterm_er}), (\ref{eq_func_f_defn}) and (\ref{eq_calc_fp_ex}), we deduce (\ref{eq_norm_asymp}).
		\par 
		Now, consider the Toeplitz operator $T_{\kappa_N, p} : L^2(Y, \iota^*( L^p \otimes F)) \to L^2(Y, \iota^*( L^p \otimes F))$, given by
		\begin{equation}
			T_{\kappa_N, p} g := B_p^Y ( \kappa_N \cdot B_p^Y g).
		\end{equation}
		Then, we clearly have
		\begin{equation}
			\big\langle T_{\kappa_N, p} g,  g  \big\rangle_{L^2(dv_Y)}
			=
			\big\langle \kappa_N \cdot B_p^Y g,  B_p^Y  g  \big\rangle_{L^2(dv_Y)}.
		\end{equation}
		Thus, by (\ref{eq_norm_1stterm_er}) and (\ref{eq_calc_fp_ex}), we have 
		\begin{equation}\label{eq_top_final_1}
			\big\| \ext_p^{0} \big\|_{L^2(dv_X)}
			=
			\frac{1}{p^{\frac{n - m}{2}}} 
			\big\| T_{\kappa_N, p} \big\|^{\frac{1}{2}} + O \Big( \frac{1}{p^{\frac{n -m + 1}{2}}} \Big).
		\end{equation}
		\par Recall that for compact manifolds $Y$, Bordemann-Meinrenken-Schlichenmaier \cite[Theorem 4.1]{BordMeinSchl} (for $(F, h^F)$ trivial) and Ma-Marinescu \cite[Theorem 3.19, (3.91)]{MaMarToepl} (for any $(F, h^F)$) established that there is $C > 0$, such that
		\begin{equation}\label{eq_top_final_2}
			\sup_{y \in Y} \kappa_N(y) - \frac{C}{\sqrt{p}}
			\leq
			\big\| T_{\kappa_N, p} \big\| \leq \sup_{y \in Y} \kappa_N(y).
		\end{equation}
		We argue that the proof from \cite{MaMarToepl} continues to hold for non-compact manifolds as well.
		Indeed, recall that the lower bound of (\ref{eq_top_final_2}) was proved in \cite{MaMarToepl} using the asymptotic expansion of the peak section, localized at the point where the supremum of $\kappa_N$ is achieved.
		As the asymptotic expansion of the peak section is based on the asymptotic expansion of the Bergman kernel and the exponential bound on it, and both those results continue to hold on manifolds of bounded geometry by the results of \cite{MaMarOffDiag}, cf. Theorems \ref{thm_bk_off_diag}, \ref{thm_berg_dailiuma}, we see that (\ref{eq_top_final_2}) continues to hold in full generality, see \cite[\S 7.5]{MaHol}, when the supremum of $\kappa_N$ is achieved.
		Now, if the supremum is not achieved, then the same proof gives us that for any $\epsilon > 0$, there is $p_1 \in \nat^*$, such that for $p \geq p_1$, we have
		\begin{equation}\label{eq_top_final_232}
			\sup_{y \in Y} \kappa_N(y) - \epsilon
			\leq
			\big\| T_{\kappa_N, p} \big\|
		\end{equation}
		We deduce (\ref{eq_norm_asymp}) by (\ref{eq_top_final_1}), (\ref{eq_top_final_2}) and (\ref{eq_top_final_232}).
		\par 
		Now it is only left to prove that if $A \neq 0$, then under additional assumption (\ref{eq_comp_vol_omeg}), one can not replace $p^{\frac{n - m+ 1}{2}}$ by $p^{\frac{n - m + 2}{2}}$.
		For this, remark that as long as $A \neq 0$, by (\ref{eq_je1_exp}), the operator, acting on $\comp^n$ with the kernel $F_1^E(Z, Z'_Y)$, has non-zero norm.
		Then, by the calculations, similar to (\ref{eq_norm_1stterm_er}), we see that the operator, acting on $\comp^n$ with the kernel $F_1^E(\sqrt{p} Z, \sqrt{p} Z'_Y)$, has norm of order $\frac{1}{p^{\frac{n - m}{2}}}$, as $p \to \infty$.
		We deduce from this and Theorem \ref{thm_ext_as_exp} that if $A \neq 0$, then under additional assumption (\ref{eq_comp_vol_omeg}), one can not replace $p^{\frac{n - m+ 1}{2}}$ by $p^{\frac{n - m + 2}{2}}$.
	\end{proof}
	\begin{proof}[Proof of Corollary \ref{cor_ot_higher}]
		First of all, since $B_p^X \circ \ext_p = \ext_p$, we clearly have
		$
			\big\| \ext_p - B_p^X \circ \ext_p^{0} \big\| \leq \big\| \ext_p - \ext_p^{0} \big\|
		$.
		Now, the statement is a direct consequence of this, Proposition \ref{prop_der_bound}, applied for $B_p^X \circ \ext_p^{0} g$, and (\ref{eq_norm_asymp}).
	\end{proof}
	\begin{proof}[Proof of Theorem \ref{thm_opt_linf_bnd}]
		We let $K_p = \ext_p - \ext_p^{0}$.
		From Corollary \ref{cor_exp_bound_int} and (\ref{eq_ext_kp_impro}), there are $C > 0$, $p_1 \in \nat^*$, such that for any $p \geq p_1$, $f \in H^0_{(2)}(Y, \iota^*( L^p \otimes F))$, we have
		\begin{equation}\label{eq_ext_as1111}
			\big\| K_p f \big\|_{L^{\infty}(X)} \leq \frac{C}{\sqrt{p}}  \cdot \big\|  f \big\|_{L^{\infty}(Y)}.
		\end{equation}
		Remark also that by construction, we have
		\begin{equation}\label{eq_ext_as1212}
			\big\| \ext_p^0 f \big\|_{L^{\infty}(X)} = \big\|  f \big\|_{L^{\infty}(Y)}.
		\end{equation}
		From (\ref{eq_ext_as1111}) and (\ref{eq_ext_as1212}) we deduce Theorem \ref{thm_opt_linf_bnd}.
	\end{proof}

\bibliography{bibliography}

		\bibliographystyle{abbrv}

\Addresses

\end{document}